\renewcommand{\email}[2][]{%
  \ifx\emails\@empty\relax\else{\g@addto@macro\emails{,\space}}\fi%
  \@ifnotempty{#1}{\g@addto@macro\emails{\textrm{(#1)}\space}}%
  \g@addto@macro\emails{#2}%
}
\author{Hiroaki  Karuo$^{(1)}$}
\address{${}^{(1)}$ Gakushuin University, Faculty of Sciences, Department of mathematics, 1-5-1 Mejiro, Toshima-ku, Tokyo 171-8588 Japan}
\email{hiroaki.karuo@gakushuin.ac.jp}
\author{Julien Korinman$^{(2)}$}
\address{${}^{(2)}$ Institut Montpelli\'erain Alexander Grothendieck - UMR 5149 Universit\'e de Montpellier. Place Eug\'ene Bataillon, 34090 Montpellier France}
\email{julien.korinman@gmail.com}
\urladdr{https://sites.google.com/site/homepagejulienkorinman/}
\subjclass{$57$R$56$,  $57$M$25$.}
\keywords{Stated skein algebras, quantum cluster algebras, quantum groups,  TQFTs, lattice gauge field theory}
\def\restriction#1#2{\mathchoice
              {\setbox1\hbox{${\displaystyle #1}_{\scriptstyle #2}$}
              \restrictionaux{#1}{#2}}
              {\setbox1\hbox{${\textstyle #1}_{\scriptstyle #2}$}
              \restrictionaux{#1}{#2}}
              {\setbox1\hbox{${\scriptstyle #1}_{\scriptscriptstyle #2}$}
              \restrictionaux{#1}{#2}}
              {\setbox1\hbox{${\scriptscriptstyle #1}_{\scriptscriptstyle #2}$}
              \restrictionaux{#1}{#2}}}
\def\restrictionaux#1#2{{#1\,\smash{\vrule height .8\ht1 depth .85\dp1}}_{\,#2}}
\newcommand{\quotient}[2]{{\raisebox{.2em}{$#1$}\left/\raisebox{-.2em}{$#2$}\right.}}
\newcommand{\Hom}{\operatorname{Hom}}
\newcommand{\tr}{\operatorname{tr}}
\newcommand{\Tr}{\operatorname{Tr}}
\newcommand{\SL}{\operatorname{SL}}
\newcommand{\id}{id}
\newcommand{\Span}{\operatorname{Span}}
\newcommand{\End}{\operatorname{End}}
\newcommand{\Specm}{\operatorname{MaxSpec}}
\newcommand{\Spec}{\operatorname{Spec}}
\newcommand{\Mod}{\operatorname{Mod}}
\newcommand{\Mat}{\operatorname{Mat}}
\newcommand{\col}{\operatorname{col}}
\newcommand{\Comod}{\operatorname{Comod}}
\newcommand{\Arc}{\operatorname{Arc}}
\newcommand{\Mut}{\operatorname{Mut}}
\newcommand{\Ob}{\operatorname{Ob}}
\newcommand{\res}{\operatorname{res}}
\newcommand{\Indecomp}{\operatorname{Indecomp}}
\newcommand{\uq}{\mathfrak{u}_q\mathfrak{sl}_2}
\newcommand{\Ext}{\operatorname{Ext}}
\newcommand{\Rep}{\operatorname{Rep}}
\newcommand{\Image}{\operatorname{Im}}
\newcommand{\QMA}{unrestricted quantum moduli algebra}
\newcommand{\QMAs}{unrestricted quantum moduli algebras}
\newcommand{\Dq}{D_qB}
\newcommand{\heightexch}[3]{
	\begin{tikzpicture}[baseline=-0.4ex,scale=0.5, >=stealth]
	\draw [fill=gray!60,gray!45] (-.7,-.75)  rectangle (.4,.75)   ;
	\draw[#1] (0.4,-0.75) to (.4,.75);
	\draw[line width=1.2] (0.4,-0.3) to (-.7,-.3);
	\draw[line width=1.2] (0.4,0.3) to (-.7,.3);
	\draw (0.65,0.3) node {\scriptsize{$#2$}}; 
	\draw (0.65,-0.3) node {\scriptsize{$#3$}}; 
	\end{tikzpicture}
}
\newcommand{\heightcurve}{
\begin{tikzpicture}[baseline=-0.4ex,scale=0.5]
\draw [fill=gray!20,gray!45] (-.7,-.75)  rectangle (.4,.75)   ;
\draw[-] (0.4,-0.75) to (.4,.75);
\draw[line width=1.2] (-.7,-0.3) to (-.4,-.3);
\draw[line width=1.2] (-.7,0.3) to (-.4,.3);
\draw[line width=1.15] (-.4,0) ++(-90:.3) arc (-90:90:.3);
\end{tikzpicture}
}
\begin{document}

\theoremstyle{plain}
\newtheorem{theorem}{Theorem}[section]
\newtheorem{proposition}[theorem]{Proposition}
\newtheorem{corollary}[theorem]{Corollary}
\newtheorem{lemma}[theorem]{Lemma}
\theoremstyle{definition}
\newtheorem{notations}[theorem]{Notations}
\newtheorem{convention}[theorem]{Convention}
\newtheorem{problem}[theorem]{Problem}
\newtheorem{definition}[theorem]{Definition}
\theoremstyle{remark}
\newtheorem{remark}[theorem]{Remark}
\newtheorem{conjecture}[theorem]{Conjecture}
\newtheorem{example}[theorem]{Example}
\newtheorem{strategy}[theorem]{Strategy}
\newtheorem{question}[theorem]{Question}

\title[Classification of semi-weight representations of reduced stated skein algebras]{Classification of semi-weight representations of reduced stated skein algebras}
%
%
%

\date{}
\maketitle


\begin{abstract} 
We classify the finite dimensional semi-weight representations of the  reduced stated skein algebras at odd roots of unity of connected essential marked surfaces which either have a boundary component with at least two boundary edges or which do not have any unmarked boundary component.  We deduce computations of the PI-degrees and Azumaya loci of unreduced stated skein algebras of essential marked surfaces having at most one boundary puncture per boundary component and of the \QMAs  {} of lattice gauge field theory. 
\end{abstract}

\tableofcontents


\section{Introduction}

\subsection{Background on reduced stated skein algebras and their representations}\label{sec_background}

Let $\Sigma$ be an oriented compact surface and $\mathcal{A}$ a finite set of disjoint arcs embedded in $\partial \Sigma$ named \textit{boundary edges}. The pair $\mathbf{\Sigma}:=(\Sigma, \mathcal{A})$ will be called a \textit{marked surface}.
For $A\in \mathbb{C}^*$ a root of unity of odd order $N$, the \textit{reduced stated skein algebra} $\overline{\mathcal{S}}_A(\mathbf{\Sigma})$ was introduced in \cite{CostantinoLe19} as the quotient of the stated skein algebra by the kernel of Bonahon-Wong quantum trace. In particular, when $\mathcal{A}=\emptyset$, $\overline{\mathcal{S}}_A(\mathbf{\Sigma})$ is the usual Kauffman-bracket skein algebra.
Its representations appear in quantum hyperbolic geometry and are conjectured to form the building blocks of some $\SL_2$-HQFT (see \cite{BaseilhacBenedettiInvariant, BaseilhacBenedetti05, BaseilhacBenedetti15, BaseilhacBenedettiHTQFT, BaseilhacBenedettiLocRep}). Let $Z_{\mathbf{\Sigma}}$ denote the center of  $\overline{\mathcal{S}}_A(\mathbf{\Sigma})$. By \cite{KojuQuesneyClassicalShadows, LePaprocki2018} (after the original work in \cite{BonahonWong1}), there exists a \textit{Frobenius morphism} 
$$ Fr_{\mathbf{\Sigma}} : \overline{\mathcal{S}}_{+1}(\mathbf{\Sigma}) \hookrightarrow Z_{\mathbf{\Sigma}}$$
which embeds the commutative algebra $\overline{\mathcal{S}}_{+1}(\mathbf{\Sigma}) $ at $A=+1$ into $Z_{\mathbf{\Sigma}}$. Let $Z_{\mathbf{\Sigma}}^0 \subset Z_{\mathbf{\Sigma}}$ denote the image of the Frobenius morphism. 
\par  A representation $r: \overline{\mathcal{S}}_A(\mathbf{\Sigma})\to \End(V)$ is a \textit{weight representation} if $V$ is semi-simple as a module over $Z_{\mathbf{\Sigma}}$ and is called a \textit{semi-weight representation} if $V$ is semi-simple as a $Z_{\mathbf{\Sigma}}^0$ module. The purpose of this paper is to make progresses towards the following problem.

\begin{problem}\label{problem_classification}
Classify all finite dimensional weight and semi-weight representations of  $\overline{\mathcal{S}}_A(\mathbf{\Sigma})$. 
\end{problem}

We will solve Problem \ref{problem_classification} in the case where $\mathbf{\Sigma}=(\Sigma, \mathcal{A})$ is a connected marked surface which either has a boundary component with at least two boundary edges or which has at least one boundary edge and no unmarked boundary component.

 The center $Z_{\mathbf{\Sigma}}$ was computed in \cite{KojuAzumayaSkein} and is described as follows. 
 Partition the set of boundary components of $\Sigma$ into two subsets $\pi_0(\partial \Sigma)= \mathring{\mathcal{P}} \bigsqcup \Gamma^{\partial}$ where 
  $\mathring{\mathcal{P}}$ is the subset of boundary components  which do not intersect $\mathcal{A}$ and $\Gamma^{\partial}$ the boundary components which contain some boundary edges. For each $p\in \mathring{\mathcal{P}}$, there is a central element $\gamma_p \in Z_{\mathbf{\Sigma}}$ and for each $\partial \in \Gamma^{\partial}$ there is an invertible central element $\alpha_{\partial} \in Z_{\mathbf{\Sigma}}$ such that $Z_{\mathbf{\Sigma}}$ is generated by the image of the Frobenius morphism together with the elements $\gamma_p$ and $\alpha_{\partial}^{\pm 1}$. Let $\widehat{X}(\mathbf{\Sigma}):=\Specm(Z_{\mathbf{\Sigma}})$,  $X(\mathbf{\Sigma}):=\Specm(Z_{\mathbf{\Sigma}}^0)$,  and $\pi: \widehat{X}(\mathbf{\Sigma})\to X(\mathbf{\Sigma})$ the map defined by the inclusion $Z_{\mathbf{\Sigma}}^0 \subset Z_{\mathbf{\Sigma}}$. We also denote by $\widehat{\mathcal{X}}(\mathbf{\Sigma})\subset\Spec(Z_{\mathbf{\Sigma}})$ the set of prime ideals $I\subset Z_{\mathbf{\Sigma}}$ such that $I\cap Z^0_{\mathbf{\Sigma}}$ is maximal in $Z^0_{\mathbf{\Sigma}}$ so we still have a projection $\widehat{\pi}: \widehat{\mathcal{X}}(\mathbf{\Sigma}) \to X(\mathbf{\Sigma})$ sending $I$ to $I\cap Z^0_{\mathbf{\Sigma}}$ extending $\pi$. 
  \par 
 For a point $\widehat{x} \in \widehat{X}(\mathbf{\Sigma})$ (i.e. a maximal ideal in $Z_{\mathbf{\Sigma}}$), we denote by $\chi_{\widehat{x}} : Z_{\mathbf{\Sigma}} \to \mathbb{C}$ the corresponding character with kernel $\widehat{x}$ and use similar notations for $X(\mathbf{\Sigma})$. The variety  $\widehat{X}(\mathbf{\Sigma})$ is then described as 
\begin{multline*}  \widehat{X}(\mathbf{\Sigma}) := \{ \widehat{x}= (x, h_p, h_{\partial})_{p,\partial}, x\in X(\mathbf{\Sigma}),
 h_p\in \mathbb{C} \mbox{ is s.t. } T_N(h_p)=\chi_x(\gamma_p) \mbox{ for }p\in \mathring{P}, 
\\ h_{\partial} \in \mathbb{C}^* \mbox{is s.t. } h_{\partial}^N = \chi_x(\alpha_{\partial}) \mbox{ for }\partial \in \Gamma^{\partial} \}.\end{multline*}
 Here $T_N(X)$ denotes the $N$-th Chebyshev polynomial of the first kind. The morphism $\pi$ is just the projection $\pi(x, h_p, h_{\partial})= x$. It was also proved in \cite{KojuAzumayaSkein} that $\overline{\mathcal{S}}_A(\mathbf{\Sigma})$ is finitely generated over $Z_{\mathbf{\Sigma}}$. Moreover, if $Q(Z_{\mathbf{\Sigma}})$ denotes the fraction field of $Z_{\mathbf{\Sigma}}$, the dimension $D=D_{\mathbf{\Sigma}}:= \dim_{Q(Z_{\mathbf{\Sigma}})} \overline{\mathcal{S}}_A(\mathbf{\Sigma})\otimes_{Z_{\mathbf{\Sigma}}} Q(Z_{\mathbf{\Sigma}})$, named $\textit{PI-degree}$,  was computed explicitly in  \cite{KojuAzumayaSkein}.
 \par For $\widehat{x} \in \widehat{\mathcal{X}}(\mathbf{\Sigma})$ write 
 $$ \overline{\mathcal{S}}_A(\mathbf{\Sigma})_{\widehat{x}}:= \quotient{ \overline{\mathcal{S}}_A(\mathbf{\Sigma})}{{\widehat{x}\overline{\mathcal{S}}_A(\mathbf{\Sigma})}}.$$
 Let $r: \overline{\mathcal{S}}_A(\mathbf{\Sigma})\to \End(V)$ be an indecomposable semi-weight representation. 
 It \textit{classical shadow} is the maximal ideal $x_r:= \ker(r)\cap Z_{\mathbf{\Sigma}}^0 \in X(\mathbf{\Sigma})$. Its \textit{full shadow} is the prime ideal $\mathcal{I}_r:=\ker(r)\cap Z_{\mathbf{\Sigma}}\in \widehat{\mathcal{X}}(\mathbf{\Sigma})$. It \textit{maximal shadow} is the unique maximal ideal $\widehat{x}_r \in \widehat{X}(\mathbf{\Sigma})$ containing $\mathcal{I}_r$ (see Theorem \ref{theorem_FAL} for a proof that $\mathcal{I}_r$ is prime and that $\widehat{x}_r$ is unique).
Note that 
 $r$ is a weight representation if and only if $\mathcal{I}_r=\widehat{x_r}\in \widehat{X}(\mathbf{\Sigma})$ is maximal. 
\par 
The \textit{Azumaya locus} of $\overline{\mathcal{S}}_A(\mathbf{\Sigma})$ is 
$$ \mathcal{AL}(\mathbf{\Sigma}):= \{ \widehat{x} \in \widehat{X}(\mathbf{\Sigma})\mbox{ such that } \overline{\mathcal{S}}_A(\mathbf{\Sigma})_{\widehat{x}}\cong \Mat_D(\mathbb{C}) \}$$
and the \textit{fully Azumaya locus} is 
$$ \mathcal{FAL}(\mathbf{\Sigma}):= \{ x\in X(\mathbf{\Sigma}) \mbox{ such that } \pi^{-1}(x)\subset \mathcal{AL}(\mathbf{\Sigma})\}.$$
Since the matrix algebra $\Mat_D(\mathbb{C}) $ has a unique indecomposable representation (up to isomorphism) which is irreducible, for each $\widehat{x} \in \widehat{X}(\mathbf{\Sigma})$ there exists a unique indecomposable weight representation with classical shadow $\widehat{x}$ (up to isomorphism) named the \textit{Azumaya representation at } $\widehat{x}$.
\par A marked surface $\mathbf{\Sigma}=(\Sigma, \mathcal{A})$ is said \textit{essential} if each of its connected component contains at least one boundary edge in $\mathcal{A}$. 
In this case, it was proved  in \cite{KojuQuesneyClassicalShadows} that the variety $X(\mathbf{\Sigma})$ admits a geometric interpretation as follows. For each boundary edge $a\in \mathcal{A}$, fix a point $v_a \in a$ and write $\mathbb{V}:=\{v_a, a\in \mathcal{A}\}$. Let $\Pi_1(\Sigma, \mathbb{V})$ denote the full subcategory of the fundamental groupoid $\Pi_1(\Sigma)$ generated by $\mathbb{V}$; so the set of objects of $\Pi_1(\Sigma, \mathbb{V})$ is $\mathbb{V}$ and a morphism $\alpha : v_a \to v_b$ is an homotopy class of a path starting at $v_a$ and ending at $v_b$. Let $\SL_2$ be the category with a single element whose endomorphism set is $\SL_2(\mathbb{C})$ and let $\mathcal{R}_{\SL_2}(\mathbf{\Sigma})$ denote the set of functors $\rho : \Pi_1(\Sigma, \mathbb{V})\to \SL_2$. In  \cite{KojuQuesneyClassicalShadows, CostantinoLe19} it is proved that $X(\mathbf{\Sigma})$ embeds into $\mathcal{R}_{\SL_2}(\mathbf{\Sigma})$ (see Section \ref{sec_moduli_spaces} for details).
For $x\in X(\mathbf{\Sigma})$, let $\rho_x$ be the associated functor. 
 For $p\in \mathring{\mathcal{P}}$, let $\gamma_p \in \Pi_1(\Sigma, \mathbb{V})$ be the homotopy class of a peripheral curve encircling $p$ once. Then $x$ is called \textit{central at } $p$ if $\rho_x(\gamma_p)=\pm \mathds{1}_2$. This property is clearly independent on the choices of $a$ and $\gamma_p$. 
\vspace{2mm}
\par It was proved in \cite{FrohmanKaniaLe_UnicityRep} for unmarked surfaces and in \cite{KojuAzumayaSkein} for marked surfaces that $\mathcal{AL}(\mathbf{\Sigma})$ is open dense so a \textit{generic} indecomposable weight representation is irreducible and determined by its full shadow. However the computation of the Azumaya locus remains a challenging problem. For closed (and thus unmarked) surfaces, the Azumaya loci was computed in \cite{KojuKaruo_Azumaya} using the original results in \cite{GanevJordanSafranov_FrobeniusMorphism}. 
For open unmarked surfaces, many partial results were found in \cite{FKL_GeometricSkein, KojuKaruo_Azumaya, Yu_ALSkein_SmallSurfaces}.
For $\mathbf{\Sigma}=(\Sigma_{g,1}, \{a\})$ a genus $g$ surface with a single boundary component and a single boundary edge, it was proved in \cite{KojuMCGRepQT} that $ \overline{\mathcal{S}}_{A}(\mathbf{\Sigma})$ is Azumaya, i.e. that $\mathcal{AL}(\mathbf{\Sigma})=\widehat{X}(\mathbf{\Sigma})$. For $\mathbf{\Sigma}=\mathbb{T}:=(\mathbb{D}^2, \{a,b,c\})$ a disc with three boundary edges (usually called the triangle), it was showed in \cite{CostantinoLe19} that $ \overline{\mathcal{S}}_{A}(\mathbb{T})$ is isomorphic to a quantum torus, thus it is Azumaya as well.

\subsection{Main results}

 In order to state our main theorem, let us introduce some notations. Write $\Delta_+(\widehat{\mathfrak{sl}}_2):= \{ (n,m) \in \mathbb{N}^2 | (n-m)^2 \leq 1\}$ and decompose it as 
   $\Delta_+(\widehat{\mathfrak{sl}}_2)= \{(0,0), (0,1), (1,0)\} \bigsqcup \Delta^{\Re}_{++}(\widehat{\mathfrak{sl}}_2)\bigsqcup \Delta^{Im}_+(\widehat{\mathfrak{sl}}_2)$ where 
 $$  \Delta^{\Re}_{++}(\widehat{\mathfrak{sl}}_2)= \{ (k, k+1), k\geq1\} \cup \{(k+1, k), k\geq 1\}, \quad \Delta^{Im}_+(\widehat{\mathfrak{sl}}_2)=\{ (k,k), k\geq 1\}.$$
 Further write 
 $$ \Delta:= \{S, P\} \sqcup   \Delta^{\Re}_+(\widehat{\mathfrak{sl}}_2) \sqcup  \left(\Delta^{Im}_+(\widehat{\mathfrak{sl}}_2)\times \mathbb{CP}^1\right), $$ 
where $S,P$ are some formal parameters and write $\Delta \sqcup \overline{\Delta}:= \Delta \times \{0,1\}$ (two copies of $\Delta$ where we denote an element $(\alpha, 0)$ simply by $\alpha$ and an element $(\alpha, 1)$ by $\overline{\alpha}$). Let $\widehat{x}=(x,h_p,h_{\partial})_{p, \partial} \in \widehat{X}(\mathbf{\Sigma})$ and decompose the set of inner punctures as $\mathring{\mathcal{P}}= \mathring{\mathcal{P}}_0 \sqcup \mathring{\mathcal{P}}_1 \sqcup \mathring{\mathcal{P}}_2$ where 
\begin{align*}
&  \mathring{\mathcal{P}}_2:= \{p\in \mathring{\mathcal{P}}, \mbox{ such that } x \mbox{ is central at }p \mbox{ and }h_p \neq \pm 2\} \\
&  \mathring{\mathcal{P}}_1:= \{p\in \mathring{\mathcal{P}}\setminus \mathring{\mathcal{P}}_2, \mbox{ such that } \chi_x(\gamma_p) = \pm 2 \mbox{ and }h_p \neq \pm 2\}.
\end{align*}
A map $\sigma : \mathring{\mathcal{P}}\to \Delta \bigsqcup \overline{\Delta} $ is called a \textit{coloring compatible with} $\widehat{x}$ if $(1)$ for $p\in \mathring{\mathcal{P}}_0$, then $\sigma(p)=S$ and $(2)$ if $p\in \mathring{\mathcal{P}}_1$, then $\sigma(p)\in \{S,P\}$. By convention, if $\mathring{\mathcal{P}}=\emptyset$, we consider that every $\widehat{x}$ has a unique coloring. We denote by $\col(\widehat{x})$ the set of colorings compatible with $\widehat{x}$. 
We write
$ m=m(\widehat{x}):= | \mathring{\mathcal{P}}_2|$.
\par 
Let $\widehat{X}^{reg}(\mathbf{\Sigma})$ denote the regular part (smooth points) of $\widehat{X}(\mathbf{\Sigma})$. Let  $\overline{\mathcal{C}}$ be the category of semi-weight modules. Let $\Indecomp(\mathbf{\Sigma})$ be the set of isomorphism classes of indecomposable semi-weight $\overline{\mathcal{S}}_A(\mathbf{\Sigma})$ modules and let $\Indecomp(\mathbf{\Sigma}, \widehat{x})\subset \Indecomp(\mathbf{\Sigma})$ the subset of these modules with maximal shadow $\widehat{x}$. 
 The main result of this paper is the following

\begin{theorem}\label{main_theorem_intro} Let $\mathbf{\Sigma}=(\Sigma, \mathcal{A})$ be a connected essential marked surface  which either has a boundary component with at least two boundary edges or which does not have any inner puncture.
 Let  $\widehat{x}=(x, h_p, h_{\partial}) \in \widehat{X}(\mathbf{\Sigma})$.
\begin{enumerate}
\item We have $\mathcal{AL}(\mathbf{\Sigma})= \widehat{X}^{reg}(\mathbf{\Sigma})$, i.e. the Azumaya locus of $\overline{\mathcal{S}}_A(\mathbf{\Sigma})$ is equal to the regular locus of $\widehat{X}(\mathbf{\Sigma})$. Moreover 
 $\widehat{x}\in \mathcal{AL}(\mathbf{\Sigma})$ if and only if $m(\widehat{x})=0$.
\item The fully Azumaya locus $\mathcal{FAL}(\mathbf{\Sigma})$ of  $\overline{\mathcal{S}}_A(\mathbf{\Sigma})$ is the locus of representations which are not central at any inner puncture. For $x\in \mathcal{FAL}(\mathbf{\Sigma})$ the indecomposable semi-weight representations with classical shadow $x$ are classified (up to isomorphism) by their full shadows, so their set  is in $1:1$ correspondence with the fiber $\widehat{\pi}^{-1}(x) \subset \widehat{\mathcal{X}}(\mathbf{\Sigma})$.
\item The indecomposable semi-weight representations with maximal shadow  $\widehat{x}$ are in $1:1$ correspondence with the set of colorings compatible with $\widehat{x}$ so one has 
$$ \Indecomp(\mathbf{\Sigma})= \bigsqcup_{\widehat{x}\in \widehat{X}(\mathbf{\Sigma})} \Indecomp(\mathbf{\Sigma}, \widehat{x}) \stackrel{1:1}{\longleftrightarrow} \bigsqcup_{\widehat{x}\in \widehat{X}(\mathbf{\Sigma})} \col(\widehat{x}).$$
\item The indecomposable weight representations with full shadow  $\widehat{x}$ correspond to the colorings taking values in $\{S,\overline{S}, ((1,1), 1), \overline{((1,1), 1)}\}$: there are thus $4^m$ such representations.
\item The irreducible representations with full shadow  $\widehat{x}$ correspond to the colorings taking values in $\{S, \overline{S}\}$: there are thus $2^m$ such representations.
\item The indecomposable semi-weight representations with maximal shadow $\widehat{x}$ which are projective objects in $\overline{\mathcal{C}}$ correspond to the colorings sending the elements of $\mathring{\mathcal{P}}_1$ to $P$ and the elements of $\mathring{\mathcal{P}}_2$ to an element in $\{P, \overline{P}\}$: there are thus $2^m$ such representations.
\end{enumerate}
\end{theorem}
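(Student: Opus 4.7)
The plan is to reduce the classification to two inputs: (a) the Azumaya property of essential marked surfaces without inner punctures, and (b) the classification of indecomposable modules of a small-quantum-group algebra of $\mathfrak{u}_q\mathfrak{sl}_2$ type at odd roots of unity, whose indecomposables are parameterized by $\Delta \sqcup \overline{\Delta}$. The topological hypothesis on $\mathbf{\Sigma}$ is precisely what will enable these two inputs to combine via a splitting morphism.

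\emph{Step 1 (Splitting off punctures).} Using the hypothesis, I would separate each inner puncture $p\in \mathring{\mathcal{P}}$ from the rest of the surface by cutting along a suitable embedded arc joining a small peripheral curve around $p$ to a boundary component carrying boundary edges (the hypothesis ensures such an arc exists without destroying essentiality). The Costantino--Le splitting morphism then embeds $\overline{\mathcal{S}}_A(\mathbf{\Sigma})$ as a subalgebra (of coinvariants under a triangle-coaction) of $\overline{\mathcal{S}}_A(\mathbf{\Sigma}_0)\otimes \bigotimes_{p}\mathcal{L}_p$, where $\mathbf{\Sigma}_0$ is an essential marked surface with no inner puncture and each $\mathcal{L}_p$ is the stated skein algebra of an annulus containing $p$ with a single boundary edge on its outer circle.

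\emph{Step 2 (Identifying the pieces).} For the first factor, I would invoke that $\overline{\mathcal{S}}_A(\mathbf{\Sigma}_0)$ is Azumaya over its center --- either directly (the triangle \cite{CostantinoLe19} or the once-holed surface with a single boundary edge \cite{KojuMCGRepQT}) or by a triangulation argument reducing to the triangle case. For each local factor, after specializing the pair of central scalars $(h_p, \chi_x(\gamma_p))$, I would identify $\mathcal{L}_p$ with a finite-dimensional algebra of $\mathfrak{u}_q\mathfrak{sl}_2$-type whose indecomposable modules are classified (in the spirit of Feigin--Gainutdinov--Semikhatov--Tipunin) by $\Delta \sqcup \overline{\Delta}$. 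The three regimes $\mathring{\mathcal{P}}_0, \mathring{\mathcal{P}}_1, \mathring{\mathcal{P}}_2$ should correspond respectively to (i) a semisimple block with a single simple labelled $S$ (or $\overline{S}$); (ii) a non-split block of length two adding $P$ (or $\overline{P}$); and (iii) the full tame block containing real-root strings and a $\mathbb{CP}^1$-family at the imaginary root. Transporting this classification through the identification --- including the ``barring'' into $\overline{\Delta}$ induced by the two sign-choices of central characters --- is what I expect to be the principal obstacle.

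\emph{Step 3 (Assembly).} An indecomposable semi-weight module $V$ with maximal shadow $\widehat{x}$ would, via the splitting, decompose in Krull--Schmidt fashion as $V \cong V_0\otimes \bigotimes_p V_p$ with $V_0$ the unique Azumaya module of $\overline{\mathcal{S}}_A(\mathbf{\Sigma}_0)$ matching the prescribed central character and each $V_p$ an indecomposable $\mathcal{L}_p$-module at $(h_p, \chi_x(\gamma_p))$; the assignment $p \mapsto V_p$ is then an element of $\col(\widehat{x})$, proving (3). Parts (4), (5), (6) follow by restricting to colorings whose $V_p$ is respectively semisimple over $Z_{\mathbf{\Sigma}}$, irreducible, or projective, yielding the counts $4^m$, $2^m$, $2^m$. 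For (1), $V$ is Azumaya iff $\dim V = D_{\mathbf{\Sigma}}$, which by the PI-degree formula of \cite{KojuAzumayaSkein} amounts to each $V_p$ being simple of maximal dimension, i.e.\ $m(\widehat{x})=0$; the matching with $\widehat{X}^{reg}(\mathbf{\Sigma})$ comes from the explicit description of $\widehat{X}(\mathbf{\Sigma})$ as a branched cover of $X(\mathbf{\Sigma})$ whose singular locus lies exactly over $\chi_x(\gamma_p)=\pm 2$ with $h_p = \pm 2$. Part (2) is then immediate from (1) by quantifying over the lifts of $x$.
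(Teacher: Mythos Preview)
Your overall architecture matches the paper's, but there are two concrete gaps.

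\textbf{The local piece must have two boundary edges.} You split off each inner puncture into an annulus with a \emph{single} boundary edge on the outer circle (the punctured monogon $\mathbf{m}_1$, whose reduced skein algebra is $B_q\SL_2$). The paper instead uses the punctured bigon $\mathbb{D}_1$, an annulus with \emph{two} boundary edges on the same boundary component, whose reduced skein algebra is the Drinfel'd double $D_qB$. This is not cosmetic: the gluing theorem (Theorem~\ref{theorem_gluing_intro}) requires that on \emph{each} side the boundary component being glued carries at least a second boundary edge. With your monogon, that hypothesis fails on the annulus side, and the argument in Step~3 collapses. The two-edge condition is exactly what makes Lemma~\ref{lemma_gluing} work: one needs corner arcs on both sides of the glued edge to show that every element of $\overline{\mathcal{S}}_A(\mathbf{\Sigma}_1)\otimes\overline{\mathcal{S}}_A(\mathbf{\Sigma}_2)$ is a product of something in the image of $\theta_{a_1\#a_2}$ with a central element. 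This surjectivity-modulo-center is what turns the injective splitting morphism into an \emph{isomorphism} after quotienting by the shadow ideals, and is the actual reason indecomposables factor as tensor products. Your Step~3 simply asserts the Krull--Schmidt decomposition ``via the splitting''; but an injective algebra map does not by itself transport indecomposables in either direction.

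\textbf{Azumaya for the punctureless core is not a triangulation triviality.} You propose to get $\overline{\mathcal{S}}_A(\mathbf{\Sigma}_0)$ Azumaya ``by a triangulation argument reducing to the triangle case''. Gluing Azumaya pieces does not automatically yield Azumaya for the glued algebra (and in fact the gluing theorem again needs the two-edge hypothesis, which fails for a union of triangles glued along all edges of a triangulation). The paper instead identifies $\overline{\mathcal{S}}_A(\mathbf{\Sigma}_0)$ with M\"uller's localized skein algebra and then with a quantum cluster algebra $\mathcal{A}_q(\mathbf{\Sigma}_0)=\mathcal{U}_q(\mathbf{\Sigma}_0)$, and applies the Azumaya criterion of \cite{MNTY_AzumayaClusterAlgebras} together with smoothness of $X(\mathbf{\Sigma}_0)$ (Theorem~\ref{theorem_smooth}). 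This is a genuinely separate ingredient you would need to supply.

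Once you replace the monogon by $\mathbb{D}_1$ and invoke the cluster-algebra Azumaya result for $\mathbf{\Sigma}^0$, your Steps~2 and~3 become the paper's argument: $\overline{\mathcal{S}}_A(\mathbb{D}_1)\cong D_qB$, its semi-weight indecomposables at a non-Azumaya shadow are classified via $\mathfrak{u}_q\mathfrak{sl}_2$ by $\Delta\sqcup\overline{\Delta}$ (Section~\ref{sec_D1}), and the tensor decomposition of Theorem~\ref{theorem_gluing} assembles these into the colorings of $\widehat{x}$.
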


We will construct explicitly the  indecomposable semi weight representations of Theorem \ref{main_theorem_intro}, so it solves Problem \ref{problem_classification} for these marked surfaces. 
\vspace{2mm}
\par The proof of Theorem \ref{main_theorem_intro} uses a preliminary result which is interesting on its own.  For $\mathbf{\Sigma}_1, \mathbf{\Sigma}_2$ two  marked surfaces and $a_i$ a boundary edges of $\mathbf{\Sigma}_i$ ($i=1,2$), we denote by $\mathbf{\Sigma}_1\cup_{a_1\# a_2} \mathbf{\Sigma}_2$ the marked surface obtained by gluing $a_1$ and $a_2$ together. There exists an injective algebra morphism, named the \textit{splitting morphism}
$$ \theta_{a_1 \# a_2}: \overline{\mathcal{S}}_A(\mathbf{\Sigma}_1\cup_{a_1 \# a_2} \mathbf{\Sigma}_2) \hookrightarrow \overline{\mathcal{S}}_A(\mathbf{\Sigma}_1)\otimes \overline{\mathcal{S}}_A(\mathbf{\Sigma}_2).$$
If $V_1$ and $V_2$ are modules over  $\overline{\mathcal{S}}_A(\mathbf{\Sigma}_1)$ and $ \overline{\mathcal{S}}_A(\mathbf{\Sigma}_2)$, then by precomposing with $ \theta_{a_1 \# a_2}$, 
$V_1\otimes V_2$ becomes a module over $\overline{\mathcal{S}}_A(\mathbf{\Sigma}_1\cup_{a_1 \# a_2} \mathbf{\Sigma}_2)$.

 \begin{theorem}\label{theorem_gluing_intro} Let  $\mathbf{\Sigma}_1$ and $\mathbf{\Sigma}_2$ be marked surfaces and for $i=1,2$ let $a_i$ be a boundary edge of $\mathbf{\Sigma}_i$ such that the connected component of $\partial \Sigma_i$ which contains $a_i$ also contains at least another boundary edge. 
 Write $\mathbf{\Sigma}:=\mathbf{\Sigma}_1 \cup_{a_1\# a_2} \mathbf{\Sigma}_2$.
 Then  any indecomposable (semi) weight $\overline{\mathcal{S}}_A(\mathbf{\Sigma})$-module is isomorphic to a module $V_1\otimes V_2$ with $V_i$ a (semi) weight indecomposable $ \overline{\mathcal{S}}_A(\mathbf{\Sigma}_i)$ module. Conversely, any such $ \overline{\mathcal{S}}_A(\mathbf{\Sigma})$ module $V_1\otimes V_2$ is (semi) weight indecomposable.
\end{theorem}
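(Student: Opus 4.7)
The plan is to analyze the image of the splitting morphism $\theta := \theta_{a_1\# a_2}$ carefully, and use it to translate modules between the two sides. The proof naturally splits into two directions: tensor products of indecomposable (semi) weight modules remain indecomposable (semi) weight; and conversely every indecomposable (semi) weight module arises as such a tensor product.

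First I would verify that $\theta$ maps $Z_{\mathbf{\Sigma}}$ into $Z_{\mathbf{\Sigma}_1}\otimes Z_{\mathbf{\Sigma}_2}$ and $Z^0_{\mathbf{\Sigma}}$ into $Z^0_{\mathbf{\Sigma}_1}\otimes Z^0_{\mathbf{\Sigma}_2}$, using naturality of $Fr$ under splitting. The (semi) weight property of $V_1\otimes V_2$ follows at once from the diagonal action of these centers on the tensor product. The key structural input, which uses the hypothesis on the extra boundary edge, is the following claim: the tensor product algebra $\overline{\mathcal{S}}_A(\mathbf{\Sigma}_1)\otimes \overline{\mathcal{S}}_A(\mathbf{\Sigma}_2)$ is obtained from $\theta(\overline{\mathcal{S}}_A(\mathbf{\Sigma}))$ by adjoining a single invertible central element, essentially $\alpha_{a_1}\otimes 1$, whose $N$-th power lies in $\theta(Z^0_{\mathbf{\Sigma}})$. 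The extra edge $b_i$ ensures that the boundary component of $a_i$ survives the gluing equipped with an invertible $\alpha_\partial$ (rather than becoming a Chebyshev-type $\gamma_p$), which is precisely what makes the extension a clean central adjunction of one variable rather than something more delicate.

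Granting this, the first direction is immediate: any $\overline{\mathcal{S}}_A(\mathbf{\Sigma})$-endomorphism of $V_1\otimes V_2$ automatically commutes with $\alpha_{a_1}\otimes 1$ on each $Z^0$-isotypic component (since its scalar action is already determined by the $Z^0_{\mathbf{\Sigma}}$-action on $V_1\otimes V_2$, which lies in $\theta(\overline{\mathcal{S}}_A(\mathbf{\Sigma}))$), so such endomorphisms lie in $\End_{\overline{\mathcal{S}}_A(\mathbf{\Sigma}_1)\otimes \overline{\mathcal{S}}_A(\mathbf{\Sigma}_2)}(V_1\otimes V_2)=\End(V_1)\otimes \End(V_2)$; a tensor product over $\mathbb{C}$ of local finite-dimensional local $\mathbb{C}$-algebras is again local, giving indecomposability. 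For the converse, given indecomposable (semi) weight $W$ over $\overline{\mathcal{S}}_A(\mathbf{\Sigma})$, I would extend it to a module over $\overline{\mathcal{S}}_A(\mathbf{\Sigma}_1)\otimes\overline{\mathcal{S}}_A(\mathbf{\Sigma}_2)$ by choosing one $N$-th root of the scalar by which $\theta^{-1}(\alpha_{a_1}^N\otimes 1)$ acts on $W$, and declaring $\alpha_{a_1}\otimes 1$ to act by this root; indecomposability of $W$ prevents distinct roots from splitting it. The standard decomposition for finite-dimensional modules over a tensor product of PI algebras with split residue fields then yields $W\cong V_1\otimes V_2$ with $V_i$ indecomposable over $\overline{\mathcal{S}}_A(\mathbf{\Sigma}_i)$, and the (semi) weight property of the $V_i$ transfers from $W$ via central compatibility.

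The main obstacle is the structural claim in the second paragraph: identifying precisely which central element is the ``missing'' one in the tensor product, showing that no further obstruction (such as a non-trivial cocycle from the coaction at $a_i$) arises, and verifying consistency of the module extension on every indecomposable $W$. This requires careful examination of the stated skein coaction at the glued edge, and is precisely where the hypothesis on $a_i$ sharing its boundary component with another edge enters essentially.
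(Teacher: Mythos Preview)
Your structural claim in the second paragraph is incorrect, and this is the main gap. The tensor product $\overline{\mathcal{S}}_A(\mathbf{\Sigma}_1)\otimes\overline{\mathcal{S}}_A(\mathbf{\Sigma}_2)$ is \emph{not} obtained from $\theta(\overline{\mathcal{S}}_A(\mathbf{\Sigma}))$ by adjoining a single invertible central element. At the classical level the map $p_{a_1\#a_2}: X(\mathbf{\Sigma}_1)\times X(\mathbf{\Sigma}_2)\to X(\mathbf{\Sigma})$ has positive-dimensional fibers (generically an $\SL_2$-orbit worth of gauge at the glued edge), so $Z^0_{\mathbf{\Sigma}_1}\otimes Z^0_{\mathbf{\Sigma}_2}$ already contains many elements not in $\theta(Z^0_{\mathbf{\Sigma}})$. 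In particular your assertion that the $N$-th power of your extra element lies in $\theta(Z^0_{\mathbf{\Sigma}})$ fails: $\alpha_{\partial_1}^N\otimes 1$ depends on $\rho_1(\alpha(p_1))$ and $\rho_1(\alpha(p_1'))$ separately, while only their products with the $\mathbf{\Sigma}_2$-side corner arcs are visible from $\mathbf{\Sigma}$. Consequently your module-extension step (``choose one $N$-th root'') does not suffice; one must choose an entire preimage $(x_1,x_2)$ of the classical shadow, and the existence of such a preimage is not automatic.

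The paper's argument is organized differently and avoids this trap. It first proves (Lemma~\ref{lemma_gluing}) the weaker but correct generation statement: every element of the tensor product is a product of something in $\theta(\overline{\mathcal{S}}_A(\mathbf{\Sigma}))$ with something in $Z^1_{\mathbf{\Sigma}_1}\otimes Z^1_{\mathbf{\Sigma}_2}$ (the subalgebra generated by the Frobenius image and all the $\alpha_\partial^{\pm1}$). Separately, it proves the surjectivity of $\widehat{p}_{a_1\#a_2}$ (Theorem~\ref{theorem_gluing_surjective}) by a genuine geometric argument using twisted representation varieties and a Bruhat-type factorization lemma; this is where the extra-boundary-edge hypothesis is actually consumed. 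Given a (semi) weight shadow $\widehat{x}$ (resp.\ middle shadow $\widetilde{x}$), surjectivity produces $(\widehat{x}_1,\widehat{x}_2)$, and then Lemma~\ref{lemma_gluing} forces the induced map $\overline{\mathcal{S}}_A(\mathbf{\Sigma})_{\widehat{x}}\to \overline{\mathcal{S}}_A(\mathbf{\Sigma}_1)_{\widehat{x}_1}\otimes\overline{\mathcal{S}}_A(\mathbf{\Sigma}_2)_{\widehat{x}_2}$ to be an isomorphism of finite-dimensional algebras. Both directions of the theorem then follow from this isomorphism, without needing to extend modules by hand. Your sketch correctly identifies that the boundary hypothesis should control the central extension, but it underestimates its size and misses the surjectivity ingredient entirely.
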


The proof of Theorem \ref{main_theorem_intro} goes as follows. A 2P marked surface is a connected marked surface which has at least two boundary edges and no unmarked boundary component.
When $\mathbf{\Sigma}$ is 2P marked surfaces, it follows from the work of M\"uller \cite{Muller} and L\^e-Yu \cite{LeYu_SSkeinQTraces} that $\overline{\mathcal{S}}_A(\mathbf{\Sigma})$ is isomorphic to a quantum cluster algebra. The Azumaya loci of quantum cluster algebras have been studied in \cite{MNTY_AzumayaClusterAlgebras} and we will deduce from this study that $\overline{\mathcal{S}}_A(\mathbf{\Sigma})$ is Azumaya.
For $\mathbf{\Sigma}=\mathbb{D}_1:=(\Sigma_{0,2}, \{a,b\})$ an annulus with two boundary arcs in the same boundary component (also called \textit{punctured bigon}), it was proved in \cite{KojuQGroupsBraidings} that $ \overline{\mathcal{S}}_{A}(\mathbb{D}_1) \cong \Dq$ is isomorphic to the Drinfel'd double of the quantum Borel algebra whose representation theory is very similar to the one of $U_q\mathfrak{sl}_2$.
The simple representations of $U_q\mathfrak{sl}_2$ were classified in \cite{DeConciniKacRepQGroups, ArnaudonRoche}, the finite dimensional indecomposable representations of the small quantum group $\mathfrak{u}_q\mathfrak{sl}_2$ were classified in \cite{Suter_Uqsl2Modules},  and many more indecomposable representations were found in \cite{Arnaudon_Uqsl2Rep}. However, at the authors knowledge, the indecomposable $U_q\mathfrak{sl}_2$ modules have never been classified. We will provide a classification of all indecomposable semi-weight $\Dq$ modules and prove that 
Theorem \ref{main_theorem_intro}  holds for $\mathbb{D}_1$. If  $\mathbf{\Sigma}$ is a connected marked surface which has a boundary component with at least two boundary edges, it can be obtained by gluing several copies of $\mathbb{D}_1$ to a 2P marked surface so Theorem \ref{main_theorem_intro} will follow from Theorem \ref{theorem_gluing_intro}. When $\mathbf{\Sigma}$ is an essential marked surface without inner puncture, it is either a 2P marked surface or is has a single boundary arc in which case its reduced stated skein algebra is Azumaya by \cite{KojuMCGRepQT}. 
\vspace{2mm}
\par The techniques of the present paper can also be used to study the representations of the unreduced stated skein algebras. Let $\mathbf{\Sigma}$ be an essential marked surface and let $\mathbf{\Sigma}^*$ be the same marked surface where each boundary edge has been replaced by a pair of boundary edges inside the same boundary component. L\^e and Yu constructed in \cite{LeYu_SSkeinQTraces} an embedding $j: \mathcal{S}_A(\mathbf{\Sigma}) \hookrightarrow \overline{\mathcal{S}}_A(\mathbf{\Sigma}^*)$ of the unreduced stated skein algebra of $\mathbf{\Sigma}$ into the reduced stated skein algebra of $\mathbf{\Sigma}^*$. Therefore every module over $\overline{\mathcal{S}}_A(\mathbf{\Sigma}^*)$ is also a module over $\mathcal{S}_A(\mathbf{\Sigma})$ so Theorem \ref{main_theorem_intro} provides a large family  of representations for the unreduced stated skein algebras as well. We will deduce the following:

 \begin{theorem}\label{theorem_Skein_intro}
 Let $\mathbf{\Sigma}=(\Sigma_{g,n}, \mathcal{A})$ be a connected essential marked surface of genus $g$ with $n$ boundary components such each boundary component contains at most one boundary edge. So it has $k:= n-|\mathcal{A}|$ inner punctures $p_1, \ldots, p_k$ and $|\mathcal{A}|$ boundary punctures $p_{\partial_1}, \ldots, p_{\partial_{|\mathcal{A}|}}$.
 \begin{enumerate}
 \item The center $\underline{Z}_{\mathbf{\Sigma}}$ of $\mathcal{S}_A(\mathbf{\Sigma})$ is generated by the image $\underline{Z}^0_{\mathbf{\Sigma}}$ of the Frobenius morphism together with the peripheral curves $\gamma_{p_1}, \ldots, \gamma_{p_k}$. So the closed points of $\widehat{\underline{X}}(\mathbf{\Sigma}):= \Specm(\underline{Z}_{\mathbf{\Sigma}})$ are in $1:1$ correspondence with the set of elements $\widehat{\rho}=(\rho, h_{p_1}, \ldots, h_{p_k})$ with $\rho: \Pi_1(\Sigma, \mathbb{V})\to \SL_2$ and $h_{p_i}\in \mathbb{C}$ is such that $T_N(h_{p_i})=-\tr(\rho(\gamma_{p_i}))$. 
 \item The PI-degree $\underline{D}_{\mathbf{\Sigma}}$ of  $\mathcal{S}_A(\mathbf{\Sigma})$ is equal to $N^{3g-3+n+2|\mathcal{A}|}(= D_{\mathbf{\Sigma}^*})$. 
 \item For $\rho: \Pi_1(\Sigma, \mathbb{V})\to \SL_2$, write $\mu(\rho):= (\rho(\alpha(p_{\partial_1})), \ldots, \rho(\alpha(p_{\partial_{|\mathcal{A}|}}))) \in (\SL_2)^{|\mathcal{A}|}$.
 Let  $\widehat{\rho}=(\rho, h_{p_i}) \in \widehat{\underline{X}}(\mathbf{\Sigma})$.
 \begin{itemize}
 \item[(i)] If $\mu(\rho)\in (\SL_2^0)^{|\mathcal{A}|}$ and for each $1\leq i \leq k$, either $\tr(\rho(\gamma_{p_i}))\neq \pm 2$ or $\tr(\rho(\gamma_{p_i}))=\pm 2$ and $h_{p_i}=\mp 2$, then $\widehat{\rho}$ belongs to the Azumaya locus of $\mathcal{S}_A(\mathbf{\Sigma})$.
 \item[(ii)] Suppose that either $\mu(\rho)\in (\SL_2^1)^{|\mathcal{A}|}$ or that $\mu(\rho)\in (\SL_2^0)^{|\mathcal{A}|}$ and there exists $i$ such that $\tr(\rho(\gamma_{p_i}))=\pm 2$ and $h_{p_i}\neq \mp 2$, then $\widehat{\rho}$ does not belong to the Azumaya locus of $\mathcal{S}_A(\mathbf{\Sigma})$.
\end{itemize}
 \end{enumerate}
 \end{theorem}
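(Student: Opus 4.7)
The plan is to reduce all three assertions about $\mathcal{S}_A(\mathbf{\Sigma})$ to corresponding statements for $\overline{\mathcal{S}}_A(\mathbf{\Sigma}^*)$ via the L\^e--Yu embedding $j : \mathcal{S}_A(\mathbf{\Sigma}) \hookrightarrow \overline{\mathcal{S}}_A(\mathbf{\Sigma}^*)$. The hypothesis on $\mathbf{\Sigma}$ forces each boundary component of $\mathbf{\Sigma}^*$ to carry either exactly two boundary edges or none, so Theorem \ref{main_theorem_intro} is available for $\mathbf{\Sigma}^*$, and the inner punctures of $\mathbf{\Sigma}$ and $\mathbf{\Sigma}^*$ are in canonical bijection.

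For part $(1)$, the inclusion $\underline{Z}_{\mathbf{\Sigma}}^0[\gamma_{p_1},\ldots,\gamma_{p_k}]\subseteq \underline{Z}_{\mathbf{\Sigma}}$ is immediate, since the Frobenius image is central by construction and the $\gamma_{p_i}$ are peripheral loops around interior punctures. For the reverse inclusion, I would push a central element $z\in \underline{Z}_{\mathbf{\Sigma}}$ through $j$ and compare with the description of $Z_{\mathbf{\Sigma}^*}$ recalled in the introduction. The key geometric observation is that the central elements $\alpha_{\partial_j}\in Z_{\mathbf{\Sigma}^*}$ correspond to arcs running between the two edges of a doubled boundary component and therefore do not lie in $j(\mathcal{S}_A(\mathbf{\Sigma}))$; hence $j(z)$, being central in $\overline{\mathcal{S}}_A(\mathbf{\Sigma}^*)$ and lying in the image of $j$, must be a polynomial in the Frobenius image and the $\gamma_{p_i}$. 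The description of closed points of $\widehat{\underline{X}}(\mathbf{\Sigma})$ follows from the standard identification $\gamma_{p_i}\mapsto -\tr(\rho(\gamma_{p_i}))$ at $A=+1$. Part $(2)$ is then a direct consequence of the same embedding: inverting the $\alpha_{\partial_j}$ on the right produces a common Ore localization of $j(\mathcal{S}_A(\mathbf{\Sigma}))$ and $\overline{\mathcal{S}}_A(\mathbf{\Sigma}^*)$, so the two algebras share the same PI-degree, which by \cite{KojuAzumayaSkein} equals $D_{\mathbf{\Sigma}^*}=N^{3g-3+n+2|\mathcal{A}|}$.

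Part $(3)$ is the heart of the proof. For $(i)$, given $\widehat{\rho}=(\rho,h_{p_i})$ satisfying the stated hypotheses, I would lift it to a point $\widehat{x}=(x^*, h_{p_i}, h_{\partial_j}) \in \widehat{X}(\mathbf{\Sigma}^*)$ by assigning arbitrary scalars $h_{\partial_j}\in \mathbb{C}^*$. The condition $\mu(\rho)\in(\SL_2^0)^{|\mathcal{A}|}$ places $\widehat{x}$ in the regular locus $\widehat{X}^{reg}(\mathbf{\Sigma}^*)$, while the inner-puncture hypothesis forces $m(\widehat{x})=0$, so Theorem \ref{main_theorem_intro}$(1)$ produces an Azumaya $\overline{\mathcal{S}}_A(\mathbf{\Sigma}^*)$-module $V$ of dimension $D_{\mathbf{\Sigma}^*}=\underline{D}_{\mathbf{\Sigma}}$ at $\widehat{x}$. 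Restricting along $j$ yields an $\mathcal{S}_A(\mathbf{\Sigma})$-module of the same dimension with central character $\widehat{\rho}$, which by the PI-degree count must be simple and realize the matrix-algebra quotient at $\widehat{\rho}$. For $(ii)$, every candidate lift $\widehat{x}$ fails to be Azumaya: a failure of $\mu(\rho)\in(\SL_2^0)^{|\mathcal{A}|}$ pushes $\widehat{x}$ out of $\widehat{X}^{reg}(\mathbf{\Sigma}^*)$, whereas a failure of the inner-puncture condition forces $m(\widehat{x})\geq 1$, so no Azumaya module over $\overline{\mathcal{S}}_A(\mathbf{\Sigma}^*)$ exists at any such lift. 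The main obstacle is then the descent step: one must argue that any hypothetical simple $\mathcal{S}_A(\mathbf{\Sigma})$-module with character $\widehat{\rho}$ of dimension $\underline{D}_{\mathbf{\Sigma}}$ would necessarily extend to an Azumaya module of $\overline{\mathcal{S}}_A(\mathbf{\Sigma}^*)$ at some lift, so that the non-Azumaya property ascends from the reduced algebra back to the unreduced one. This extension-across-the-doubling argument will require a careful analysis of how $\mathcal{S}_A(\mathbf{\Sigma})$-modules behave under inversion of boundary arcs, and it is the technical core of $(ii)$.
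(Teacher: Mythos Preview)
Your overall architecture—reduce to $\overline{\mathcal{S}}_A(\mathbf{\Sigma}^*)$ through the L\^e--Yu embedding $j$ and invoke Theorem~\ref{main_theorem_intro}—matches the paper. But two steps are genuinely wrong or missing, and they are exactly the technical heart of the argument.

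First, in part~(1) you assert that $j(z)$ is central in $\overline{\mathcal{S}}_A(\mathbf{\Sigma}^*)$. This is not automatic: $j$ is only an embedding, so a priori $j(z)$ commutes only with the image of $j$. The paper fills this gap with a separate lemma (Lemma~\ref{lemma_Skein2}) showing that after inverting the bad arcs, the image of $j^{loc}$ together with $Z_{\mathbf{\Sigma}^*}$ generates all of $\overline{\mathcal{S}}_A(\mathbf{\Sigma}^*)$; only then does commuting with $\mathrm{Im}(j)$ force commuting with everything. The same lemma is what makes the fiber map $j_{\widehat\rho}:\mathcal{S}_A(\mathbf{\Sigma})_{\widehat\rho}\to\overline{\mathcal{S}}_A(\mathbf{\Sigma}^*)_{\widehat\rho^*}$ \emph{surjective}, and that surjectivity is what you actually need in (3)(i): the restricted module is irreducible not ``by the PI-degree count'' (that is circular, since simplicity of a $D$-dimensional module is equivalent to Azumaya-ness at that point) but because a surjective algebra map pulls back simples to simples.

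Second, your plan for (3)(ii) is in the wrong direction and, in one case, inapplicable. When $\mu(\rho)\in(\SL_2^1)^{|\mathcal A|}$ there is \emph{no} lift $\widehat x\in\widehat X(\mathbf{\Sigma}^*)$ at all (this is Lemma~\ref{lemma_Skein3}), so ``every candidate lift fails to be Azumaya'' says nothing. The paper instead observes that such a $\widehat\rho$ factors through the quotient $\mathcal{S}_A(\mathbf{\Sigma})\twoheadrightarrow\overline{\mathcal{S}}_A(\mathbf{\Sigma})$ (the bad arcs are killed by $\SL_2^1$), and any irreducible module of $\overline{\mathcal{S}}_A(\mathbf{\Sigma})$ has dimension $\le D_{\mathbf{\Sigma}}<D_{\mathbf{\Sigma}^*}$; pulling back along the surjection gives an irreducible $\mathcal{S}_A(\mathbf{\Sigma})$-module of dimension $<\underline D_{\mathbf{\Sigma}}$ at $\widehat\rho$, hence $\widehat\rho\notin\mathcal{AL}$. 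For the remaining case ($\mu(\rho)\in(\SL_2^0)^{|\mathcal A|}$ with a bad inner puncture) the paper again avoids any extension argument: it takes an irreducible $\overline{\mathcal{S}}_A(\mathbf{\Sigma}^*)$-module at a non-Azumaya lift $\widehat\rho^*$ (dimension $<D$), pulls it back along $j$, and uses the surjectivity of $j_{\widehat\rho}$ to conclude the pullback is still irreducible. Producing a small irreducible module at $\widehat\rho$ is enough; your proposed ``descent of Azumaya'' by extending a hypothetical large simple upward is both harder and unnecessary.
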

In this theorem, we have considered the simple Bruhat decomposition $\SL_2= \SL_2^0 \sqcup \SL_2^1$ where $\SL_2^0$ is the subset of matrices $\begin{pmatrix} a & b \\ c & d\end{pmatrix}$ such that $a\neq 0$ and $\SL_2^1$ the set of such matrices with $a=0$. Soon after the prepublication of the present paper, Yu computed in \cite{Yu_CenterSSkein} the center and PI-degrees of every stated skein algebras, thus generalizing the first and second items of Theorem \ref{theorem_Skein_intro}.
\par
In the particular case where $|\mathcal{A}|=1$, i.e. when $\mathbf{\Sigma}=(\Sigma_{g,n+1}, \{a\})$ is a genus $g$ surface with $n+1$ boundary components and a single boundary arc, the algebra $\mathcal{L}_{g,n}:= \mathcal{S}_A(\mathbf{\Sigma})$ is called the \textit{\QMA} in lattice gauge field theory and first  appeared in the work of Buffenoir-Roche and Alekseev-Grosse-Schomerus (\cite{AlekseevGrosseSchomerus_LatticeCS1,AlekseevGrosseSchomerus_LatticeCS2, AlekseevSchomerus_RepCS, BuffenoirRoche, BuffenoirRoche2}). For these algebras, Theorem \ref{theorem_Skein} specializes to 

\begin{corollary}\label{coro_QMA_intro}
\begin{enumerate}
\item The center of $\mathcal{L}_{g,n}$ is generated by the image of the Frobenius morphism and the peripheral curves $\gamma_{p_1}, \ldots, \gamma_{p_n}$.
\item The PI-degree of  $\mathcal{L}_{g,n}$ is $N^{3g+n}$.
\item The Azumaya locus of $\mathcal{L}_{g,n}$ is the locus of elements $\widehat{\rho}=(\rho, h_{p_i})$ such that $\mu(\rho)\in \SL_2^0$ and $\tr(\rho(\gamma_{p_i}))=\pm 2 \Rightarrow h_{p_i}=\mp 2$.
\end{enumerate}
\end{corollary}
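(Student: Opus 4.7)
The plan is to derive this corollary as a direct specialization of Theorem \ref{theorem_Skein_intro} to the connected essential marked surface $\mathbf{\Sigma}=(\Sigma_{g,n+1}, \{a\})$, which has a single boundary edge ($|\mathcal{A}|=1$), $n+1$ boundary components, and therefore exactly $n$ inner punctures $p_1,\ldots,p_n$ and one boundary puncture. With this identification, $\mathcal{L}_{g,n}=\mathcal{S}_A(\mathbf{\Sigma})$ and items (1)--(3) of the corollary are each obtained by substituting these parameters into the corresponding items of Theorem \ref{theorem_Skein_intro}.

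For (1), the statement is simply Theorem \ref{theorem_Skein_intro}(1) rewritten: the center $\underline{Z}_{\mathbf{\Sigma}}$ is generated by the image of the Frobenius morphism together with the peripheral curves $\gamma_{p_1},\ldots,\gamma_{p_n}$ around the inner punctures. For (2), we apply the PI-degree formula $\underline{D}_{\mathbf{\Sigma}}=N^{3g-3+n+2|\mathcal{A}|}$ of Theorem \ref{theorem_Skein_intro}(2). The only care needed here is a bookkeeping one: in that formula $n$ denotes the total number of boundary components, which for us is $n+1$, while $|\mathcal{A}|=1$. Substituting gives $N^{3g-3+(n+1)+2}=N^{3g+n}$, as claimed.

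For (3), the key observation is that with $|\mathcal{A}|=1$ one has the disjoint decomposition $(\SL_2)^{|\mathcal{A}|}=\SL_2^0\sqcup \SL_2^1$, so the hypotheses of parts (i) and (ii) of Theorem \ref{theorem_Skein_intro}(3) are complementary and together cover every $\widehat{\rho}\in \widehat{\underline{X}}(\mathbf{\Sigma})$. Concretely: if $\mu(\rho)\in\SL_2^1$ then (ii) forces $\widehat{\rho}$ outside the Azumaya locus; if $\mu(\rho)\in\SL_2^0$ then (i) and (ii) jointly assert that $\widehat{\rho}$ lies in the Azumaya locus if and only if the implication $\tr(\rho(\gamma_{p_i}))=\pm 2\Rightarrow h_{p_i}=\mp 2$ holds for every $i=1,\ldots,n$. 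This is exactly the characterization in (3).

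No genuine obstacle is anticipated, since the corollary is a direct reading of Theorem \ref{theorem_Skein_intro} once the parameters are set. The only potential pitfall is a bookkeeping mismatch between the paper's notation $\Sigma_{g,n}$ (where $n$ counts boundary components) and the corollary's notation $\mathcal{L}_{g,n}$ (where $n$ counts inner punctures); this is resolved by the substitution above. The completeness of the Azumaya characterization in (3) relies crucially on the fact that $|\mathcal{A}|=1$, so that there is no intermediate case between $\mu(\rho)\in(\SL_2^0)^{|\mathcal{A}|}$ and $\mu(\rho)\in(\SL_2^1)^{|\mathcal{A}|}$.
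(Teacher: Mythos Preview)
Your proposal is correct and matches the paper's approach: the paper simply states that Theorem~\ref{theorem_Skein_intro} (equivalently Theorem~\ref{theorem_Skein}) specializes to this corollary when $|\mathcal{A}|=1$. Your write-up is in fact more detailed than the paper's, particularly in spelling out why the dichotomy $\SL_2=\SL_2^0\sqcup\SL_2^1$ makes parts~(i) and~(ii) of Theorem~\ref{theorem_Skein_intro}(3) exhaustive, which is exactly what upgrades the partial information there to a complete description of the Azumaya locus.
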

When $n=0$, Corollary \ref{coro_QMA_intro} was proved in \cite[Theorem $1$]{GanevJordanSafranov_FrobeniusMorphism}. When $g=0$, the first point of Corollary \ref{coro_QMA_intro} was proved in \cite[Theorem $1.1$]{BaseilhacRoche_LGFT1}, while the second point was proved in \cite[Theorem $1.3$]{BaseilhacRoche_LGFT2}. S.Baseilhac and P.Roche have informed the authors that an alternative proof of the whole Corollary \ref{coro_QMA_intro} and a generalization for arbitrary gauge group (the case considered here is $\SL_2$) will appear in \cite{BaseilhacFaitgRoche_LGFT4}.

\vspace{2mm}
\par Reduced stated skein algebras have been recently generalized for $\SL_n$ gauge group in \cite{LeSikora_SSkein_SLN, LeYu_SLNQTraces} where similar relations with quantum cluster algebras are proved so it seems natural to expect that the techniques of the present paper should extend to higher rank cases. However, it is proved in \cite{FeldvossWitherspoon_RepSmallQG, FeldvossWitherspoon_RepSmallQG2} that the small quantum group $\mathfrak{u}_q\mathfrak{g}$ is wild for simple Lie algebras $\mathfrak{g}$ of rank at least $3$: the problem of classifying its indecomposable representations is unsolvable. Therefore there is little hope that the problem of classifying the semi-weight indecomposable representations of $\SL_n$ reduced stated skein algebras for $n\geq 3$ could be solvable. 
\vspace{2mm}
\par Let us conclude this introduction by explaining why we choose to classify the semi-weight representations. The long-term objective is to construct a $\SL_2$ HQFT whose vector spaces associated to (decorated) surfaces would be indecomposable representations of the reduced stated skein algebras; such HQFT are expected to extend the construction in \cite{BCGPTQFT} and to refine the ones in \cite{BaseilhacBenedettiHTQFT} and  the links invariants of these conjectural HQFT were built in \cite{KojuQGroupsBraidings} using such representations. Since we want to decorate our surfaces by some flat connections thought as points in $X(\mathbf{\Sigma})$, it is natural to consider at least the weight representations. Moreover, in non semi-simple TQFTs \cite{BCGPTQFT} some spaces are semi-weight modules for the skein algebras which are not weight modules; by analyzing the construction carefully we see that this necessity follows from the observation in \cite{Arnaudon_Uqsl2Rep} that the category of semi-weight representations is the smallest  full subcategory of $U_q\mathfrak{sl}_2-\Mod$ which contains the simple modules and is stable by $\otimes$. So it seems natural to consider the class of semi-weight representations.

\subsection{Plan of the paper}
In Section \ref{sec_sskein}, we review the definition and main properties of the reduced stated skein algebras. Section \ref{sec_moduli_spaces} is devoted to the study of the schemes $X(\mathbf{\Sigma})$ and $\widehat{X}(\mathbf{\Sigma})$.
 We then study the representation theory of $ \overline{\mathcal{S}}_{A}(\mathbb{D}_1) \cong \Dq$ in Section \ref{sec_D1}. We then study the case of 2P marked surfaces  in Section \ref{sec_cluster_algebras} using quantum cluster algebras. Theorem \ref{theorem_gluing_intro} is proved in Section \ref{sec_gluing} and  Theorem \ref{main_theorem_intro} is proved in Section \ref{sec_classification}. In Section \ref{sec_QMS}, we prove Theorem \ref{theorem_Skein_intro} and Corollary \ref{coro_QMA_intro}. In Appendix \ref{appendix_glossary} we summarize the terminologies and notations used in the paper.

\vspace{2mm}\par 
\textit{Acknowledgments.} The authors thank S.Baseilhac, D.Calaque,  F.Costantino, M.Faitg,  T.L\^e and P.Roche  for valuable conversations. The skein interpretation of Alekseev's morphism which appears in Section \ref{sec_QMS} was suggested to the authors by T.L\^e which we warmly thank. J.K. acknowledges support from  the European Research Council (ERC DerSympApp) under the European Union’s Horizon 2020 research and innovation program (Grant Agreement No. 768679).
H.K. was partially supported by JSPS KAKENHI Grant Numbers JP22K20342, JP23K12976. 


 \section{Reduced stated skein algebras} \label{sec_sskein}
 
 \subsection{Definition of reduced stated skein algebras}
 
 \begin{definition}[Marked surfaces]\label{def_marked_surfaces}
 \begin{enumerate}
 \item A \textbf{marked surface} is a pair $\mathbf{\Sigma}=(\Sigma, \mathcal{A})$ where $\Sigma$ is a compact oriented surface and $\mathcal{A}$ is a finite set of pairwise disjoint closed intervals embedded in $\partial \Sigma$ named \textbf{boundary edges}. 
 The orientation of $\Sigma$ induces an orientation of $\partial \Sigma$ and thus an orientation of each boundary edge.
 We impose that each boundary edge $a\subset \partial \Sigma$ is equipped with a parametrization $\varphi_a: [0,1] \hookrightarrow \partial \Sigma$ such that $\varphi_a([0,1])=a$ and such that $\varphi_a$ is an oriented embedding (where $[0,1]$ is oriented from $0$ to $1$). A marked surface  $\mathbf{\Sigma}=(\Sigma, \mathcal{A})$ is called \textbf{unmarked} if $\mathcal{A}=\emptyset$ and is said \textbf{essential} if each connected component of $\Sigma$ contains at least one boundary edge.
  \item A connected component of $\partial \Sigma \setminus \mathcal{A}$ is called a \textbf{puncture} and, pictorially, we will represent them as a punctures $\bullet$. There are two kinds of punctures: the ones homeomorphic to a circle, which correspond to unmarked boundary components are called \textbf{inner punctures} and their set is denoted by $\mathring{\mathcal{P}}$; the ones homeomorphic to an open interval, which lies between two boundary edges of the same boundary component, are called \textbf{boundary punctures} and their set is denoted by $\mathcal{P}^{\partial}$. We denote by $\Gamma^{\partial}$ the set of boundary components which intersect $\mathcal{A}$ non trivially. 
 \item If $\mathbf{\Sigma}_1=(\Sigma_1, \mathcal{A}_1)$ and $\mathbf{\Sigma}_2=(\Sigma_2, \mathcal{A}_2)$ are two marked surfaces and $a_1, a_2$ are boundary edges of $\mathbf{\Sigma}_1$ and $\mathbf{\Sigma}_2$ respectively, we denote by $\mathbf{\Sigma}_1\cup_{a_1\# a_2} \mathbf{\Sigma}_2:= (\Sigma_1\cup_{a_1\# a_2}\Sigma_2, (\mathcal{A}_1 \cup \mathcal{A}_2)\setminus \{a_1, a_2\})  $ the marked surface obtained by gluing $a_1$ to $a_2$ where 
 $$ \Sigma_1\cup_{a_1\# a_2}\Sigma_2:= \quotient{ \Sigma_1 \bigsqcup \Sigma_2}{ (\varphi_{a_1}(1-t)=\varphi_{a_2}(t), t\in [0,1])}.$$
\end{enumerate}
 \end{definition}

 \begin{definition}[Tangles and diagrams]\label{def_tangles} Let $\mathbf{\Sigma}=(\Sigma, \mathcal{A})$ be a marked surface.
 \begin{enumerate}
 \item  A \textbf{tangle} in $ \mathbf{\Sigma} \times (0,1)$   is a  compact framed, properly embedded $1$-dimensional manifold $T\subset \Sigma\times (0,1)$ such that  $\partial T \subset \mathcal{A}\times (0,1)$ and 
 for every point of $\partial T$ the framing is parallel to the $(0,1)$ factor and points to the direction of $1$.  Here, by framing, we refer to a section of the unitary normal bundle of $T$. The \textbf{height} of $(v,h)\in \Sigma\times (0,1)$ is $h$.  If $a\in \mathcal{A}$ is a boundary edge, we impose that no two points in $\partial_aT:= \partial T \cap a\times(0,1)$  have the same heights, hence the set $\partial_aT$ is totally ordered by the heights. Two tangles are isotopic if they are isotopic through the class of tangles that preserve the boundary height orders. By convention, the empty set is a tangle only isotopic to itself.
 \item Let $\pi : \Sigma\times (0,1)\to \Sigma$ be the projection with $\pi(v,h)=v$. A tangle $T$ is in a \textbf{standard position} if for each of its points, the framing is parallel to the $(0,1)$ factor and points in the direction of $1$ and is such that $\pi_{\big| T} : T\rightarrow \Sigma$ is an immersion with at most transversal double points in the interior of $\Sigma$. Every tangle is isotopic to a tangle in a standard position. We call \textbf{diagram}  the image $D=\pi(T)$ of a tangle in a standard position, together with the over/undercrossing information at each double point. An isotopy class of diagram $D$ together with a total order of $\partial_a D:=\partial D\cap a$ for each boundary arc $a$, define uniquely an isotopy class of tangle. When choosing an orientation $\mathfrak{o}(a)$ of a boundary edge $a$ and a diagram $D$, the set $\partial_aD$ receives a natural order by setting that the points are increasing when going in the direction of $\mathfrak{o}(a)$. Pictorially, we will depict tangles by drawing a diagram and an orientation (an arrow) for each boundary edge, as in Figure \ref{fig_statedtangle}. When a boundary edge $a$ is oriented we assume that $\partial_a D$ is ordered according to the orientation.
 \item A connected open diagram without double point is called an \textbf{arc}. A closed connected diagram without double point is called a \textbf{loop}.
 \item  A \textbf{state} of a tangle is a map $s:\partial T \rightarrow \{-, +\}$. A pair $(T,s)$ is called a \textbf{stated tangle}. We define a \textbf{stated diagram} $(D,s)$ in a similar manner.
 \end{enumerate}
 \end{definition}
 
 \begin{figure}[!h] 
\centerline{\includegraphics[width=6cm]{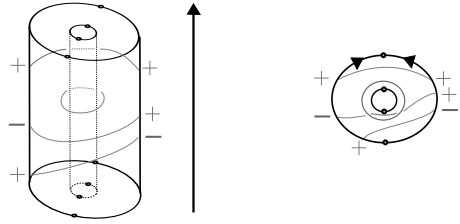} }
\caption{On the left: a stated tangle. On the right: its associated diagram. The arrows represent the height orders. } 
\label{fig_statedtangle} 
\end{figure}

 \begin{definition}[Stated skein algebras](\cite{BonahonWongqTrace, LeStatedSkein})\label{def_skein}
 Let $k$ be a commutative unital ring and $A^{1/2}\in k^{\times}$ and invertible element.
 The \textbf{stated skein algebra}  $\mathcal{S}_A(\mathbf{\Sigma})$ is the  free $k$-module generated by isotopy classes of stated tangles in $\mathbf{\Sigma}\times (0, 1)$ modulo the following relations \eqref{eq: skein 1} and \eqref{eq: skein 2}, 
  	\begin{equation}\label{eq: skein 1} 
\begin{tikzpicture}[baseline=-0.4ex,scale=0.5,>=stealth]	
\draw [fill=gray!45,gray!45] (-.6,-.6)  rectangle (.6,.6)   ;
\draw[line width=1.2,-] (-0.4,-0.52) -- (.4,.53);
\draw[line width=1.2,-] (0.4,-0.52) -- (0.1,-0.12);
\draw[line width=1.2,-] (-0.1,0.12) -- (-.4,.53);
\end{tikzpicture}
=A
\begin{tikzpicture}[baseline=-0.4ex,scale=0.5,>=stealth] 
\draw [fill=gray!45,gray!45] (-.6,-.6)  rectangle (.6,.6)   ;
\draw[line width=1.2] (-0.4,-0.52) ..controls +(.3,.5).. (-.4,.53);
\draw[line width=1.2] (0.4,-0.52) ..controls +(-.3,.5).. (.4,.53);
\end{tikzpicture}
+A^{-1}
\begin{tikzpicture}[baseline=-0.4ex,scale=0.5,rotate=90]	
\draw [fill=gray!45,gray!45] (-.6,-.6)  rectangle (.6,.6)   ;
\draw[line width=1.2] (-0.4,-0.52) ..controls +(.3,.5).. (-.4,.53);
\draw[line width=1.2] (0.4,-0.52) ..controls +(-.3,.5).. (.4,.53);
\end{tikzpicture}
\hspace{.5cm}
\text{ and }\hspace{.5cm}
\begin{tikzpicture}[baseline=-0.4ex,scale=0.5,rotate=90] 
\draw [fill=gray!45,gray!45] (-.6,-.6)  rectangle (.6,.6)   ;
\draw[line width=1.2,black] (0,0)  circle (.4)   ;
\end{tikzpicture}
= -(A^2+A^{-2}) 
\begin{tikzpicture}[baseline=-0.4ex,scale=0.5,rotate=90] 
\draw [fill=gray!45,gray!45] (-.6,-.6)  rectangle (.6,.6)   ;
\end{tikzpicture}
;
\end{equation}

\begin{equation}\label{eq: skein 2} 
\begin{tikzpicture}[baseline=-0.4ex,scale=0.5,>=stealth]
\draw [fill=gray!45,gray!45] (-.7,-.75)  rectangle (.4,.75)   ;
\draw[->] (0.4,-0.75) to (.4,.75);
\draw[line width=1.2] (0.4,-0.3) to (0,-.3);
\draw[line width=1.2] (0.4,0.3) to (0,.3);
\draw[line width=1.1] (0,0) ++(90:.3) arc (90:270:.3);
\draw (0.65,0.3) node {\scriptsize{$+$}}; 
\draw (0.65,-0.3) node {\scriptsize{$+$}}; 
\end{tikzpicture}
=
\begin{tikzpicture}[baseline=-0.4ex,scale=0.5,>=stealth]
\draw [fill=gray!45,gray!45] (-.7,-.75)  rectangle (.4,.75)   ;
\draw[->] (0.4,-0.75) to (.4,.75);
\draw[line width=1.2] (0.4,-0.3) to (0,-.3);
\draw[line width=1.2] (0.4,0.3) to (0,.3);
\draw[line width=1.1] (0,0) ++(90:.3) arc (90:270:.3);
\draw (0.65,0.3) node {\scriptsize{$-$}}; 
\draw (0.65,-0.3) node {\scriptsize{$-$}}; 
\end{tikzpicture}
=0,
\hspace{.2cm}
\begin{tikzpicture}[baseline=-0.4ex,scale=0.5,>=stealth]
\draw [fill=gray!45,gray!45] (-.7,-.75)  rectangle (.4,.75)   ;
\draw[->] (0.4,-0.75) to (.4,.75);
\draw[line width=1.2] (0.4,-0.3) to (0,-.3);
\draw[line width=1.2] (0.4,0.3) to (0,.3);
\draw[line width=1.1] (0,0) ++(90:.3) arc (90:270:.3);
\draw (0.65,0.3) node {\scriptsize{$+$}}; 
\draw (0.65,-0.3) node {\scriptsize{$-$}}; 
\end{tikzpicture}
=A^{-1/2}
\begin{tikzpicture}[baseline=-0.4ex,scale=0.5,>=stealth]
\draw [fill=gray!45,gray!45] (-.7,-.75)  rectangle (.4,.75)   ;
\draw[-] (0.4,-0.75) to (.4,.75);
\end{tikzpicture}
\hspace{.1cm} \text{ and }
\hspace{.1cm}
A^{1/2}
\heightexch{->}{-}{+}
- A^{5/2}
\heightexch{->}{+}{-}
=
\heightcurve.
\end{equation}
The product of two classes of stated tangles $[T_1,s_1]$ and $[T_2,s_2]$ is defined by  isotoping $T_1$ and $T_2$  in $\Sigma \times (1/2, 1) $ and $\Sigma \times (0, 1/2)$ respectively and then setting $[T_1,s_1]\cdot [T_2,s_2]=[T_1\cup T_2, s_1\cup s_2]$. Figure \ref{fig_product} illustrates this product.
\end{definition}
\par For an unmarked surface, $\mathcal{S}_A(\mathbf{\Sigma})$ coincides with the classical  Kauffman-bracket skein algebra.

\begin{figure}[!h] 
\centerline{\includegraphics[width=8cm]{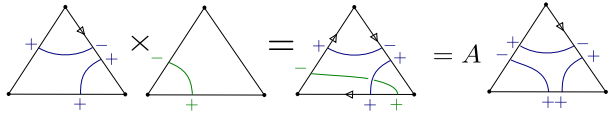} }
\caption{An illustration of the product in stated skein algebras.} 
\label{fig_product} 
\end{figure}

 \begin{definition}[Reduced stated skein algebras]
 \begin{enumerate}
 \item Let $\mathbf{\Sigma}$ a marked surface and $p\in \mathcal{P}^{\partial}$ a boundary puncture between two consecutive boundary edges $a$ and $b$ on the same boundary component $\partial\in \Gamma^{\partial}$.
The orientation of $\Sigma$ induces an orientation of $\partial$ so a cyclic ordering of the elements of $\partial \cap \mathcal{A}$ we suppose that $b$ is followed by $a$ in this ordering.  We denote by $\alpha(p)$ the arc with one endpoint $v_a\in a$ and one endpoint $v_b \in b$ such that $\alpha(p)$ can be isotoped inside $\partial$. $\alpha(p)$ is called a \textbf{corner arc}.
Let $\alpha(p)_{ij}\in \mathcal{S}_A(\mathbf{\Sigma})$ be the class of the stated arc $(\alpha(p), s)$ where $s(v_a)=i$ and $s(v_b)=j$.
 \item We call \textbf{bad arc} associated to $p$ the element $\alpha(p)_{-+}\in \mathcal{S}_A(\mathbf{\Sigma})$ (see Figure \ref{fig_bad_arc}). The \textbf{reduced stated skein algebra} $\overline{\mathcal{S}}_A(\mathbf{\Sigma})$ is the quotient of $\mathcal{S}_A(\mathbf{\Sigma})$ by the ideal generated by all bad arcs.
 \end{enumerate}
 \end{definition}

 \begin{figure}[!h] 
\centerline{\includegraphics[width=4cm]{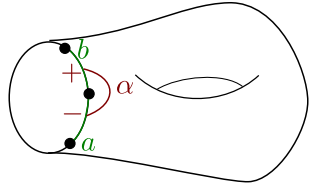} }
\caption{A bad arc.} 
\label{fig_bad_arc} 
\end{figure} 
 
 \begin{remark} The \textbf{quantum trace} is an algebra morphism $\Tr_q : \mathcal{S}_A(\mathbf{\Sigma}) \to \mathcal{Z}_A(\mathbf{\Sigma}, \Delta)$ between the stated skein algebra and the balanced Chekhov-Fock algebra (a variation of the quantum Teichm\"uller space) which is a quantum torus associated to an ideal triangulation $\Delta$ of $\mathbf{\Sigma}$ (see \cite{BonahonWongqTrace, LeStatedSkein}). Its kernel is precisely the ideal generated by bad arcs (\cite[Theorem $7.12$]{CostantinoLe19}), hence it defines an embedding $\Tr_q: \overline{\mathcal{S}}_A(\mathbf{\Sigma}) \hookrightarrow  \mathcal{Z}_A(\mathbf{\Sigma}, \Delta)$. The spaces which appear in quantum hyperbolic geometry are representations of $\mathcal{Z}_A(\mathbf{\Sigma}, \Delta)$, therefore of $ \overline{\mathcal{S}}_A(\mathbf{\Sigma}) $ as well. This is the motivation for the introduction of reduced stated skein algebras and for the study of their representations.
 \end{remark}
 
 The main interest in extending skein algebras to marked surfaces is the existence of splitting morphisms which we now describe.
  Let $a$, $b$ be two distinct boundary edges of $\mathbf{\Sigma}$, denote by $\pi : \Sigma\rightarrow \Sigma_{a\#b}$ the projection and $c:=\pi(a)=\pi(b)$. Let $(T_0, s_0)$ be a stated framed tangle of $\Sigma_{a\#b}\times (0,1)$ transverse to $c\times (0,1)$ and such that the heights of the points of $T_0 \cap c\times (0,1)$ are pairwise distinct and the framing of the points of $T_0 \cap c\times (0,1)$ is vertical towards $1$. Let $T\subset \Sigma \times (0,1)$ be the framed tangle obtained by cutting $T_0$ along $c$. 
Any two states $s_a : \partial_a T \rightarrow \{-,+\}$ and $s_b : \partial_b T \rightarrow \{-,+\}$ give rise to a state $(s_a, s, s_b)$ on $T$. 
Both the sets $\partial_a T$ and $\partial_b T$ are in canonical bijection with the set $T_0\cap c$ by the map $\pi$. Hence the two sets of states $s_a$ and $s_b$ are both in canonical bijection with the set $\mathrm{St}(c):=\{ s: c\cap T_0 \rightarrow \{-,+\} \}$. 

\begin{definition}[Splitting morphism]\label{def_gluing_map}
The \textbf{splitting morphism} $\theta_{a\#b}: \mathcal{S}_{A}(\mathbf{\Sigma}_{a\#b}) \rightarrow \mathcal{S}_{A}(\mathbf{\Sigma})$ is the linear map given, for any $(T_0, s_0)$ as above, by: 
$$ \theta_{a\#b} \left( [T_0,s_0] \right) := \sum_{s \in \mathrm{St}(c)} [T, (s, s_0 , s) ].$$
\end{definition}

\begin{figure}[!h] 
\centerline{\includegraphics[width=8cm]{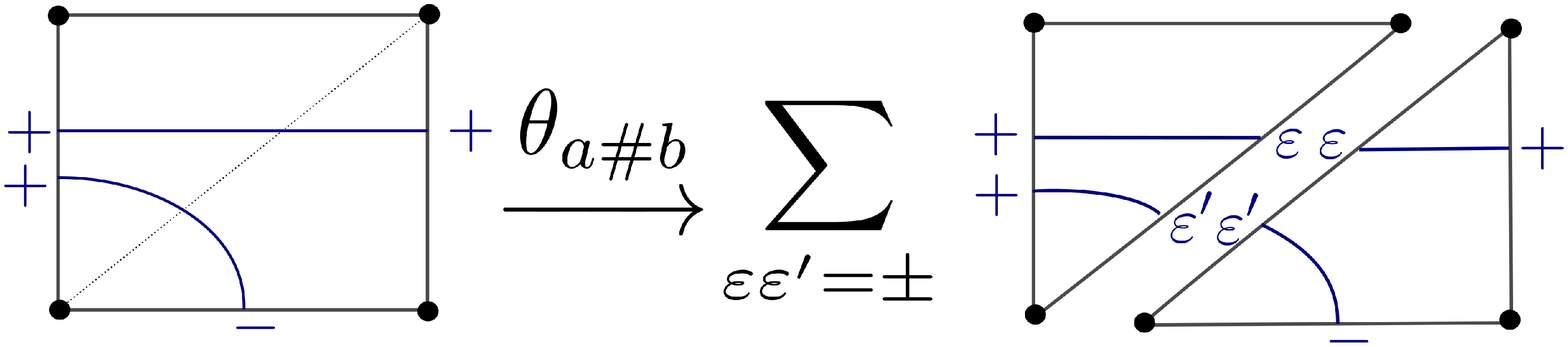} }
\caption{An illustration of the splitting morphism $\theta_{a\#b}$.} 
\label{fig_gluingmap} 
\end{figure} 

\begin{theorem}\label{theorem_gluing}(\cite[Theorem $3.1$]{LeStatedSkein}, \cite[Theorem $7.6$]{CostantinoLe19})
 The linear map $\theta_{a\#b}: \mathcal{S}_A(\mathbf{\Sigma}_{a\#b}) \hookrightarrow \mathcal{S}_A(\mathbf{\Sigma})$ is an injective morphism of algebras. It passes to the quotient to define an injective morphism (still denoted by the same letter) 
 $\theta_{a\#b}: \overline{\mathcal{S}}_A(\mathbf{\Sigma}_{a\#b}) \hookrightarrow \overline{\mathcal{S}}_A(\mathbf{\Sigma})$. 
   Moreover the gluing operation is coassociative in the sense that if $a,b,c,d$ are four distinct boundary arcs, then we have $\theta_{a\#b} \circ \theta_{c\#d} = \theta_{c\#d} \circ \theta_{a\#b}$.
\end{theorem}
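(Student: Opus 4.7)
The plan is to prove four things in order: well-definedness of $\theta_{a\#b}$ and the algebra morphism property, injectivity, descent to the reduced quotient, and coassociativity. The first and last are routine; the real content is injectivity and the descent.

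For well-definedness, I would verify that $\theta_{a\#b}$ respects the defining relations \eqref{eq: skein 1} and \eqref{eq: skein 2}. Relations whose local model sits away from the cutting arc $c:=\pi(a)=\pi(b)$ lift trivially. The non-trivial cases are local pictures straddling $c$, and these reduce to a finite list of identities to check in $\mathcal{S}_A(\mathbf{\Sigma})$ after expanding the sum over states on $T_0\cap c$; the bigon and height exchange rules of \eqref{eq: skein 2} are exactly what is needed to absorb the various $A^{\pm 1/2}$ factors that arise when one slides a crossing across $c$. Multiplicativity is immediate because cutting along $c\times(0,1)$ commutes with vertical stacking and the sum over states on a stacked tangle factors as the product of the sums on each floor.

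The main obstacle is injectivity. My plan is to exhibit compatible ordered bases. One chooses a basis $\mathcal{B}(\mathbf{\Sigma}_{a\#b})$ of simple stated diagrams in a suitable normal form (for example increasing states along each boundary edge, as in the construction of \cite{LeStatedSkein}), represented by tangles transverse to $c$ with prescribed heights. For $B\in\mathcal{B}(\mathbf{\Sigma}_{a\#b})$ with $k$ intersection points with $c$, the image $\theta_{a\#b}(B)$ is a sum of $2^k$ stated tangles in $\mathbf{\Sigma}$. Order stated diagrams of $\mathbf{\Sigma}$ first by the total number of boundary points on $a$ and $b$, then lexicographically on states, and show that each $\theta_{a\#b}(B)$ has a unique leading monomial in $\mathcal{B}(\mathbf{\Sigma})$ and that distinct $B$ produce distinct leading monomials. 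An alternative, probably cleaner, route factors $\theta_{a\#b}$ through the Bonahon--Wong quantum trace: the splitting of triangulated surfaces induces a splitting morphism of balanced Chekhov--Fock quantum tori, where injectivity is a trivial monomial check; combined with $\Tr_q$ being injective modulo the bad arc ideal, this reduces both the unreduced and reduced injectivity statements to the same quantum torus computation.

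For the descent, I would show that every bad arc $\alpha(p)_{-+}$ in $\mathbf{\Sigma}_{a\#b}$ has image in the ideal generated by bad arcs of $\mathbf{\Sigma}$. If the boundary puncture $p$ is not adjacent to $c$, then $\theta_{a\#b}$ fixes $\alpha(p)_{-+}$ and there is nothing to do. If $p$ is created by the gluing, one computes $\theta_{a\#b}(\alpha(p)_{-+})$ directly as a short sum of corner arcs in $\mathbf{\Sigma}$ whose non-bad terms cancel thanks to \eqref{eq: skein 2}, leaving only a scalar multiple of a bad arc of $\mathbf{\Sigma}$; this is the local calculation performed in \cite[\S 7]{CostantinoLe19}. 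Injectivity on the quotient then follows from the quantum-trace factorization, since $\Tr_q$ is injective on the reduced algebra and the splittings are compatible on both sides. Finally, coassociativity is purely formal: if $a,b,c,d$ are pairwise distinct, the two compositions $\theta_{a\#b}\circ\theta_{c\#d}$ and $\theta_{c\#d}\circ\theta_{a\#b}$ are both equal to the double sum over independent state assignments on the two disjoint cutting arcs, and the cut tangles coincide tangle by tangle.
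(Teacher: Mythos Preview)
The paper does not give its own proof of this theorem: it is quoted from \cite{LeStatedSkein} and \cite{CostantinoLe19}. So there is nothing in the present paper to compare your proposal against; the comparison has to be with the original sources. Your outline is broadly in the spirit of those proofs (check relations locally, use the L\^e basis to get a triangular map for injectivity, verify bad arcs split to bad-arc ideal, observe coassociativity is formal), and as a high-level plan it is sound.

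Two points of caution. First, your ``alternative cleaner route'' via the quantum trace cannot establish injectivity for the \emph{unreduced} algebra: $\Tr_q$ has a non-trivial kernel (precisely the bad-arc ideal), so factoring through it only yields injectivity on $\overline{\mathcal{S}}_A$, not on $\mathcal{S}_A$; moreover the quantum trace requires a triangulation, which not every marked surface admits. L\^e's original injectivity argument for $\mathcal{S}_A$ is the basis/leading-term argument you sketch first, and that is the one you actually need. Second, in your descent step, the phrase ``non-bad terms cancel thanks to \eqref{eq: skein 2}, leaving only a scalar multiple of a bad arc'' is not quite what happens: when a corner arc $\alpha(p)_{-+}$ of $\mathbf{\Sigma}_{a\#b}$ crosses $c$ once, the splitting produces a sum of two terms, each of which is a \emph{product} of two corner arcs in $\mathbf{\Sigma}$, and in each term one of the two factors is already a bad arc (no cancellation is needed). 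The upshot is the same---$\theta_{a\#b}$ sends the bad-arc ideal into the bad-arc ideal---but the mechanism is direct containment, not cancellation. For injectivity on the reduced quotient, Costantino--L\^e proceed by exhibiting a basis of $\overline{\mathcal{S}}_A$ and checking the induced map is triangular with respect to it, rather than by invoking $\Tr_q$; your quantum-trace alternative does work here, but it presupposes both the existence of a triangulation and the identification of $\ker\Tr_q$ with the bad-arc ideal, which is itself a nontrivial theorem of \cite{CostantinoLe19}.
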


 \subsection{Bases}
 
 We now define two bases for the reduced stated skein algebras. Let us first make a preliminary remark: if $T$ is a tangle in standard position and $D$ its planar diagram projection, we cannot recover the isotopy class $T$ from the isotopy class of  $D$ since we lost the information about the height orders of the sets $\partial_aT$, $a\in \mathcal{A}$ in the projection.
 
 \begin{convention}\label{convention_diagram} Let $(D,s)$ be a stated diagram in $\mathbf{\Sigma}$. Recall that each boundary edge $a\in \mathcal{A}$ receives an orientation from the orientation of $\Sigma$ and that for $v,w \in a$ we write $v<_{\mathfrak{o}^+} w$ if $a$ is oriented from $v$ to $w$. Let $(T,s)$ be a stated tangle such that $(1)$ $(T,s)$ is in standard position (in the sense of Definition \ref{def_tangles}) and its planar projection is $(D,s)$ and $(2)$ for every $a\in \mathcal{A}$, if $v, w \in \partial_a D$ are such that $v<_{\mathfrak{o}^+}w$ then $h(v)<h(w)$. Conditions $(1)$ and $(2)$ completely determine the isotopy class of $(T,s)$ and we will write $[D,s]:= [T,s]\in \overline{\mathcal{S}}_A(\mathbf{\Sigma})$. 
 \end{convention}
 
 \begin{definition}[L\^e's bases]
 \begin{enumerate}
 \item A closed component of a diagram $D$ is trivial if it bounds an embedded disc in $\Sigma$. An open component of $D$ is trivial if it can be isotoped, relatively to its boundary, inside some boundary arc. A diagram is \textbf{simple} if it has neither double point nor trivial component. By convention, the empty set is a simple diagram. Let $\mathfrak{o}^+$ denote the orientation of the boundary edges of $\mathbf{\Sigma}$ induced by the orientation of $\Sigma$ as in Definition \ref{def_marked_surfaces}. For each boundary edge $a$ we write $<_{\mathfrak{o}^+}$ the induced total order on $\partial_a D$. A state $s: \partial D \rightarrow \{ - , + \}$ is $\mathfrak{o}^+-$\textbf{increasing} if for any boundary edge $a$ and any two points $x,y \in \partial_a D$, then $x<_{\mathfrak{o}^+} y$ implies $s(x)\leq s(y)$, with the convention $- < +$. 
 \item We denote by $\mathcal{B}^{L}\subset \overline{\mathcal{S}}_{A}(\mathbf{\Sigma})$ the set of classes $[D,s]$ of stated diagrams such that $D$ is simple,  $s$ is $\mathfrak{o}^+$-increasing and such that $(D,s)$ does not contain bad arc component.
 \end{enumerate}
 \end{definition}
 
 \begin{theorem}(\cite[Theorem $7.1$]{CostantinoLe19}) $\mathcal{B}^{L}$ is a basis of $\overline{\mathcal{S}}_{A}(\mathbf{\Sigma})$.\end{theorem}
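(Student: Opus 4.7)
The plan is to deduce this basis theorem from L\^e's analogous result for the unreduced stated skein algebra $\mathcal{S}_A(\mathbf{\Sigma})$, which states that the set $\widetilde{\mathcal{B}}^L$ of classes $[D,s]$ with $D$ simple and $s$ being $\mathfrak{o}^+$-increasing (without the ``no bad arc'' restriction) is a $k$-basis of $\mathcal{S}_A(\mathbf{\Sigma})$. Let $q: \mathcal{S}_A(\mathbf{\Sigma}) \twoheadrightarrow \overline{\mathcal{S}}_A(\mathbf{\Sigma})$ be the quotient by the two-sided ideal $J$ generated by the bad arcs, and partition $\widetilde{\mathcal{B}}^L = \mathcal{B}^L \sqcup \widetilde{\mathcal{B}}^{\mathrm{bad}}$, where $\widetilde{\mathcal{B}}^{\mathrm{bad}}$ consists of those simple stated diagrams with increasing state that have at least one bad arc component. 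The theorem reduces to the equality of subspaces
\[ J \;=\; \Span(\widetilde{\mathcal{B}}^{\mathrm{bad}}), \]
since then $q$ restricts to an isomorphism $\Span(\mathcal{B}^L) \xrightarrow{\sim} \overline{\mathcal{S}}_A(\mathbf{\Sigma})$.

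The inclusion $\Span(\widetilde{\mathcal{B}}^{\mathrm{bad}}) \subset J$ is the easy direction: given $[D,s] \in \widetilde{\mathcal{B}}^{\mathrm{bad}}$, simplicity of $D$ ensures the bad arc component is disjoint from the remaining components of $D$, so after reordering heights via the relations \eqref{eq: skein 2} one can factor $[D,s]$ in $\mathcal{S}_A(\mathbf{\Sigma})$ as a product of a bad arc $\alpha(p)_{-+}$ with another stated simple diagram, which places it in the two-sided ideal $J$.

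The reverse inclusion $J \subset \Span(\widetilde{\mathcal{B}}^{\mathrm{bad}})$ is the main obstacle, and I would attack it by producing, on the quotient, a map that separates the elements of $\mathcal{B}^L$. Fixing an ideal triangulation $\Delta$ of $\mathbf{\Sigma}$, the Bonahon-Wong quantum trace $\Tr_q : \mathcal{S}_A(\mathbf{\Sigma}) \to \mathcal{Z}_A(\mathbf{\Sigma}, \Delta)$ into the balanced Chekhov-Fock quantum torus vanishes on every bad arc (a local computation inside the triangle containing the associated boundary puncture), so it factors through $\overline{\mathcal{S}}_A(\mathbf{\Sigma})$. I would then evaluate $\Tr_q$ on an element $[D,s] \in \mathcal{B}^L$ using the explicit state-sum formula over intersections $D \cap \Delta$ and extract, with respect to a carefully chosen monomial ordering on the quantum torus, a well-defined ``leading monomial'' encoding the combinatorial intersection data together with $s$. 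The $\mathfrak{o}^+$-increasing condition and the absence of bad arc components in elements of $\mathcal{B}^L$ should together ensure that the combinatorial data $(D \cap \Delta, s)$, and hence the leading monomial, characterize $[D,s]$ uniquely.

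Once this leading-term injectivity is in place, $q(\mathcal{B}^L)$ is linearly independent in $\overline{\mathcal{S}}_A(\mathbf{\Sigma})$; combined with the first step, this forces $J$ to coincide with $\Span(\widetilde{\mathcal{B}}^{\mathrm{bad}})$, hence $\mathcal{B}^L$ is a basis of $\overline{\mathcal{S}}_A(\mathbf{\Sigma})$. The hard part is the leading-term analysis: one must align the conventions for the height orders on the boundary edges, the $\mathfrak{o}^+$-orientations, and a compatible monomial order on $\mathcal{Z}_A(\mathbf{\Sigma}, \Delta)$, so that the highest term of $\Tr_q([D,s])$ distinguishes distinct elements of $\mathcal{B}^L$ while all bad-arc-containing elements are annihilated. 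This compatibility is the only delicate point; everything else is bookkeeping at the level of L\^e's already-known basis.
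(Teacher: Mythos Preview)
The paper does not prove this statement; it is quoted verbatim from \cite[Theorem 7.1]{CostantinoLe19}, so there is no in-paper argument to compare against. Your overall architecture---reducing to $J=\Span(\widetilde{\mathcal B}^{\mathrm{bad}})$, handling the two inclusions separately, and invoking the quantum trace for the linear-independence direction---is exactly the one carried out in that reference.

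Two comments on the details. For the inclusion $\Span(\widetilde{\mathcal B}^{\mathrm{bad}})\subset J$, the relations you cite as \eqref{eq: skein 2} are the boundary skein relations, not height-exchange relations; what you actually need are the reordering identities of \cite[Lemma 2.4]{LeStatedSkein} (used in the present paper in the proofs of Lemmas~\ref{lemma_Muller_basis} and~\ref{lemma_heightexch}). The argument does go through, because sliding the $-$-endpoint of the bad arc to the bottom of its edge only meets endpoints of state $-$ (by $\mathfrak o^+$-monotonicity), and sliding the $+$-endpoint to the bottom of its edge produces only swaps of type $(+,+)$ or $(+,-)$ (higher-height state first); all three are scalar exchanges. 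But you should make this explicit rather than leave it as ``reordering heights,'' since the fourth case $(-,+)$ is \emph{not} scalar and a careless reader will wonder why you are not creating $\heightcurve$-terms.

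For the hard inclusion, your outline is right but two caveats deserve mention. First, the quantum trace requires an ideal triangulation, so the small exceptional surfaces need a separate (trivial) treatment. Second, the leading-monomial injectivity on $\mathcal B^L$ is the genuine combinatorial content of \cite[\S7]{CostantinoLe19}; you correctly identify it as the crux, but calling everything else ``bookkeeping'' is fair only once that step is nailed down.
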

 
 \begin{lemma}(\cite[Proposition $7.4$]{CostantinoLe19}) Let $p \in \mathcal{P}^{\partial}$. Then $\alpha(p)_{--}$ is the inverse of $\alpha(p)_{++}$ in $ \overline{\mathcal{S}}_{A}(\mathbf{\Sigma})$ (i.e. $\alpha(p)_{++}\alpha(p)_{--}=1$). \end{lemma}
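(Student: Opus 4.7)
The plan is to compute $\alpha(p)_{++}\alpha(p)_{--}$ by stacking the two corner arcs and expanding the resulting diagram via the skein relations. When the copy with state $(++)$ sits at higher height than the copy with state $(--)$, Convention~\ref{convention_diagram} forces the two endpoints on $a$ (respectively on $b$) to be interleaved, in the sense that the state $-$ endpoint lies at the $<_{\mathfrak{o}^+}$-earlier position on one side but at the $<_{\mathfrak{o}^+}$-later position on the other. Inside a small bigon neighborhood of $p$---a disc topologically---such an interleaved four-point boundary configuration cannot be joined by a non-crossing pair of arcs, so after a small isotopy the projected diagram has exactly one transversal crossing, with the $(++)$-copy passing over the $(--)$-copy.

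I would then expand this crossing via the Kauffman relation~\eqref{eq: skein 1}. One of the two resulting resolutions reconnects the endpoints ``horizontally'' into a pair of arcs from $a$ to $b$; one of these carries state $-$ at its $a$-endpoint and state $+$ at its $b$-endpoint, i.e.\ is the bad arc $\alpha(p)_{-+}$, which vanishes in $\overline{\mathcal{S}}_A(\mathbf{\Sigma})$. The other resolution reconnects the endpoints ``vertically'' into two short cap arcs, one with both ends on $a$ and one with both ends on $b$. Each cap bounds an embedded disc disjoint from $p$, so the bigon relations of~\eqref{eq: skein 2} (after using the height-exchange identity to rewrite the cap states in the canonical form) evaluate each cap to a scalar power of $A$. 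A direct tally of the $A$-factor from the Kauffman relation together with the $A^{\pm 1/2}$-factors from the two bigon evaluations produces exactly~$1$.

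The main obstacle is the bookkeeping of half-integer powers of $A$ and of the correct canonical form for the cap states: depending on how $\mathfrak{o}^+$ orients $a$ and $b$ relative to $p$, the caps appear with states $(+,-)$ or $(-,+)$, and the height-exchange identity must be applied in the appropriate direction to land in the case covered by $\bigonheightcurve{+}{-}=A^{-1/2}$. A cleaner alternative that bypasses this accounting is to use the identification of the stated skein algebra of the bigon with the quantum coordinate algebra $\mathcal{O}_{q}(\SL_2)$ of \cite{LeStatedSkein}, under which the four $\alpha(p)_{ij}$ are the entries of a $2\times 2$ quantum matrix. The quantum determinant relation, in the form $\alpha_{++}\alpha_{--}-A^{2}\alpha_{+-}\alpha_{-+}=1$, then gives the claim immediately after invoking $\alpha(p)_{-+}=0$ in $\overline{\mathcal{S}}_A(\mathbf{\Sigma})$.
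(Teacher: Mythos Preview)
The paper does not give its own proof of this lemma; it simply cites \cite[Proposition~7.4]{CostantinoLe19}. Both of your approaches are correct and either constitutes a valid proof.

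Your direct skein computation is the standard route. Carried through, it gives in the unreduced algebra
\[
\alpha(p)_{++}\alpha(p)_{--}=1+q^{-1}\,\alpha(p)_{+-}\alpha(p)_{-+}:
\]
the cap resolution contributes $A\cdot A^{-1/2}\cdot A^{-1/2}=1$, while the other resolution is a two-arc diagram which, after a single height-exchange on $b$ (cost $A^{-1}$), becomes $\alpha(p)_{+-}\alpha(p)_{-+}$ and hence lies in the bad-arc ideal. This is the same bookkeeping you flag as the ``main obstacle,'' and it is worth making explicit that the bad-arc resolution is not literally a product containing $\alpha(p)_{-+}$ until that exchange is performed.

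Your bigon shortcut recovers the same identity via $ad=1+q^{-1}bc$ in $\mathcal{O}_q\SL_2\cong\mathcal{S}_A(\mathbb{B})$. One minor correction: the coefficient is $q^{-1}=A^{-2}$, not $A^{2}$; the version with $A^{2}$ is the relation for the reversed product $\alpha(p)_{--}\alpha(p)_{++}$, coming from $da=1+qbc$. Either way the term vanishes in $\overline{\mathcal{S}}_A(\mathbf{\Sigma})$, so your conclusion is unaffected.
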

 
 \begin{definition}[M\"uller's bases]\label{def_Muller_basis1}
 Let $\mathcal{B}^M \subset  \overline{\mathcal{S}}_{A}(\mathbf{\Sigma})$ be the set of classes of stated diagrams $[D,s]$ such that:
 \begin{enumerate}
 \item $D$ is simple, 
 \item if $\alpha \subset D$ is an open connected component of $D$ which is not a corner arc, then both endpoints of $\alpha$ have state $+$, 
 \item for each $p\in \mathcal{P}^{\partial}$, there exists a sign $\varepsilon_p\in \{-,+\}$ such that every component of $D$ parallel to $\alpha(p)$ has both endpoints with state $\varepsilon_p$.
 \end{enumerate}
 \end{definition}
 
 \begin{proposition}\label{prop_Muller_basis}
 $\mathcal{B}^M$ is a basis of $\overline{\mathcal{S}}_{A}(\mathbf{\Sigma})$.
 \end{proposition}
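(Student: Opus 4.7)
The plan is to deduce the statement from the analogous fact for the L\^e basis $\mathcal{B}^L$ (which was recalled just before), using the two defining relations of the reduced algebra at boundary punctures: $\alpha(p)_{-+} = 0$ (bad arc) and $\alpha(p)_{++}\alpha(p)_{--}=1$ (invertibility of the corner arc). The idea is that these two relations, together with the height exchange in \eqref{eq: skein 2}, suffice to push every $-$ state appearing on a non-corner arc endpoint into the corner arcs and then to consolidate the corner arc signs at each puncture.

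First I would define a change-of-basis expansion $\Phi : \mathrm{Span}(\mathcal{B}^L) \to \mathrm{Span}(\mathcal{B}^M)$ converting each L\^e basis element into a linear combination of M\"uller candidates. Given $[D,s]\in \mathcal{B}^L$, on each boundary edge $a$ the state is of the form $(-,\ldots,-,+,\ldots,+)$ along $\partial_a D$. I apply the height exchange relation iteratively to migrate the $-$ states lying on non-corner arc endpoints toward adjacent boundary punctures, where they get absorbed by the neighboring corner arcs, possibly creating a U-turn that can be resolved by \eqref{eq: skein 2}. At each puncture $p$, any residual sign mismatch among the corner arcs parallel to $\alpha(p)$ is then eliminated using $\alpha(p)_{++}\alpha(p)_{--}=1$, leaving a single sign $\varepsilon_p$ for all such arcs. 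Spanning of $\mathcal{B}^M$ follows from spanning of $\mathcal{B}^L$.

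For linear independence, I would introduce a partial order $\preceq$ on simple stated diagrams, graded first by the topological type of $D$ (number of corner arcs at each puncture, nesting, etc.) and refined by the distribution of $-$ states on $\partial D$, under which $\Phi$ is unitriangular with diagonal coefficients that are units (monomials in $A^{\pm 1/2}$). The expected leading term of $\Phi([D, s])$ is the M\"uller element obtained by the naive reassignment: replace each non-corner $-$ endpoint by $+$, and set $\varepsilon_p$ according to the parity of $-$ states on corner arcs at $p$. Linear independence of $\mathcal{B}^M$ is then forced by the invertibility of the triangular transformation, given that $\mathcal{B}^L$ is itself linearly independent.

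The main obstacle will be verifying the triangularity rigorously. Each height exchange produces not only the desired sign-swap term but also a U-turn correction that modifies the underlying diagram, and the iterative absorption of signs at each corner via $\alpha(p)_{++}\alpha(p)_{--}=1$ introduces both the empty diagram and parallel corner arc terms as corrections; one must check that all such correction terms lie strictly lower in $\preceq$, so that the procedure terminates and the resulting matrix is unitriangular without reintroducing bad arcs or non-simple components.
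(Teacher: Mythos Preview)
Your overall strategy---relate $\mathcal{B}^M$ to $\mathcal{B}^L$ via the boundary skein relations---is the right one, but you are missing the key observation that makes the argument clean, and as a result you are proposing machinery (a partial order, unitriangularity, correction terms) that is both unnecessary and not obviously well-defined.

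The paper's proof goes in the opposite direction: it builds a map $f:\mathcal{B}^M\to\mathcal{B}^L$ and shows that each $[D,s]\in\mathcal{B}^M$ is \emph{equal} (not just ``equal up to lower-order terms'') to a single power of $A^{1/2}$ times an element of $\mathcal{B}^L$. The reason is this: if $[D,s]\in\mathcal{B}^M$ fails to be $\mathfrak{o}^+$-increasing at a pair of consecutive endpoints, the upper one (with state $-$) necessarily lies on a corner arc $\alpha(p)_{--}$. Applying the relation $\heightexch{->}{-}{+}=A^2\heightexch{->}{+}{-}+A^{-1/2}\heightcurve$, the swapped term contains the bad arc $\alpha(p)_{-+}$ and vanishes in $\overline{\mathcal{S}}_A(\mathbf{\Sigma})$ (this requires a small lemma when there are parallel copies of $\alpha(p)_{--}$ nearby). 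So only the U-turn term survives, and iterating gives $f([D,s])=A^{n/2}[D,s]$ for some $n$. An explicit inverse $g:\mathcal{B}^L\to\mathcal{B}^M$ is then described by sliding each non-corner $-$ endpoint across the adjacent puncture and inserting a copy of $\alpha(p)_{--}$; again one term of the skein relation dies. Thus the change of basis is \emph{diagonal}, not merely triangular.

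Your proposal does not identify this vanishing, which is why you anticipate correction terms. Moreover, your ``naive reassignment'' leading term (replace non-corner $-$ endpoints by $+$, adjust $\varepsilon_p$ by parity) does not in general land in $\mathcal{B}^M$: the underlying diagram $D$ must change (arcs are joined to or separated from corner arcs), and your description does not track this. Without the bad-arc vanishing, setting up a partial order under which all corrections are strictly lower would be delicate; with it, no such order is needed.
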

 
 \begin{lemma}\label{lemma_Muller_basis}
 Let $(D,s)$ be a stated diagram which contains a bad arc $\alpha_{bad}=\alpha(p)_{-+}\subset (D,s)$. Further suppose that every stated arc $\alpha(p)_{ij} \subset (D,s)$ parallel to $\alpha(p)$ and distinct from $\alpha_{bad}$ has states $i=j=-$. Then $[D,s]=0$ in $\overline{\mathcal{S}}_A(\mathbf{\Sigma})$. 
 
 \end{lemma}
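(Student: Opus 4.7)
The plan is to express $[D,s]$ as a product in $\mathcal{S}_A(\mathbf{\Sigma})$ having the bad arc $\alpha(p)_{-+}$ as a factor; then vanishing in $\overline{\mathcal{S}}_A(\mathbf{\Sigma})$ follows immediately because this algebra is the quotient by the two-sided ideal generated by bad arcs.

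The first step is to separate, by height, the $k+1$ arcs of $(D,s)$ parallel to $\alpha(p)$ (the bad arc together with the $k$ copies of $\alpha(p)_{--}$) from the remaining components. The endpoints of the parallel arcs are concentrated in arbitrarily small neighborhoods of $v_a$ in $a$ and of $v_b$ in $b$, disjoint from the boundary points of the non-parallel components. By isotopy and, if needed, applications of the height exchange relations of equation~\eqref{eq: skein 2} at $a$ and $b$, one arranges the parallel arcs so that all their boundary points on $a$ (resp.\ $b$) have heights above those of the non-parallel components. In this form, $[D,s]$ factors in $\mathcal{S}_A(\mathbf{\Sigma})$ as
\[
[D,s] = \left( \prod_{i=1}^{k+1} \beta_i \right) \cdot [D_{\mathrm{rest}}, s_{\mathrm{rest}}],
\]
where each $\beta_i \in \{ \alpha(p)_{--}, \alpha(p)_{-+}\}$, exactly one of them equals $\alpha(p)_{-+}$, and $[D_{\mathrm{rest}}, s_{\mathrm{rest}}]$ denotes the class of $(D,s)$ with all the parallel arcs removed.

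Since $\alpha(p)_{-+} = 0$ in $\overline{\mathcal{S}}_A(\mathbf{\Sigma})$ by definition, the product vanishes. Equivalently, one can invoke the previous lemma ($\alpha(p)_{--}$ is the two-sided inverse of $\alpha(p)_{++}$) to show directly that the monomial $\prod_{i=1}^{k+1} \beta_i$ itself vanishes in $\overline{\mathcal{S}}_A(\mathbf{\Sigma})$: if the bad arc occurs at position $j+1$ from the bottom, then writing $\alpha(p)_{++}^{j} \cdot \alpha(p)_{--}^{j} \cdot \alpha(p)_{-+} \cdot \alpha(p)_{--}^{k-j} \cdot \alpha(p)_{++}^{k-j} = \alpha(p)_{-+} = 0$ and multiplying by appropriate powers of $\alpha(p)_{--}$ on both sides yields $\alpha(p)_{--}^{j} \cdot \alpha(p)_{-+} \cdot \alpha(p)_{--}^{k-j} = 0$.

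The main obstacle is the height rearrangement. The orientation convention on $a$ and $b$ at the corner $p$ forces parallel arcs to be ordered oppositely on the two edges: the parallel arc closest to $p$ on $a$ lies farthest from $p$ on $b$ in the induced height ordering. Consequently, a naive isotopy cannot simultaneously place all parallel arcs at the top on both $a$ and $b$, and the height exchange relations must be invoked; these produce correction terms in which two parallel arcs are replaced by a single capped arc, strictly reducing the number of parallel components while preserving the bad arc. A straightforward induction on the total number $k+1$ of parallel components, with trivial base case, handles these correction terms and closes the argument.
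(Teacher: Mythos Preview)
Your overall strategy---factor $[D,s]$ with the bad arc appearing explicitly---is the right one and is also the paper's, but your justification of the height rearrangement has a genuine gap. You assert that the correction terms ``replace two parallel arcs by a single capped arc \ldots while preserving the bad arc'' and then induct on the number of parallel components; neither claim is correct as stated. Two kinds of exchanges are actually needed. First, to place the parallel block above the non-parallel block in height, one must work on the edge $a$ (where every parallel endpoint carries state $-$ and lies physically between $p$ and the non-parallel endpoints): raising these $-$'s past the non-parallel endpoints uses only the monomial relations $\heightexch{->}{\pm}{-}=A^{\mp1}\heightexch{<-}{\pm}{-}$ and produces \emph{no} correction term. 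Attempting this on $b$ instead forces the bad arc's $+$-endpoint past non-parallel $-$'s; the resulting cap merges the bad arc with a non-parallel arc, destroying the bad arc and leaving your induction inapplicable. Second, to write the parallel block as a product $\prod\beta_i$, one must reorder the parallel arcs among themselves on $b$. The only non-monomial exchange there involves the bad arc's $+$ and an adjacent parallel $-$; its cap term \emph{does} consume the bad arc---again outside your inductive hypothesis---but it vanishes outright because the capped pair is a trivial arc on $a$ with both states $-$, hence zero by~\eqref{eq: skein 2}. This direct vanishing, not an induction, is the crux your argument misses.

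The paper exploits the same vanishing more economically: it first applies the last relation of~\eqref{eq: skein 2} repeatedly on edge $b$ to transfer the $+$-state from $\alpha_n$ onto the innermost parallel arc $\alpha_1$ (each extra cup term is exactly such a trivial $(-,-)$ arc and vanishes), and then factors out $\alpha_1$ alone via a single sequence of monomial height exchanges on one edge. No separation of the full parallel block and no induction enter.
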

 
 \begin{proof}
 The proof is illustrated in Figure \ref{fig_lemma_Muller_basis} and goes as follows. Let $a$ be the boundary edge adjacent to $p$ containing the endpoint of $\alpha_{bad}$ with state $-$. Let $\alpha_1, \ldots, \alpha_k$ be the subarcs of $D$ which are parallel to $\alpha(p)$ so the underlying arc of $\alpha_{bad}$ is $\alpha_n$ for $1\leq n \leq k$. We suppose that $\alpha_1$ is the arc closest to $p$ in the sense that there exists another arc $\beta\subset \partial \Sigma$ such that $\alpha_1\cup \beta$ bounds a disc which does not intersect any $\alpha_i$ for $i\neq 1$. Let $s'$ be the state of $D$ which is equal to $s$ for every arcs distinct from the $\alpha_i$ and such that $\alpha_1$ has states $(-,+)$ (so is a bad arc) and all $\alpha_i$ for $i\neq 1$ have states $(-,-)$. By applying the skein relation 
   $$ 
\heightexch{->}{-}{+}
= A^{2}
\heightexch{->}{+}{-}
+ A^{-1/2}
\heightcurve.
$$ repeatedly and noting that the class of the stated diagram $\heightcurve$ vanishes due to the skein relation 
$ \begin{tikzpicture}[baseline=-0.4ex,scale=0.5,>=stealth]
\draw [fill=gray!45,gray!45] (-.7,-.75)  rectangle (.4,.75)   ;
\draw[->] (0.4,-0.75) to (.4,.75);
\draw[line width=1.2] (0.4,-0.3) to (0,-.3);
\draw[line width=1.2] (0.4,0.3) to (0,.3);
\draw[line width=1.1] (0,0) ++(90:.3) arc (90:270:.3);
\draw (0.65,0.3) node {\scriptsize{$-$}}; 
\draw (0.65,-0.3) node {\scriptsize{$-$}}; 
\end{tikzpicture}
=0$, we see that $[D,s]=A^{2n}[D,s']$. 
 Let $(T,s')$ be the stated tangle associated to $(D,s)$ using Convention \ref{convention_diagram} so that $[D,s]=[T,s]$ by definition. Let $v\subset \partial \alpha_1$ be the endpoint with state $s'(v)=-$ and let  $T'$ be the tangle obtained from $T$ by isotoping $v$ in the $[0,1]$ direction such that $h(v)<h(v')$ for all $v'\in \partial_aT'$. Using the following height exchange relations, proved in \cite[Lemma $2.4$]{LeStatedSkein}, 
 $$\heightexch{<-}{+}{-}=A \heightexch{->}{+}{-}, \quad \mbox{ and }\quad
\heightexch{<-}{-}{-}=A^{-1} \heightexch{->}{-}{-}$$
 we see that there exists $m\in \mathbb{Z}$ such that $[T,s']=A^m [T',s']$. Let $(D_0,s_0)$ be the stated diagram obtained from $(D,s)$ by removing $\alpha_{bad}$. Then we have $[T',s']= [D_0,s_0]\cdot \alpha_{bad}$ therefore $[D,s]=A^{2n+m}[D_0,s_0]\cdot \alpha_{bad}$ is in the ideal generated by bad arcs so it vanishes in $\overline{\mathcal{S}}_A(\mathbf{\Sigma})$. This concludes the proof.

 \begin{figure}[!h] 
\centerline{\includegraphics[width=16cm]{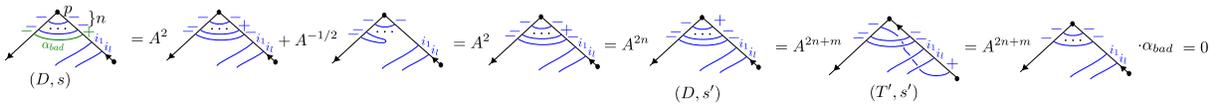} }
\caption{An illustration of the proof of Lemma \ref{lemma_Muller_basis}.} 
\label{fig_lemma_Muller_basis} 
\end{figure}

 \end{proof}

 \begin{proof}
 We claim that there exist a bijection $f: \mathcal{B}^M \to \mathcal{B}^L$ and a map $c: \mathcal{B}^M \to \mathbb{Z}$ such that for all $[D,s]\in \mathcal{B}^M$ we have $f([D,s])=A^{c([D,s])/2} [D,s]$. The fact that $\mathcal{B}^M$ is a basis will then follow from the fact that $\mathcal{B}^L$ is a basis. Let $[D,s] \in \mathcal{B}^M$. If $s$ is $\mathfrak{o}^+$ increasing, we set $f([D,s])=[D,s]$ and $c([D,s])=0$. Else, there exists $a\in \mathcal{A}$ and two consecutive endpoints $v_1 <_{\mathfrak{o}^+} v_2$ in $\partial_aD$ such that $s(v_1)=+$ and $s(v_2)=-$ (i.e. locally $[D,s]$ has the form
  $\heightexch{->}{-}{+}$). By definition of $\mathcal{B}^M$, the arc in $D$ which contains $v_2$ is a corner arc $\alpha(p)_{--}$ with both endpoints with state $-$. Let $[D',s']$ be the class of stated diagram obtained from $[D,s]$ by gluing $v_1$ and $v_2$ together and pushing the resulting point in the interior of $\Sigma$, i.e. is obtained by the local replacement $\heightexch{->}{-}{+} \mapsto \heightcurve$.
  Using the skein relation \eqref{eq: skein 2}, one has: 
  $$ 
\heightexch{->}{-}{+}
= A^{2}
\heightexch{->}{+}{-}
+ A^{-1/2}
\heightcurve = A^{-1/2}
\heightcurve, $$
where the second equality comes from the fact that the stated diagram corresponding to $\heightexch{->}{+}{-}$ contains the bad arc $\alpha(p)_{-+}$ and thus vanishes in  $\overline{\mathcal{S}}_{A}(\mathbf{\Sigma})$ by Lemma \ref{lemma_Muller_basis}. Therefore $[D,s]=A^{-1/2} [D',s']$. If $[D',s']$ is $\mathfrak{o}$-increasing, we set $f([D,s]):= [D',s']$ and $c([D,s])=1$. Else, $[D',s']$ contains a pair of consecutive points of the form $\heightexch{->}{-}{+}$ and we can repeat the process of replacing 
 $\heightexch{->}{-}{+}$ by $\heightcurve$. Since this process decreases the number of endpoints in $\partial D$, after a finite number of steps, say $n$, we obtain an element $f([D,s])\in \mathcal{B}^L$ such that $f([D,s])= A^{n/2} [D,s]$ and we set $c([D,s])=n$. It remains to prove that $f$ is a bijection; let us define an inverse map $g: \mathcal{B}^L \to \mathcal{B}^M$.
  Let $[D,s]\in \mathcal{B}^L$.  If $[D,s]\in \mathcal{B}^M$, we set $g([D,s]):=[D,s]$. Else, 
 since $s$ is $\mathfrak{o}^+$ increasing, $(D,s)$ cannot have two sub stated arcs isotopic to $\alpha(p)_{++}$ and $\alpha(p)_{--}$ respectively for some $p\in \mathcal{P}^{\partial}$. Therefore there exists $a\in \mathcal{A}$, an arc $\alpha \subset D$  
 and $v\in \partial_aD \cap \partial \alpha$ such that $s(v)=-$ and such that $(\alpha, s_{| \partial \alpha})$ is not isotopic to $\alpha(p)_{--}$ for any $p \in \mathcal{P}^{\partial}$. 
 Choose a such $v$ which is minimal for the order $<_{\mathfrak{o}^+}$ in $a$. Let $p, p'\in \mathcal{P}^{\partial}$ be the punctures adjacent to $a$ 
 such that the $\mathfrak{o}^+$ orientation of $a$ points from $p$ to $p'$. Let $b\in \mathcal{A}$ be the boundary edge adjacent to $p$ right before  $a$ (possibly $a=b$ if the connected component of $\partial \Sigma$ containing $a$ does not contain any other boundary edge, i.e. if $p=p'$). 
 If $w\in \partial_a D$ is such that $w<_{\mathfrak{o}^+}v$ and $\beta \subset D$ is the arc with endpoint $w$, then $(\beta, s_{| \partial \beta})$ is isotopic to $\alpha(p)_{--}$. Let $[D',s']$ be the element obtained from $[D,s]$ by slightly pushing $v\in a$ to $v' \in b$ above such arcs $\beta$ and by setting $s'(v')=+$ and by adding a copy of $\alpha(p)_{--}$ as illustrated in Figure \ref{fig_move}. Using the same skein relation \eqref{eq: skein 2} than previously, one finds that $[D',s']=A^{-1/2}[D,s]$. If $[D',s']\in \mathcal{B}^M$ we set $g([D,s]):=[D',s']$. Else, we can find another point $w'\in \partial D'$ and repeat the process. Since the operation $[D,s]\mapsto [D',s']$ decreases the number of endpoints of $D$ with state $-$, after a finite number of steps we obtain an element $g([D,s])\in \mathcal{B}^M$. Clearly $f$ and $g$ are inverse to each other so $f$ is a bijection and the proof is complete.

 \begin{figure}[!h] 
\centerline{\includegraphics[width=8cm]{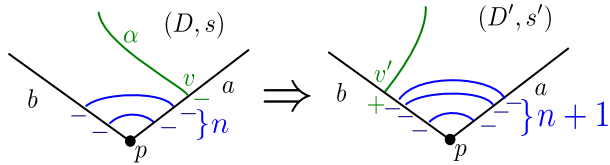} }
\caption{An illustration of the operation $[D,s]\mapsto [D',s']$.} 
\label{fig_move} 
\end{figure}

 \end{proof}
 
 \begin{lemma}\label{lemma_heightexch} Let  $(T,s)$ and $(T',s')$ be two stated tangles in $\mathbf{\Sigma}\times (0,1)$ in standard position (in the sense of Definition \ref{def_tangles})  whose projection on $\mathbf{\Sigma}\times \{1/2\}$ are both equal to the same element $(D,s)$ such that either $[D,s]\in \mathcal{B}^L$ or $[D,s]\in \mathcal{B}^M$.  Then there exists $n\in \mathbb{Z}$ such that $[T,s]=A^{n/2}[T',s']$.
 \end{lemma}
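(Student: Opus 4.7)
The plan is to reduce the comparison of $[T,s]$ and $[T',s']$ to a sequence of transpositions of adjacent heights on a common boundary edge, and then invoke L\^e's height exchange relations.

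First I would observe that two stated tangles $(T,s)$ and $(T',s')$ in standard position whose projections agree with the same stated diagram $(D,s)$ differ only by a permutation of the heights on each set $\partial_a T\cong \partial_a D$, since any isotopy can slide boundary endpoints freely along each $a\in\mathcal{A}$ provided it preserves the total order by height. Consequently the isotopy class of a lift of $(D,s)$ is determined by the choice, for each boundary edge $a$, of a total order on the finite set $\partial_a D$; any two such orders are connected by a sequence of transpositions of consecutive elements, so by induction on the length of such a sequence it suffices to prove the statement in the case where $T'$ is obtained from $T$ by swapping the heights of two height-adjacent endpoints $v_1,v_2$ lying on a common boundary edge $a$.

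For this case, the essential input is L\^e's height exchange relations \cite[Lemma $2.4$]{LeStatedSkein}, two instances of which have already been recalled in the proof of Lemma \ref{lemma_Muller_basis} above, namely
\[\heightexch{<-}{+}{-} \;=\; A\,\heightexch{->}{+}{-} \qquad \text{and} \qquad \heightexch{<-}{-}{-} \;=\; A^{-1}\,\heightexch{->}{-}{-}.\]
The full collection consists of one such identity for each pair of states $(\varepsilon_1,\varepsilon_2)\in\{-,+\}^2$, and in every case exchanging the heights of two height-adjacent boundary endpoints carrying states $\varepsilon_1,\varepsilon_2$ multiplies the class of the tangle in $\overline{\mathcal{S}}_A(\mathbf{\Sigma})$ by a scalar power of $A^{1/2}$. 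It is important to distinguish these height exchange relations, which are pure scalar equalities, from the state-reduction relation derived from \eqref{eq: skein 2}, which relates stated tangles with different state assignments at the boundary and does produce an additional cap term $\heightcurve$; in our setting $(T,s)$ and $(T',s')$ carry identical states, so only the height exchange relations are needed.

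Composing the scalars arising from the transpositions that decompose the permutation relating $T$ to $T'$ yields the desired identity $[T,s]=A^{n/2}[T',s']$ for some $n\in\mathbb{Z}$. The hypothesis $[D,s]\in\mathcal{B}^L\cup\mathcal{B}^M$ plays a secondary role in the argument, ensuring only that $D$ is simple so that no interior crossings of the tangle force the invocation of the Kauffman relation \eqref{eq: skein 1} during the local height-exchange moves near the boundary, and so that the resulting basis elements are nonzero. The only modest technical point is the verification of the height exchange formulas in each of the four state configurations, which is reduced in \cite{LeStatedSkein} to direct computation with the defining skein relations.
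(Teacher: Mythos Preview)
Your overall strategy---reduce to a single transposition of adjacent heights and then invoke L\^e's height exchange relations---is the right one, and it is also the paper's. But there is a genuine gap in your case analysis. You assert that L\^e's relations \cite[Lemma $2.4$]{LeStatedSkein} give a pure scalar identity for \emph{every} pair of states $(\varepsilon_1,\varepsilon_2)\in\{-,+\}^2$, and on that basis you dismiss the hypothesis $[D,s]\in\mathcal{B}^L\cup\mathcal{B}^M$ as ``secondary.'' This is not correct: only three of the four configurations yield pure scalar height exchanges, namely
\[
\heightexch{->}{+}{+}= A \heightexch{<-}{+}{+},\qquad
\heightexch{->}{-}{-}= A \heightexch{<-}{-}{-},\qquad
\heightexch{->}{+}{-}= A^{-1} \heightexch{<-}{+}{-}.
\]
For the remaining configuration $\heightexch{->}{-}{+}$, swapping heights produces an additional cap term of type $\heightcurve$; this is exactly the off-diagonal contribution $A-A^{-3}$ in the $R$-matrix, and it does not vanish in $\overline{\mathcal{S}}_A(\mathbf{\Sigma})$ for a general simple diagram (it is not a bad arc). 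So the exchange is not mere multiplication by a power of $A^{1/2}$, and your induction breaks down in this case.

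The paper closes this gap precisely by exploiting the hypothesis you downplay. It first reduces from $\mathcal{B}^M$ to $\mathcal{B}^L$ using the scalar relation $[D,s]=A^{n/2}[D',s']$ established in the proof of Proposition~\ref{prop_Muller_basis}, and then observes that for $[D,s]\in\mathcal{B}^L$ the state $s$ is $\mathfrak{o}^+$-increasing, so after arranging $v_1<_{\mathfrak{o}^+}v_2$ one has $(s(v_1),s(v_2))\in\{(+,+),(-,-),(-,+)\}$ and the problematic configuration $\heightexch{->}{-}{+}$ never occurs. The hypothesis is therefore essential, not secondary. To repair your argument you must either perform this reduction to $\mathcal{B}^L$ first (this is short, given Proposition~\ref{prop_Muller_basis}), or else argue separately that for diagrams in $\mathcal{B}^M$ the extra cap term in the bad case vanishes---but the latter is essentially the content of Lemma~\ref{lemma_Muller_basis} and requires the specific structure of $\mathcal{B}^M$.
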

 
 \begin{proof}
 As we proved during the proof of Proposition \ref{prop_Muller_basis}, for each $[D,s]\in \mathcal{B}^M$ there exists $[D',s']\in \mathcal{B}^L$ such that $[D,s]=A^{n/2}[D',s']$ for some $n\in \mathbb{Z}$. So we can assume that the common projection $(D,s)$ of $(T,s)$ and $(T',s')$ is such that $[D,s] \in \mathcal{B}^L$. 
 We say that $(T,s)$ and $(T',s')$ differ by an height exchange move if there exists $a\in \mathcal{A}$ and $v_1, v_2$ two  points in $\partial_a T$ with $h(v_1)<h(v_2)$ which are consecutive, in the sense that there is no $w\in \partial_aT$ with 
 $h(v_1)<_{\mathfrak{o}^+} h(w)<_{\mathfrak{o}^+} h(v_2)$, such that $(T',s')$ is obtained from $(T,s)$ by exchanging the heights of $v_1$ and $v_2$ (this means that $(T,s)\mapsto (T',s')$ is obtained by a local relation $\heightexch{->}{\varepsilon}{\varepsilon'} \mapsto \heightexch{<-}{\varepsilon}{\varepsilon'}$). Any two stated tangles in standard position with the same projection are related by a finite sequence of height exchanges so it suffices to prove the lemma in this case. Exchanging $(T,s)$ and $(T',s')$ if necessary, we can suppose without loss of generality than $v_1 <_{\mathfrak{o}^+} v_2$. 
  Since $[D,s]\in \mathcal{B}^L$, $s$ is $\mathfrak{o}^+$ increasing so  $( s(v_1), s(v_2)) \in \{ (+, +), (-, -), (-,+)\}$. By \cite[Lemma $2.4$]{LeStatedSkein}, one has the following skein relations
 $$ \heightexch{->}{+}{+}= A \heightexch{<-}{+}{+}, \quad  \heightexch{->}{-}{-}= A \heightexch{<-}{-}{-}, \quad  \heightexch{->}{+}{-}= A^{-1} \heightexch{<-}{+}{-}.$$
 So $[T,s]=A^{\pm 1/2}[T',s']$ and the lemma is proved.
 
 \end{proof}

 \subsection{Center, Azumaya locus and shadows}
 
 \begin{convention}
 From now on and in all the rest of the paper with the exception of Subsections \ref{sec_Muller} and \ref{sec_QCA}, we suppose that $k=\mathbb{C}$ and that $A^{1/2}$ is a root of unity of odd order $N\geq 3$ which is fixed once and for all. We denote by $Z_{\mathbf{\Sigma}}$ the center of $\overline{\mathcal{S}}_A(\mathbf{\Sigma})$.
  \end{convention}
  In this subsection, we describe $Z_{\mathbf{\Sigma}}$ and define the Azumaya locus $\mathcal{AL}(\mathbf{\Sigma})$. Stated skein algebras and their reduced versions belong to the following specific class of algebras.
  
  \begin{definition}[Almost Azumaya algebras] A $\mathbb{C}$ associative, unital algebra $\mathscr{A}$ is said \textbf{almost Azumaya} if $(i)$ $\mathscr{A}$ is affine (finitely generated as an algebra), $(ii)$ $\mathscr{A}$ is prime and $(iii)$ $\mathscr{A}$ is finitely generated as a module over its center $Z$. 
  \end{definition}
  Let $Q(Z)$ denote the fraction field of $Z$ obtained by localizing by every non zero element. By a theorem of Posner-Formaneck (see \cite[Theorem $I.13.3$]{BrownGoodearl}), $\mathscr{A}\otimes_Z Q(Z)$ is a central simple algebra and there exists a finite extension $F$ of $Q(Z)$ such that $\mathscr{A} \otimes_Z F\cong \Mat_D(F)$ is a matrix algebra of size $D$. The integer $D$ is called the \textit{PI-degree} of $\mathscr{A}$ and is characterized by the formula $\dim_{Q(Z)}(\mathscr{A}\otimes_ZQ(Z)) = D^2$. Let $\mathcal{X}:= \Specm(Z)$ and for $x\in X$ corresponding to a maximal ideal $\mathfrak{m}_x$, consider the finite dimensional algebra $\mathscr{A}_x:= \quotient{\mathscr{A}}{\mathfrak{m}_x \mathscr{A}}$.

\begin{definition}
Let $\rho: \mathscr{A}\to \End(V)$ be a representation. 
\begin{enumerate}
\item 
$\rho$ is a  \textbf{central representation}  if 
it sends central elements  to  scalar operators. In this case, it induces a character over the center $Z$, so a  point in $\mathcal{X}$ named its \textbf{shadow}.
\item $\rho$ is a \textbf{weight representation} if $V$ is semi-simple as a $Z$-module, i.e. if $V$ is a direct sum of central representations.
\end{enumerate}
\end{definition}

Indecomposable weight representations are clearly central. A central representation  $\rho: \mathscr{A}\to \End(V)$ with shadow $x$ induces by quotient a representation $\rho : \mathscr{A}_x \to \End(V)$.
  
  \begin{definition}[Azumaya locus] The \textbf{Azumaya locus} of $\mathscr{A}$ is the subset
  $$ \mathcal{AL}:= \{x \in X : \mathscr{A}_x \cong \Mat_D(\mathbb{C}) \}.$$
  \end{definition}
  
  Note that $\Mat_D(\mathbb{C})$ is semi-simple and every non trivial simple module is isomorphic to the standard module $\mathbb{C}^D$.
So if $x\in \mathcal{AL}$, there exists a unique indecomposable representation of $\mathscr{A}$ with shadow $x$ (up to isomorphism); this representation is irreducible and has dimension $D$ and is called \textbf{the Azumaya representation} at $x$.
The following theorem is essentially due to De Concini-Kac in their original work on quantum enveloping algebras \cite{DeConciniKacRepQGroups} and was further generalized by several authors including \cite{DeConciniLyubashenko_OqG, Brown_AL_discriminant, BrownGoodearl, FrohmanKaniaLe_UnicityRep}.
\begin{theorem}\label{theorem_AL} Let $\mathscr{A}$ be an almost Azumaya algebra with PI-degree $D$.
\begin{enumerate}
\item (\cite[Theorem III.I.7]{BrownGoodearl}) The Azumaya locus is a Zariski open dense subset.
\item  (\cite[Theorem III.I.6]{BrownGoodearl}) If $\rho: \mathscr{A}\to \End(V)$ is an irreducible representation whose  shadow is not in the Azumaya locus, then $\dim(V)<D$. So the Azumaya locus admits the following alternative description:
$$\mathcal{AL}=\{ x \in \mathcal{X} | x \mbox{ is the shadow of an irreducible representation of maximal dimension }D\}.$$
  Therefore, if $\rho:\mathscr{A} \to \End(V)$ is a $D$-dimensional central representation inducing $x\in \mathcal{X}$, then $\rho$ is irreducible if and only if $x\in \mathcal{AL}$.
\end{enumerate}
\end{theorem}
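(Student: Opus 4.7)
The plan is to deduce both items from the Posner-Formaneck theorem recalled just before the statement, together with two classical structural tools for PI-algebras over $\mathbb{C}$: Kaplansky's bound (any simple $\mathscr{A}$-module has dimension at most $D$) and Jacobson density. All arguments proceed by comparing a closed fibre $\mathscr{A}_x=\mathscr{A}/\mathfrak{m}_x\mathscr{A}$ with the generic fibre $\mathscr{A}\otimes_Z Q(Z)$, which has dimension $D^2$ over $Q(Z)$.

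For item~(1), I would invoke the Artin-Procesi characterization of Azumaya algebras via the reduced trace form. Let $\tau$ denote the reduced trace attached to the splitting $\mathscr{A}\otimes_Z F\cong \Mat_D(F)$, and choose $Z$-generators $a_1,\ldots,a_N$ of $\mathscr{A}$ so that $\tau(a_ia_j)\in Z$ (possibly after clearing denominators). The Gram determinant $d:=\det\bigl(\tau(a_ia_j)\bigr)\in Z$ is nonzero because Posner-Formaneck forces $\tau$ to be non-degenerate on the generic fibre; the principal open $D(d)\subset\mathcal{X}$ is therefore nonempty and hence dense, as $Z$ is a domain. On $D(d)$ the induced trace pairing on $\mathscr{A}_x$ remains non-degenerate, and the Artin-Procesi theorem identifies this condition with the Azumaya condition, so $D(d)\subset \mathcal{AL}$. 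Zariski-openness of $\mathcal{AL}$ follows from the same criterion, or directly from the observation that the condition $\mathscr{A}_x\cong \Mat_D(\mathbb{C})$ is Zariski-open on a scheme of finite type.

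For item~(2), let $\rho\colon \mathscr{A}\to \End(V)$ be irreducible with shadow $x$. Then $\rho$ factors through $\mathscr{A}_x$, Jacobson density gives a surjection $\mathscr{A}_x\twoheadrightarrow \End(V)\cong \Mat_{\dim V}(\mathbb{C})$, and Kaplansky's theorem yields $\dim V\leq D$. Conversely, on $\mathcal{AL}$ the fibre $\mathscr{A}_x\cong \Mat_D(\mathbb{C})$ has a unique simple module of dimension~$D$, so every irreducible representation with shadow in $\mathcal{AL}$ has the maximal dimension. To upgrade $\dim V=D$ to $x\in \mathcal{AL}$, one combines the surjection $\mathscr{A}_x\twoheadrightarrow \Mat_D(\mathbb{C})$ with the trace-form argument of item~(1): the existence of a $D$-dimensional quotient of $\mathscr{A}_x$ forces the trace pairing of the ambient $\Mat_D(\mathbb{C})$ to lift non-degenerately to $\mathscr{A}_x$, collapsing any Jacobson radical and extra blocks. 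The final sentence of the theorem is then immediate: for a $D$-dimensional central $\rho$, irreducibility is equivalent to $\mathscr{A}_x\twoheadrightarrow \End(V)$ being surjective, and by a dimension count this is equivalent to $x\in\mathcal{AL}$.

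The main obstacle, and the reason one cites Brown-Goodearl rather than reproving from scratch, is the Artin-Procesi identification of $\mathcal{AL}$ with the non-vanishing locus of the discriminant $d$. The delicate point is that $\mathscr{A}$ need not be a free (or even projective) $Z$-module, so the block structure of $\mathscr{A}_x$ has to be controlled via the trace form rather than via naive fibre-dimension bookkeeping; once this identification is granted, the remaining ingredients (Posner-Formaneck, Kaplansky, Jacobson density, and Artin-Wedderburn over~$\mathbb{C}$) assemble cleanly.
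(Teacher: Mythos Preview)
The paper does not supply its own proof of this theorem: it is stated as a citation of Brown--Goodearl (Theorems~III.1.6 and~III.1.7) with no accompanying argument. So there is no ``paper's proof'' to compare against; your proposal is essentially a sketch of the standard proof that those references contain.

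Your outline for item~(1) via the discriminant of the reduced trace form and Artin--Procesi is indeed the orthodox route, and you correctly flag the non-projectivity of $\mathscr{A}$ over $Z$ as the technical wrinkle. For item~(2), the direction ``$x\in\mathcal{AL}\Rightarrow$ every simple over $x$ has dimension $D$'' and Kaplansky's bound $\dim V\le D$ are fine. The step that is genuinely underspecified is the converse: from a surjection $\mathscr{A}_x\twoheadrightarrow \Mat_D(\mathbb{C})$ you want $\mathscr{A}_x\cong \Mat_D(\mathbb{C})$, but your sentence about the trace pairing ``lifting non-degenerately'' does not do this --- pulling back a non-degenerate form along a surjection with nonzero kernel is degenerate, not the other way around. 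The actual mechanism in Brown--Goodearl is that off the Azumaya locus some central polynomial for $D\times D$ matrices vanishes identically on $\mathscr{A}_x$, forcing every simple $\mathscr{A}_x$-module to satisfy a stronger identity and hence have dimension strictly less than $D$. You gesture at this by invoking Artin--Procesi, but the logical flow you wrote runs in the wrong direction at that point. Since you already concede that this is precisely why one cites the reference rather than reproving it, the proposal is acceptable as a roadmap but would need that paragraph rewritten to be a proof.
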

   
   We now explain why (reduced) stated skein algebras are almost Azumaya and describe their centers. Condition $(ii)$ follows from the 
   \begin{theorem}\label{theorem_domain}(\cite{LeStatedSkein, BonahonWongqTrace, PrzytyckiSikora_SkeinDomain, CostantinoLe19}) Both $\mathcal{S}_A(\mathbf{\Sigma})$ and  $\overline{\mathcal{S}}_A(\mathbf{\Sigma})$ are domain.
   \end{theorem}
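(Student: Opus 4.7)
The plan is to treat the two statements in parallel by reducing to two well understood situations: a quantum torus and the classical Kauffman bracket skein algebra of a closed surface. Observe first that if $\mathbf{\Sigma}=\mathbf{\Sigma}_1 \sqcup \mathbf{\Sigma}_2$ is a disjoint union, then $\mathcal{S}_A(\mathbf{\Sigma}) \cong \mathcal{S}_A(\mathbf{\Sigma}_1) \otimes \mathcal{S}_A(\mathbf{\Sigma}_2)$ (and similarly for the reduced version), so being a domain is preserved under tensor product over a field. Hence one may assume $\mathbf{\Sigma}$ is connected.

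For $\overline{\mathcal{S}}_A(\mathbf{\Sigma})$ when $\mathcal{A}\neq\emptyset$, I would fix an ideal triangulation $\Delta$ of $\mathbf{\Sigma}$ and invoke the Bonahon-Wong/Costantino-L\^e quantum trace map $\Tr_q : \overline{\mathcal{S}}_A(\mathbf{\Sigma}) \hookrightarrow \mathcal{Z}_A(\mathbf{\Sigma}, \Delta)$, recalled in the remark following Definition \ref{def_Muller_basis1}. The target is a balanced sub-quantum-torus inside a quantum torus, which is manifestly a domain: its monomials form a basis whose $q$-commutation is by units, so the leading monomial of a product is the product of the leading monomials. Being a subalgebra of a domain, $\overline{\mathcal{S}}_A(\mathbf{\Sigma})$ itself is a domain. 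For the unreduced stated skein algebra $\mathcal{S}_A(\mathbf{\Sigma})$ with $\mathcal{A}\neq\emptyset$, I would then compose with the L\^e-Yu embedding $j : \mathcal{S}_A(\mathbf{\Sigma}) \hookrightarrow \overline{\mathcal{S}}_A(\mathbf{\Sigma}^*)$ introduced in the introduction; since $\mathbf{\Sigma}^*$ is again a marked surface with at least one boundary edge, the previous step applies, so $\overline{\mathcal{S}}_A(\mathbf{\Sigma}^*)$ is a domain and hence so is any subalgebra.

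It remains to treat the unmarked case $\mathcal{A}=\emptyset$, where $\overline{\mathcal{S}}_A(\mathbf{\Sigma}) = \mathcal{S}_A(\mathbf{\Sigma})$ is the usual Kauffman bracket skein algebra. Here I would reproduce the Przytycki-Sikora leading-term argument: choose a sufficiently rich finite family of essential simple closed curves on $\Sigma$ (for instance the curves of a pants decomposition together with a dual system) and use it to put a $\mathbb{Z}_{\geq 0}$-filtration on $\mathcal{S}_A(\mathbf{\Sigma})$ indexed by the tuple of minimal geometric intersection numbers of a simple diagram with this family. Using L\^e's basis $\mathcal{B}^L$, show that for two non-zero elements $x,y$, the associated graded product $\mathrm{gr}(x)\cdot\mathrm{gr}(y)$ equals, up to a power of $A^{1/2}$, the superposition diagram in which every crossing between a component of the leading support of $x$ and one of $y$ is resolved by the $A$-smoothing, and that this superposition is itself a non-zero basis element of the associated graded. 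This forces $xy\neq 0$.

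The main obstacle I expect is the combinatorial verification in the last step: one must choose the filtrating family of curves carefully enough that (i) the skein relation \eqref{eq: skein 1} is compatible with the filtration (crossings of higher intersection resolve strictly below the leading degree), and (ii) the superposition of two simple supports is again simple up to removing trivial components, so that it represents a non-zero basis vector. Once these two points are established, the inductive leading-term argument runs exactly as in the unmarked Kauffman-bracket case originally treated by Przytycki-Sikora, and the theorem follows.
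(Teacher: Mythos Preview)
The paper does not give a proof of this theorem; it only cites the four references and, in the sentence following the statement, attributes each case (closed unmarked, open unmarked, marked) to the relevant paper. So there is no ``paper's proof'' to compare against---only the original references.

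Your sketch is essentially correct and tracks the literature closely for the reduced and unmarked cases: embedding $\overline{\mathcal{S}}_A(\mathbf{\Sigma})$ into a quantum torus via the quantum trace is exactly the Costantino--L\^e argument, and the Przytycki--Sikora filtration handles the closed unmarked case. Two remarks. First, your route for the \emph{unreduced} marked algebra---pulling back along the L\^e--Yu embedding $j:\mathcal{S}_A(\mathbf{\Sigma})\hookrightarrow\overline{\mathcal{S}}_A(\mathbf{\Sigma}^*)$---is not the original one: L\^e's proof in \cite{LeStatedSkein} proceeds directly by a filtration/degeneration to a monomial subalgebra of a quantum torus, without passing through $\mathbf{\Sigma}^*$. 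Your shortcut is logically valid provided the proof of injectivity of $j$ in \cite{LeYu_SSkeinQTraces} does not itself invoke the domain property (it is proved by basis considerations, so this is fine, but you should check the dependency if you present this as a self-contained argument). Second, the quantum trace requires an ideal triangulation, so you should dispose separately of the small non-triangulable exceptions (disc with $0$ or $1$ boundary edges, bigon, sphere), whose skein algebras are $\mathbb{C}$ or $\mathbb{C}[X^{\pm1}]$ and hence trivially domains.
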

   More precisely, Theorem \ref{theorem_domain} is proved in \cite{PrzytyckiSikora_SkeinDomain} for closed (thus unmarked) surfaces, in \cite{BonahonWongqTrace} for unmarked open surfaces and in \cite{LeStatedSkein, CostantinoLe19} for marked surfaces. 
  \par 
  Consider a stated arc $\alpha_{ij}$ in some marked surface $\mathbf{\Sigma}$ and denote by $\alpha_{ij}^{(N)}$ the stated tangle obtained by taking $N$ parallel copies of $\alpha_{ij}$ pushed along the framing direction. In the case where both endpoints of $\alpha$ lye in two distinct boundary edges, one has the equality
$ \alpha_{ij}^{(N)} = (\alpha_{ij})^N$
in $\overline{\mathcal{S}}_A(\mathbf{\Sigma})$, but when both endpoints lye in the same boundary edge, they are distinct. More precisely, suppose the two endpoints, say $v$ and $w$, of $\alpha$ lie in the same boundary edge  with $h(v)>h(w)$. Then  $\alpha_{ij}^{(N)}$ is defined by a stated tangle $(\alpha^{(N)}, s)$ where $\alpha^{(N)}=\alpha_1 \cup \ldots \cup \alpha_N$ represents $N$ copies of $\alpha$ and the endpoints $v_i, w_i$ of the copy $\alpha_i$ are chosen such that $h(v_N) > \ldots > h(v_1) > h(w_N)>\ldots >h(w_1)$.

\begin{definition}[Chebyshev polynomials]  The $N$-th \textbf{Chebyshev polynomial} of the first kind is the polynomial  $T_N(X) \in \mathbb{Z}[X]$ defined by the recursive formulas $T_0(X)=2$, $T_1(X)=X$ and $T_{n+2}(X)=XT_{n+1}(X) -T_n(X)$ for $n\geq 0$.
\end{definition}

\begin{theorem}\label{theorem_Frobenius}(\cite{BonahonWongqTrace} for unmarked surfaces,  \cite{KojuQuesneyClassicalShadows} for marked surfaces;  see also \cite{BloomquistLe, LePaprocki2018})
There is an embedding 
$$ Fr_{\mathbf{\Sigma}}: {\mathcal{S}}_{+1}(\mathbf{\Sigma}) \hookrightarrow \mathcal{Z}({\mathcal{S}}_{A}(\mathbf{\Sigma}))$$
sending the (commutative) algebra at $+1$ into the center of the stated skein algebra at $A^{1/2}$. Moreover, $Fr_{\mathbf{\Sigma}}$ is characterized by the facts that if $\gamma$ is a loop, then $Fr_{\mathbf{\Sigma}}(\gamma) = T_N(\gamma)$ and if $\alpha_{ij}$ is a stated arc, then $Fr_{\mathbf{\Sigma}}(\alpha_{ij})= \alpha_{ij}^{(N)}$. It passes to the quotient to define an embedding (still denoted by the same letter): 
$$ Fr_{\mathbf{\Sigma}}: \overline{\mathcal{S}}_{+1}(\mathbf{\Sigma}) \hookrightarrow {Z}_{\mathbf{\Sigma}}.$$
\end{theorem}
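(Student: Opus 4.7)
The plan is to construct $Fr_{\mathbf{\Sigma}}$ by prescribing its values on generators of $\mathcal{S}_{+1}(\mathbf{\Sigma})$ (loops and stated arcs), verifying that these values lie in $Z_{\mathbf{\Sigma}}$ and respect the defining relations of $\mathcal{S}_{+1}(\mathbf{\Sigma})$, and finally establishing injectivity via a basis argument. I would treat the unmarked case first (the classical Bonahon-Wong result on loops) and then extend to stated arcs on general marked surfaces, before descending to the reduced quotient.

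The crux is proving a \emph{transparency} property: for every loop $\gamma$ the class $T_N(\gamma)$ commutes with every element of $\mathcal{S}_A(\mathbf{\Sigma})$, and likewise $\alpha_{ij}^{(N)}$ for every stated arc. It suffices to check commutativity against another loop or stated arc crossing $\gamma$ (or $\alpha$) transversely in a single point. Iterating the skein relation \eqref{eq: skein 1} across the $N$ parallel copies of $\gamma$ and applying the Chebyshev recurrence $T_{n+2}(X)=XT_{n+1}(X)-T_n(X)$ yields a telescoping sum whose coefficients collapse precisely because $A^{2N}=1$. For stated arcs the analogous computation rests on the height-exchange identities of \eqref{eq: skein 2}, whose $A^{\pm 1}$ and $A^{\pm 5/2}$ factors become central scalars when raised to the $N$-th power at our root of unity.

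Once transparency is in hand, well-definedness reduces to checking each skein relation at $A=+1$. Each such relation, after applying $Fr_{\mathbf{\Sigma}}$, becomes an identity between $N$-cabled configurations in $\mathcal{S}_A(\mathbf{\Sigma})$, which, by the transparency of cables, may be reduced to the same relation carried out inside a single skein box (producing the corresponding identity at $A=+1$). For injectivity I would compare L\^e bases: the image of $[D,s]\in\mathcal{B}^L$ is the $N$-cabled stated diagram $[D^{(N)},s^{(N)}]$, and its expansion in $\mathcal{B}^L$ at $A$ is triangular with respect to a natural filtration by number of components, hence injective. To descend to the reduced algebra, note that the image of the bad arc $\alpha(p)_{-+}$ is $\alpha(p)_{-+}^{(N)}$; by iterating the height-exchange and bigon arguments in the spirit of Lemma \ref{lemma_Muller_basis}, this lies in the ideal generated by $\alpha(p)_{-+}$.

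The main obstacle is the centrality step, which involves a delicate interplay between the Chebyshev recurrence and the stated skein relations, especially for arcs with both endpoints on the same boundary edge. The cleanest route, following \cite{BonahonWongqTrace, LePaprocki2018}, is to use the quantum trace embedding $\Tr_q:\overline{\mathcal{S}}_A(\mathbf{\Sigma})\hookrightarrow \mathcal{Z}_A(\mathbf{\Sigma},\Delta)$ into a quantum torus, where the Frobenius morphism is transparent (each generator is sent to its $N$-th power, which is central because $A^{2N}=1$), and then verify that $\Tr_q$ intertwines the two Frobenius maps; centrality, well-definedness, and injectivity then transfer automatically from the quantum torus. This reduces the combinatorial skein manipulation to a short linear-algebraic verification, and also makes the descent to the reduced algebra essentially automatic since $\Tr_q$ is already defined on $\overline{\mathcal{S}}_A(\mathbf{\Sigma})$.
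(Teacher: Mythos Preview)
The paper does not prove this theorem; it is stated as a known result with citations to \cite{BonahonWongqTrace, KojuQuesneyClassicalShadows, BloomquistLe, LePaprocki2018}, and the paragraph following the statement only explains why specifying $Fr_{\mathbf{\Sigma}}$ on loops and stated arcs suffices to characterize it (because those generate $\mathcal{S}_A(\mathbf{\Sigma})$) and points to alternative derivations via M\"uller skein algebras. So there is no in-paper proof to compare against.

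Your sketch is broadly consistent with how the cited references proceed. The ``transparency'' step (centrality of $T_N(\gamma)$ and $\alpha_{ij}^{(N)}$) is indeed the heart of the matter, and the two routes you describe---direct skein computation with the Chebyshev recurrence, or factoring through the quantum trace into a quantum torus where $N$-th powers are manifestly central---are exactly the strategies of \cite{BonahonWongqTrace} and \cite{LePaprocki2018, KojuQuesneyClassicalShadows} respectively. One caveat: your quantum-trace route only directly handles the \emph{reduced} algebra $\overline{\mathcal{S}}_A(\mathbf{\Sigma})$, since $\Tr_q$ is injective only after quotienting by bad arcs; to get the Frobenius on the unreduced $\mathcal{S}_A(\mathbf{\Sigma})$ you would still need the direct combinatorial argument (or the comodule-algebra methods of \cite{KojuQuesneyClassicalShadows}). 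Also, your injectivity argument via triangularity in $\mathcal{B}^L$ is plausible but would need care for closed loops, where $T_N(\gamma)$ is a polynomial in $\gamma$ rather than an $N$-cable.
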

$Fr_{\mathbf{\Sigma}}$ is called the \textbf{Frobenius morphism}.
It is proved in \cite{KojuQuesneyClassicalShadows} that  ${\mathcal{S}}_A(\mathbf{\Sigma})$ is generated by the classes of loops and stated arcs so $Fr_{\mathbf{\Sigma}}$ is indeed characterized by specifying its image on these generators.
A generalization of Theorem \ref{theorem_Frobenius} for marked $3$-manifolds was done in \cite{LeKauffmanBracket} in the unmarked case and \cite{BloomquistLe} for marked $3$-manifolds. As we shall review in Section \ref{sec_cluster_algebras}, by \cite[Theorem $5.2$]{LeYu_SSkeinQTraces} the reduced stated skein algebras are localizations of the M\"uller skein algebras. The existence of Frobenius morphisms for M\"uller skein algebras was proved in \cite{LePaprocki2018} from which we could derive $Fr_{\mathbf{\Sigma}}$ as well.

 \begin{definition}[Punctures and boundary central elements]\label{def_central_elements}
 \begin{enumerate}
 \item   For $p\in \mathring{\mathcal{P}}$ an inner puncture, we denote by $\gamma_p \in \overline{\mathcal{S}}_A(\mathbf{\Sigma})$ the class of a peripheral curve encircling $p$ once.
 \item  For $\partial \in \Gamma^{\partial}$ a boundary component which intersects $\mathcal{A}$ non trivially, denote by $p_1, \ldots, p_n$ the boundary punctures in $\partial$ cyclically ordered by $\mathfrak{o}^+$  and define the elements in $\overline{\mathcal{S}}_A(\mathbf{\Sigma})$:
  $$ \alpha_{\partial} := \alpha(p_1)_{++} \ldots \alpha(p_n)_{++}, \quad \mbox{ and } \quad \alpha_{\partial}^{-1}:= \alpha(p_1)_{--} \ldots \alpha(p_n)_{--}.$$
 \end{enumerate}
 \end{definition}
 
 The elements $\alpha_{\partial}$ and $\alpha_{\partial}^{-1}$ are inverse to each other (see \cite{KojuAzumayaSkein}), hence the notation. 
 
 \begin{definition}[PI-degree]
 \begin{enumerate}
 \item A connected marked surface is \textbf{small} if it is either a disc with $0$ or $1$  boundary arc or an unmarked sphere. 
  \item Let $\mathbf{\Sigma}=(\Sigma_{g,n}, \mathcal{A})$ be a connected marked surface of genus $g$ with $n$ boundary component which is not small. Set $D_{\mathbf{\Sigma}}:= N^{3g-3+n+|\mathcal{A}|}$. If $\mathbf{\Sigma}$ is small, we set $D_{\mathbf{\Sigma}}=1$. We extend it to non connected surfaces by the formula $D_{\mathbf{\Sigma}\bigsqcup \mathbf{\Sigma}'}:= D_{\mathbf{\Sigma}} D_{\mathbf{\Sigma}'}$.
  \end{enumerate}
 \end{definition}
 
 \begin{lemma}\label{lemma_additivityD}
 Let $\mathbf{\Sigma}_1, \mathbf{\Sigma}_2$ be two marked surfaces  and let $a_1, a_2$ be some boundary edges of $\mathbf{\Sigma}_1, \mathbf{\Sigma}_2$ respectively which do not belong to a small component. Then 
 $$ D_{\mathbf{\Sigma}_1 \cup_{a_1 \# a_2}\mathbf{\Sigma}_2} = D_{\mathbf{\Sigma}_1} D_{\mathbf{\Sigma}_2}.$$
 \end{lemma}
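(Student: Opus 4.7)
The plan is a direct topological bookkeeping. By multiplicativity $D_{\mathbf{\Sigma}\sqcup\mathbf{\Sigma}'}=D_{\mathbf{\Sigma}}D_{\mathbf{\Sigma}'}$, the connected components of $\mathbf{\Sigma}_1$ and $\mathbf{\Sigma}_2$ which do not contain $a_1, a_2$ appear as untouched direct factors on both sides of the desired equality and can be factored out. Hence I reduce to the case where $\mathbf{\Sigma}_1$ and $\mathbf{\Sigma}_2$ are both connected and non-small, write $\mathbf{\Sigma}_i=(\Sigma_{g_i,n_i},\mathcal{A}_i)$, and let $\mathbf{\Sigma}:=\mathbf{\Sigma}_1\cup_{a_1\#a_2}\mathbf{\Sigma}_2=(\Sigma_{g,n},\mathcal{A})$, which is connected.

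The next step is to express $g, n, |\mathcal{A}|$ in terms of $g_i, n_i, |\mathcal{A}_i|$. The gluing identifies two disjoint surfaces along a contractible interval, so $\chi(\Sigma)=\chi(\Sigma_1)+\chi(\Sigma_2)-1$. The boundary component of $\Sigma_1$ containing $a_1$ and the one of $\Sigma_2$ containing $a_2$ merge into a single boundary component of $\Sigma$ while all other boundary components are preserved, giving $n=n_1+n_2-1$. Combining these two identities via $\chi(\Sigma_{g,n})=2-2g-n$ yields $g=g_1+g_2$. The boundary edges $a_1, a_2$ disappear upon gluing, hence $|\mathcal{A}|=|\mathcal{A}_1|+|\mathcal{A}_2|-2$.

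Before applying the formula $D_{\bullet}=N^{3g-3+n+|\mathcal{A}|}$ to $\mathbf{\Sigma}$, I check that $\mathbf{\Sigma}$ is itself non-small. It cannot be an unmarked sphere since $n\geq 1$. If it were a disc with at most one boundary edge, then $g=0$, $n=1$, forcing $g_1=g_2=0$ and $n_1=n_2=1$, so each $\mathbf{\Sigma}_i$ is a disc; the non-smallness hypothesis then gives $|\mathcal{A}_i|\geq 2$, yielding $|\mathcal{A}|\geq 2$, a contradiction. Substituting now into the exponent:
$$3g-3+n+|\mathcal{A}| = 3(g_1+g_2)-3+(n_1+n_2-1)+(|\mathcal{A}_1|+|\mathcal{A}_2|-2) = \sum_{i=1}^{2}\bigl(3g_i-3+n_i+|\mathcal{A}_i|\bigr),$$
which gives $D_{\mathbf{\Sigma}}=D_{\mathbf{\Sigma}_1}D_{\mathbf{\Sigma}_2}$.

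There is essentially no obstacle here: the statement reduces to a short Euler-characteristic and boundary-counting computation. The only mild subtlety is the case analysis to ensure $\mathbf{\Sigma}$ is not small, so that the single closed-form formula for $D$ applies on both sides; this is exactly where the hypothesis that $a_i$ lies in a non-small component of $\mathbf{\Sigma}_i$ is used.
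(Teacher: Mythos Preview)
Your proof is correct and follows the same approach as the paper's: a direct count of the topological invariants $g$, $n$, $|\mathcal{A}|$ under gluing and substitution into the closed-form exponent. In fact your argument is more complete than the paper's very terse version, which only records the change in $|\mathcal{A}|$ and in the number of boundary components; you also supply the genus computation via Euler characteristic and the check that the glued surface remains non-small, both of which the paper leaves implicit.
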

 
 \begin{proof} This follows from the facts that $\mathbf{\Sigma}_1 \cup_{a_1 \# a_2}\mathbf{\Sigma}_2$ has $| \mathcal{A}_1| + |\mathcal{A}_2| -2$ boundary edges and has $|\pi_0(\partial \Sigma_1)| + |\pi_0(\partial \Sigma_2)| -1$ boundary components.
  \end{proof}
 
 \begin{theorem}[\cite{FrohmanKaniaLe_UnicityRep} for unmarked surfaces, \cite{KojuAzumayaSkein} for marked surfaces]\label{theorem_center} 
 \begin{enumerate}
 \item The elements $\gamma_p$ and $\alpha_{\partial}^{\pm 1}$ are central and $Z_{\mathbf{\Sigma}}$ is generated by the image of the Frobenius morphism together with these elements. More precisely, $Z_{\mathbf{\Sigma}}$ is isomorphic to the quotient of $$ \overline{\mathcal{S}}_{+1}(\mathbf{\Sigma})[ \gamma_p, \alpha_{\partial}^{\pm 1}; p \in \mathring{P}, \partial \in \Gamma^{\partial} ]$$ by the relations $T_N(\gamma_p)=Fr_{\mathbf{\Sigma}}(\gamma_p)$ and $\alpha_{\partial}^N = Fr_{\mathbf{\Sigma}}(\alpha_{\partial})$.
 \item Both $\mathcal{S}_A(\mathbf{\Sigma})$ and $\overline{\mathcal{S}}_A(\mathbf{\Sigma})$ are finitely generated as algebras and are finitely generated over the images of the Frobenius morphisms (so over their center). Therefore they are almost Azumaya and their Azumaya loci are open dense.
 \item The PI-degree of $\overline{\mathcal{S}}_A(\mathbf{\Sigma})$ is $D_{\mathbf{\Sigma}}$.
 \end{enumerate}
 \end{theorem}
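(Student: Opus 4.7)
The plan is to transport the entire computation to the balanced Chekhov--Fock quantum torus $\mathcal{Z}_A(\mathbf{\Sigma},\Delta)$ via the Bonahon--Wong--L\^e quantum trace embedding $\Tr_q : \overline{\mathcal{S}}_A(\mathbf{\Sigma}) \hookrightarrow \mathcal{Z}_A(\mathbf{\Sigma},\Delta)$ attached to an ideal triangulation $\Delta$, since the analogous invariants of quantum tori at odd roots of unity are completely understood: the center is generated by the $N$-th powers of the generators, the algebra is free of finite rank over its center, and the PI-degree is $N^r$ where $r$ is half the rank of the defining skew form modulo $N$. Everything then reduces to identifying the image of $\overline{\mathcal{S}}_A(\mathbf{\Sigma})$ inside $\mathcal{Z}_A(\mathbf{\Sigma},\Delta)$.

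For item $(1)$, I would first verify the easy containment: each $\gamma_p$ is central because a peripheral loop can be isotoped into an arbitrarily thin collar of $p$ and therefore commutes skein-theoretically with any diagram, while each $\alpha_\partial$ is central by a direct check using the skein and state relations \eqref{eq: skein 1}--\eqref{eq: skein 2} (corner arcs braid trivially with all generators once one sums over states along $\partial$). Adding the central Frobenius image, this produces a subalgebra $Z^{\flat}\subset Z_{\mathbf{\Sigma}}$ presented by $T_N(\gamma_p)=Fr_{\mathbf{\Sigma}}(\gamma_p)$ and $\alpha_\partial^N=Fr_{\mathbf{\Sigma}}(\alpha_\partial)$, a relation one checks in the quantum torus where $\Tr_q(\alpha_\partial)$ and $\Tr_q(\gamma_p)$ are explicit monomials whose $N$-th powers coincide with $\Tr_q \circ Fr_{\mathbf{\Sigma}}$ on the corresponding elements. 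For the reverse inclusion, any central $z\in Z_{\mathbf{\Sigma}}$ becomes central in $\mathcal{Z}_A(\mathbf{\Sigma},\Delta)$, hence is a Laurent polynomial in $N$-th powers of the Chekhov--Fock generators; one then pulls this expression back through $\Tr_q$ using the explicit formulas from \cite{KojuQuesneyClassicalShadows} to recognise it as an element of $Z^{\flat}$.

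For item $(2)$, finite generation of $\mathcal{S}_A(\mathbf{\Sigma})$ as an algebra follows from \cite{KojuQuesneyClassicalShadows}, where arcs and small loops are shown to generate, and it passes to the quotient $\overline{\mathcal{S}}_A(\mathbf{\Sigma})$. Finite generation over the center then follows from the fact that the balanced Chekhov--Fock algebra is finite over the subalgebra of $N$-th powers of its generators (together with the inverted boundary monomials), combined with the characterisation of $Z_{\mathbf{\Sigma}}$ obtained in $(1)$; together with the domain property (Theorem \ref{theorem_domain}) this shows $\overline{\mathcal{S}}_A(\mathbf{\Sigma})$ is almost Azumaya, and Theorem \ref{theorem_AL} then gives openness and density of $\mathcal{AL}(\mathbf{\Sigma})$.

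For item $(3)$, the PI-degree is computed by extending scalars to $Q(Z_{\mathbf{\Sigma}})$ and using that, after inverting enough central elements, $\Tr_q$ becomes a Morita equivalence onto a quantum torus of known rank; a direct count using $6g-6+3n+2|\mathcal{A}|$ edges of $\Delta$ together with the balancing constraints and the extra central peripheral/boundary relations yields $\dim_{Q(Z_{\mathbf{\Sigma}})} \overline{\mathcal{S}}_A(\mathbf{\Sigma})\otimes Q(Z_{\mathbf{\Sigma}}) = N^{2(3g-3+n+|\mathcal{A}|)}=D_{\mathbf{\Sigma}}^2$, handling small components as boundary cases of the formula. The main obstacle is the second half of $(1)$: one must show that the skein elements which become central in the quantum torus are \emph{already} expressible in the generators $Fr_{\mathbf{\Sigma}}(\cdot)$, $\gamma_p$, $\alpha_\partial^{\pm 1}$, rather than merely lifting from some larger commutative subalgebra of $\mathcal{Z}_A(\mathbf{\Sigma},\Delta)$; this is the step where the explicit triangulation-dependent formulas for $\Tr_q$ on corner arcs and peripheral curves carry the real content.
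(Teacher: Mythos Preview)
The paper does not actually prove this theorem: it is quoted from \cite{FrohmanKaniaLe_UnicityRep} (unmarked case) and \cite{KojuAzumayaSkein} (marked case), so there is no in-paper argument to compare against. I can therefore only assess whether your sketch would stand on its own.

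The quantum-trace strategy is reasonable in spirit and is indeed close to how the cited references operate, but your write-up contains a genuine gap in item $(1)$. You assert that ``any central $z\in Z_{\mathbf{\Sigma}}$ becomes central in $\mathcal{Z}_A(\mathbf{\Sigma},\Delta)$''. An embedding of domains does not by itself transport centers: $z$ commutes with everything in the image of $\Tr_q$, but a priori not with the remaining Chekhov--Fock generators. To make this step work you need an additional input, e.g.\ a sandwich statement of the form $\overline{\mathcal{S}}_A(\mathbf{\Sigma})\subset \mathcal{Z}_A(\mathbf{\Sigma},\Delta)\subset$ (an Ore localization of $\overline{\mathcal{S}}_A(\mathbf{\Sigma})$), or equivalently that both algebras share the same skew field of fractions. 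Such statements are available in the literature but are nontrivial and triangulation-dependent; without invoking one explicitly, the inference fails. The approach actually used in \cite{FrohmanKaniaLe_UnicityRep, KojuAzumayaSkein} bypasses this issue by computing the center intrinsically via a graded/filtered basis argument on the skein algebra itself (essentially leading-term analysis in a basis like $\mathcal{B}^L$), rather than by descent from the quantum torus.

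A second, related issue is the one you yourself flag at the end: even once $z$ is known to lie in the center of the quantum torus (hence is a Laurent polynomial in $N$-th powers of the Chekhov--Fock generators), intersecting this with the image of $\Tr_q$ and recognising the result as a polynomial in $Fr_{\mathbf{\Sigma}}(\cdot)$, $\gamma_p$, $\alpha_{\partial}^{\pm 1}$ is the real content of the theorem; it is not a routine unwinding of formulas. Your outline for $(2)$ and $(3)$ is broadly correct (finite generation from \cite{KojuQuesneyClassicalShadows}, domain property from Theorem~\ref{theorem_domain}, PI-degree via a localization comparison with a quantum torus), but the edge count and balancing constraints you quote would need to be checked carefully against the actual Chekhov--Fock presentation for marked surfaces.
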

 
 \begin{definition}[Classical schemes]
 \begin{enumerate}
 \item We write $X(\mathbf{\Sigma}):= \Specm( \overline{\mathcal{S}}_{+1}(\mathbf{\Sigma}) )$ and $\widehat{X}(\mathbf{\Sigma}):= \Specm(Z_{\mathbf{\Sigma}})$ and denote by $\pi: \widehat{X}(\mathbf{\Sigma}) \to {X}(\mathbf{\Sigma})$ the morphism induced by $Fr_{\mathbf{\Sigma}}$.
 \item We also denote by $\widehat{\mathcal{X}}(\mathbf{\Sigma})$ the set of primes  $\mathcal{I} \in \Spec(Z_{\mathbf{\Sigma}})$ such that $\mathcal{I}\cap Z_{\mathbf{\Sigma}}^0$ is maximal in $Z_{\mathbf{\Sigma}}^0$ and denote by $\widehat{\pi}: \widehat{\mathcal{X}}(\mathbf{\Sigma}) \to X(\mathbf{\Sigma})$ the projection $\mathcal{I}\mapsto \mathcal{I}\cap Z_{\mathbf{\Sigma}}^0$.
 \item For $\mathbf{\Sigma}_1, \mathbf{\Sigma}_2$ with boundary edges $a_1$ and $a_2$, we denote by $ p_{a_1 \# a_2} : {X}(\mathbf{\Sigma}_1)\times {X}(\mathbf{\Sigma}_2) \to {X}(\mathbf{\Sigma}_1\cup_{a_1 \# a_2} \mathbf{\Sigma}_2)$ and $\widehat{p}_{a_1 \# a_2} : \widehat{X}(\mathbf{\Sigma}_1)\times \widehat{X}(\mathbf{\Sigma}_2) \to \widehat{X}(\mathbf{\Sigma}_1\cup_{a_1 \# a_2} \mathbf{\Sigma}_2)$ the dominant maps induced by $\theta_{a_1 \# a_2}$.
 \end{enumerate}
 \end{definition}
 Note that the Frobenius morphisms are compatible with the splitting morphisms in the sense that $Fr_{\mathbf{\Sigma}} \circ \theta_{a\# b} = \theta_{a\# b}\circ Fr_{\mathbf{\Sigma}}$.
 By Theorem \ref{theorem_center}, as a set  $\widehat{X}(\mathbf{\Sigma})$ is described as 
 \begin{multline*}  \widehat{X}(\mathbf{\Sigma}) = \{ \widehat{x}= (x, h_p, h_{\partial}): x\in X(\mathbf{\Sigma}),
 h_p\in \mathbb{C} \mbox{ is such that  } T_N(h_p)=\chi_x(\gamma_p) \mbox{ for }p\in \mathring{P}, 
\\ h_{\partial} \in \mathbb{C}^* \mbox{ is such that  } h_{\partial}^N = \chi_x(\alpha_{\partial}) \mbox{ for }\partial \in \Gamma^{\partial} \}.\end{multline*}

 \begin{definition}[Fully Azumaya locus]\label{def_AL} Let $\mathcal{AL}(\mathbf{\Sigma})\subset \widehat{X}(\mathbf{\Sigma})$ be the Azumaya locus of $\overline{\mathcal{S}}_A(\mathbf{\Sigma})$.
 The \textbf{fully Azumaya locus} of $ \overline{\mathcal{S}}_A(\mathbf{\Sigma})$ is the subset  $\mathcal{FAL}(\mathbf{\Sigma}) \subset {X}(\mathbf{\Sigma}) $ of elements $x\in {X}(\mathbf{\Sigma})$ such that $\pi^{-1}(x)\subset \mathcal{AL}(\mathbf{\Sigma})$. 
 \end{definition}

  Let $Z_{\mathbf{\Sigma}}^0\subset Z_{\mathbf{\Sigma}}$ denote the image of the Frobenius morphism (so $Z_{\mathbf{\Sigma}}^0\cong \overline{\mathcal{S}}_{+1}(\mathbf{\Sigma})$).
  Recall that a representation is called \textbf{semi-weight representation} if the induced module over $Z_{\mathbf{\Sigma}}^0$ is semi-simple.
  An indecomposable semi-weight representation $r$ induces a character over $Z_{\mathbf{\Sigma}}^0$ so defines a point in $x_r\in X(\mathbf{\Sigma})$ which we call the \textbf{classical shadow} of $r$. 
  
  \begin{theorem}\label{theorem_FAL}
  \begin{enumerate}
  \item Let $r:\overline{\mathcal{S}}_A(\mathbf{\Sigma})$ be an indecomposable semi-weight representation with classical shadow $x_r \in X(\mathbf{\Sigma})$. Then  $\mathcal{I}_r:=\ker(r)\cap Z_{\mathbf{\Sigma}}$ is a prime ideal of $Z_{\mathbf{\Sigma}}$, so $\mathcal{I}_r\in \widehat{\pi}^{-1}(x_r)$.
  \item Every prime ideal in $\widehat{\mathcal{X}}(\mathbf{\Sigma})$ is contained in a unique maximal ideal in ${\widehat{X}}(\mathbf{\Sigma})$.
  \item Let $\widehat{x} \in \widehat{X}(\mathbf{\Sigma})$ and decompose the set of inner punctures as $\mathring{\mathcal{P}}=\mathring{\mathcal{P}}_0 \sqcup \mathring{\mathcal{P}}_1$ where 
  $$ \mathring{\mathcal{P}}_1:= \left\{ p \in \mathring{\mathcal{P}}: \chi_x(\gamma_p)=\pm 2 \mbox{ and }h_p \neq \pm 2\right\}.$$
  Then the set of primes contained in ${\widehat{x}}$ is in bijection with the set $\col(\widehat{x})$ of maps $c: \mathring{\mathcal{P}}\to \{S, P\}$ such that $c(p)=S$ for all $p\in \mathring{\mathcal{P}}_0$.
  \item If $x\in \mathcal{FAL}(\mathbf{\Sigma})$, then the map $r\mapsto \mathcal{I}_r$ induces a $1:1$ correspondance between isomorphism classes of semi-weight indecomposable representations with classical shadow $x$ and the set $\widehat{\pi}^{-1}(x)$. Therefore, for $\widehat{x}\in \pi^{-1}(x)$, the set of isomorphism classes of  semi-weight indecomposable representations such that $\mathcal{I}_r\subset \widehat{x}$ is in $1:1$ correspondance with $\col(\widehat{x})$. Under this correspondance, there is a unique simple module with $\mathcal{I}_r\subset \widehat{x}$ which corresponds to the coloring $c$ such that $c(p)=S$ for all $p\in \mathring{\mathcal{P}}$ and there is a unique projective indecomposable semi-weight module with $\mathcal{I}_r\subset \widehat{x}$ which corresponds to the coloring $c$ such that $c(p)=P$ for all $p\in \mathring{\mathcal{P}}_1$.
  \end{enumerate}
  \end{theorem}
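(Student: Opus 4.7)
The strategy is to reduce the study of indecomposable semi-weight modules to commutative algebra on $Z^{(x_r)}:=Z_{\mathbf{\Sigma}}/\mathfrak{m}_{x_r}Z_{\mathbf{\Sigma}}$ and then to invoke the Azumaya property on the $\mathcal{FAL}$-locus. For an indecomposable semi-weight $r$, indecomposability over $\overline{\mathcal{S}}_A(\mathbf{\Sigma})\supset Z_{\mathbf{\Sigma}}^0$ combined with semi-simplicity of $V$ over $Z_{\mathbf{\Sigma}}^0$ forces the $Z_{\mathbf{\Sigma}}^0$-action on $V$ to be given by a single character $\chi_{x_r}$; otherwise the $Z_{\mathbf{\Sigma}}^0$-isotypic decomposition of $V$ would split it as an $\overline{\mathcal{S}}_A(\mathbf{\Sigma})$-module. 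Hence $x_r$ is well defined, the $Z_{\mathbf{\Sigma}}$-action on $V$ factors through the finite-dimensional algebra $Z^{(x_r)}$, and Theorem~\ref{theorem_center} combined with CRT applied to the Chebyshev relations $T_N(\gamma_p)=\chi_{x_r}(\gamma_p)$ and the power relations $\alpha_\partial^N=\chi_{x_r}(\alpha_\partial)$ yields a decomposition $Z^{(x_r)}\cong\prod_{\widehat{x}\in\pi^{-1}(x_r)}R_{\widehat{x}}$ into local Artinian factors, each isomorphic to $\bigotimes_{p\in\mathring{\mathcal{P}}_1}\mathbb{C}[\epsilon_p]/(\epsilon_p^2)$ (where $\mathring{\mathcal{P}}_1$ depends on the chosen $\widehat{x}$). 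Lifting the primitive central idempotents of $Z^{(x_r)}$ back to central elements of $Z_{\mathbf{\Sigma}}$ and invoking indecomposability once more, $V$ is annihilated by all but one such lift and is therefore supported on a single factor $R_{\widehat{x}}$; this $\widehat{x}$ will be the unique maximal ideal of $\widehat{X}(\mathbf{\Sigma})$ containing $\mathcal{I}_r$, giving item~(2).

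Items~(1) and~(3) rest on a per-puncture dichotomy: for each $p\in\mathring{\mathcal{P}}_1$, the central nilpotent $\epsilon_p$ must act on the indecomposable $V$ either identically as zero (declare $c(p):=S$) or non-trivially on every non-zero $\overline{\mathcal{S}}_A(\mathbf{\Sigma})$-stable subquotient of $V$ (declare $c(p):=P$); no mixed behaviour is possible. Granted this, $\mathcal{I}_r$ is described as the pull-back to $Z_{\mathbf{\Sigma}}$ of the ideal of $R_{\widehat{x}}$ generated by $\{\epsilon_p:c(p)=S\}$, which one verifies is prime, and the assignment $r\mapsto c_r$ produces the bijection of item~(3). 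The dichotomy would fail for abstract modules over the wild (as soon as $|\mathring{\mathcal{P}}_1|\geq 2$) truncated polynomial ring $R_{\widehat{x}}$; this is where the non-commutative skein structure enters, by embedding a punctured bigon $\mathbb{D}_1\subset\mathbf{\Sigma}$ near each $p\in\mathring{\mathcal{P}}_1$ via the splitting morphism of Theorem~\ref{theorem_gluing}, factoring the $\epsilon_p$-action through $\overline{\mathcal{S}}_A(\mathbb{D}_1)\cong\Dq$, and invoking the classification of semi-weight indecomposables of $\Dq$ established in Section~\ref{sec_D1}.

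For item~(4), fix $x\in\mathcal{FAL}(\mathbf{\Sigma})$ and $\widehat{x}\in\pi^{-1}(x)$. Since $\widehat{x}$ lies in the Azumaya locus, $\overline{\mathcal{S}}_A(\mathbf{\Sigma})\otimes_{Z_{\mathbf{\Sigma}}}R_{\widehat{x}}\cong\Mat_D(R_{\widehat{x}})$, and Morita equivalence identifies its indecomposable modules with $V_{\widehat{x}}\otimes_{\mathbb{C}}M$, where $V_{\widehat{x}}$ is the $D$-dimensional Azumaya representation and $M$ is an indecomposable $R_{\widehat{x}}$-module. The per-puncture dichotomy of the previous paragraph selects precisely the \emph{coordinate} indecomposables $M_c=R_{\widehat{x}}/(\epsilon_p:c(p)=S)$ for $c\in\col(\widehat{x})$, yielding the bijection of item~(4); the simples correspond to $c\equiv S$ (so $M_c=\mathbb{C}$, the unique simple $R_{\widehat{x}}$-module) and the indecomposable projectives of $\overline{\mathcal{C}}$ to $c\equiv P$ (so $M_c=R_{\widehat{x}}$, its projective cover). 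The main obstacle throughout is the per-puncture dichotomy itself: without the skein-theoretic input from Section~\ref{sec_D1} it fails as soon as $|\mathring{\mathcal{P}}_1|\geq 2$, so that step carries all the representation-theoretic content of the theorem, every other reduction being commutative-algebraic.
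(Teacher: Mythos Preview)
Your approach introduces a logical circularity and overcomplicates items (1)--(3). The paper proves all four items using only commutative algebra on $Z(\widehat{x})$ plus one cited external result, with no appeal whatsoever to Section~\ref{sec_D1} or to the gluing theorem.

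For item (1), the paper uses a completely general ring-theoretic fact (Lemma~\ref{lemma_FAL1}): for \emph{any} indecomposable representation $r$ of \emph{any} algebra $\mathscr{A}$ with center $Z$, the ideal $\ker(r)\cap Z$ is prime. No per-puncture dichotomy is involved. For items (2) and (3), the paper simply computes $Z(\widehat{x})\cong\mathscr{D}^{\otimes|\mathring{\mathcal{P}}_1|}$ with $\mathscr{D}=\mathbb{C}[X]/(X^2)$ (Lemma~\ref{lemma_FAL2}); this ring is local, which gives (2) at once, and the claimed bijection with $\col(\widehat{x})$ is read off from that description. For item (4), the one non-trivial external input is the Brown--Gordon theorem (Theorem~\ref{theorem_BG_FAL}), which furnishes the isomorphism $\overline{\mathcal{S}}_A(\mathbf{\Sigma})_x\cong\Mat_D(Z(x))$ on the fully Azumaya locus; you invoke this isomorphism without naming it. Morita equivalence then reduces the question to indecomposable $Z(\widehat{x})$-modules, which the paper treats via the elementary Lemma~\ref{lemma_FAL3} on $\mathscr{D}$-modules.

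The circularity in your argument is concrete: you invoke the classification of semi-weight $\Dq$-modules from Section~\ref{sec_D1}, but Lemma~\ref{lemm5} there is proved by appealing to Theorem~\ref{theorem_FAL}. Furthermore, your mechanism for ``factoring the $\epsilon_p$-action through $\overline{\mathcal{S}}_A(\mathbb{D}_1)$'' via the splitting morphism is not well-posed: the splitting morphism embeds the skein algebra of the \emph{glued} surface into a tensor product, not a copy of $\overline{\mathcal{S}}_A(\mathbb{D}_1)$ into $\overline{\mathcal{S}}_A(\mathbf{\Sigma})$, and the boundary-edge hypotheses of Theorem~\ref{theorem_gluing} are not available for an arbitrary $\mathbf{\Sigma}$ at this stage of the paper.

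You are right that $R_{\widehat{x}}\cong\mathscr{D}^{\otimes m}$ has wild representation type once $m\geq 2$, so not every indecomposable $R_{\widehat{x}}$-module is a coordinate tensor product $\bigotimes_p M_{c(p)}$. That is a genuine subtlety, but your proposed fix through the $\Dq$-classification is neither how the paper argues nor logically available here; the paper's route is the Brown--Gordon isomorphism together with Lemma~\ref{lemma_FAL3}, not a skein-theoretic dichotomy.
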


  \begin{definition}[Shadows]
  Let $r:\overline{\mathcal{S}}_A(\mathbf{\Sigma})$ be an indecomposable semi-weight representation.
  \begin{enumerate}
  \item The \textbf{classical shadow} of $r$ is the maximal ideal $x_r:= \ker(r) \cap Z^0_{\mathbf{\Sigma}} \in X(\mathbf{\Sigma})$.
  \item The \textbf{full shadow} of $r$ is the prime ideal $\mathcal{I}_r:= \ker(r)\cap Z_{\mathbf{\Sigma}} \in \widehat{\mathcal{X}}(\mathbf{\Sigma})$.
  \item The \textbf{maximal shadow} of $r$ is the (unique) maximal ideal $\widehat{x}_r \in \widehat{X}(\mathbf{\Sigma})$ containing $\mathcal{I}_r$. 
  \end{enumerate}
  \end{definition}
  
  So Theorem \ref{theorem_FAL} implies that semi-weight indecomposable modules with classical shadows in the fully Azumaya locus are completely classified, up to isomorphism, by their full shadows.   The proof of Theorem \ref{theorem_FAL} will occupy all the rest of the subsection.
  
  \begin{lemma}\label{lemma_FAL1}
  Let $\mathscr{A}$ be an associative, unital complex algebra with center $Z$. Let $r: \mathscr{A} \to \End(V)$ be a non zero indecomposable representation. Then $\mathcal{I}_r:= \ker(r)\cap Z$ is a prime ideal in $Z$.
  \end{lemma}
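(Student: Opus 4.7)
The plan is to use Fitting's lemma applied to the action of central elements. For any $z \in Z$, since $z$ commutes with all of $\mathscr{A}$, the endomorphism $r(z) \in \End(V)$ actually belongs to $\End_{\mathscr{A}}(V)$. This restricts to give a ring homomorphism $Z \to \End_{\mathscr{A}}(V)$ with kernel $\mathcal{I}_r$, so that $Z/\mathcal{I}_r$ injects into $\End_{\mathscr{A}}(V)$. The game is then to show that this image is a domain.

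When $V$ is indecomposable and of finite length---which is the setting implicit in the paper's applications---Fitting's lemma asserts that $\End_{\mathscr{A}}(V)$ is a local ring: every endomorphism is either invertible or nilpotent, and the non-invertible elements form the unique maximal ideal $\mathfrak{m}$. Now suppose $a,b \in Z$ with $ab \in \mathcal{I}_r$, meaning $r(a) r(b) = 0$. If $r(a)$ is invertible in $\End_{\mathscr{A}}(V)$, then $r(b) = 0$ and thus $b \in \mathcal{I}_r$; symmetrically if $r(b)$ is invertible. This handles the easy case and already shows that $Z/\mathcal{I}_r$ is a local ring (its image contains no non-trivial idempotents, which would otherwise lift through the central embedding to a decomposition of $V$, contradicting indecomposability).

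The main obstacle is the remaining case in which both $r(a)$ and $r(b)$ lie in $\mathfrak{m}$, i.e.\ both act nilpotently. Here one must show that $Z/\mathcal{I}_r$ contains no non-zero nilpotent element (so that this local ring is actually a field). Assuming for contradiction that $\overline z \in Z/\mathcal{I}_r$ were a non-zero nilpotent, one lifts to $z\in Z$ with $r(z)$ a non-zero nilpotent in $\End_{\mathscr{A}}(V)$; the claim would then be that this is incompatible with indecomposability of $V$. The argument I would use is that, since the paper's $\mathscr{A} = \overline{\mathcal{S}}_A(\mathbf{\Sigma})$ is almost Azumaya, $Z/\mathcal{I}_r$ is a finite-dimensional commutative local $\mathbb{C}$-algebra, so the putative nilpotent $\overline z$ corresponds to generalized-eigenspace decompositions inside $\End_{\mathscr{A}}(V)$ that are forced to be trivial by Fitting's lemma applied once more (any non-zero nilpotent central operator produces a non-trivial filtration, but the graded pieces assemble back into an idempotent decomposition of $V$, contradicting indecomposability). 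This last step---turning nilpotency of $r(z)$ into an actual idempotent splitting of $V$---is the delicate part, and is where the hypothesis that $V$ is indecomposable of finite length must be exploited to the fullest.
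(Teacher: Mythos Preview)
Your argument has a genuine gap at the final step. The claim that a nonzero nilpotent central operator on an indecomposable module forces an idempotent decomposition is false: filtrations do not split in general, and the associated graded pieces do not reassemble into idempotents on the original module.

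Concretely, take $\mathscr{A} = Z = \mathscr{D} := \mathbb{C}[X]/(X^2)$ (the dual numbers used elsewhere in the paper) and let $V = \mathscr{D}$ be the left regular module. Then $V$ is indecomposable, since $\End_{\mathscr{A}}(V) \cong \mathscr{D}$ is local, yet the representation is faithful, so $\mathcal{I}_r = 0$. But $0$ is not a prime ideal of $\mathscr{D}$, as $X \cdot X = 0$ with $X \neq 0$. Here $r(X)$ is precisely a nonzero nilpotent central operator, the filtration $0 \subset X\mathscr{D} \subset \mathscr{D}$ does not split, and no idempotent appears. So the difficulty you flagged as ``delicate'' is not a missing technique; the step simply fails.

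For comparison, the paper's proof takes a different and more direct route: from ideals $\mathcal{J}_1,\mathcal{J}_2 \subset Z$ with $\mathcal{J}_1\mathcal{J}_2 \subset \mathcal{I}_r$ but $\mathcal{J}_i \not\subset \mathcal{I}_r$, it forms the annihilator submodule $V_1 = \{v \in V : r(\mathcal{J}_1)v = 0\}$, shows $0 \neq V_1 \subsetneq V$, and concludes that $V$ is not indecomposable. But exhibiting a proper nonzero submodule only contradicts simplicity, not indecomposability; the same dual-numbers example applies (there $V_1 = X\mathscr{D}$). What both approaches do correctly establish is that $Z/\mathcal{I}_r$ is a \emph{local} ring, i.e.\ that $\mathcal{I}_r$ lies under a unique maximal ideal of $Z$; this weaker statement is what is actually used downstream to define the maximal shadow.
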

  
  \begin{proof}
  By contradiction, suppose that $\mathcal{I}_r$ is not prime. If $\mathcal{I}_r=Z$ then it contains the unit $1$, so $r$ is null. Else, there exist $\mathcal{J}_1, \mathcal{J}_2\subset Z$ some ideals such that $\mathcal{J}_1\mathcal{J}_2\subset \mathcal{I}_r$ and $\mathcal{J}_1\not\subset \mathcal{I}_r$, $\mathcal{J}_2 \not \subset \mathcal{I}_r$. For $i=1,2$, set 
  $$ V_i:= \{ v\in V : r(\mathcal{J}_i)v=0\}.$$
  Clearly, $V_i$ is preserved by $\mathscr{A}$. 
  The condition $\mathcal{J}_i \not \subset \mathcal{I}_r$ implies that $V_i \neq V$. The condition $\mathcal{J}_1\mathcal{J}_2\subset \mathcal{I}_r$ implies that $r(\mathcal{J}_2)(V) \subset V_1$. Since $\mathcal{J}_2\not \subset \mathcal{I}_r$, there exists $v\in V$ and $x\in \mathcal{J}_2$ such that $r(x)v\neq 0$, so $V_1\neq 0$. Therefore $0\neq V_1 \subsetneq V$ is a non zero proper submodule of $V$, so $r$ is not indecomposable. This concludes the proof.

  \end{proof}
  
  \begin{notations} For $x\in X(\mathbf{\Sigma})$ we set 
  $$ \overline{\mathcal{S}}_{A}(\mathbf{\Sigma})_x:= \quotient{\overline{\mathcal{S}}_{A}(\mathbf{\Sigma})}{\mathfrak{m}_x\overline{\mathcal{S}}_{A}(\mathbf{\Sigma})} \quad \mbox{ and } Z(x):= \quotient{Z_{\mathbf{\Sigma}}}{\mathfrak{m}_x Z_{\mathbf{\Sigma}}}.$$
  \end{notations}
  
   Note that $r$ has classical shadow $x$ if and only if it factorizes through the finite dimensional algebra $\overline{\mathcal{S}}_{A}(\mathbf{\Sigma})_x$.   The following theorem is useful to classify the indecomposable semi-weight representations whose classical shadows lye in the fully Azumaya locus.
  
  \begin{theorem}( \cite[Corollary $2.7$]{BrownGordon_ramificationcenters})\label{theorem_BG_FAL} If $x\in \mathcal{FAL}(\mathbf{\Sigma})$, then 
  $ \overline{\mathcal{S}}_{A}(\mathbf{\Sigma})_x \cong \Mat_D(Z(x))$.
  \end{theorem}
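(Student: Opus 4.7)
The plan is to show that $\overline{\mathcal{S}}_A(\mathbf{\Sigma})_x$ is an Azumaya algebra over the finite-dimensional commutative $\mathbb{C}$-algebra $Z(x)$, and then to exploit the structure theory of Azumaya algebras over products of Artinian local $\mathbb{C}$-algebras. The starting observation is the standard characterization (essentially Brown--Goodearl III.1.7) that $\widehat{x}\in\mathcal{AL}(\mathbf{\Sigma})$ is equivalent to $\overline{\mathcal{S}}_A(\mathbf{\Sigma})$ being Azumaya over $Z_{\mathbf{\Sigma}}$ at $\widehat{x}$, i.e.\ the localization $\overline{\mathcal{S}}_A(\mathbf{\Sigma})_{\widehat{x}}:=\overline{\mathcal{S}}_A(\mathbf{\Sigma})\otimes_{Z_{\mathbf{\Sigma}}} (Z_{\mathbf{\Sigma}})_{\widehat{x}}$ is an Azumaya $(Z_{\mathbf{\Sigma}})_{\widehat{x}}$-algebra. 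The hypothesis $x\in \mathcal{FAL}(\mathbf{\Sigma})$ means this holds at every $\widehat{x}_i\in\pi^{-1}(x)=\{\widehat{x}_1,\dots,\widehat{x}_k\}$.

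Next, since $Z_{\mathbf{\Sigma}}$ is module-finite over $Z^0_{\mathbf{\Sigma}}$ by Theorem \ref{theorem_center}, the ring $Z(x)=Z_{\mathbf{\Sigma}}/\mathfrak{m}_x Z_{\mathbf{\Sigma}}$ is a finite-dimensional commutative $\mathbb{C}$-algebra, hence Artinian. Its maximal ideals are precisely the images of $\mathfrak{m}_{\widehat{x}_i}$ for $\widehat{x}_i\in\pi^{-1}(x)$, so the Artinian structure theorem furnishes an isomorphism $Z(x)\cong\prod_{i=1}^k R_i$, where $R_i$ can be identified with the Artinian local $\mathbb{C}$-algebra $(Z_{\mathbf{\Sigma}})_{\widehat{x}_i}/\mathfrak{m}_x (Z_{\mathbf{\Sigma}})_{\widehat{x}_i}$ with residue field $\mathbb{C}$. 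The corresponding primitive idempotents of $Z(x)$ are central in $\overline{\mathcal{S}}_A(\mathbf{\Sigma})_x$, so $\overline{\mathcal{S}}_A(\mathbf{\Sigma})_x\cong\prod_{i=1}^k A_i$ with $A_i:=\overline{\mathcal{S}}_A(\mathbf{\Sigma})_x\otimes_{Z(x)}R_i$.

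For each $i$, since the Azumaya property is preserved under base change, $A_i\cong\overline{\mathcal{S}}_A(\mathbf{\Sigma})_{\widehat{x}_i}\otimes_{(Z_{\mathbf{\Sigma}})_{\widehat{x}_i}} R_i$ is an Azumaya $R_i$-algebra. Now $R_i$ is Artinian local, hence complete, hence Henselian, and its residue field is $\mathbb{C}$, whose Brauer group is trivial; by the classical fact that the Brauer group of a Henselian local ring equals that of its residue field, every Azumaya $R_i$-algebra is isomorphic to $\Mat_n(R_i)$ for some $n$. Reducing modulo $\mathfrak{m}_{R_i}$ recovers $A_i\otimes_{R_i}\mathbb{C}\cong \overline{\mathcal{S}}_A(\mathbf{\Sigma})_{\widehat{x}_i}\cong \Mat_D(\mathbb{C})$ by the assumption $\widehat{x}_i\in\mathcal{AL}(\mathbf{\Sigma})$, which forces $n=D$. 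Hence $A_i\cong \Mat_D(R_i)$ for every $i$, and assembling gives
$$\overline{\mathcal{S}}_A(\mathbf{\Sigma})_x\cong\prod_{i=1}^k \Mat_D(R_i)\cong \Mat_D\!\left(\prod_{i=1}^k R_i\right)\cong \Mat_D(Z(x)),$$
as desired.

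The main obstacle is the lifting step at each local factor: namely, certifying that the residual splitting $A_i\otimes_{R_i}\mathbb{C}\cong \Mat_D(\mathbb{C})$ lifts to a splitting $A_i\cong \Mat_D(R_i)$ over the whole of $R_i$. This is precisely where the Henselian hypothesis (automatic here because $R_i$ is Artinian) and the triviality of $\mathrm{Br}(\mathbb{C})$ are used; without the fully Azumaya hypothesis, one of the $A_i$'s would fail to be Azumaya over $R_i$ and the conclusion would fail. All remaining steps (the Artinian decomposition of $Z(x)$, the base change of the Azumaya property, and the reassembly into $\Mat_D(Z(x))$) are routine.
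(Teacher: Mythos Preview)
Your argument is correct. The paper does not supply its own proof of this statement; it simply quotes it as \cite[Corollary 2.7]{BrownGordon_ramificationcenters}, and what you have written is precisely the standard Brauer-group lifting argument underlying that result (Artinian decomposition of $Z(x)$ into local factors $R_i$, base change of the Azumaya property, triviality of $\mathrm{Br}$ over a Henselian local ring with residue field $\mathbb{C}$, and reassembly).

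One notational caution: in the paper the symbol $\overline{\mathcal{S}}_A(\mathbf{\Sigma})_{\widehat{x}}$ denotes the \emph{quotient} $\overline{\mathcal{S}}_A(\mathbf{\Sigma})/\widehat{x}\,\overline{\mathcal{S}}_A(\mathbf{\Sigma})$, not the localization $\overline{\mathcal{S}}_A(\mathbf{\Sigma})\otimes_{Z_{\mathbf{\Sigma}}}(Z_{\mathbf{\Sigma}})_{\widehat{x}}$ that you introduce. Your logic is unaffected, but if this is to be inserted into the paper you should use a distinct symbol for the localization to avoid a clash.
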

  
  Denote by  $\mathscr{D}:= \quotient{\mathbb{C}[X]}{(X^2)}$ the dual numbers algebra. 
  
 \begin{lemma}\label{lemma_FAL2} For $x\in X(\mathbf{\Sigma})$, one has an isomorphism of algebras
 $$ Z(x)\cong \left( \otimes _{p\in \mathring{\mathcal{P}}} Z(x;p) \right) \otimes \left( \otimes_{\partial \in \Gamma^{\partial}} Z(x; \partial)\right), $$
 where 
 $$ Z(x;p):=
 \left\{ \begin{array}{ll}
 \mathbb{C}\oplus \mathscr{D} ^{\oplus \frac{N-1}{2}} &, \mbox{ if }\chi_x(\gamma_p)=\pm 2; \\
 \mathbb{C}^{\oplus N} &, \mbox{ else.}
 \end{array} \right. 
 \mbox{ and }\quad Z(x; \partial):= \mathbb{C}^{\oplus N}.$$
 Therefore, if for $\widehat{x}\in \widehat{X}(\mathbf{\Sigma})$ we write $m(\widehat{x}):= | \mathring{\mathcal{P}}_1|$ in the partition $\mathring{\mathcal{P}}=\mathring{\mathcal{P}}_0 \sqcup \mathring{\mathcal{P}}_1$  of Theorem \ref{theorem_FAL}, and setting $\mathscr{D}^{\otimes 0}:= \mathbb{C}$, we have
 $$ Z(x)\cong \oplus_{\widehat{x} \in \pi^{-1}(x)} Z(\widehat{x}), \quad \mbox{ where } Z(\widehat{x}):= \mathscr{D}^{\otimes m(\widehat{x})}.$$
 \end{lemma}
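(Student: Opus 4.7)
The plan is to unwind the presentation of $Z_{\mathbf{\Sigma}}$ given by Theorem \ref{theorem_center}(1), reduce modulo $\mathfrak{m}_x$, and observe that the resulting commutative algebra tensor-splits naturally along the punctures and boundary components. Concretely, Theorem \ref{theorem_center}(1) yields an isomorphism
\[
Z_{\mathbf{\Sigma}} \cong \overline{\mathcal{S}}_{+1}(\mathbf{\Sigma})\otimes_{\mathbb{C}}\left(\bigotimes_{p\in\mathring{\mathcal{P}}}\mathbb{C}[\gamma_p]\right)\otimes\left(\bigotimes_{\partial\in\Gamma^{\partial}}\mathbb{C}[\alpha_{\partial}^{\pm 1}]\right)\Big/ \mathcal{R},
\]
where $\mathcal{R}$ is the ideal generated by $T_N(\gamma_p)-Fr_{\mathbf{\Sigma}}(\gamma_p)$ and $\alpha_{\partial}^N-Fr_{\mathbf{\Sigma}}(\alpha_{\partial})$. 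After dividing by $\mathfrak{m}_x Z_{\mathbf{\Sigma}}$ each Frobenius image becomes a scalar given by $\chi_x$, so that
\[
Z(x)\;\cong\;\left(\bigotimes_{p\in\mathring{\mathcal{P}}}\mathbb{C}[\gamma_p]/(T_N(\gamma_p)-\chi_x(\gamma_p))\right)\otimes\left(\bigotimes_{\partial\in\Gamma^{\partial}}\mathbb{C}[\alpha_{\partial}^{\pm 1}]/(\alpha_{\partial}^N-\chi_x(\alpha_{\partial}))\right).
\]
This gives the first claimed tensor decomposition, with the local factors to be identified next.

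For the boundary factors, since $\alpha_{\partial}$ is invertible one has $\chi_x(\alpha_{\partial})\in\mathbb{C}^*$, so $X^N-\chi_x(\alpha_\partial)$ splits into $N$ distinct linear factors (as $N$ is odd and invertible in $\mathbb{C}$), and the Chinese Remainder Theorem gives $Z(x;\partial)\cong \mathbb{C}^{\oplus N}$. For the inner-puncture factors I would analyze the polynomial $T_N(X)-c$ via the parametrization $X=2\cos\theta$, $T_N(2\cos\theta)=2\cos(N\theta)$; its derivative is $T_N'(X)=N\sin(N\theta)/\sin\theta$, which vanishes exactly at $\theta=k\pi/N$ with $1\le k\le N-1$, and at those values $T_N(X)=\pm 2$. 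Hence $T_N(X)-c$ has only simple roots when $c\neq\pm 2$, and when $c=\pm 2$ (with $N$ odd) it factors as
\[
T_N(X)\mp 2 \;=\; (X\mp 2)\prod_{k=1}^{(N-1)/2}(X\mp 2\cos(2k\pi/N))^2.
\]
Applying CRT one more time yields $Z(x;p)\cong\mathbb{C}^{\oplus N}$ when $\chi_x(\gamma_p)\neq\pm 2$, and $Z(x;p)\cong\mathbb{C}\oplus\mathscr{D}^{\oplus(N-1)/2}$ when $\chi_x(\gamma_p)=\pm 2$, the simple summand being supported at $h_p=\pm 2$.

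Finally, to obtain the refined decomposition $Z(x)\cong\bigoplus_{\widehat{x}\in\pi^{-1}(x)} Z(\widehat{x})$, I would distribute the direct sums through the tensor product: the indexing set becomes the product of the sets of roots of $T_N(\gamma_p)=\chi_x(\gamma_p)$ (for each $p$) and $\alpha_{\partial}^N=\chi_x(\alpha_{\partial})$ (for each $\partial$), which by the description of $\widehat{X}(\mathbf{\Sigma})$ recalled after Theorem \ref{theorem_center} is exactly $\pi^{-1}(x)$. Reading off the local factors, the $\widehat{x}$-summand is a tensor product of factors $\mathbb{C}$ except for one copy of $\mathscr{D}$ for each $p\in\mathring{\mathcal{P}}$ with $\chi_x(\gamma_p)=\pm 2$ and $h_p\neq\pm 2$; by definition of $\mathring{\mathcal{P}}_1$ and $m(\widehat{x})$, this gives $Z(\widehat{x})\cong\mathscr{D}^{\otimes m(\widehat{x})}$. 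The only nontrivial ingredient is the discriminant calculation for $T_N(X)-c$, but as sketched above it reduces to a direct trigonometric computation; the rest is bookkeeping via CRT and the tensor structure of $Z_{\mathbf{\Sigma}}$.
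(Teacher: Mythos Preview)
Your proof is correct and follows essentially the same approach as the paper: reduce the presentation of $Z_{\mathbf{\Sigma}}$ from Theorem~\ref{theorem_center} modulo $\mathfrak{m}_x$, split off one polynomial-quotient factor per puncture and per boundary component, and apply the Chinese Remainder Theorem. The only cosmetic difference is that the paper analyzes the roots of $T_N(X)-c$ via the algebraic substitution $X=z+z^{-1}$ (so that $T_N(X)-c=\prod_{n}(X-A^n\tilde z-A^{-n}\tilde z^{-1})$), whereas you use the equivalent trigonometric parametrization $X=2\cos\theta$; both lead to the same factorization and the same identification of the double roots when $c=\pm 2$.
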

 
 \begin{proof}
 Let us first make a preliminary remark. Let $P(X)\in\mathbb{C}[X]$ be a polynomial with decomposition $P(X)=\prod_{i=1}^d (X-\lambda_i)^{n_i}$ where $n_i \geq 1$ and the $\lambda_i$ are pairwise distinct. 
\par  \underline{Claim:} We have an isomorphism of algebras:
 $$ \quotient{\mathbb{C}[X]}{(P(X))} \cong \oplus_{i=1}^d \quotient{\mathbb{C}[X]}{ (X^{n_i})}.$$
 Indeed, since the ideals $(X-\lambda_i)^{n_i}$ are pairwise coprime, by the chinese reminder theorem we have an isomorphism $ \quotient{\mathbb{C}[X]}{(P(X))} \cong \oplus_{i=1}^d \quotient{\mathbb{C}[X]}{ \left((X-\lambda_i)^{n_i}\right)}$ and setting $Y:=X-\lambda_i$ one has $\quotient{\mathbb{C}[X]}{ \left((X-\lambda_i)^{n_i}\right)}\cong \quotient{\mathbb{C}[Y]}{ (Y^{n_i})}$.
 \par By Theorem \ref{theorem_Frobenius}, one has 
 \begin{multline*}
  Z(x)\cong \quotient{\mathbb{C}[X_p, X_{\partial};  p\in \mathring{P}, \partial \in \Gamma^{\partial} ]}{\left( T_N(X_p)=\chi_x(\gamma_p), X_{\partial}^N = \chi_x(\alpha_{\partial})\right)} \\
  \cong 
 \left( \otimes_{p} \quotient{\mathbb{C}[X_p]}{(T_N(X_p)-\chi_x(\alpha_p))}\right) \otimes \left(\otimes_{\partial} \quotient{\mathbb{C}[X_{\partial}]}{(X_{\partial}^N-\chi_x(\alpha_{\partial}))} \right).\end{multline*}
 Let  $z\in \mathbb{C}$ and consider $\widetilde{z}$ such that $\widetilde{z}^N=z$. 
 Then  $X^N-z= \prod_{n\in \mathbb{Z}/N\mathbb{Z}} (X-A^n \widetilde{z})$  so $\quotient{\mathbb{C}[X]}{(X^N-z)} \cong \mathbb{C}^{\oplus N}$ (the isomorphism sends $f(X)$ to $(f(A^n \widetilde{z}))_{n\in \mathbb{Z}/N\mathbb{Z}}$).
 \par  Suppose $z\neq 0$ and set $x:= z+z^{-1}$. Then $T_N(X)-x= \prod_{n\in \mathbb{Z}/N\mathbb{Z}} (X- A^n \widetilde{z} - A^{-n}\widetilde{z}^{-1})$. If $x\neq \pm 2$ then the roots $\lambda_n:=A^n \widetilde{z} + A^{-n}\widetilde{z}^{-1}$ are pairwise distinct so $\quotient{\mathbb{C}[X]}{(T_N(x)-z)} \cong \mathbb{C}^{\oplus N}$. If $x=\pm 2$, then $T_N(x)-\pm 2= (X-\pm 2) \prod_{n=1}^{(N-1)/2}( X-\pm A^n -\pm A^{-n})^2$ so $\quotient{\mathbb{C}[X]}{(T_N(x)-z)} \cong \mathbb{C}\oplus \mathscr{D}^{\oplus \frac{N-1}{2}}$. This proves the first assertion. The second is an immediate consequence.
 
 \end{proof}

 \begin{lemma}\label{lemma_FAL3} Let $\mathds{1}$ be the $1$-dimensional $\mathscr{D}$-module generated by the single vector $v$ such that $X\cdot v =0$. Let $W$ be the $2$-dimensional $\mathscr{D}$-module which corresponds to the left regular representation, i.e. which has basis $x,y$ such that $X\cdot x= 0$ and $X\cdot y = x$. Then every indecomposable $\mathscr{D}$-module is isomorphic to either $\mathds{1}$ or $W$ so the set of indecomposable $\mathscr{D}$ module is in $1:1$ correspondance with their annihilator (prime) ideal. Moreover we have a non split exact sequence of $\mathscr{D}$ modules
 $$ 0 \to \mathds{1} \xrightarrow{i} W \xrightarrow{p} \mathds{1} \to 0,$$
 where $i(v)=x$, $p(x)=0$ and $p(y)=v$. In particular $\mathds{1}$ is the only simple $\mathscr{D}$ module and $W$ is the only  projective indecomposable $\mathscr{D}$ module.
 \end{lemma}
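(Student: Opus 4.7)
The plan is to reduce the classification to the structure theorem for finitely generated modules over the PID $\mathbb{C}[X]$. A finite dimensional $\mathscr{D}$-module $M$ is in particular a finitely generated $\mathbb{C}[X]$-module annihilated by $X^2$, so the structure theorem gives $M \cong \bigoplus_i \mathbb{C}[X]/(X^{n_i})$ with each $n_i \in \{1,2\}$. Inspection of the $X$-action identifies $\mathbb{C}[X]/(X)$ with $\mathds{1}$ and $\mathbb{C}[X]/(X^2)$ with $W$, so every indecomposable is isomorphic to $\mathds{1}$ or $W$. The annihilators are then $\operatorname{Ann}(\mathds{1}) = (X)$ and $\operatorname{Ann}(W) = (0)$, which are the two distinct ideals occurring as annihilators of indecomposables, giving the claimed bijection.

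For the exact sequence, the maps $i$ and $p$ are $\mathscr{D}$-linear by direct inspection: $i$ is injective with image $\mathbb{C} x$, and $p$ is surjective with kernel $\mathbb{C} x = \operatorname{Im}(i)$. Non-splitness follows because if the sequence split, $W$ would be isomorphic to $\mathds{1} \oplus \mathds{1}$, on which $X$ acts by zero; but in $W$ we have $X \cdot y = x \neq 0$, a contradiction.

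It remains to identify the simple and projective indecomposables. Since $\mathscr{D}$ is a local ring with Jacobson radical $(X)$, any simple $\mathscr{D}$-module is annihilated by $(X)$ and thus is a module over $\mathscr{D}/(X) \cong \mathbb{C}$, hence one-dimensional and isomorphic to $\mathds{1}$. Conversely, $W$ is isomorphic to the left regular module ${}_\mathscr{D}\mathscr{D}$ via $y \mapsto 1$, $x \mapsto X$, so $W$ is projective; and $\mathds{1}$ is not projective since the displayed non-split exact sequence would otherwise have to split. Therefore $W$ is the unique projective indecomposable $\mathscr{D}$-module up to isomorphism.

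There is no substantial obstacle in this proof: the only mildly delicate point is checking that the cyclic summands $\mathbb{C}[X]/(X^{n_i})$ produced by the structure theorem, equipped with their natural $X$-action, really match $\mathds{1}$ and $W$ as $\mathscr{D}$-modules (rather than twisted versions), which is immediate from the presentations $\mathds{1} = \mathbb{C} v$ with $X \cdot v = 0$ and $W = \mathbb{C} x \oplus \mathbb{C} y$ with $X \cdot x = 0$, $X \cdot y = x$.
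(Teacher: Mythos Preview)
Your proof is correct and follows essentially the same route as the paper. The only cosmetic difference is that the paper phrases the classification step in terms of Jordan normal form of the nilpotent operator $r(X)$ (the only Jordan blocks squaring to zero have size $1$ or $2$), whereas you invoke the structure theorem for finitely generated modules over the PID $\mathbb{C}[X]$; these are of course equivalent formulations of the same fact.
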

 
 \begin{proof} Let $r: \mathscr{D} \to \End(V)$ be an indecomposable representation. Then $r$ is determined by $\pi:= r(X)$. Up to conjugacy, we can consider that  $\pi$ is in Jordan normal form. Since $r$ is indecomposable, $\pi$ is a Jordan matrix which squares to $0$. There exists exactly two Jordan matrices which squares to $0$: the matrix $(0)$ and $\begin{pmatrix}0 & 1 \\ 0 & 0\end{pmatrix}$. If $r(X)$ is $(0)$ then $V=\mathds{1}$; if $r(X)$ is conjugate to  $\begin{pmatrix}0 & 1 \\ 0 & 0\end{pmatrix}$ then   $r$ is isomorphic to $W$. 
 \par The fact that the short sequence of the lemma is exact and does not split follows from a straightforward computation. This implies that $W$ is not simple and that $\mathds{1}$ is not projective. Since $W$ is free, it is projective. 
 This concludes the proof.
 
 \end{proof}

 \begin{proof}[Proof of Theorem \ref{theorem_FAL}]
 Item $(1)$ follows from Lemma \ref{lemma_FAL1}. Note that the set of primes/maximal ideals $\mathcal{I}\subset Z$ containing $\widehat{x}\subset Z^0\subset Z$ is in $1:1$ correspondance with the set of primes/maximal ideals in $Z(\widehat{x})$. By Lemma \ref{lemma_FAL2}, we have $Z(\widehat{x})\cong \mathscr{D}^{\otimes \mathring{\mathcal{P}}_1}$. So Item $(2)$ follows from the fact that $Z(\widehat{x})$ is local whereas Item $(3)$ follows from the fact that we have a bijection $\col(\widehat{x}) \cong \Spec(Z(\widehat{x}))$ sending $c$ to the prime ideal $\oplus_{p \in \mathring{\mathcal{P}_1}} \mathcal{I}_{c(p)} \subset\mathscr{D}^{\otimes \mathring{\mathcal{P}}_1}\cong Z(\widehat{x}) $ defined by $\mathcal{I}_{c(p)}= \left\{ \begin{array}{ll} 0 \mbox{, if }c(p)=S; \\ (X) \mbox{, if }c(p)=P. \end{array} \right.$.  
 Let us prove $(4)$.
 Let $x\in \mathcal{FAL}(\mathbf{\Sigma})$. By definition, an indecomposable representation of $\overline{\mathcal{S}}_A(\mathbf{\Sigma})$ with classical shadow $x$ is a representation of $\overline{\mathcal{S}}_A(\mathbf{\Sigma})_x$. 
 By Theorem \ref{theorem_FAL}, $\overline{\mathcal{S}}_A(\mathbf{\Sigma})_x \cong \Mat_D(Z(x)) \cong \Mat_D(\mathbb{C}) \otimes Z(x)\cong \oplus_{\widehat{x}\in \pi^{-1}(x)} \Mat_D(\mathbb{C})\otimes Z(\widehat{x})$. The matrix algebra $\Mat_D(\mathbb{C})$ is semi-simple with unique simple module the standard $D$-dimensional module $V=\mathbb{C}^D$ which is projective. So the map $U \mapsto V\otimes U$ defines a $1:1$ correspondence between the indecomposable $Z(\widehat{x})$ modules and the indecomposable representations of $\overline{\mathcal{S}}_A(\mathbf{\Sigma})$ with maximal shadow $\widehat{x}$. In this correspondence $V\otimes U$ is simple (resp. projective) if and only if $U$ is simple (resp. projective). By Lemma \ref{lemma_FAL2} $Z(\widehat{x})\cong \mathscr{D}^{\otimes \mathring{\mathcal{P}}_1}$ so Lemma \ref{lemma_FAL3} implies that the (isomorphism classes of) indecomposable $Z(\widehat{x})$-modules are in 
 $1:1$ correspondance with $\Spec(Z(\widehat{x}))\cong \col(\widehat{x})$. The unique simple $Z(\widehat{x})$ module is the one dimensional module $\mathbb{C}$ which corresponds to the unique maximal ideal $0\in \Spec(Z(\widehat{x}))$ so to the coloring $c$ such that $c(p)=S$ for all $p$. The unique projective $Z(\widehat{x})$ module is the free rank one module $Z(\widehat{x})\cong W^{\otimes \mathring{\mathcal{P}}_1}$ which corresponds to the coloring sending all $p\in  \mathring{\mathcal{P}}_1$ to $P$. This concludes the proof.

 \end{proof}

  \section{Classical moduli spaces}\label{sec_moduli_spaces}
 
 In this subsection, we give a geometric interpretation of $X(\mathbf{\Sigma})$ following \cite{FockRosly, KojuTriangularCharVar} and study its basic properties.
 
 \subsection{Geometric interpretation of $X(\mathbf{\Sigma})$}
 
 \begin{definition}[Relative representation varieties]\label{def_modulispaces} Let $\mathbf{\Sigma}=(\Sigma, \mathcal{A})$ be an essential  marked surface.
 \begin{enumerate}
 \item The \textbf{fundamental groupoid} $\Pi_1(\Sigma)$ is the groupoid whose objects are points in $\Sigma$ and whose morphisms $\beta: v_1 \to v_2$ are homotopy classes of path $c_{\beta}: [0,1] \to \Sigma$ such that $c_{\beta}(0)=v_1$ and $c_{\beta}(1)=v_2$. We write $v_1=s(\beta)$ (the source) and $v_2= t(\beta)$ (the target). The composition is the concatenation of paths and the unit at $v\in \Sigma$ is the class $1_v$ of the constant path. For $\beta: v_1 \to v_2$, we denote by $\beta^{-1}: v_2 \to v_1$ the class of the path $c_{\beta^{-1}}(t) = c_{\beta}(1-t)$ (so $\beta \beta^{-1}=1_{t(\beta)}$). For $a\in \mathcal{A}$ we denote by $v_a \in a$ the middle point $v_a:= \varphi_a(1/2)$ and denote by $\mathbb{V}=\{ v_a, a\in \mathcal{A}\}$ the set of such points. $\Pi_1(\Sigma, \mathbb{V})$ is the full subcategory of $\Pi_1(\Sigma)$ generated by $\mathbb{V}$. By abuse of notation, we also denote by $\Pi_1(\Sigma, \mathbb{V})$ the set of morphisms of the underlying category.
 \item The \textbf{relative representation variety} $\mathcal{R}_{\SL_2}(\mathbf{\Sigma})$ is the set of functors $\rho: \Pi_1(\Sigma, \mathbb{V}) \to \SL_2$, where $\SL_2$ is seen as a category with only one object $*$ whose set of  endomorphisms is $\SL_2(\mathbb{C})$. It admits a structure of a complex affine variety whose algebra of regular functions is 
 $$ \mathcal{O}[\mathcal{R}_{\SL_2}(\mathbf{\Sigma})]:= \quotient{ \mathbb{C}[X_{ij}^{\beta}, i,j\in \{-,+\}, \beta \in \Pi_1(\Sigma, \mathbb{V})]}{\left( M_{\beta_1}M_{\beta_2}=M_{\beta_1 \beta_2}, \det(M_{\beta})=1 \right)}.$$
 Here, for $\beta \in \Pi_1(\Sigma, \mathbb{V})$, $M_{\beta}$ represents the $2\times 2$ matrix with coefficients in the polynomial ring $ \mathbb{C}[X_{ij}^{\beta}, i,j\in \{-,+\}, \beta \in \Pi_1(\Sigma, \mathbb{V})]$ defined by 
 $M_{\beta}=\begin{pmatrix} X_{++}^{\beta} & X_{+-}^{\beta} \\ X_{-+}^{\beta} & X_{--}^{\beta} \end{pmatrix}$ and we quotient by the relations $\det(M_{\beta}):=X_{++}^{\beta}X_{--}^{\beta}- X_{+-}^{\beta}X_{-+}^{\beta} =1$ for all $\beta \in  \Pi_1(\Sigma, \mathbb{V})$ and by the four matrix coefficients of $ M_{\beta_1}M_{\beta_2}-M_{\beta_1 \beta_2}$ for every pair of composable paths (i.e. such that $t(\beta_2)=s(\beta_1)$).
 Clearly the set of closed points of $\mathcal{R}_{\SL_2}(\mathbf{\Sigma}):= \Specm(\mathcal{O}[\mathcal{R}_{\SL_2}(\mathbf{\Sigma})])$ is in canonical bijection with the set of functors  $\rho: \Pi_1(\Sigma, \mathbb{V}) \to \SL_2$.
 \item A \textbf{presenting graph} $\Gamma$ for $\mathbf{\Sigma}$ is an embedded oriented graph $\Gamma \subset \Sigma$ whose set of vertices is $\mathbb{V}$ and such that $\Sigma$ retracts on $\Gamma$. We denote by $\mathcal{E}(\Gamma)$ the set of its oriented edges whose elements are seen as paths in $\Pi_1(\Sigma, \mathbb{V})$. 
 \item An oriented arc $\alpha$  in $\mathbf{\Sigma}$ naturally defines a path in $\Pi_1(\Sigma, \mathbb{V})$ which we abusively also denote by $\alpha$. For $i,j \in \{-, +\}$, we denote by $\alpha_{ij}\in \mathcal{S}_A(\mathbf{\Sigma})$ the class of the arc $\alpha$ with state $i$ at its source point $s(\alpha)$ at state $j$ at its target point $t(\alpha)$. For $p\in {\mathcal{P}}^{\partial}$ we fix the canonical orientation of the corner arc $\alpha(p)$ such that $\alpha(p)_{-+}$ is a bad arc. The \textbf{small Bruhat cell} is the subset of $\SL_2(\mathbb{C})$ defined by
 $$ \SL_2^{1}:= \{ M = \begin{pmatrix} a & b \\ c & d \end{pmatrix} \in \SL_2(\mathbb{C}) \mbox{ such that }a=0 \}.$$
 The \textbf{reduced relative representation variety} is the subvariety: 
 $$ \overline{\mathcal{R}}_{\SL_2}(\mathbf{\Sigma}) = \{ \rho : \Pi_1(\Sigma, \mathbb{V}) \to \SL_2 \mbox{ such that } \rho(\alpha(p))\in \SL_2^{1} \mbox{ for all }p\in {\mathcal{P}}^{\partial} \}.$$
 Said differently, its algebra of regular functions is 
 $$ \mathcal{O}[\overline{\mathcal{R}}_{\SL_2}(\mathbf{\Sigma})] := \quotient{ \mathcal{O}[\mathcal{R}_{\SL_2}(\mathbf{\Sigma})]}{ ( X_{++}^{\alpha(p)}, p\in {\mathcal{P}}^{\partial})}.$$
 \end{enumerate}
 
 \end{definition}
 
 Let $\Arc(\mathbf{\Sigma})$ be the set of oriented arcs in $\mathbf{\Sigma}$.
 
 \begin{theorem}\label{theorem_classical_limit}(\cite[Theorem $3.18$]{KojuQuesneyClassicalShadows}, see also \cite[Theorem $4.7$]{KojuPresentationSSkein}) Let $\mathbf{\Sigma}$ be an essential marked surface. There exists a map $w: \Arc(\mathbf{\Sigma})\to \{0, 1\}$ and an isomorphism $\Psi_w: \mathcal{S}_{+1}(\mathbf{\Sigma}) \xrightarrow{\cong} \mathcal{O}[\mathcal{R}_{\SL_2}(\mathbf{\Sigma})]$ characterized by the formula:
$$ \Psi_w \begin{pmatrix} \alpha_{++} & \alpha_{+-} \\ \alpha_{-+} & \alpha_{--} \end{pmatrix} = (-1)^{w(\alpha)} \begin{pmatrix} 0 & -1 \\ 1 & 0\end{pmatrix} \begin{pmatrix} X_{++}^{\alpha} & X_{+-}^{\alpha} \\ X_{-+}^{\alpha} & X_{--}^{\alpha} \end{pmatrix} = (-1)^{w(\alpha)} \begin{pmatrix} -X_{-+}^{\alpha} & -X_{--}^{\alpha} \\ X_{++}^{\alpha} & X_{+-}^{\alpha} \end{pmatrix}, \quad \mbox{ for all }\alpha \in \Arc(\mathbf{\Sigma}).$$
 \end{theorem}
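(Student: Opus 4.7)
The plan is to construct $\Psi_w$ explicitly on a set of generators, verify that it respects the defining relations of $\mathcal{S}_{+1}(\mathbf{\Sigma})$, and then build an inverse. This follows the strategy of \cite{KojuQuesneyClassicalShadows, KojuPresentationSSkein}, which we sketch here.

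First I would fix a presenting graph $\Gamma$ for $\mathbf{\Sigma}$. Since $\Sigma$ retracts onto $\Gamma$ and the objects of $\Pi_1(\Sigma,\mathbb{V})$ are exactly the vertices of $\Gamma$, every morphism in $\Pi_1(\Sigma,\mathbb{V})$ is a product of edges of $\Gamma$ and their inverses; hence $\mathcal{O}[\mathcal{R}_{\SL_2}(\mathbf{\Sigma})]$ is generated by the coefficients $X_{ij}^{e}$ for $e\in \mathcal{E}(\Gamma)$ subject only to the four determinant relations $\det(M_e)=1$. On the skein side, by \cite[Section $4$]{KojuPresentationSSkein}, $\mathcal{S}_{+1}(\mathbf{\Sigma})$ is generated by stated arcs whose underlying arc lies in $\Gamma$, together with loops, subject to the specialization at $A=+1$ of the standard defining relations (commutativity, a classical limit of height exchange, determinant relation $\alpha_{++}\alpha_{--}-\alpha_{+-}\alpha_{-+}=1$, trivial loop $=-2$, and the skein crossing relation).

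Next I would define $\Psi_w$ on generators by the prescribed formula for stated arcs and by $\Psi_w(\gamma)=-\tr(\rho(\gamma))$ on loops (which is forced by the arc formula via closing an arc into a loop and the Cayley--Hamilton identity on $\SL_2$). The weight $w:\Arc(\mathbf{\Sigma})\to\{0,1\}$ must be chosen so that (i) the orientation-reversal symmetry $\alpha_{ij}=\varepsilon_{ii'}\varepsilon_{jj'}(\alpha^{-1})_{j'i'}$ (with $\varepsilon$ the Levi-Civita antisymmetric tensor) is respected, and (ii) along each boundary edge the induced signs at the two endpoints are compatible: one checks that such a $w$ exists by propagating signs along $\Gamma$, the only obstruction being homological and absorbable into the freedom of choice. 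Then each specialized skein relation must be verified: the $A=+1$ crossing relation becomes the classical identity $\tr(M)\tr(N)=\tr(MN)+\tr(MN^{-1})$ on $\SL_2$, the trivial loop becomes $-\tr(\mathds{1})=-2$, the height exchange becomes trivial (the $A^{\pm 1/2}$ factors equal $1$), and the determinant relation becomes precisely $\det M_\alpha=1$ after conjugation by $\begin{pmatrix}0&-1\\1&0\end{pmatrix}$. Functoriality $\Psi_w(\alpha\beta)=\Psi_w(\alpha)\Psi_w(\beta)$ on composable arcs follows directly from the matrix-coefficient splitting relation in the stated skein algebra, which at $A=+1$ is exactly $M_{\beta\alpha}=M_\beta M_\alpha$.

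Finally I would construct the inverse: each $X_{ij}^{e}$ is, up to the sign $(-1)^{w(e)}$ and the fixed conjugation matrix, an image of a stated arc associated to $e$, so $\Psi_w$ is surjective onto generators. For injectivity I would exhibit a compatible basis: L\^e's basis $\mathcal{B}^L$ of $\mathcal{S}_{+1}(\mathbf{\Sigma})$ maps to a basis of $\mathcal{O}[\mathcal{R}_{\SL_2}(\mathbf{\Sigma})]$ adapted to the PBW-type filtration coming from $\Gamma$, by an induction on the number of boundary edges using the compatibility with the splitting morphism $\theta_{a\# b}$ (which corresponds classically to the pullback fiber product defining $\mathcal{R}_{\SL_2}(\mathbf{\Sigma}_{a\#b})$). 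The main obstacle is the careful bookkeeping of the signs encoded by $w$: one must verify that the sign assignment exists globally and that it renders all the skein relations sign-consistent; this is the technical content of \cite[Theorem $3.18$]{KojuQuesneyClassicalShadows} and \cite[Theorem $4.7$]{KojuPresentationSSkein}, which we may invoke to conclude.
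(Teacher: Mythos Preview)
The paper does not prove this theorem at all: it is stated with an attribution to \cite[Theorem $3.18$]{KojuQuesneyClassicalShadows} and \cite[Theorem $4.7$]{KojuPresentationSSkein} and is used as a black box. Your proposal is a reasonable sketch of the argument in those references and ultimately invokes them to conclude, so in effect you do the same thing the paper does---cite the result---just with some expository scaffolding around it.
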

 
 \begin{corollary}\label{coro_classical_limit}  Let $\mathbf{\Sigma}$ be an essential marked surface. There exists a map $w: \Arc(\mathbf{\Sigma})\to \{0, 1\}$ and an isomorphism $\overline{\Psi}_w: \overline{\mathcal{S}}_{+1}(\mathbf{\Sigma}) \xrightarrow{\cong} \mathcal{O}[\overline{\mathcal{R}}_{\SL_2}(\mathbf{\Sigma})]$ characterized by the formula:
$$ \overline{\Psi}_w \begin{pmatrix} \alpha_{++} & \alpha_{+-} \\ \alpha_{-+} & \alpha_{--} \end{pmatrix} = (-1)^{w(\alpha)} \begin{pmatrix} -X_{-+}^{\alpha} & -X_{--}^{\alpha} \\ X_{++}^{\alpha} & X_{+-}^{\alpha} \end{pmatrix}, \quad \mbox{ for all }\alpha \in \Arc(\mathbf{\Sigma}).$$
Therefore $X(\mathbf{\Sigma})\cong \overline{\mathcal{R}}_{\SL_2}(\mathbf{\Sigma})$.
 \end{corollary}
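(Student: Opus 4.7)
The plan is to obtain Corollary \ref{coro_classical_limit} as a direct quotient-level consequence of Theorem \ref{theorem_classical_limit}. The strategy has three steps: identify the image of each bad arc under $\Psi_w$; show that the two defining ideals correspond under $\Psi_w$; then pass to the quotient to obtain $\overline{\Psi}_w$, and finally take $\operatorname{MaxSpec}$ to deduce the statement on varieties.

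First, I would apply Theorem \ref{theorem_classical_limit} to get the isomorphism $\Psi_w: \mathcal{S}_{+1}(\mathbf{\Sigma}) \xrightarrow{\cong} \mathcal{O}[\mathcal{R}_{\SL_2}(\mathbf{\Sigma})]$ and then read off, from the matrix identity displayed in that theorem, the image of each bad arc generator. Comparing the lower-left entries of the two $2 \times 2$ matrices gives
\[
\Psi_w(\alpha_{-+}) = (-1)^{w(\alpha)} X_{++}^{\alpha} \quad \text{for every } \alpha \in \Arc(\mathbf{\Sigma}).
\]
Specializing to corner arcs $\alpha(p)$ for $p \in \mathcal{P}^{\partial}$, this shows that each bad arc $\alpha(p)_{-+}$ is mapped to $\pm X_{++}^{\alpha(p)}$.

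Next, I would observe that the ideal $\mathcal{I}_{\mathrm{bad}} \subset \mathcal{S}_{+1}(\mathbf{\Sigma})$ generated by $\{\alpha(p)_{-+}\}_{p \in \mathcal{P}^{\partial}}$ is, by definition, the kernel of the quotient map $\mathcal{S}_{+1}(\mathbf{\Sigma}) \twoheadrightarrow \overline{\mathcal{S}}_{+1}(\mathbf{\Sigma})$, while the ideal $\mathcal{J} \subset \mathcal{O}[\mathcal{R}_{\SL_2}(\mathbf{\Sigma})]$ generated by $\{X_{++}^{\alpha(p)}\}_{p \in \mathcal{P}^{\partial}}$ is by definition the kernel of $\mathcal{O}[\mathcal{R}_{\SL_2}(\mathbf{\Sigma})] \twoheadrightarrow \mathcal{O}[\overline{\mathcal{R}}_{\SL_2}(\mathbf{\Sigma})]$. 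Since $\Psi_w$ is an algebra isomorphism sending a generating set of $\mathcal{I}_{\mathrm{bad}}$ bijectively (up to sign) to a generating set of $\mathcal{J}$, we obtain $\Psi_w(\mathcal{I}_{\mathrm{bad}}) = \mathcal{J}$. Hence $\Psi_w$ descends to an isomorphism $\overline{\Psi}_w$ between the two quotients, and the formula on arcs is inherited verbatim from $\Psi_w$. Taking maximal spectra on both sides yields the claimed identification $X(\mathbf{\Sigma}) \cong \overline{\mathcal{R}}_{\SL_2}(\mathbf{\Sigma})$.

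No essential obstacle is expected, since Theorem \ref{theorem_classical_limit} already carries all of the geometric content; the only verification needed here is the compatibility of ideals, which is immediate from the explicit formula for $\Psi_w$ on arcs. The same function $w : \Arc(\mathbf{\Sigma}) \to \{0,1\}$ produced by Theorem \ref{theorem_classical_limit} can be reused without modification, so the corollary is essentially a bookkeeping step.
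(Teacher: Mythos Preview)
Your proposal is correct and follows essentially the same approach as the paper: both observe that $\Psi_w$ sends the bad arc $\alpha(p)_{-+}$ to $(-1)^{w(\alpha(p))}X_{++}^{\alpha(p)}$, so the ideals defining the two quotients correspond and $\Psi_w$ descends to the desired isomorphism $\overline{\Psi}_w$. The paper's proof is a single sentence recording exactly this observation; your version simply spells out the bookkeeping more explicitly.
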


\begin{proof} The isomorphism $\Psi_w$ sends the bad arc $\alpha(p)_{-+}$ to the element $(-1)^{w(\alpha(p))}X_{++}^{\alpha(p)}$ so induces $\overline{\Psi}_w$ by passing to the quotient. \end{proof}
 
 \subsection{Smooth loci of moduli spaces}
 
 Let $\Gamma$ be a presenting graph for $\mathbf{\Sigma}$. Since $\Sigma$ retracts to $\Gamma$, we obtain an isomorphism
 $$ \varphi_{\Gamma}: \mathcal{R}_{\SL_2}(\mathbf{\Sigma}) \xrightarrow{\cong} (\SL_2(\mathbb{C}))^{\mathcal{E}(\Gamma)}, \quad \varphi_{\Gamma}: \rho \mapsto (\rho(\beta))_{\beta \in \mathcal{E}(\Gamma)}.$$
 In particular $\mathcal{R}_{\SL_2}(\mathbf{\Sigma})$ is smooth. 
 
 \begin{example}\label{example_presenting_graph} Let $\mathbf{\Sigma}=(\Sigma_{g,n}, \mathcal{A})$ be a connected essential marked surface of genus $g$ and let us define a presenting graph $\Gamma$. 
 Denote by $\mathcal{A}=\{a_0, \ldots, a_{|\mathcal{A}|-1}\}$ the boundary edges, by $\pi_0(\partial \Sigma) = \{\partial_0, \ldots, \partial_{n-1}\}$ the boundary components with $a_0\subset \partial_0$ and write  $v_i:=v_{a_i}$. We order the boundary components such that for $0\leq j < n^{\partial}$ then $\partial_j\in \Gamma^{\partial}$ and for $n^{\partial}\leq j \leq n-1$ then $\partial_j \in \mathring{\mathcal{P}}$.
Let $\overline{\Sigma}$ be the surface obtained from $\Sigma$ by gluing a disc along each boundary component $\partial_i$ for $1\leq i \leq r$, and choose $\lambda_1, \mu_1, \ldots, \lambda_g, \mu_g$ some paths in $\pi_1(\Sigma, v_0)$, such that their images in $\overline{\Sigma}$ generate the free group $\pi_1(\overline{\Sigma}, v_0)$ (said differently, the $\lambda_i$ and $\mu_i$ are longitudes and meridians of $\Sigma$). For each  $p\in \mathring{\mathcal{P}}$ choose a peripheral curve $\gamma_p \in \pi_1(\Sigma, v_0)$ encircling $p$ once. Eventually, for each boundary component $\partial_j$, with $1\leq j <n^{\partial}$, containing a boundary edge $a_{k_j} \subset \partial_j$ chosen arbitrarily,  choose a path $\delta_{\partial_j} : v_0 \rightarrow v_{k_j}$. The set 
$$\mathbb{G}:= \{ \lambda_i, \mu_i, \gamma_p,  \alpha(p_{\partial}), \delta_{\partial_j} | 1\leq i \leq g, p\in \mathring{\mathcal{P}}, p_{\partial} \in \mathcal{P}^{\partial},  1\leq j < n^{\partial}\}$$
is a generating set for $\Pi_1(\Sigma, \mathbb{V})$ (see Figure \ref{fig_generators_final} for an illustration).
 Moreover each of its generators which is of the form $\gamma_p$ or $\alpha(p_{\partial})$ can be expressed as a composition of the other ones and their inverse, therefore a set $\mathcal{E}(\Gamma)$ obtained from $\mathbb{G}$ by removing one of the elements of the form $ \alpha(p_{\partial})$ or $\gamma_p$, forms a set of edges of a presenting graph $\Gamma$.
Note that $\mathbb{G}$ has cardinality $2g-1+|\mathcal{A}|+n$,  so one has an isomorphism $\mathcal{R}_{\SL_2}(\mathbf{\Sigma}) \cong (\SL_2(\mathbb{C}))^{2g-2+|\mathcal{A}|+n}$. 

\begin{figure}[!h] 
\centerline{\includegraphics[width=9cm]{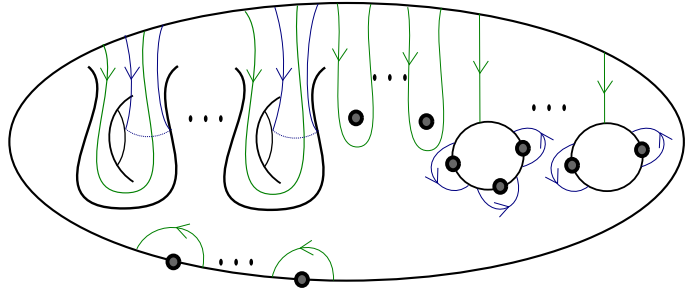} }
\caption{The geometric representatives of a set of generators for $\Pi_1(\Sigma, \mathbb{V})$.} 
\label{fig_generators_final} 
\end{figure} 

\end{example}
 
 \begin{theorem}\label{theorem_smooth} Let $\mathbf{\Sigma}$ be a connected marked surface with at least two boundary arcs. Then $X(\mathbf{\Sigma})$ is a smooth variety.
 \end{theorem}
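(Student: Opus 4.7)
The plan is to use Corollary \ref{coro_classical_limit} to identify $X(\mathbf{\Sigma})$ with the reduced relative representation variety $\overline{\mathcal{R}}_{\SL_2}(\mathbf{\Sigma})$, and then show smoothness of the latter as a pointwise-transverse intersection inside the smooth ambient $\mathcal{R}_{\SL_2}(\mathbf{\Sigma}) \cong \SL_2(\mathbb{C})^{\mathcal{E}(\Gamma)}$ for a suitable presenting graph $\Gamma$. The key geometric input is that $\SL_2^1 = \{M \in \SL_2 : M_{11} = 0\}$ is a smooth codimension-one subvariety: the relation $\det(M) = 1$ on $\SL_2^1$ becomes $M_{12}M_{21} = -1$, so $\SL_2^1 \cong \mathbb{C}^* \times \mathbb{C}$ with coordinates $(M_{12}, M_{22})$. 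Hence $\overline{\mathcal{R}}_{\SL_2}(\mathbf{\Sigma})$ is the zero locus in $\SL_2^{\mathcal{E}(\Gamma)}$ of the $|\mathcal{P}^{\partial}|$ regular functions $X_{++}^{\alpha(p)}$, and it is enough to check that these have everywhere independent differentials.

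The strategy is to choose $\mathcal{E}(\Gamma)$ via Example \ref{example_presenting_graph} so as to include as many corner arcs as possible. If $\mathring{\mathcal{P}} \neq \emptyset$, one may remove a peripheral generator $\gamma_p$ rather than a corner arc, so that every $\alpha(p)$ with $p \in \mathcal{P}^{\partial}$ becomes an edge of $\Gamma$. The equations $X_{++}^{\alpha(p)} = 0$ then live on pairwise distinct $\SL_2$ factors and $\varphi_\Gamma$ restricts to an isomorphism
\[
\overline{\mathcal{R}}_{\SL_2}(\mathbf{\Sigma}) \cong (\SL_2^1)^{|\mathcal{P}^{\partial}|} \times \SL_2^{|\mathcal{E}(\Gamma)| - |\mathcal{P}^{\partial}|},
\]
manifestly a product of smooth varieties.

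The main obstacle is the remaining case $\mathring{\mathcal{P}} = \emptyset$, where exactly one corner arc $\alpha(p^*)$, on some boundary component $\partial_{j^*}$, must be excluded from $\mathcal{E}(\Gamma)$ and is then a specific groupoid word $\alpha(p^*) = \beta_1 \cdots \beta_\ell$ in the remaining edges, determined by the surface relation. Here I would proceed via a tangent-space computation using the cocycle description $T_\rho \mathcal{R}_{\SL_2}(\mathbf{\Sigma}) \cong Z^1(\Pi_1(\Sigma,\mathbb{V}), \mathrm{Ad}\,\rho) \cong \mathfrak{sl}_2^{\mathcal{E}(\Gamma)}$: a direct calculation shows that for $M \in \SL_2^1$ and $\xi \in \mathfrak{sl}_2$ identified with a tangent vector at $M$ via left translation, $dX_{++}(\xi) = M_{12}\,\xi_{21}$ with $M_{12} \in \mathbb{C}^*$, so transversality reduces to the linear independence, on $\mathfrak{sl}_2^{\mathcal{E}(\Gamma)}$, of the $|\mathcal{P}^{\partial}|$ functionals $\delta \mapsto \delta(\alpha(p))_{21}$. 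The $|\mathcal{P}^{\partial}|-1$ corner arcs lying in $\mathcal{E}(\Gamma)$ give manifestly independent coordinate functionals; for $\alpha(p^*)$ the cocycle identity expands as $\delta(\alpha(p^*)) = \sum_{i=1}^\ell \mathrm{Ad}(\rho(\beta_1 \cdots \beta_{i-1}))\delta(\beta_i)$, and the hypothesis $|\mathcal{A}|\geq 2$, combined with the structure of Example \ref{example_presenting_graph}, ensures the existence of an index $i_0$ whose variation $\delta(\beta_{i_0})$ moves $\delta(\alpha(p^*))_{21}$ nontrivially while leaving $\delta(\alpha(p))_{21}=0$ for all $p\neq p^*$, either because $\beta_{i_0}$ is a non-corner-arc edge (where $\delta(\beta_{i_0})\in \mathfrak{sl}_2$ is unconstrained and $\xi \mapsto (\mathrm{Ad}(g)\xi)_{21}$ is always a nonzero linear form on $\mathfrak{sl}_2$) or because the corresponding adjoint twist does not preserve the upper-triangular Borel subalgebra $\mathfrak{b}$. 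The principal technical difficulty is to carry out this verification uniformly across all configurations of $\mathbf{\Sigma}$ with $|\mathcal{A}|\geq 2$, in particular when $\partial_{j^*}$ carries a single boundary edge so that $\alpha(p^*)$ is a conjugate of the boundary loop of $\partial_{j^*}$ and the word $w$ must be analyzed via the interplay between handle generators and boundary generators.
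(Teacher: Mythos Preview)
Your overall strategy matches the paper's: identify $X(\mathbf{\Sigma})$ with $\overline{\mathcal{R}}_{\SL_2}(\mathbf{\Sigma})$, handle the case $\mathring{\mathcal{P}}\neq\emptyset$ by choosing a presenting graph containing all corner arcs (giving a product of copies of $\SL_2$ and $\SL_2^1$), and in the case $\mathring{\mathcal{P}}=\emptyset$ remove a single corner arc $\alpha(p^*)$ and show the remaining equation $\rho(\alpha(p^*))_{++}=0$ is a submersion on the partially reduced variety.

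The genuine gap is precisely the one you name at the end: you have not carried out the verification that the functional $\delta\mapsto\delta(\alpha(p^*))_{21}$ can be varied independently of the others. Your proposed mechanism (``either $\beta_{i_0}$ is a non-corner edge with $\delta(\beta_{i_0})\in\mathfrak{sl}_2$ unconstrained, or the adjoint twist does not preserve $\mathfrak{b}$'') is suggestive but not an argument; in particular, in the sub-case where every boundary component has exactly one boundary edge, the word for $\alpha(p^*)$ has the form $\delta_\partial \alpha(p_1)\delta_\partial^{-1}\beta$, and one must check that varying $\delta(\alpha(p_1))$ along the \emph{constrained} tangent space $T\SL_2^1$ (not all of $\mathfrak{sl}_2$) together with varying $\delta(\delta_\partial)$ still moves the $(2,1)$ entry. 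This requires a concrete computation, not an abstract existence claim.

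The paper closes this gap with a direct matrix lemma rather than the cocycle formalism: it writes $\rho(\alpha(p^*))_{++}$ either as $(hg)_{--}$ (when $p^*$ shares a boundary component with another boundary edge) or as $(ghg^{-1}f)_{--}$ (otherwise), with $h\in\SL_2^1$ and $g,f\in\SL_2$, and then exhibits explicit tangent vectors in $T_h\SL_2^1$ (and in the second case also $T_g\SL_2$) at which the differential is nonzero. In the second case the computation is delicate: one needs three candidate vectors and a case analysis on the vanishing of certain matrix entries, using $\det=1$ to rule out simultaneous vanishing. Your cocycle approach would ultimately reduce to the same computation; you should either supply it or invoke an equivalent explicit lemma.
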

 
 For $g=\begin{pmatrix} g_{++} & g_{+-} \\ g_{-+} & g_{--} \end{pmatrix} \in \SL_2$, the Zariski tangent space $T_g\SL_2$ is: 
 \begin{multline*}
  T_g\SL_2 = \left\{ X= \begin{pmatrix} X_{++} & X_{+-} \\ X_{-+} & X_{--} \end{pmatrix}, \mbox{ s.t. } \det(g+\varepsilon X)\equiv 1 \pmod{\varepsilon^2} \right\} \\ =  \left\{ X= \begin{pmatrix} X_{++} & X_{+-} \\ X_{-+} & X_{--} \end{pmatrix}, \mbox{ s.t. } g_{--}X_{++} + g_{++}X_{--} = g_{-+}X_{+-} + g_{+-}X_{-+} \right\}. 
  \end{multline*}
 Similarly, if $g\in \SL_2^{1}$ (i.e. if $g_{++}=0$) then 
 $$ T_g\SL_2^{1}= \left\{X= \begin{pmatrix} 0& X_{+-} \\ X_{-+} & X_{--} \end{pmatrix}, \mbox{ s.t. } g_{+-}X_{-+} + g_{-+}X_{+-}=0 \right\}.$$
 
 \begin{lemma}\label{lemma_smooth}
 \begin{enumerate}
 \item Let $X=\SL_2^{1} \times \SL_2$ and consider the regular map $F: X\to \mathbb{C}$, $F(h,g)=(hg)_{--}$. Let $x=(h,g)\in X$. Then there exists $v \in T_h \SL_2^{1} \subset T_h \SL_2^{1} \oplus T_g\SL_2=T_x X$ such that $D_xF(v)\neq 0$. 
 \item 
 Let $Y:= \SL_2^{1} \times (\SL_2)^2$ and consider the regular map $F: Y\to \mathbb{C}$, $F(h,g,f):= (ghg^{-1}f)_{--}$. Let $y=(h,g,f)\in Y$. Then there exists $v \in T_h \SL_2^{1}\oplus T_g\SL_2 \subset T_h \SL_2^{1}\oplus T_g\SL_2\oplus T_f \SL_2=T_yY$ such that $D_yF(v)\neq 0$.
 \end{enumerate}
 \end{lemma}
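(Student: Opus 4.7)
The plan is to reduce both parts to concrete linear-algebra statements in $\SL_2$, using the explicit descriptions of $T_g\SL_2$ and $T_h\SL_2^{1}$ recalled immediately before the lemma.

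\textbf{Part (1).} Expanding gives $F(h,g)=(hg)_{--}=h_{-+}g_{+-}+h_{--}g_{--}$, so $D_xF(X,0)=X_{-+}g_{+-}+X_{--}g_{--}$ for $X\in T_h\SL_2^{1}$. The description $T_h\SL_2^{1}=\{X:X_{++}=0,\ h_{+-}X_{-+}+h_{-+}X_{+-}=0\}$ leaves $(X_{-+},X_{--})$ free, so $D_xF(\cdot,0)$ vanishes on all of $T_h\SL_2^{1}$ iff $g_{+-}=g_{--}=0$. This would force $\det g = 0$, contradicting $g\in \SL_2$.

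\textbf{Part (2).} Introduce the conjugation map $\psi\colon \SL_2^{1}\times \SL_2\to \SL_2$, $\psi(h,g):=ghg^{-1}$, and set $u:=\psi(h,g)$. Then $F(h,g,f)=(\psi(h,g)\cdot f)_{--}$, so $D_yF(X,Y,0)=(D\psi_{(h,g)}(X,Y)\cdot f)_{--}$. I will establish (a) that $\psi$ is a submersion at $(h,g)$, so $(X,Y)\mapsto D\psi(X,Y)\cdot f$ surjects onto $T_{uf}\SL_2$; and (b) that the linear form $V\mapsto V_{--}$ does not vanish identically on $T_{uf}\SL_2$. Item (b) is easy: if it did, it would be a scalar multiple of the defining form $V\mapsto (uf)_{--}V_{++}-(uf)_{-+}V_{+-}-(uf)_{+-}V_{-+}+(uf)_{++}V_{--}$ of $T_{uf}\SL_2=\ker d(\det)_{uf}$, which would force $(uf)_{--}=(uf)_{-+}=(uf)_{+-}=0$ and hence $\det(uf)=0$, a contradiction.

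The main obstacle is (a). Using the right-trivialization $Y=gZ$ with $Z\in\mathfrak{sl}_2$, a direct computation gives $D\psi_{(h,g)}(X,0)=gXg^{-1}$ and $D\psi_{(h,g)}(0,gZ)=g[Z,h]g^{-1}$, so the image of $D\psi_{(h,g)}$ equals $g\bigl(T_h\SL_2^{1}+[\mathfrak{sl}_2,h]\bigr)g^{-1}$. Every commutator is traceless, hence $[\mathfrak{sl}_2,h]\subset\{V\in \Mat_2(\mathbb{C}):\tr V=0\}$; whereas $E_1:=\begin{pmatrix}0 & 0 \\ 0 & 1\end{pmatrix}$ lies in $T_h\SL_2^{1}$ and satisfies $\tr E_1=1$, so $T_h\SL_2^{1}\not\subset[\mathfrak{sl}_2,h]$. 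Since $h\in\SL_2^{1}$ has $h_{++}=0$, we have $h\neq \pm I$; therefore the centralizer of $h$ in $\mathfrak{sl}_2$ is one-dimensional and $\dim[\mathfrak{sl}_2,h]=2$. Combined with $\dim T_h\SL_2^{1}=2$, the two distinct $2$-dimensional subspaces sum to the full $3$-dimensional $T_h\SL_2$, proving (a) and finishing the proof.
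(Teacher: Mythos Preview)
Your proof is correct. Part~(1) is essentially the same as the paper's argument, phrased more compactly: the paper exhibits the two explicit tangent vectors $X_1^h=\begin{pmatrix}0&0\\0&1\end{pmatrix}$ and $X_2^h=\begin{pmatrix}0&h_{+-}\\-h_{-+}&0\end{pmatrix}$ and observes that $D_xF$ evaluates on them to $g_{--}$ and (a nonzero multiple of) $g_{+-}$, which cannot both vanish.

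For Part~(2) your route is genuinely different from the paper's. The paper proceeds by brute force: it writes down three explicit tangent vectors $X_1^h,X_2^h\in T_h\SL_2^1$ and $X^g\in T_g\SL_2$, computes $D_yF$ on each, and argues by cases that if the first two vanish then $g_{--}=(g^{-1}f)_{--}=0$, whence the third evaluates to $(g_{-+})^2h_{+-}f_{--}\neq 0$. Your argument instead factors $F$ through the conjugation map $\psi(h,g)=ghg^{-1}$ and proves the cleaner structural fact that $\psi$ is a submersion whenever $h$ is non-central, via the dimension count $T_h\SL_2^1+[\mathfrak{sl}_2,h]=T_h\SL_2$ (two distinct $2$-planes in a $3$-space). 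This avoids the case analysis entirely and makes transparent why the result holds: once $\psi$ is a submersion, the problem reduces to the trivial observation that $V\mapsto V_{--}$ cannot vanish on any $T_m\SL_2$. The paper's approach has the virtue of being completely self-contained and elementary; yours is shorter and more conceptual, and the submersion statement for $\psi$ is a reusable fact.
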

 
 \begin{proof}
 \par $(1)$ Let $X_1^h, X_2^h \in T_h\SL_2^{1}\subset T_xX$ be the vectors $X_1^h= \begin{pmatrix} 0 & 0 \\ 0 & 1 \end{pmatrix}$, $X_2^h= \begin{pmatrix} 0 & h_{+-} \\ -h_{-+} & 0 \end{pmatrix}$. Then $D_xF(X_1^h)= g_{--}$ and $D_xF(X_2^h)= -g_{+-}$. Since $\det(g)=1$, either $g_{--} \neq 0$ or $g_{+-}\neq 0$.
 \par $(2)$ Consider the vectors $X_1^h, X_2^h, X^g \in T_yY=T_h\SL_2^{1} \oplus T_g\SL_2 \oplus T_f\SL_2$ defined by $X_1^h= \begin{pmatrix} 0 & 0 \\ 0 & 1 \end{pmatrix} \in T_h \SL_2^{1} \subset T_yY$, $X_2^h= \begin{pmatrix} 0 & h_{+-} \\ -h_{-+} & 0 \end{pmatrix} \in T_h \SL_2^{1} \subset T_yY$ and $X^g= \begin{pmatrix} 0 & g_{+-} \\ -g_{-+} & 0 \end{pmatrix} \in T_g \SL_2 \subset T_yY$. 
 One has $D_yF(X_1^h)= g_{--} (g^{-1}f)_{--}$ and $D_yF(X_2^h)= h_{+-}g_{-+}(g^{-1}f)_{--} - h_{-+} g_{--}(g^{-1}f)_{+-}$ so $D_yF(X_1^h)=D_yF(X_2^h)=0$ if and only if $g_{--}=(g^{-1}f)_{--}=0$. In this case, one finds that 
  $$ D_yF(X^g) = -g_{+-}(gh)_{-+}f_{--} - g_{-+}(hg^{-1}f)_{+-} + g_{-+}(gh)_{--}f_{--} = (g_{-+})^2 h_{+-}f_{--}.$$
  Since $g\in \SL_2$, $g_{+-}g_{-+}=-1$ and $g_{-+}\neq 0$. Similarly, $h_{+-}\neq 0$. If $f_{--}=0$ then $f_{+-}\neq 0$ and  the equality $(g^{-1}f)_{--}=0$ would imply $g_{-+}f_{+-}=0$ which is a contradiction. Therefore $f_{--}\neq 0$ and $D_yF(X^g)\neq 0$.
 \end{proof}

 \begin{proof}[Proof of Theorem \ref{theorem_smooth}] Let $g$ be the genus of $\Sigma$ and $n$ the number of its boundary components. Let $\mathbb{G}$ be the generating set of Example \ref{example_presenting_graph}. Let us first suppose  that $\mathring{\mathcal{P}}\neq \emptyset$. In this case, we can remove from $\mathbb{G}$ one generator $\gamma_p$ to obtain a presenting graph $\Gamma$ such that every corner arc is an edge of $\Gamma$. We obtain an isomorphism
 $$ X(\mathbf{\Sigma}) \xrightarrow[\cong]{\Psi_w^*} \overline{\mathcal{R}}_{\SL_2}(\mathbf{\Sigma}) \xrightarrow[\cong]{\varphi_{\Gamma}} (\SL_2(\mathbb{C}))^{2g-2+n} \times (\SL_2^{1})^{|\mathcal{A}|} .$$
 So $ X(\mathbf{\Sigma})$ is smooth and irreducible because both $\SL_2(\mathbb{C})$ and $\SL_2^{1}\cong \mathbb{C}^* \times \mathbb{C}$ are smooth and irreducible. 
 If $\mathring{\mathcal{P}}=\emptyset$, let $p_0 \in \mathcal{P}^{\partial}$, let $\Gamma$ be the presenting graph obtained from $\mathbb{G}$ by removing $\alpha_{p_0}$ and consider the variety
 $$ \mathcal{R}' := \{ \rho : \Pi_1(\Sigma, \mathbb{V}) \to \SL_2, \mbox{ such that } \rho(\alpha(p)) \in \SL_2^{1} \mbox{ for all }p\in \mathcal{P}^{\partial} \setminus \{ p_0\} \}.$$
 The morphism $\varphi_{\Gamma}$ restricts to an isomorphism $\varphi_{\Gamma}: \mathcal{R}' \cong (\SL_2(\mathbb{C}))^{2g+n-1} \times (\SL_2^{1})^{|\mathcal{A}| -1}$, so $\mathcal{R}'$ is a smooth irreducible variety. 
 Let $F: \mathcal{R}' \to \mathbb{C}$ be the regular function $F=X_{++}^{\alpha(p_0)}$, i.e. $F(\rho)=\rho(\alpha(p_0))_{++}$. One has $ \overline{\mathcal{R}}_{\SL_2}(\mathbf{\Sigma})= F^{-1}(0)$ so it suffices to prove that for every $\rho \in \mathcal{R}'$, one has $D_{\rho}F\neq 0$. Using $\varphi_{\Gamma}$ we identify the Zariski tangent space $T_{\rho}\mathcal{R}'$ with 
 $$ T_{\rho}\mathcal{R}' \cong \oplus_{p\in \mathcal{P}^{\partial} \setminus \{p_0\}} T_{\rho(\alpha(p))} \SL_2^{1} \oplus \oplus_{\beta \in \mathbb{G}\setminus\{\alpha(p)\} } T_{\rho(\beta)}\SL_2.$$

\par  First suppose that $\mathbf{\Sigma}$ contains two boundary edges in the same boundary component $\partial$. We suppose that $p_0 \in \partial$ so one has two punctures $p_0,p_1\in \partial$ consecutive in the cyclic order of $\partial$.  
 By definition of $\mathbb{G}$, there exists  a path $\beta$ which is a composition of the elements of $\mathbb{G}\setminus \{\alpha(p_0), \alpha(p_1)\}$ such that $\alpha(p_0)\alpha(p_1)\beta= 1_{s(\alpha(p_0))}$. Therefore
 $$ F(\rho) = \rho( \alpha(p_0))_{++}= \rho( \alpha(p_0)^{-1})_{--} = \left(\rho(\alpha(p_1)) \rho(\beta) \right)_{--}.$$
 By the first item of Lemma \ref{lemma_smooth}, for every $\rho \in \mathcal{R}'$, there exists $v\in T_{\rho(\alpha(p_1))}\SL_2^{1} \subset T_{\rho}\mathcal{R}'$ such that $D_{\rho}F(v)\neq 0$.
 \par If $\mathbf{\Sigma}$ only contains boundary components having a single boundary arc, one can find a boundary component $\partial$ whose unique puncture $p_1\in \partial$ is distinct from $p_0$. By definition of $\mathbb{G}$, there exists  a path $\beta$ which is a composition of the elements of $\mathbb{G}\setminus \{\alpha(p_0), \alpha(p_1), \delta_{\partial}\}$ such that $\alpha(p_0)^{-1}=\delta_{\partial} \alpha(p_1) \delta_{\partial}^{-1} \beta$, so 
 $$F(\rho)= \left( \rho(\delta_{\partial}) \rho(\alpha(p_1)) \rho(\delta_{\partial})^{-1} \rho(\beta) \right)_{--}.$$
 By the second item of Lemma \ref{lemma_smooth},  for every $\rho \in \mathcal{R}'$, there exists $v\in T_{\rho(\alpha(p_1))}\SL_2^{1}\oplus T_{\rho(\delta_{\partial})}\SL_2 \subset T_{\rho}\mathcal{R}'$ such that $D_{\rho}F(v)\neq 0$. 
 In every cases, we have thus proved that $D_{\rho}F\neq 0$ for every $\rho \in \mathcal{R}'$, so $X(\mathbf{\Sigma})$ is smooth.
 \end{proof}
 
 \begin{definition}[Central at inner punctures]
 Let $x\in X(\mathbf{\Sigma})$ and write $\rho_x:= \overline{\Psi}_w^{-1}(x) \in \overline{R}_{\SL_2}(\mathbf{\Sigma})$. For $p\in \mathring{\mathcal{P}}$ an inner puncture, we say that $x$ \textbf{ is central at } $p$ if $\rho_x(\gamma_p)= \pm \id$. 
 \end{definition}
 
 \begin{theorem}\label{theorem_smooth2} Let $\mathbf{\Sigma}$ be an essential marked surface with at least two boundary arcs. Then the regular locus of $\widehat{X}(\mathbf{\Sigma})$ is the subset of elements $\widehat{x}=(x, h_p, h_{\partial})$ such that for every $p\in \mathring{\mathcal{P}}$ then either $x$ is not central at $p$ or $h_p=\pm 2$. 
 \end{theorem}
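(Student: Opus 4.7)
The plan is to compute the Zariski tangent space $T_{\widehat{x}}\widehat{X}(\mathbf{\Sigma})$ directly, using the presentation of $Z_{\mathbf{\Sigma}}$ from Theorem \ref{theorem_center}: $\widehat{X}(\mathbf{\Sigma})$ is the vanishing locus of the equations $f_p := T_N(y_p) - \widetilde{\gamma}_p$ ($p \in \mathring{\mathcal{P}}$) and $g_{\partial} := z_{\partial}^N - \widetilde{\alpha}_{\partial}$ ($\partial \in \Gamma^{\partial}$) inside the smooth variety $\mathcal{W} := X(\mathbf{\Sigma}) \times \mathbb{C}^{|\mathring{\mathcal{P}}|} \times (\mathbb{C}^*)^{|\Gamma^{\partial}|}$ (smooth by Theorem \ref{theorem_smooth}), where $\widetilde{\gamma}_p, \widetilde{\alpha}_{\partial} \in \mathcal{O}[X(\mathbf{\Sigma})]$ are the Frobenius images. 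Because $Nh_{\partial}^{N-1} \neq 0$ and $T_N'(h_p) \neq 0$ for $p \notin P_0 := \{p : T_N'(h_p) = 0\}$, solving the linear system $df_p = dg_{\partial} = 0$ at $\widehat{x}$ yields
\[
T_{\widehat{x}}\widehat{X}(\mathbf{\Sigma}) \;\cong\; \Bigl(\bigcap_{p \in P_0} \ker d\widetilde{\gamma}_p|_x\Bigr) \oplus \mathbb{C}^{P_0}.
\]
Since $\pi$ is finite, $\dim \widehat{X}(\mathbf{\Sigma}) = \dim X(\mathbf{\Sigma})$, so $\widehat{x}$ is regular if and only if $\{d\widetilde{\gamma}_p|_x : p \in P_0\}$ is a linearly independent family in $T^*_x X(\mathbf{\Sigma})$.

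Parametrizing $h = z + z^{-1}$ so that $T_N(h) = z^N + z^{-N}$, a short calculation gives $T_N'(h) = 0$ iff $h \neq \pm 2$ and $T_N(h) = \pm 2$; at $\widehat{x}$ this says $p \in P_0$ exactly when $h_p \neq \pm 2$ and $\chi_x(\widetilde{\gamma}_p) = \pm 2$. Under $\overline{\Psi}_w$ of Corollary \ref{coro_classical_limit}, $\widetilde{\gamma}_p$ corresponds (up to sign) to $-\tr \circ \mathrm{ev}_{\gamma_p}$, where $\mathrm{ev}_{\gamma_p}: \overline{\mathcal{R}}_{\SL_2}(\mathbf{\Sigma}) \to \SL_2$ is the evaluation map. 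Since centrality at $p$ (i.e.\ $\rho_x(\gamma_p) = \pm\id$) automatically forces $\chi_x(\widetilde{\gamma}_p) = \pm 2$, the theorem reduces to the equivalence: $\{d\widetilde{\gamma}_p|_x : p \in P_0\}$ is linearly independent in $T^*_x X(\mathbf{\Sigma})$ if and only if no $p \in P_0$ is central at $x$.

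To establish this equivalence, I would choose the presenting graph $\Gamma$ of Example \ref{example_presenting_graph} so that every peripheral loop $\gamma_p$, $p \in \mathring{\mathcal{P}}$, is an edge of $\Gamma$. This is possible because $|\mathcal{A}| \geq 2$ gives $|\mathcal{P}^{\partial}| \geq 2$, so one can remove a corner arc $\alpha(p_{\partial_0})$ from $\mathbb{G}$ rather than a peripheral loop. Then $\overline{\mathcal{R}}_{\SL_2}(\mathbf{\Sigma}) = F^{-1}(0)$ inside the smooth variety $\mathcal{R}'$ with $F := X_{++}^{\alpha(p_{\partial_0})}$, and $T^*_{\rho_x}\overline{\mathcal{R}}_{\SL_2}(\mathbf{\Sigma}) = T^*_{\rho_x}\mathcal{R}'/\mathbb{C}\,dF$. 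Since $\widetilde{\gamma}_p$ depends only on the $\gamma_p$-coordinate of $\mathcal{R}' \cong \prod_{\beta \in \mathcal{E}(\Gamma)} \SL_2^{?}$, the differential $d\widetilde{\gamma}_p|_{\rho_x}$ lies in the cotangent factor $T^*_{\rho_x(\gamma_p)}\SL_2$; the standard computation of $d\tr$ on $\SL_2$ shows it is nonzero if and only if $\rho_x(\gamma_p) \neq \pm\id$, i.e.\ iff $x$ is not central at $p$.

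The main obstacle is controlling the quotient by $\mathbb{C}\,dF$: a relation $\sum_{p \in P_0} a_p d\widetilde{\gamma}_p|_x = 0$ in the cotangent of $\overline{\mathcal{R}}_{\SL_2}(\mathbf{\Sigma})$ lifts to $\sum_{p \in P_0} a_p d\widetilde{\gamma}_p|_{\rho_x} = \lambda\,dF$ in $T^*_{\rho_x}\mathcal{R}'$, whose left side is supported in the pairwise disjoint factors $\{T^*_{\rho_x(\gamma_p)}\SL_2 : p \in P_0\}$. To force $\lambda = 0$, I would reuse the argument from the proof of Theorem \ref{theorem_smooth}: Lemma \ref{lemma_smooth} produces a tangent vector showing that $dF$ has a nonzero component in a cotangent factor disjoint from all $\gamma_p$-factors (either in $T^*_{\rho_x(\alpha(p_1))}\SL_2^1$, or in $T^*_{\rho_x(\alpha(p_1))}\SL_2^1 \oplus T^*_{\rho_x(\delta_{\partial})}\SL_2$, depending on whether some boundary component of $\Sigma$ contains two or more boundary edges). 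Once $\lambda = 0$, each $a_p d\widetilde{\gamma}_p|_{\rho_x}$ must vanish in its $\gamma_p$-factor, so $a_p = 0$ precisely when $d\widetilde{\gamma}_p|_{\rho_x} \neq 0$, i.e.\ when $x$ is not central at $p$. This yields the equivalence and completes the proof.
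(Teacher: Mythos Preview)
Your proof is correct and follows essentially the same strategy as the paper: view $\widehat{X}(\mathbf{\Sigma})$ as the zero locus of the equations $T_N(y_p)=\widetilde{\gamma}_p$ and $z_\partial^N=\widetilde{\alpha}_\partial$ over the smooth base $X(\mathbf{\Sigma})$ (Theorem~\ref{theorem_smooth}), and then determine where the Jacobian has full rank using that $T_N'(h)=0$ iff $h\neq\pm2$ and $T_N(h)=\pm2$, together with $d\mathrm{tr}|_g=0$ on $T_g\SL_2$ iff $g=\pm\mathds{1}$.

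The organization differs in one respect worth noting. The paper packages the two derivative computations into Lemma~\ref{lemma_submersion} and then asserts that the evaluation map $h:\overline{\mathcal{R}}_{\SL_2}(\mathbf{\Sigma})\times\mathbb{C}^s\to(\SL_2\times\mathbb{C})^s$, $(\rho,h_{p_i})\mapsto(\rho(\gamma_{p_i}),h_{p_i})$, is a submersion; the regular locus then reduces to a product of the single-puncture problems handled by that lemma. You instead reduce to linear independence of the family $\{d\widetilde{\gamma}_p|_x:p\in P_0\}$ in $T^*_xX(\mathbf{\Sigma})$ and verify it by choosing a presenting graph containing all peripheral loops, so that the $d\widetilde{\gamma}_p$ live in pairwise disjoint cotangent factors, and then using Lemma~\ref{lemma_smooth} to show $dF$ has a nonzero component outside those factors, forcing $\lambda=0$. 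This last step is precisely what justifies the paper's submersion claim for $h$ (which the paper does not spell out), so your argument is in fact a more explicit version of the same proof rather than a genuinely different one.
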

 
 \begin{lemma}\label{lemma_submersion}
 Consider the application $f: \SL_2\times \mathbb{C} \to \mathbb{C}$, $f : (g,z) \mapsto \tr(g) + T_N(z)$ and set $\widehat{X}:=f^{-1}(0)$. Then $(g,z)\in \widehat{X}$ is smooth if and only if either $g\neq \pm \mathds{1}$ or $g=\pm \mathds{1}$ and 
$z=\mp 2$.
\end{lemma}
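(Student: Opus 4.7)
The plan is to apply the Jacobian criterion: since $\SL_2 \times \mathbb{C}$ is smooth (being a product of smooth varieties, of dimension $4$), and $\widehat{X} = f^{-1}(0)$ is cut out by a single regular function, a point $(g,z) \in \widehat{X}$ is smooth if and only if the differential $df_{(g,z)}$ is nonzero as a linear form on $T_g\SL_2 \oplus T_z\mathbb{C}$. I would therefore compute this differential explicitly and determine when it vanishes.

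First I would observe that $df_{(g,z)} = (d\tr)_g \oplus T_N'(z)\,dz$, so $df_{(g,z)} = 0$ if and only if both $(d\tr)_g = 0$ on $T_g\SL_2$ and $T_N'(z) = 0$. Writing $g = \begin{pmatrix} a & b \\ c & d \end{pmatrix}$ with $ad-bc=1$, the tangent space $T_g\SL_2$ is cut out inside $\Mat_2(\mathbb{C})$ by the linear equation $d\,X_{++} + a\,X_{--} - c\,X_{+-} - b\,X_{-+} = 0$, and the restriction of $d\tr$ to this space is $X \mapsto X_{++} + X_{--}$. This linear form vanishes on the hyperplane $T_g\SL_2$ if and only if it is proportional to the defining equation, which forces $a = d$ and $b = c = 0$; combined with $\det(g) = 1$, this gives precisely $g = \pm \mathds{1}_2$.

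Next I would analyze the vanishing of $T_N'(z)$ at the points compatible with $(g,z) \in \widehat{X}$ and $g = \pm \mathds{1}_2$. If $g = \mathds{1}_2$ then $T_N(z) = -2$, and if $g = -\mathds{1}_2$ then $T_N(z) = 2$. Using the parametrization $z = 2\cos\theta$ one has $T_N(z) = 2\cos(N\theta)$ and $T_N'(z) = N\sin(N\theta)/\sin\theta$. The equation $T_N(z) = 2$ forces $\theta \in \tfrac{2\pi}{N}\mathbb{Z}$, so $\sin(N\theta)=0$; here $T_N'(z)=0$ unless $\theta = 0$, i.e.\ unless $z=2$ (the exceptional value being handled by L'Hôpital's rule, giving $T_N'(2) = N^2$). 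Similarly $T_N(z) = -2$ forces $\theta \in \tfrac{\pi}{N}(2\mathbb{Z}+1)$, so $\sin(N\theta)=0$, and $T_N'(z)=0$ except at $\theta=\pi$, i.e.\ except at $z=-2$ where $T_N'(-2)=N^2$ (using that $N$ is odd).

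Combining these two computations, $df_{(g,z)} = 0$ on $\widehat{X}$ exactly when $g = \pm\mathds{1}_2$ and $z \neq \mp 2$, which is the desired criterion. The main mechanical step is the trigonometric computation of the roots of $T_N'$ on the loci $T_N(z)=\pm 2$; the only subtlety is keeping track that $N$ is odd so that the value $z=-2$ corresponds to $\theta=\pi$ and is exceptional in the fibre over $-2$, and symmetrically for $z=2$.
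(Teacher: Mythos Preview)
Your proof is correct and follows essentially the same approach as the paper: both apply the Jacobian criterion, split $df_{(g,z)}$ into the $\tr$-part and the $T_N'$-part, show that $(d\tr)_g$ vanishes on $T_g\SL_2$ exactly when $g=\pm\mathds{1}_2$, and then analyze the vanishing of $T_N'(z)$ on the fibres $T_N(z)=\mp 2$. The only differences are cosmetic: you use a proportionality argument for the vanishing of $(d\tr)_g$ where the paper does a short case analysis on the entries of $g$, and you parametrize $T_N$ trigonometrically where the paper uses the factorization $T_N(X)-(u^N+u^{-N})=\prod_n (X-u q^n - u^{-1}q^{-n})$; these are equivalent computations.
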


\begin{proof}
First, note that  for $z\in \mathbb{C}$ and $z^{1/N}$ an $N$-th root of $z$, the equation $T_N(X)=z+z^{-1}$ has solutions $X= z^{n/N} + z^{-n/N}$ for $n\in \{0, \ldots, N-1\}$ counted with multiplicity, i.e. $T_N(X)-(z+z^{-1})= \prod_{n=0}^{N-1} (X- z^{n/N}-z^{-n/N})$. Therefore, for $c\in \mathbb{C}$, the polynomial $P_{N,c}(X):= T_N(X)-c$ has roots of multiplicity $\geq 2$ if and only if $c=\pm 2$ in which case the multiple roots are $X= \pm (q^n+q^{-n})$ with   $n\in \{1, \ldots, (N-3)/2\}$.
\par For $g=\begin{pmatrix} g_{++} & g_{+-} \\ g_{-+} & g_{--} \end{pmatrix} \in \SL_2$, let us consider the tangent space
$$T_g\SL_2=   \left\{ X= \begin{pmatrix} X_{++} & X_{+-} \\ X_{-+} & X_{--} \end{pmatrix}, \mbox{ s.t. } g_{--}X_{++} + g_{++}X_{--} = g_{-+}X_{+-} + g_{+-}X_{-+} \right\}$$
as before. Then the differential at $(g,z)\in \widehat{X}$ is computed as 
$$ D_{(g,z)}f : T_g\SL_2 \oplus \mathbb{C} \to \mathbb{C}, \quad D_{(g,z)}f: (X, z_0) \mapsto \tr(X) + T_N'(z)z_0.$$
In order to find at which point $f$ is a submersion, we need to find when this differential vanishes. This will happen if and only if we have the two conditions: $(i)$ $\tr(X)=0$ for all $X\in T_g\SL_2$ and $(ii)$ $T_N'(z)=0$. By the preceding discussion, $T_N'(z)=0$ if and only if $\tr(g)=\pm 2$ and $z\neq \mp 2$. Let us analyse $(i)$.  
First suppose that $g_{+-}\neq 0$. Then for every $x\in \mathbb{C}$ we have $X:= \begin{pmatrix} x & 0 \\ x \frac{g_{--}}{g_{+-}} & 0 \end{pmatrix} \in T_g\SL_2$ and $\tr(X)=x$ so $(i)$ is not satisfied. Similarly, if $g_{-+}\neq 0$, we can use $X:= \begin{pmatrix} x & x \frac{g_{--}}{g_{-+}}  \\ 0& 0 \end{pmatrix} \in T_g\SL_2$ and $\tr(X)=x$ so $(i)$ is not satisfied. If $g=\begin{pmatrix} x & 0 \\ 0 & x^{-1} \end{pmatrix}$ then every $X\in T_g\SL_2$ satisfies $\tr(X)=(1-x^{-2})X_{++}$ so $(i)$ is satisfied if and only if $g= \pm \mathds{1}$ in which case $(ii)$ is satisfied if and only if $z=\mp (q^n +q^{-n})$ for $n\in \{1, \ldots, (N-3)/2\}$. This concludes the proof.
\end{proof}

 \begin{proof}[Proof of Theorem \ref{theorem_smooth2}]

Write $\mathring{\mathcal{P}}=\{p_1, \ldots, p_s\}$ and consider the composition 
$$ g : \overline{R}_{\SL_2}(\mathbf{\Sigma}) \times \mathbb{C}^{s} \xrightarrow{h} (\SL_2\times \mathbb{C})^{s} \xrightarrow{ (f, \ldots, f)} \mathbb{C}^s$$
where $g(\rho, \{h_{p_i}\}_i) := (\rho(\gamma_{p_i}), h_{p_i})_{p_i}$ and $f$ is defined in Lemma \ref{lemma_submersion}. Let $X^0(\mathbf{\Sigma}):=g^{-1}(0, \ldots, 0)$ and consider the covering $\pi: \widehat{X}(\mathbf{\Sigma})\to X^0(\mathbf{\Sigma})$ defined by $\pi(x, h_{p_i}, h_{\partial}):=(\rho_x, h_{p_i})$ which consists in forgetting the boundary invariants $h_{\partial}$. For $(\rho_x, h_p)\in X^0(\mathbf{\Sigma})$, set $c_{\partial}:= \rho_x(\alpha_{\partial})$. The fiber $\pi^{-1}(\rho_x, h_{p_i})$ consists in the set of elements $(x, h_{p_i}, h_{\partial})$ where 
 the $h_{\partial}$ satisfy $h_{\partial}^N = c_{\partial}$ so are zeroes of the polynomials $X^N-c_{\partial}$. Since the polynomials $X^N-c_{\partial}$ have $N$ zeroes each with multiplicity one, $\pi$ is a regular covering so the smooth locus of $\widehat{X}(\mathbf{\Sigma})$ is the pull-back by $\pi$ of the smooth locus of $X^0(\mathbf{\Sigma})$. 
 \par  Since  $h$ is a submersion on $X^0(\mathbf{\Sigma})$, by the chain rule $g$ is a submersion at $(\rho, \{h_{p_i}\}_i)$ if and only if for every $i$ $f$ is a submersion at $(\rho(\gamma_{p_i}), h_{p_i})$. By Lemma \ref{lemma_submersion}, this condition is equivalent to the condition that for every $p\in \mathring{\mathcal{P}}$, either $\rho(\gamma_p)\neq \pm \mathds{1}$ or $h_p=\mp 2$.  This concludes the proof.
 
 \end{proof}

 \subsection{Behavior for the gluing operation}
 
 The goal of this subsection is to prove the 
 
 \begin{theorem}\label{theorem_gluing_surjective}
 Let  $\mathbf{\Sigma}_1$ and $\mathbf{\Sigma}_2$ be connected marked surfaces and for $i=1,2$ let $a_i$ be a boundary edges of $\mathbf{\Sigma}_i$ such that the connected component of $\partial \Sigma_i$ which contains $a_i$ also contains at least another boundary edge. 
 Then the maps $p_{a_1\# a_2}: {X}(\mathbf{\Sigma}_1)\times  {X}(\mathbf{\Sigma}_2) \to {X}(\mathbf{\Sigma}_1\cup_{a_1\# a_2} \mathbf{\Sigma}_2)$ and $\widehat{p}_{a_1\# a_2}: \widehat{X}(\mathbf{\Sigma}_1)\times  \widehat{X}(\mathbf{\Sigma}_2) \to \widehat{X}(\mathbf{\Sigma}_1\cup_{a_1\# a_2} \mathbf{\Sigma}_2)$ are surjective.
 \end{theorem}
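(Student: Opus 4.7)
The plan is to prove surjectivity by constructing, for each $\rho\in X(\mathbf{\Sigma})$, an explicit preimage $(\rho_1,\rho_2)\in X(\mathbf{\Sigma}_1)\times X(\mathbf{\Sigma}_2)$. By Corollary~\ref{coro_classical_limit} we identify $X(\mathbf{\Sigma}_\bullet)$ with $\overline{\mathcal{R}}_{\SL_2}(\mathbf{\Sigma}_\bullet)$ and view points as functors $\rho:\Pi_1(\Sigma_\bullet,\mathbb{V}_\bullet)\to \SL_2$ satisfying $\rho(\alpha(p))\in \SL_2^{1}$ at every boundary puncture. After gluing, the identified vertex $v:=v_{a_1}=v_{a_2}$ becomes interior to $\Sigma$ and is absent from $\mathbb{V}(\mathbf{\Sigma})$. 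Van Kampen for groupoids identifies $\Pi_1(\Sigma,\mathbb{V}\cup\{v\})$ with the amalgamated pushout $\Pi_1(\Sigma_1,\mathbb{V}_1)\ast_{v}\Pi_1(\Sigma_2,\mathbb{V}_2)$, so functors from this pushout to $\SL_2$ are in bijection with pairs $(\rho_1,\rho_2)\in \mathcal{R}_{\SL_2}(\mathbf{\Sigma}_1)\times \mathcal{R}_{\SL_2}(\mathbf{\Sigma}_2)$, and restriction to the full subgroupoid on $\mathbb{V}$ recovers $p_{a_1\#a_2}(\rho_1,\rho_2)$. The task thus reduces to extending $\rho$ to a functor $\tilde\rho$ on this pushout such that each $\rho_i:=\tilde\rho|_{\Pi_1(\Sigma_i,\mathbb{V}_i)}$ satisfies its own reduced conditions.

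Extensions of a given $\rho$ form a $3$-dimensional family parametrized by the value $g:=\tilde\rho(\delta)\in\SL_2$ on any fixed path $\delta$ from $v$ to some $w\in\mathbb{V}$. For $i=1,2$, the reduced conditions on corner arcs of $\mathbf{\Sigma}_i$ not adjacent to $a_i$ are automatic, since these arcs persist as corner arcs of $\mathbf{\Sigma}$ and inherit $\rho(\alpha)\in\SL_2^{1}$. The remaining four constraints $\rho_i(\alpha(p_i^\pm))\in\SL_2^{1}$ become polynomial equations in the entries of $g$. Here the hypothesis that the boundary component of $\Sigma_i$ containing $a_i$ has another boundary edge is crucial: it lets us write each such corner arc $\alpha(p_i^\pm)$ as a composition of $\delta^{\pm 1}$ with a path $\mu\in\Pi_1(\Sigma_i,\mathbb{V}_i\setminus\{v\})\subset\Pi_1(\Sigma,\mathbb{V})$ that is homotopic in $\mathbf{\Sigma}$ to an already-existing corner arc of $\mathbf{\Sigma}$, so that $\rho(\mu)$ already lies in $\SL_2^{1}$. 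Substituting into the explicit matrix formula of Theorem~\ref{theorem_classical_limit} and accounting for the twist by $J=\left(\begin{smallmatrix}0&-1\\1&0\end{smallmatrix}\right)$ coming from that isomorphism, the four equations collapse into a compatible system cutting out a non-empty one-parameter family of solutions $g$, matching the dimension identity $\dim X(\mathbf{\Sigma}_1)+\dim X(\mathbf{\Sigma}_2)-\dim X(\mathbf{\Sigma})=1$. This produces the desired lift and proves surjectivity of $p_{a_1\#a_2}$.

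For $\widehat{p}_{a_1\#a_2}$: given $\widehat{x}=(x,h_p,h_\partial)\in\widehat{X}(\mathbf{\Sigma})$, first lift $x$ to $(x_1,x_2)$ as above. Every inner puncture of $\mathbf{\Sigma}$ lies in a unique $\Sigma_i$, so its invariant $h_p$ directly furnishes the corresponding coordinate of $\widehat{x}_i$. Each boundary component of $\mathbf{\Sigma}$ is an amalgamation of boundary components of $\mathbf{\Sigma}_1$ and $\mathbf{\Sigma}_2$ across the gluing, and the central element $\alpha_\partial$ of Definition~\ref{def_central_elements} factors as a product of the corresponding $\alpha_{\partial_i}$'s; compatible $N$-th roots $h_{\partial_i}$ whose product equals $h_\partial$ always exist over $\mathbb{C}^*$. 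The principal obstacle in the entire argument is the compatibility of the four reduced conditions on $g$ in the second paragraph; its resolution depends essentially on the hypothesis about the extra boundary edge, which supplies the auxiliary corner arcs of $\mathbf{\Sigma}$ whose $\rho$-values force the system on $g$ into a consistent Bruhat-type form.
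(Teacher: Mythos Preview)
Your overall strategy coincides with the paper's: identify $X(\mathbf{\Sigma}_\bullet)$ with a reduced representation variety, extend a given $\rho$ to a functor on the groupoid containing the glued vertex $v$ by choosing a free parameter $g\in\SL_2$, and then argue that $g$ can be adjusted so that the four ``new'' corner-arc conditions at $p_1,p_1',p_2,p_2'$ hold. The treatment of $\widehat{p}_{a_1\#a_2}$ is also essentially the paper's.

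However, the crucial step is not actually proved, and your justification for it is incorrect. You claim that each corner arc $\alpha(p_i^{\pm})$ factors as $\delta^{\pm 1}$ composed with some $\mu\in\Pi_1(\Sigma_i,\mathbb{V}_i\setminus\{v\})$ that is homotopic in $\mathbf{\Sigma}$ to a corner arc of $\mathbf{\Sigma}$, so that $\rho(\mu)\in\SL_2^1$ is already known. This cannot work for a single fixed $\delta$: the condition $\mu\in\Pi_1(\Sigma_i,\mathbb{V}_i\setminus\{v\})$ forces $\delta$ to lie entirely in $\Sigma_i$, which is impossible for both $i=1$ and $i=2$ simultaneously. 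What is actually true is only that the \emph{concatenations} $\alpha(p_1)\alpha(p_2)=\alpha(p)$ and $\alpha(p_2')\alpha(p_1')=\alpha(p')$ are corner arcs of $\mathbf{\Sigma}$; individually, the $\mu$'s for two of the four arcs are never corner arcs of $\mathbf{\Sigma}$, regardless of how $\delta$ is chosen.

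Consequently, after translating via the twist, the problem reduces exactly to: given $A,B,C,D\in\SL_2$ with $AB\in B^+$ and $CD\in B^+$, find $g\in\SL_2$ such that $Ag,\ g^{-1}B,\ gC,\ Dg^{-1}$ all lie in $B^+$. Your dimension count only shows that the solution set is \emph{generically} one-dimensional; it does not prove non-emptiness for every $\rho$. The paper handles precisely this in Lemma~\ref{lemma_prout} by an explicit linear-algebra computation: one solves two $2\times 2$ homogeneous systems of rank~$\leq 1$ (the rank drop being equivalent to $AB\in B^+$ and $CD\in B^+$) and then rescales to achieve $\det g=1$. You need this (or an equivalent argument), not the dimension heuristic, to close the gap.
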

 
 To prove Theorem \ref{theorem_gluing_surjective}, it is useful to introduce an alternative geometric interpretation of $X(\mathbf{\Sigma})$ defined in \cite{CostantinoLe19}. Let $\mathbf{\Sigma}$ be a connected essential surface and fix a Riemannian metric on $\Sigma$. Let $\pi: U\Sigma \to \Sigma$ be the unitary tangent bundle, so a point $\overrightarrow{v}\in U\Sigma$ is given by a pair $\overrightarrow{v}=(p,v)$ where $p\in \Sigma$ and $v\in T_p\Sigma$ has norm one.
 For such a  $\overrightarrow{v}=(p,v)$, write $-\overrightarrow{v}:= (p, -v)$. 
 \par Recall that the orientation of $\Sigma$ induces an orientation of $\partial \Sigma$.
    For $p \in \partial \Sigma$, let $\overrightarrow{p} \in U\Sigma$ be $\overrightarrow{p}:=(p,v)$ where $v\in T_p \partial \Sigma \subset T_p\Sigma$ is the unitary tangent vector oriented in the direction of $\partial \Sigma$. Said differently, if $n\in T_p\Sigma$ is a vector normal to $\partial \Sigma$ which points outside of $\Sigma$, then $(n,v)$ is a positive basis of $T_p\Sigma$. Write $\widehat{\mathbb{V}} := \{ \pm \overrightarrow{v_a}, a\in \mathcal{A} \}$ (recall that $v_a$ is a point in the middle of the boundary edge $a$). 
 \par For $\overrightarrow{v} \in U\Sigma$ with $p=\pi(\overrightarrow{v})$,  let $\theta_{\overrightarrow{v}} \in \pi_1(U\Sigma, \overrightarrow{v})$ be the homotopy class of the path in $U\Sigma$ which lies inside the circle fiber $\pi^{-1}(p)=T_p\Sigma$ and makes a single positive turn. Also denote by $\theta_{\overrightarrow{v}}^{1/2}$ be the homotopy class of the path starting at $\overrightarrow{v}$ and ending at $-\overrightarrow{v}$ which makes a positive half turn in the fiber $T_p\Sigma$. 
 \par Let $\alpha$ be an oriented arc of $\mathbf{\Sigma}$ with endpoints in some boundary edges $a$ and $b$ such that $\alpha$ is oriented from $a$ to $b$. Let $c_{\alpha}: [0,1]\to \Sigma$ be an oriented embedding such that $(1)$ $c_{\alpha}([0,1])$ is isotopic to $\alpha$ with the same orientation, $(2)$ $c_{\alpha}'(0)= - \overrightarrow{v_a}$ and $c_{\alpha}'(1)=\overrightarrow{v_b}$ and $(3)$ $c_{\alpha}'(t)$ has norm $1$ for all $t\in [0,1]$. We denote by $\widehat{\alpha}$ the homotopy class of the path $[0,1] \to U\Sigma$, $t\mapsto (c_{\alpha}(t), c'_{\alpha}(t))$. 
 
 \begin{definition}[Twisted relative representation varieties]
 \begin{enumerate}
 \item The \textbf{twisted relative representation variety} $\mathcal{R}^{tw}_{\SL_2}(\mathbf{\Sigma})$  is the set of functors $\widehat{\rho}: \Pi_1(U\Sigma, \widehat{\mathbb{V}}) \to \SL_2$ such that $\widehat{\rho}(\theta^{1/2}_{\overrightarrow{v}})= \begin{pmatrix} 0 & -1\\ 1 & 0 \end{pmatrix}$ for all $\overrightarrow{v} \in \widehat{\mathbb{V}}$. It admits a natural structure of affine variety as in Definition \ref{def_modulispaces}
 \item The  \textbf{reduced twisted relative representation variety} $\overline{\mathcal{R}}^{tw}_{\SL_2}(\mathbf{\Sigma})$ is the subvariety of  $\mathcal{R}^{tw}_{\SL_2}(\mathbf{\Sigma})$  of functors $\widehat{\rho}$ such that $\widehat{\rho}(\widehat{\alpha(p)})$ is upper triangular for all $p \in \mathcal{P}^{\partial}$.
 \end{enumerate}
 \end{definition}
 
 \begin{theorem}(\cite[Theorem $8.12$]{CostantinoLe19}) There exists an isomorphism $\Psi_{\mathbf{\Sigma}} : X(\mathbf{\Sigma}) \xrightarrow{\cong} {\mathcal{R}}^{tw}_{\SL_2}(\mathbf{\Sigma})$ sending a character $\chi : \mathcal{S}_{+1}(\mathbf{\Sigma}) \to \mathbb{C}$ to the functor $\widehat{\rho}: \Pi_1(U\Sigma, \widehat{\mathbb{V}}) \to \SL_2$ characterized by the fact that for every arc $\alpha$, then 
 $$ \widehat{\rho}(\widehat{\alpha}) = \begin{pmatrix}\chi(\alpha_{++}) & \chi(\alpha_{+-}) \\ \chi(\alpha_{-+}) & \chi(\alpha_{--})\end{pmatrix}.$$
 \end{theorem}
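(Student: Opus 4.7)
The plan is to build $\Psi_{\mathbf{\Sigma}}$ directly on the level of generators, by sending an arc $\alpha$ to the matrix $M_\alpha$ of its stated evaluations, and by declaring that every half-fiber-turn $\theta^{1/2}_{\overrightarrow{v}}$ acts by the antidiagonal matrix $\omega:=\begin{pmatrix}0&-1\\1&0\end{pmatrix}$. The main task is then to check that this rule descends to a well-defined functor on $\Pi_1(U\Sigma,\widehat{\mathbb{V}})$, that the resulting assignment is a morphism of affine varieties, and to exhibit a two-sided inverse. The key external input will be the classical limit (Theorem \ref{theorem_classical_limit}) and, crucially, the skein relations at $A=+1$ for splitting and reversing stated arcs.

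First I would fix a character $\chi$, define, for each oriented arc $\alpha$ in $\mathbf{\Sigma}$, the matrix $M_\alpha:=\bigl(\chi(\alpha_{ij})\bigr)_{i,j}\in\mathrm{Mat}_2(\mathbb{C})$, and observe that the bigon/rotation skein relations, specialised to $A=+1$, force $\alpha_{++}\alpha_{--}-\alpha_{+-}\alpha_{-+}=1$, so $M_\alpha\in\SL_2(\mathbb{C})$. Setting $\widehat{\rho}(\widehat{\alpha}):=M_\alpha$ and $\widehat{\rho}(\theta^{1/2}_{\overrightarrow{v}}):=\omega$, I extend $\widehat{\rho}$ to arbitrary morphisms of $\Pi_1(U\Sigma,\widehat{\mathbb{V}})$ by composition. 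Note that $\omega^{2}=-\mathds{1}_{2}$ is automatically compatible with the fact that the full fiber turn $\theta_{\overrightarrow{v}}=(\theta^{1/2}_{\overrightarrow{v}})^{2}$ is the nontrivial element of $\pi_{1}(\SO(2))$ which, in any $\SL_{2}$-representation coming from a framed object, acts by $-\mathds{1}_{2}$.

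The core step is functoriality. Picking a presenting graph $\Gamma$ as in Example \ref{example_presenting_graph}, every morphism of $\Pi_{1}(U\Sigma,\widehat{\mathbb{V}})$ is a composition of arcs $\widehat{\beta}$ corresponding to edges of $\Gamma$, their inverses, and half-turns $\theta^{1/2}_{\overrightarrow{v}}$ needed to match framings at common endpoints. Two checks remain. The first is a concatenation identity: if $\alpha$ ends at $\overrightarrow{v_a}$ and $\beta$ starts at $-\overrightarrow{v_a}$, then $\widehat{\beta}\cdot\theta^{1/2}_{\overrightarrow{v_a}}\cdot\widehat{\alpha}=\widehat{\beta\alpha}$ in $\Pi_{1}(U\Sigma,\widehat{\mathbb{V}})$; on the skein side, the splitting morphism of Theorem \ref{theorem_gluing} at $A=+1$ yields, after a short calculation, $M_{\beta\alpha}=M_{\beta}\,\omega\, M_{\alpha}$. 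The second is a reversal identity: a skein computation shows that $(\alpha^{-1})_{ij}=\sum_{k,l}(-1)^{\bullet}\varepsilon_{ik}\alpha_{lk}\varepsilon^{lj}$ with $\varepsilon$ the antisymmetric tensor, i.e. $M_{\alpha^{-1}}=\omega^{-1}(M_{\alpha})^{-T}\omega$, matching $\widehat{\alpha^{-1}}=\theta^{1/2}\widehat{\alpha}^{-1}\theta^{-1/2}$ in the twisted fundamental groupoid. These two identities, together with the relation $\omega^{2}=-\mathds{1}$ and the single relation between loops at a basepoint coming from the retraction to $\Gamma$, suffice.

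Finally, I would build the inverse map $\Phi$: given $\widehat{\rho}\in\mathcal{R}^{tw}_{\SL_{2}}(\mathbf{\Sigma})$, define $\chi$ on the generators $\alpha_{ij}$ of $\mathcal{S}_{+1}(\mathbf{\Sigma})$ by $\chi(\alpha_{ij}):=\widehat{\rho}(\widehat{\alpha})_{ij}$; the concatenation and reversal identities above, read in reverse, show that $\chi$ respects every defining relation of $\mathcal{S}_{+1}(\mathbf{\Sigma})$, so descends to a character. The restriction $X(\mathbf{\Sigma})\subset\Specm(\mathcal{S}_{+1}(\mathbf{\Sigma}))$ corresponds precisely to characters killing all bad arcs $\alpha(p)_{-+}$, which in turn corresponds to $\widehat{\rho}(\widehat{\alpha(p)})$ being upper triangular, i.e.\ to $\widehat{\rho}\in\overline{\mathcal{R}}^{tw}_{\SL_{2}}(\mathbf{\Sigma})$. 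The constructions $\Psi_{\mathbf{\Sigma}}$ and $\Phi$ are inverse by construction since each is determined on generators, and both are algebraic maps, giving an isomorphism of affine varieties.

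The main obstacle is the twist bookkeeping in the concatenation identity: tracking precisely where a factor of $\omega$ arises from the choice of framing $\overrightarrow{v_a}$ vs $-\overrightarrow{v_a}$ at a splitting vertex, and verifying this matches the explicit gluing formula of Definition \ref{def_gluing_map} specialised to $A=+1$. A clean approach is to set up the calculation once for a single bigon obtained by gluing two arcs along one boundary edge, where both the skein side and the groupoid side reduce to a single two-by-two matrix identity; the general case then follows by induction on the number of gluings and functoriality of both $\Psi_{\mathbf{\Sigma}}$ and the splitting morphism.
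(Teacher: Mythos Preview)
The paper does not prove this theorem; it is quoted from \cite[Theorem $8.12$]{CostantinoLe19} and used without argument. The only thing deduced here is the subsequent Corollary (matching bad arcs with upper-triangularity of $\widehat{\rho}(\widehat{\alpha(p)})$), which is exactly the content of your final paragraph. So there is no in-paper proof to compare against.

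On your sketch itself: the strategy is the natural one and broadly follows the cited source. One slip: the reversal identity $M_{\alpha^{-1}}=\omega^{-1}(M_{\alpha})^{-T}\omega$ is a tautology in $\SL_2$ (both sides equal $M_\alpha$, since $\omega^{-1}X^{-T}\omega=X$ holds for every $X\in\SL_2$); the genuine skein relation is $(\alpha^{-1})_{ij}=\alpha_{ji}$, i.e.\ $M_{\alpha^{-1}}=M_\alpha^{T}$, and one checks this matches the groupoid via $\omega X^{-1}\omega^{-1}=X^{T}$. For the concatenation step you will need either a presentation of $\mathcal{S}_{+1}(\mathbf{\Sigma})$ (as supplied by \cite{KojuPresentationSSkein} and underlying Theorem \ref{theorem_classical_limit}) or the splitting-across-a-bigon argument you outline at the end; the ``twist bookkeeping'' you flag is indeed where the substance of the original proof lies.
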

 
 In particular, the ring isomorphism $\Psi_{\mathbf{\Sigma}}^{*}$ sends a bad arc $\alpha(p)_{-+}$ to the function $X_{-+}^{\widehat{\alpha(p)}}$ which sends $\widehat{\rho}$ to the lower left matrix coefficient of $\widehat{\rho}(\widehat{\alpha(p)})$, so we get the 
 
 \begin{corollary}
  The isomorphism $\Psi_{\mathbf{\Sigma}}$ restricts to an isomorphism (still denoted by the same letter) $\Psi_{\mathbf{\Sigma}}: \overline{X}(\mathbf{\Sigma}) \xrightarrow{\cong} \overline{\mathcal{R}}^{tw}_{\SL_2}(\mathbf{\Sigma})$.
  \end{corollary}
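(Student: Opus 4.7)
The plan is straightforward: verify that under the ring-theoretic pullback $\Psi_{\mathbf{\Sigma}}^*$ of the isomorphism from the preceding theorem, the ideal of bad arcs matches the ideal cutting out the upper-triangular condition on corner arcs, so that $\Psi_{\mathbf{\Sigma}}$ descends to the respective quotients. First I would unwind the defining ideals on both sides. By definition $\overline{\mathcal{S}}_{+1}(\mathbf{\Sigma}) = \mathcal{S}_{+1}(\mathbf{\Sigma}) / I_{\mathrm{bad}}$, where $I_{\mathrm{bad}}$ is generated by the bad arcs $\{\alpha(p)_{-+}\}_{p\in\mathcal{P}^{\partial}}$, so $\overline{X}(\mathbf{\Sigma}) \subset X(\mathbf{\Sigma})$ is the closed subscheme cut out by $I_{\mathrm{bad}}$. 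On the geometric side, a matrix $M\in \SL_2$ is upper triangular precisely when its lower-left entry vanishes, so $\overline{\mathcal{R}}^{tw}_{\SL_2}(\mathbf{\Sigma}) \subset \mathcal{R}^{tw}_{\SL_2}(\mathbf{\Sigma})$ is the closed subvariety cut out by the ideal $J$ generated by the matrix-coefficient functions $\{X_{-+}^{\widehat{\alpha(p)}}\}_{p\in\mathcal{P}^{\partial}}$.

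Next I would invoke the computation already recorded immediately before the statement: the ring isomorphism $\Psi_{\mathbf{\Sigma}}^{*}$ sends $X_{-+}^{\widehat{\alpha(p)}}$ to $\alpha(p)_{-+}$ for each $p \in \mathcal{P}^{\partial}$. As these pairs are exactly the distinguished generators of $J$ and $I_{\mathrm{bad}}$, and $\Psi_{\mathbf{\Sigma}}^{*}$ is a ring isomorphism, one has $\Psi_{\mathbf{\Sigma}}^{*}(J) = I_{\mathrm{bad}}$. Consequently $\Psi_{\mathbf{\Sigma}}^{*}$ descends to an isomorphism of quotient algebras $\mathcal{O}[\overline{\mathcal{R}}^{tw}_{\SL_2}(\mathbf{\Sigma})] \xrightarrow{\cong} \overline{\mathcal{S}}_{+1}(\mathbf{\Sigma})$, and taking $\Specm$ yields the desired restriction of $\Psi_{\mathbf{\Sigma}}$.

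The argument is essentially formal; the only sanity check worth emphasizing is that the upper-triangularity constraint at each corner arc is genuinely cut out by the single scalar equation $X_{-+}^{\widehat{\alpha(p)}} = 0$, with no hidden additional relations forced by the twisted framing, so that $J$ is generated by exactly those functions which $\Psi_{\mathbf{\Sigma}}^{*}$ has already been shown to identify with the bad arcs. Beyond that, there is no serious obstacle.
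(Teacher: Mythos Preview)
Your argument is correct and is exactly the paper's own reasoning: the sentence immediately preceding the corollary observes that $\Psi_{\mathbf{\Sigma}}^{*}$ sends each bad arc $\alpha(p)_{-+}$ to the lower-left matrix coefficient $X_{-+}^{\widehat{\alpha(p)}}$, and the corollary is deduced from this without further comment. You have simply made explicit the passage to quotients that the paper leaves implicit.
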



Now consider $\mathbf{\Sigma}_1$ and $\mathbf{\Sigma}_2$ with $a_1$ and $a_2$ as in Theorem \ref{theorem_gluing_surjective} and write $\mathbf{\Sigma}:=  \mathbf{\Sigma}_1\cup_{a_1\# a_2} \mathbf{\Sigma}_2$.
For $i=1,2$, let $\iota_i : \Sigma_i \hookrightarrow \Sigma$ be the inclusion map.
 Fix a Riemannian metric on $\Sigma$ which induces, by restriction, some Riemannian metrics on $\Sigma_1$ and $\Sigma_2$. Let $\overrightarrow{w} \in U\Sigma$ be the image of $\overrightarrow{v_{a_1}}\in U\Sigma_1 \xrightarrow{\subset} U\Sigma$ by $(\iota_1)_*$. Note that the image of $\overrightarrow{v_{a_2}}\in U\Sigma_2$ in $U\Sigma$ by $(\iota_2)_*$ is $-\overrightarrow{v}$. 
 Write $\Pi_1:= \Pi_1(U\Sigma_1, \mathbb{V}_1)$, $\Pi_2:= \Pi_1(U\Sigma_2, \mathbb{V}_2)$ and $\Pi:= \Pi_1(U\Sigma, \mathbb{V} \cup \{\overrightarrow{w}\})$ and consider the variety: 
 $$ \mathcal{R}^{w}:= \{ \widehat{\rho}\in \Hom(\Pi, \SL_2), \widehat{\rho}(\theta^{1/2}_{\overrightarrow{v}})= \begin{pmatrix} 0 & -1\\ 1 & 0 \end{pmatrix} \mbox{, for all }\overrightarrow{v}\in \widehat{\mathbb{V}} \}.$$
 
 For $i=1,2$, the embedding $\iota_i : \Sigma_i \hookrightarrow \Sigma$ induces a fully faithful functor $(\iota_i)_*: \Pi_i \hookrightarrow \Pi$ so we can consider $\Pi_i$ as a full subcategory of $\Pi$. We are in the following situation: 
 
 \begin{lemma}\label{lemma_groupoid} Let $G$ be a groupoid and $G_1, G_2\subset G$ be two full subcategories of $G$ and $v\in G$ an object such that 
 
 \begin{enumerate}
 \item one has $\Ob(G_1)\cup \Ob(G_2)= \Ob(G)$ and $\Ob(G_1)\cap \Ob(G_2)=\{ v\}$; 
 \item for $v_1\in G_1$ and $v_2\in G_2$, the composition map $G_1(v_1, v)\times G_2(v, v_2) \to G(v_1,v_2)$ is a bijection.
 \end{enumerate}
 Then the restriction map $\res: \Hom(G, \SL_2) \to \Hom(G_1, \SL_2) \times \Hom(G_2, \SL_2)$ sending $\rho$ to the pair $(\restriction{\rho}{G_1}, \restriction{\rho}{G_2})$, is a bijection.
 \end{lemma}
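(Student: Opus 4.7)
The plan is to construct an explicit inverse to $\res$. Since $\SL_2$ has a single object, a functor $\rho : G \to \SL_2$ is determined by its values on morphisms of $G$, so it suffices to describe $\rho$ on morphisms.

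For injectivity of $\res$, suppose $\rho, \rho' \in \Hom(G, \SL_2)$ both restrict to the same pair $(\rho_1, \rho_2)$. For $\beta \in G(v_1, v_2)$ I distinguish cases. If $v_1, v_2 \in \Ob(G_i)$ for some common $i \in \{1,2\}$, then fullness of $G_i$ gives $\beta \in G_i$, so $\rho(\beta) = \rho_i(\beta) = \rho'(\beta)$. If $v_1 \in \Ob(G_1)$ and $v_2 \in \Ob(G_2)$, condition (2) provides a unique factorization $\beta = \beta_1\beta_2$ with $\beta_1 \in G_1(v_1, v)$ and $\beta_2 \in G_2(v, v_2)$, and functoriality yields $\rho(\beta) = \rho_1(\beta_1)\rho_2(\beta_2) = \rho'(\beta)$. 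The reverse mixed case follows by applying the previous one to $\beta^{-1}$ and inverting in $\SL_2$.

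For surjectivity, given $(\rho_1, \rho_2)$ I define $\rho$ on morphisms by the same formulas: $\rho(\beta) := \rho_i(\beta)$ when $\beta$ lies in $G_i$, and $\rho(\beta) := \rho_1(\beta_1)\rho_2(\beta_2)$ in the mixed case $v_1 \in \Ob(G_1),\, v_2 \in \Ob(G_2)$, with the reverse mixed case handled by $\rho(\beta) := \rho(\beta^{-1})^{-1}$. Consistency on overlaps (e.g.\ when the source or target equals $v$) follows from uniqueness of factorization in (2). The heart of the proof is then the functoriality of $\rho$, i.e.\ $\rho(\beta\gamma) = \rho(\beta)\rho(\gamma)$ for composable morphisms, which is verified by a case analysis on the locations (in $\Ob(G_1)$ vs.\ $\Ob(G_2)$) of the source of $\beta$, the common midpoint, and the target of $\gamma$.

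The main obstacle is a single nontrivial sub-case: when $\beta\gamma$ has both endpoints in $\Ob(G_1)$ but the midpoint lies in $\Ob(G_2)\setminus\{v\}$ (or the symmetric situation). Here a preliminary observation is decisive: applying (2) with $v_1 = v_2 = v$, combined with the fullness of $G_1, G_2$ (so $G_i(v,v) = G(v,v)$), forces the multiplication $G(v,v) \times G(v,v) \to G(v,v)$ to be a bijection, which can only occur if $G(v,v) = \{1_v\}$. Once this is known, the ``extra loop'' at $v$ that appears in the product $\rho(\beta)\rho(\gamma)$ (namely $\beta_2\gamma_1$ in the factorizations $\beta = \beta_1\beta_2$, $\gamma = \gamma_1\gamma_2$ passing through $v$) trivializes, and the required identity $\rho(\beta\gamma) = \rho(\beta)\rho(\gamma)$ collapses to the functoriality of $\rho_1$ (or $\rho_2$) applied to $\beta_1\gamma_2 = \beta\gamma$.
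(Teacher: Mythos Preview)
Your proof is correct and takes essentially the same approach as the paper: construct an explicit inverse to $\res$ by defining $\rho$ casewise according to where the source and target lie. Your verification is in fact more thorough than the paper's (which simply asserts the inverse is ``clearly'' well-defined); in particular, your observation that hypothesis~(2) with $v_1=v_2=v$ forces $G(v,v)=\{1_v\}$ is precisely what makes the functoriality check go through in the mixed-midpoint case, and is a point the paper leaves implicit.
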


 \begin{proof}
Consider the map  $$F: \Hom(G_1, \SL_2) \times \Hom(G_2, \SL_2) \to \Hom(G, \SL_2)$$ which sends a pair $(\rho_1, \rho_2)$ to the functor $\rho$ such that for $v_1, v_2 \in G$, the restriction $\rho: G(v_1,v_2) \to \SL_2(\mathbb{C})$ is equal to $(i)$ the restriction $\rho_1: G_1(v_1,v_2) \to \SL_2(\mathbb{C})$ if $v_1, v_2 \in \Pi_1$, $(ii)$ to the restriction $\rho_2: G_2(v_1,v_2) \to \SL_2(\mathbb{C})$ if $v_1,v_2\in G_2$, $(iii)$ to the composition $G(v_1,v_2) \cong G_1(v_1, v) \times G_2(v, v_2) \xrightarrow{(\rho_1, \rho_2)}\SL_2(\mathbb{C}) \times \SL_2(\mathbb{C}) \xrightarrow{ \times} \SL_2(\mathbb{C})$ if $v_1\in G_1$, $v_2\in G_2$, $(iv)$ to a similar composition map with $1$ and $2$ exchanged if $v_1\in G_2$ and $v_2\in G_1$. $F$ is clearly the inverse of  $\res$.
\end{proof}
The morphism $F$ of the proof, obtained by replacing $(G,G_1,G_2, v) $  by $(\Pi, \Pi_1, \Pi_2, \overrightarrow{w})$,   induces by restriction an isomorphism (still denoted by the same letter): 
 $$ F :\mathcal{R}^{tw}_{\SL_2}(\mathbf{\Sigma}_1) \times \mathcal{R}^{tw}_{\SL_2}(\mathbf{\Sigma}_2) \xrightarrow{\cong} \mathcal{R}^w.$$
The inclusion $\Pi_1(U\Sigma, \widehat{\mathbb{V}}) \subset \Pi_1(U\Sigma, \widehat{\mathbb{V}} \cup \{\overrightarrow{w}\})$ induces a regular (restriction) map $G: \mathcal{R}^w \to \mathcal{R}^{tw}_{\SL_2}(\mathbf{\Sigma})$.

\begin{lemma}\label{lemma_surjectivity_modulispaces}
The composition $G\circ F : \mathcal{R}^{tw}_{\SL_2}(\mathbf{\Sigma}_1) \times \mathcal{R}^{tw}_{\SL_2}(\mathbf{\Sigma}_2) \to  \mathcal{R}^{tw}_{\SL_2}(\mathbf{\Sigma})$ is surjective and induces by restriction a surjective map 
 $\varphi: \overline{\mathcal{R}}^{tw}_{\SL_2}(\mathbf{\Sigma}_1) \times \overline{\mathcal{R}}^{tw}_{\SL_2}(\mathbf{\Sigma}_2) \to  \overline{\mathcal{R}}^{tw}_{\SL_2}(\mathbf{\Sigma})$.
 \end{lemma}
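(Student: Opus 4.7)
The plan is to reduce surjectivity to that of $G$. Since $F$ is an isomorphism by Lemma \ref{lemma_groupoid}, the surjectivity of $G\circ F$ follows from that of the restriction morphism $G$. For $\widehat{\rho}\in \mathcal{R}^{tw}_{\SL_2}(\mathbf{\Sigma})$, I construct an extension $\widetilde{\rho}\in \mathcal{R}^w$ by specifying values on a small system of additional generators of $\Pi$: a path $\beta:\overrightarrow{w}\to \overrightarrow{v_0}$ to some $\overrightarrow{v_0}\in \widehat{\mathbb{V}}$, assigned an arbitrary matrix $M\in \SL_2$, together with $\widetilde{\rho}(\theta^{1/2}_{\pm\overrightarrow{w}})=J:=\bigl(\begin{smallmatrix}0&-1\\1&0\end{smallmatrix}\bigr)$, the latter chosen so that the restrictions of $\widetilde{\rho}$ to $\mathbf{\Sigma}_i$ satisfy the twisted condition at $\pm\overrightarrow{v_{a_i}}$. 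The resulting $\widetilde{\rho}$ lies in $\mathcal{R}^w$, since the defining condition of $\mathcal{R}^w$ only constrains half-twists at $\widehat{\mathbb{V}}$, and restricts to $\widehat{\rho}$ under $G$.

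For the reduced case, the above produces an $\SL_2$-family of extensions parametrized by $M$, and the task is to exhibit some $M$ such that both restrictions $\widehat{\rho}_i=\widetilde{\rho}\circ (\iota_i)_*$ lie in $\overline{\mathcal{R}}^{tw}_{\SL_2}(\mathbf{\Sigma}_i)$. The corner arcs of $\mathbf{\Sigma}_i$ split into two classes: those at punctures not adjacent to $a_i$, which coincide with corner arcs of $\mathbf{\Sigma}$ and therefore automatically verify $\widehat{\rho}_i(\widehat{\alpha(p)})=\widehat{\rho}(\widehat{\alpha(p)})\in B$ (upper triangular subgroup); and the at most two adjacent corner arcs $\widehat{\alpha(p_i^\pm)}$, where $p_i^\pm$ is the puncture between $a_i$ and its neighbouring boundary edge in the same boundary component of $\mathbf{\Sigma}_i$ (which exists by the hypothesis of the theorem).

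The crucial point is that the adjacent punctures get identified pairwise under the gluing to form merged boundary punctures $p^\pm$ of $\mathbf{\Sigma}$ whose corner arcs decompose in $\Pi$ as
\[
\widehat{\alpha(p^\pm)}\;=\;\widehat{\alpha(p_2^\mp)}^{\varepsilon_2}\;\circ\;(\theta^{1/2}_{\pm\overrightarrow{w}})^{\varepsilon}\;\circ\;\widehat{\alpha(p_1^\pm)}^{\varepsilon_1},
\]
for suitable signs $\varepsilon_1,\varepsilon_2,\varepsilon\in\{\pm 1\}$ depending on the orientation conventions. Applying $\widetilde{\rho}$ and using $\widetilde{\rho}(\theta^{1/2}_{\pm\overrightarrow{w}})=J$, a direct bookkeeping shows that $M$ enters $\widehat{\rho}_1(\widehat{\alpha(p_1^\pm)})$ and $\widehat{\rho}_2(\widehat{\alpha(p_2^\mp)})$ by inverse multiplications that cancel in the product, so the right-hand side is in fact $M$-independent. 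Consequently the upper triangular hypothesis on $\widehat{\rho}(\widehat{\alpha(p^\pm)})$ translates into an equality of the two Borel cosets $\{M:\widehat{\rho}_1(\widehat{\alpha(p_1^\pm)})\in B\}$ and $\{M:\widehat{\rho}_2(\widehat{\alpha(p_2^\mp)})\in B\}$ (up to conjugation by $J$), and any $M$ in this common coset simultaneously renders all four adjacent corner arc images upper triangular. This yields the surjectivity of $\varphi$.

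The main obstacle is the careful accounting of this decomposition and the ensuing cancellation of $M$, which requires tracking the framing conventions at $\overrightarrow{w}$ and the orientations used to make the bad arc identifications of Corollary \ref{coro_classical_limit} consistent across the gluing; once this bookkeeping is set up, the reduced surjectivity is a direct consequence of the hypothesis $\widehat{\rho}(\widehat{\alpha(p^\pm)})\in B$.
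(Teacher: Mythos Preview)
Your overall strategy coincides with the paper's: reduce to surjectivity of $G$ via Lemma~\ref{lemma_groupoid}, build an extension $\widetilde{\rho}\in\mathcal{R}^w$ depending on a free parameter $M\in\SL_2$, and then adjust $M$ so that the four new corner arcs at $p_1^\pm,p_2^\mp$ become upper triangular. Your observation that, for a \emph{fixed} merged puncture $p^+$ (resp.\ $p^-$), the $\Sigma_1$-condition and the $\Sigma_2$-condition cut out the \emph{same} $B$-coset of $M$'s is correct and is exactly the cancellation you describe. However, this produces \emph{two} cosets $C_+$ and $C_-$, one for each merged puncture, and one of them is a left coset $xB$ while the other is a right coset $By$ (the corner arcs on opposite sides of $a_i$ have $\overrightarrow{w}$ as source versus target, so $M$ acts on them from opposite sides). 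Your phrase ``any $M$ in this common coset simultaneously renders all four adjacent corner arc images upper triangular'' silently assumes $C_+=C_-$, or at least $C_+\cap C_-\neq\emptyset$, and this is the missing step.

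That intersection is not automatic: $xB\cap By\neq\emptyset$ holds iff $x$ and $y$ lie in the same Bruhat double coset $B\backslash\SL_2/B$, which is a genuine constraint. In the paper this is precisely the role of Lemma~\ref{lemma_prout}: given $A,B,C,D$ with $AB\in B^+$ and $CD\in B^+$, one constructs by hand a matrix $g$ with $Ag,\,g^{-1}B,\,Dg^{-1},\,gC\in B^+$ by solving two $2\times 2$ linear systems on the columns of $g$ and then normalizing the determinant. Your argument needs an analogous step (or a direct appeal to Lemma~\ref{lemma_prout}) to pass from ``each merged puncture determines a single coset'' to ``there exists $M$ lying in both''. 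Without it, the reduced surjectivity is not established.
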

 
 In order to prove Lemma \ref{lemma_surjectivity_modulispaces}, we first prove the following:
 
 \begin{lemma}\label{lemma_prout} Let $B^+\subset \SL_2(\mathbb{C})$ be the subgroup of upper triangular matrices and let $A,B,C,D \in \SL_2(\mathbb{C})$ be such that $AB \in B^+$ and $CD\in B^+$. Then there exists $g \in \SL_2(\mathbb{C})$ such that $Ag$, $g^{-1}B$, $Dg^{-1}$ and $gC$ are in $B^+$.
 \end{lemma}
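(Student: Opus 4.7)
The plan is to recast the four membership conditions as incidence conditions on how $g$ acts on the distinguished line $\ell_0 := \mathbb{C}e_+ \subset \mathbb{C}^2$, using the fact that $B^+$ coincides with the $\SL_2(\mathbb{C})$-stabilizer of $\ell_0$: an element $M \in \SL_2(\mathbb{C})$ lies in $B^+$ iff $M(\ell_0) = \ell_0$. Under this translation, $Ag \in B^+$ amounts to $g(\ell_0) = A^{-1}(\ell_0)$, and $g^{-1}B \in B^+$ amounts to $g(\ell_0) = B(\ell_0)$; these two are simultaneously satisfiable precisely when $A^{-1}(\ell_0) = B(\ell_0)$, which is exactly the first hypothesis $AB \in B^+$. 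Denote the resulting common line by $L_1$. Symmetrically, $Dg^{-1} \in B^+$ and $gC \in B^+$ become $g^{-1}(\ell_0) = D^{-1}(\ell_0)$ and $g^{-1}(\ell_0) = C(\ell_0)$, whose joint solvability is encoded by the second hypothesis $CD \in B^+$; call this common line $L_2$.

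Having performed this reduction, the task becomes the following: construct $g \in \SL_2(\mathbb{C})$ satisfying simultaneously $g(\ell_0) = L_1$ and $g(L_2) = \ell_0$. In the generic case where $L_2 \neq \ell_0$ (and likewise $L_1 \neq \ell_0$), I would choose nonzero vectors $v_1 \in L_1$ and $v_2 \in L_2$; since then $\{e_+, v_2\}$ forms a basis of $\mathbb{C}^2$, $g$ can be defined on this basis by $g(e_+) := \lambda v_1$ and $g(v_2) := e_+$, with the scalar $\lambda$ chosen so that $\det(g) = 1$. In the fully degenerate situation $L_1 = L_2 = \ell_0$ (which forces $A,B,C,D$ themselves to lie in $B^+$), any element $g \in B^+$ is a solution.

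The delicate point is the mixed degenerate case, in which exactly one of $L_1, L_2$ equals $\ell_0$: the two requirements $g(\ell_0) = \ell_0$ and $g(L) = \ell_0$ for a distinct line $L$ cannot be met by any element of $\SL_2(\mathbb{C})$. The main obstacle in the argument will be to handle (or rule out) this configuration, most naturally by appealing to the geometric provenance of $A, B, C, D$ as holonomies of paths in the marked-surface gluing setup of the preceding lemmas, where corner-arc constraints impose additional rigidity that excludes such asymmetric degeneracies. Once this subtlety is disposed of, the explicit construction of $g$ above completes the proof, and in fact gives a one-parameter family of solutions parametrized by the residual scaling of $g$ along $\ell_0$.
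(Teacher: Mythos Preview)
Your geometric reformulation is clean and essentially mirrors the paper's coordinate computation: the paper writes out $Ag, g^{-1}B \in B^+$ as a singular $2\times 2$ linear system in the column $(g_{++}, g_{-+})$, and $Dg^{-1}, gC \in B^+$ as a singular system in $(g_{-+}, g_{--})$, then picks nontrivial kernel vectors. Your lines $L_1, L_2$ are exactly these kernel directions.

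Two remarks. First, a small slip: the compatibility of $g^{-1}(\ell_0) = D^{-1}(\ell_0)$ with $g^{-1}(\ell_0) = C(\ell_0)$ is equivalent to $DC \in B^+$, not $CD \in B^+$. (The paper's proof contains the same mismatch in its ``$CD\in B^+ \Leftrightarrow \det(\cdot)=0$'' line.)

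Second, and more importantly: the mixed degenerate case you flag is not an artifact to be ruled out by context---it is a genuine counterexample to the lemma as stated. Take $A=B=\mathds{1}_2$ and $C=\begin{pmatrix}0&1\\-1&0\end{pmatrix}$, $D=C^{-1}$. Then $AB=CD=DC=\mathds{1}_2 \in B^+$, yet $L_1=\ell_0$ while $L_2=\mathbb{C}e_-\neq\ell_0$, and one checks directly that no $g\in\SL_2(\mathbb{C})$ works: the conditions $Ag, g^{-1}B \in B^+$ force $g\in B^+$, whereupon $(gC)_{-+}=-g_{--}=-g_{++}^{-1}\neq 0$. So your instinct that something is wrong is correct, but the proposed fix---appealing to the downstream geometric setup---does not prove the lemma; at best it would prove a weaker statement tailored to the application. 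The paper's own argument glosses over the same point: it chooses $(g_{++},g_{-+})$ from one kernel and $(g_{-+},g_{--})$ from the other and never reconciles the two values of $g_{-+}$.
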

 
 \begin{proof} Write $A=\begin{pmatrix} a_{++} & a_{+-} \\ a_{-+} & a_{--} \end{pmatrix}$ with similar notations for $B,C,D,g$. Then 
 $$ \left\{ \begin{array}{l} Ag \in B^+ \\ g^{-1}B \in B^+ \end{array} \right. \Leftrightarrow \begin{pmatrix} a_{-+} & a_{--} \\ b_{-+} & -b_{++} \end{pmatrix} \begin{pmatrix} g_{++} \\ g_{-+} \end{pmatrix} = \begin{pmatrix} 0 \\ 0 \end{pmatrix}.$$
 Since $AB \in B^+ \Leftrightarrow \det \begin{pmatrix} a_{-+} & a_{--} \\ b_{-+} & -b_{++} \end{pmatrix} =0$, one can find $(g_{++}, g_{-+})\neq (0,0)$ which satisfies this equation. Similarly, 
 $$ \left\{ \begin{array}{l} Dg^{-1} \in B^+ \\ gC \in B^+ \end{array} \right. \Leftrightarrow \begin{pmatrix} c_{++} & c_{-+} \\ -d_{--} & d_{-+} \end{pmatrix} \begin{pmatrix} g_{-+} \\ g_{--} \end{pmatrix} = \begin{pmatrix} 0 \\ 0 \end{pmatrix}.$$
 Since $CD\in B^+ \Leftrightarrow \det  \begin{pmatrix} c_{++} & c_{-+} \\ -d_{--} & d_{-+} \end{pmatrix} =0$, one can find $(g_{-+}, g_{--})\neq (0,0)$ which satisfies this equation. Up to multiplying $(g_{-+}, g_{--})$ by a non zero scalar, one can further suppose that $g_{++}g_{--}-g_{+-}g_{-+}=1$, so the matrix $g= \begin{pmatrix} g_{++} & g_{+-} \\ g_{-+} & g_{--} \end{pmatrix}$ satisfies the conclusion of the lemma.

 \end{proof}
 
 \begin{proof}[Proof of Lemma \ref{lemma_surjectivity_modulispaces}] Since $F$ is a bijection, we need to prove that $G$ is surjective. Let us first define an action of $\SL_2(\mathbb{C})$ on $\mathcal{R}^w$ as follows. For $g\in \SL_2(\mathbb{C})$ and $\rho \in \mathcal{R}^w$, we define $g\bullet \rho \in \mathcal{R}^w$ as the functor which sends a path $\beta: v_1 \to v_2$ in  $\Pi_1(U\Sigma, \widehat{\mathbb{V}} \cup \{ \overrightarrow{w}\})$ to 
 $$ (g\bullet \rho) (\beta) := \left\{ \begin{array}{ll}
 \rho(\beta) & \mbox{, if }v_1 \neq \overrightarrow{w}, v_2 \neq \overrightarrow{w}; \\
 \rho(\beta)g & \mbox{, if }v_1 \neq \overrightarrow{w}, v_2 = \overrightarrow{w}; \\
  g^{-1}\rho(\beta) & \mbox{, if }v_1 = \overrightarrow{w}, v_2 \neq \overrightarrow{w}; \\
  g^{-1}\rho(\beta)g & \mbox{, if }v_1 = v_2 = \overrightarrow{w}.
  \end{array} \right. $$
  Clearly $G(g\bullet \rho) = G(\rho)$ so $G$ induces a map $G': \quotient{ \mathcal{R}^w}{\SL_2(\mathbb{C})} \to \mathcal{R}^{tw}_{\SL_2}(\mathbf{\Sigma})$. Let us prove that $G'$ is surjective (it is not difficult to prove that it is an isomorphism).
\par  Let $c$ be a boundary edge of $\mathbf{\Sigma}$ (which exists by the  hypotheses made in Theorem \ref{theorem_gluing_surjective}) and consider an arbitrary path $\beta: \overrightarrow{v_c} \to \overrightarrow{w}$. Let $\rho \in  \mathcal{R}^{tw}_{\SL_2}(\mathbf{\Sigma})$ and define an extension $\widehat{\rho} \in \mathcal{R}^w$ as follows. First, the restriction of $\widehat{\rho}$ to $\Pi_1(U\Sigma, \widehat{\mathbb{V}})$ is $\rho$ and $\widehat{\rho}(\beta)= \mathds{1}_2$. Next for a path $\gamma: \overrightarrow{w} \to \overrightarrow{v}$ with $\overrightarrow{v}\in \widehat{V}$, set $\widehat{\rho}(\gamma):= \rho(\gamma \beta^{-1})$. Similarly for a path $\gamma': \overrightarrow{v} \to \overrightarrow{w}$ set $\widehat{\rho}(\gamma'):= \rho(\beta^{-1}\gamma' )$. Clearly  $\widehat{\rho} \in \mathcal{R}^w$ and $G(\widehat{\rho})=\rho$ so $G$ and $G'$  are surjective. 
 \par Let us prove that the restriction $\varphi$ of $G\circ F$ is surjective as well. Let $p_1, p_1'$ be the two punctures adjacent to $a_1$ and $p_2, p_2'$ the two punctures adjacent to $a_2$ such that while gluing $a_1$ with $a_2$, $p_1$ and $p_2$ have the same image $p\in \partial \Sigma$ and $p_1', p_2'$ have the same image $p'\in \partial \Sigma$ (see Figure \ref{fig_gluing_surjective}). By assumption, $p_1\neq p_1'$ and $p_2\neq p_2'$. 
 Let $\rho \in \overline{\mathcal{R}}^{tw}_{\SL_2}(\mathbf{\Sigma})$. By surjectivity of $G$, there exists $\widehat{\rho} \in \mathcal{R}^w$ such that $G(\widehat{\rho})=\rho$. Then $\widehat{\rho}$ is the image by $F$ of an element of 
$ \overline{\mathcal{R}}^{tw}_{\SL_2}(\mathbf{\Sigma}_1) \times \overline{\mathcal{R}}^{tw}_{\SL_2}(\mathbf{\Sigma}_2) $ if and only if for all $p\in \mathcal{P}^{\partial_1} \cup \mathcal{P}^{\partial_2}$, then $\widehat{\rho}(\widehat{\alpha(p)}) \in B^+$. Since  $G(\widehat{\rho})=\rho \in  \overline{\mathcal{R}}^{tw}_{\SL_2}(\mathbf{\Sigma})$, $\widehat{\rho}(\widehat{\alpha(p)}) \in B^+$ for all $p\notin \{p_1,p_1', p_2, p_2'\}$. Write $A:= \widehat{\rho}(\widehat{\alpha(p_1)})$, $B:=  \widehat{\rho}(\widehat{\alpha(p_2)})$, $C:=  \widehat{\rho}(\widehat{\alpha(p_2')})$, $D:=  \widehat{\rho}(\widehat{\alpha(p_1')})$. Then $AB= \rho (\widehat{\alpha(p)}) \in B^+$ and $CD=\rho (\widehat{\alpha(p')})\in B^+$ so by Lemma \ref{lemma_prout}, there exists $g\in \SL_2(\mathbb{C})$ such that  $Ag$, $g^{-1}B$, $Dg^{-1}$ and $gC$ are in $B^+$. Therefore, writing  $\rho':= g\bullet \widehat{\rho}$ one has $\restriction{\rho'}{\Pi_1}\in \overline{\mathcal{R}}^{tw}_{\SL_2}(\mathbf{\Sigma}_1)$, $\restriction{\rho'}{\Pi_2} \in \overline{\mathcal{R}}^{tw}_{\SL_2}(\mathbf{\Sigma}_2)$ and $\varphi(\restriction{\rho'}{\Pi_1}, \restriction{\rho'}{\Pi_2)} = \rho$. So $\varphi$ is surjective.

 \end{proof}
 
 \begin{proof}[Proof of Theorem \ref{theorem_gluing_surjective}] 
 The surjectivity of $p_{a_1\# a_2}$ follows from the commutativity of the following diagram
 $$ \begin{tikzcd}
 X(\mathbf{\Sigma}_1) \times X(\mathbf{\Sigma}_2) 
 \ar[r, "p_{a_1\# a_2}"] \ar[d, "\Psi_{\mathbf{\Sigma}_1}\times \Psi_{\mathbf{\Sigma}_2}", "\cong"'] &
 X(\mathbf{\Sigma}) \ar[d, "\Psi_{\mathbf{\Sigma}}", "\cong"'] \\
 \overline{\mathcal{R}}^{tw}_{\SL_2}(\mathbf{\Sigma}_1) \times \overline{\mathcal{R}}^{tw}_{\SL_2}(\mathbf{\Sigma}_2) 
 \ar[r, "\varphi"] & 
  \overline{\mathcal{R}}^{tw}_{\SL_2}(\mathbf{\Sigma}) 
 \end{tikzcd} $$
 together with the surjectivity of $\varphi$ proved in Lemma \ref{lemma_surjectivity_modulispaces}. Let us prove that  $\widehat{p}_{a_1\# a_2}$ is surjective as well. Let $\widehat{x}=({x}, h_p, h_{\partial})_{p, \partial} \in \widehat{X}(\mathbf{\Sigma})$ and fix $(x_1,x_2) \in X(\mathbf{\Sigma}_1)\times X(\mathbf{\Sigma}_2)$ such that $p_{a_1\# a_2}(x_1,x_2)=x$. Let $\partial_1$ be the boundary component of $\Sigma_1$ containing $a_1$ and $\partial_2$ the boundary component of $\Sigma_2$ containing $a_2$ and $\partial'$ the boundary component of $\Sigma$ obtained by $1$-surgery on $\partial_1\cup \partial_2$. By definition $h_{\partial'}^N=\chi_x(\alpha_{\partial})$. Since $\theta_{a_1\# a_2}(\alpha_{\partial_1}\otimes \alpha_{\partial_2})=\alpha_{\partial}$, one can find (and do fix) $h_{\partial_1}, h_{\partial_2} \in \mathbb{C}^*$ such that $h_{\partial}=h_{\partial_1}h_{\partial_2}$ and $h_{\partial_1}^N=\chi_{x_1}(\alpha_{\partial_1})$, $h_{\partial_2}^N= \chi_{x_2}(\alpha_{\partial_2})$. Define $(\widehat{x}_1, \widehat{x}_2)\in \widehat{X}(\mathbf{\Sigma}_1)\times \widehat{X}(\mathbf{\Sigma}_2)$ by, for $i=1,2$,  $\widehat{x}_i= (x_i, h_p, h_{\partial})_{p \in \mathring{\mathcal{P}}_i, \partial \in \Gamma_i^{\partial}}$ where for $p\in \mathring{\mathcal{P}}_i  \ \subset \mathring{\mathcal{P}}$, $h_p$ is the puncture invariant of $\widehat{x}$, for $\partial \in \Gamma_i^{\partial} \setminus \{ \partial_i\}$ then $h_{\partial}$ is the boundary invariant of $\widehat{x}$ and $h_{\partial_i}$ is the previously chosen scalar. Then $\widehat{p}_{a_1\# a_2}(\widehat{x}_1, \widehat{x}_2)= \widehat{x}$, thus $\widehat{p}_{a_1\# a_2}$ is surjective.

 \end{proof}

 \section{Representations of $ \overline{\mathcal{S}}_{A}(\mathbb{D}_1)$}\label{sec_D1}
 
 Let $\mathbb{D}_1=(\Sigma_{0,2}, \{a_L, a_R\})$ be an annulus with two boundary edges in the same boundary component (i.e. a punctured bigon). It admits a presenting graph with two edges $\alpha, \beta$ represented in Figure \ref{fig_puncturedBigon} (note that $\alpha$ and $\beta^{-1}$ are corner arcs). The algebra $ \overline{\mathcal{S}}_{A}(\mathbb{D}_1)$ is isomorphic to the Drinfeld double of the quantum Borel algebra $\Dq$ described as follows.

 \begin{figure}[!h] 
\centerline{\includegraphics[width=2cm]{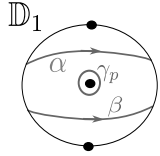} }
\caption{Two oriented arcs $\alpha$ and $\beta$ in $\mathbb{D}_1$.} 
\label{fig_puncturedBigon} 
\end{figure}

 \begin{definition}[Quantum enveloping algebra]
 Write $q:= A^2$. The algebra $\Dq$ is presented by the generators $K^{\pm 1/2}, L^{\pm 1/2}, E$ and $F$ together with the following relations:

\begin{align*}
&EK^{1/2}= q^{-1}K^{1/2}E; \quad EL^{1/2}= qL^{1/2} E; \quad  FK^{1/2}=qK^{1/2}F; \quad FL^{1/2}= q^{-1}L^{1/2}F; \\
&xy=yx \mbox{, for all }x,y\in \{K^{\pm 1/2}, L^{\pm 1/2} \}; \quad K^{1/2}K^{-1/2}=L^{1/2}L^{-1/2}= 1; \\
&EF-FE= \frac{K-L}{q-q^{-1}}. 
\end{align*}
 \end{definition}

 \begin{theorem}\label{theorem_skein_QG}(\cite{KojuQGroupsBraidings})
 There is an isomorphism of algebras $\Psi:  \Dq \xrightarrow{\cong}  \overline{\mathcal{S}}_{A}(\mathbb{D}_1)$ defined by 
 \begin{align*}
 {} & \Psi(K^{1/2})= \alpha_{--}, \quad \Psi(K^{-1/2})= \alpha_{++}, \quad \Psi(L^{1/2})= \beta_{--}, \quad \Psi(L^{-1/2})=\beta_{++} \\
 {} & \Psi(E)=\frac{-A}{q-q^{-1}} \alpha_{+-}\alpha_{--}, \quad \Psi(F)= \frac{A^{-1}}{q-q^{-1}}\beta_{--} \beta_{-+}
 \end{align*}
 \end{theorem}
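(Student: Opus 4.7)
The strategy is the standard three-step verification: first check that $\Psi$ is a well-defined algebra homomorphism, then surjectivity, then injectivity. Throughout we use $\alpha$, $\beta$ as drawn, noting that $\alpha$ is a corner arc for one boundary puncture (so $\alpha_{-+}=0$ in $\overline{\mathcal{S}}_A(\mathbb{D}_1)$) while $\beta^{-1}$ is the corner arc for the other puncture (so one of $\beta_{\pm\mp}$ vanishes as well).

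\textbf{Step 1: well-definedness.} I would verify the defining relations of $\Dq$ one at a time on the images. The $q$-commutation relations among $\alpha_{--},\alpha_{++},\beta_{--},\beta_{++}$ come directly from the height-exchange skein relations in \eqref{eq: skein 2}: for two parallel stated arcs ending at the same boundary edge the products differ by a power of $A=q^{1/2}$. The commutations between $K^{\pm 1/2}$ (resp.\ $L^{\pm 1/2}$) and $E$ or $F$ reduce to the single height-exchange identity $\alpha_{+-}\alpha_{--}=q^{-1}\alpha_{--}\alpha_{+-}$ on $a_L$ (resp.\ the analogous one on $a_R$), after substituting the definitions of $\Psi(E)$ and $\Psi(F)$. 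The commutation $KL=LK$ (and that $E,F$ commute with the orthogonal boundary element up to the stated power of $q$) uses that $\alpha$ and $\beta$ have disjoint endpoints on different boundary edges, so their height orders are independent.

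\textbf{Step 2: the $[E,F]$ relation.} This is the main obstacle. We have to show
$$\Psi(EF-FE)=\frac{-1}{(q-q^{-1})^{2}}\bigl(\alpha_{+-}\alpha_{--}\beta_{--}\beta_{-+}-\beta_{--}\beta_{-+}\alpha_{+-}\alpha_{--}\bigr)=\frac{\alpha_{--}^{2}-\beta_{--}^{2}}{q-q^{-1}}.$$
The right-hand side is $\Psi((K-L)/(q-q^{-1}))$. To verify this, I would put both stated diagrams in standard position above the punctured bigon so that the four arcs form a small bigon $B$ around the inner puncture. Applying the Kauffman-bracket relation \eqref{eq: skein 1} to the interior crossing of $B$ (after using the height-exchange relations to migrate the endpoints along $a_L$ and $a_R$ as needed) produces two smoothings: one that isotopes away from the puncture (a trivial loop that contributes a scalar via the $-(A^{2}+A^{-2})$ relation) and one that loops around the puncture (remaining nontrivial). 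Tracking the scalars and cancelling the trivial contributions gives exactly $\alpha_{--}^{2}-\beta_{--}^{2}$ divided by $q-q^{-1}$. This is the only genuinely non-formal computation in the proof.

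\textbf{Step 3: surjectivity.} By the M\"uller basis (Proposition \ref{prop_Muller_basis}), $\overline{\mathcal{S}}_A(\mathbb{D}_1)$ is spanned by monomials in the simple stated arcs $\alpha_{ij},\beta_{ij}$, together with the peripheral loop around the puncture (which, by the splitting relation applied at the puncture, is a polynomial in these arcs). The diagonal arcs are exactly $\Psi(K^{\pm 1/2})$ and $\Psi(L^{\pm 1/2})$. Since $\alpha_{-+}=0$ (bad arc) and similarly $\beta_{+-}=0$, it remains to realize $\alpha_{+-}$ and $\beta_{-+}$; these are scalar multiples of $\Psi(E)\Psi(K^{-1/2})$ and $\Psi(L^{1/2})\Psi(F)$ respectively by the very definition of $\Psi(E),\Psi(F)$. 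Hence $\Psi$ is surjective.

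\textbf{Step 4: injectivity.} I would use the PBW basis $\{E^{a}F^{b}K^{c/2}L^{d/2} : a,b\in\mathbb{N},\ c,d\in\mathbb{Z}\}$ of $\Dq$ (a standard fact: $\Dq$ has a triangular decomposition with the $q$-commutation relations ensuring no further collapse). Using the commutation relations already checked in Step~1, the image of each PBW monomial is, up to a nonzero scalar, a product $\alpha_{+-}^{a}\alpha_{--}^{a+c}\beta_{--}^{b+d}\beta_{-+}^{b}$ in $\overline{\mathcal{S}}_A(\mathbb{D}_1)$. These diagrams correspond (after putting them in standard position and using Lemma \ref{lemma_heightexch}) to pairwise distinct elements of the M\"uller basis of $\overline{\mathcal{S}}_A(\mathbb{D}_1)$, hence are linearly independent. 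Therefore $\Psi$ is injective, and combined with Step 3 it is the claimed isomorphism.
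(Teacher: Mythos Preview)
The paper does not give its own proof of this statement: Theorem~\ref{theorem_skein_QG} is quoted from \cite{KojuQGroupsBraidings} and used as a black box. So there is nothing in the paper to compare your argument against.

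That said, your outline follows the natural route (check relations, surjectivity via a spanning set, injectivity via a PBW-type basis matched against $\mathcal{B}^M$), and Steps~1, 3, 4 are essentially correct in shape. The weak point is Step~2. Your description of the $[E,F]$ computation is not quite right: the products $\alpha_{+-}\alpha_{--}\beta_{--}\beta_{-+}$ and $\beta_{--}\beta_{-+}\alpha_{+-}\alpha_{--}$ are each stacks of four arcs, and the difference arises not from a single interior crossing of a ``small bigon $B$ around the inner puncture'' but from the height-exchange/crossing relations at the two boundary edges where $\alpha$- and $\beta$-endpoints interlace. Resolving those boundary crossings (using the last relation in \eqref{eq: skein 2}) produces cup-terms that pair an $\alpha$-endpoint with a $\beta$-endpoint; after simplification with the vanishing of the bad arcs $\alpha_{-+}$ and $\beta_{+-}$ and the trivial-arc relations, the surviving terms are exactly $\alpha_{--}^2$ and $\beta_{--}^2$. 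Your ``trivial loop'' and ``loop around the puncture'' picture does not match this mechanism, and if you tried to carry it out as written you would not find the cancellation you need. If you intend to write this proof out, redo Step~2 as a boundary-edge height-exchange computation rather than an interior Kauffman resolution.
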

 
 Note that $H_{\partial}:=K^{-1/2}L^{-1/2}$ is a central element such that $\Psi(H_{\partial})=h_{\partial}$ for $\partial$ the connected component containing $a_L$ and $a_R$. Also, if $p$ denotes the only inner puncture of $\mathbb{D}_1$, then 
 \begin{equation}\label{eq_Casimir}
 C:= - \frac{\Psi^{-1}(\gamma_p)H_{\partial}^{-1}}{(q-q^{-1})^2} = EF+\frac{qL +q^{-1}K}{(q-q^{-1})^2} = FE+ \frac{qK+q^{-1}L}{(q-q^{-1})^2}
 \end{equation}
 is the Casimir element of $\Dq$. Let $\widetilde{U}_q \mathfrak{sl}_2:= \quotient{\Dq}{(H_{\partial} -1)}$ and $U_q\mathfrak{sl}_2 \subset \widetilde{U}_q \mathfrak{sl}_2$ the subalgebra generated by $K^{\pm 1}, E$ and $F$. The simple modules of  $U_q\mathfrak{sl}_2$ have been classified in \cite{DeConciniKacRepQGroups, ArnaudonRoche} (see also  \cite[Chapter $1$ Section $3.3$]{KlimykSchmudgenQGroups}) whereas many indecomposable semi-weight $U_q\mathfrak{sl}_2$ modules were found and studied in \cite{Arnaudon_Uqsl2Rep}. We now provide a classification of the indecomposable (semi) weight $\Dq$ modules as follows.
 
 \begin{definition}[Weight representations of $\Dq$]\label{def_QGRep}
 We define three families of indecomposable weight $\Dq$-modules as follows. For $n\in \mathbb{Z}$, we write $[n]:=\frac{q^n-q^{-n}}{q-q^{-1}}$. 
\begin{enumerate}
\item For $\lambda, \mu\in \mathbb{C}^*$ and $a,b\in \mathbb{C}$, the module $V(\lambda, \mu, a,b)$ has canonical basis $(v_0, \ldots, v_{N-1})$ and module structure given by:
\begin{align*}
& K^{1/2}v_i = \lambda q^{-i}v_i; \quad L^{1/2}v_i = \mu q^i v_i \mbox{, for }i\in \{0, \ldots, N-1\}; \\
& Fv_{N-1}=bv_0; \quad Fv_i=v_{i+1} \mbox{, for }i \in \{0, \ldots, N-2\}; \\
& Ev_0=a v_{N-1}; \quad Ev_{i}= \left( \frac{q^{-i+1}\lambda^2 - q^{i-1}\mu^2}{q-q^{-1}} [i] +ab \right) v_{i-1}\mbox{, for }i\in \{1, \ldots, N-1\}.
\end{align*}

\item For $\lambda, \mu \in \mathbb{C}^*$, $c\in \mathbb{C}$, the module $\widetilde{V}(\lambda, \mu, c)$ has canonical basis $(w_0, \ldots, w_{N-1})$  and module structure given by: 
\begin{align*}
& K^{1/2}w_i= \lambda q^{i}w_i; \quad L^{1/2}w_i= \mu q^{-i}w_i \mbox{, for }i \in \{0, \ldots, N-1\}; \\
& Ew_{N-1}= cw_0; \quad Ew_i= w_{i+1} \mbox{, for }i\in \{0, \ldots, N-2\}; \\
& F w_0= 0; \quad Fw_{i} = \left( \frac{ \mu^2q^{-i+1} - \lambda^2 q^{i-1} }{q-q^{-1}}[i] \right) w_i \mbox{, for }i\in \{1, \ldots, N-1\}.
\end{align*}

\item For $\mu \in \mathbb{C}^*$, $\varepsilon \in \{-1,+1\}$, $0\leq n \leq N-1$, the module $S_{\mu, \varepsilon, n}$ has canonical basis $(e_0, \ldots, e_n)$ and module structure given by: 
\begin{align*}
&K^{1/2}e_i = \varepsilon \mu A^{n-2i} e_i; \quad L^{1/2}e_i= \mu A^{2i-n}e_i \mbox{, for }i\in \{0, \ldots, n\}; \\
&Fe_n=0; \quad Fe_i = e_{i+1} \mbox{, for }i\in \{0,\ldots, n-1\}; \\
&Ee_0=0; \quad Ee_i= \mu^2 [i][n-i+1]e_{i-1}\mbox{, for }i\in \{1,\ldots, n\}.
\end{align*}

\end{enumerate}
 \end{definition}
 
 \begin{definition}[Standard semi-weight representations of $\Dq$] \label{def_semiweight_modules} We define three families of indecomposable semi-weight representations as follows.
 \begin{enumerate}
 \item For $\lambda, \mu, b \in \mathbb{C}^*$ and $a\in \mathbb{C}$ such that 
 $$ h_p:= -(q-q^{-1})^2(\lambda\mu)^{-1}ab-\lambda \mu^{-1}q - \mu\lambda^{-1}q^{-1} = \pm (q^n+q^{-n}) \quad \mbox{ for some }n\in \{1, \ldots, (N-1)/2\}$$
 the module $P(\lambda, \mu, a,b)$ has canonical basis $(x_0, \ldots, x_{N-1}, y_0, \ldots, y_{N-1})$ and module structure given by: 
 \begin{align*}
 & K^{1/2}x_i=\lambda q^{-i}x_i, K^{1/2}y_i=\lambda q^{-i}y_i, L^{1/2}x_i= \mu q^i x_i, L^{1/2}y_i=\mu q^i y_i \mbox{ for }i \in \{0, \ldots, N-1\}; \\
 & Fx_{N-1}=b x_0, Fy_{N-1}=by_0, Fx_i= x_{i+1}, Fy_i=y_{i+1} \mbox{ for }i\in \{0, \ldots, N-2\}; \\
 & Ex_0=ax_{N-1}, Ex_i=\left( \frac{q^{-i+1}\lambda^2 - q^{i-1}\mu^2}{q-q^{-1}} [i] +ab \right)x_{i-1} \mbox{ for }i\in \{1, \ldots, N-1\}; \\
 & Ey_0=a y_{N-1}+b^{-1}x_{N-1}, E y_i= \left( \frac{q^{-i+1}\lambda^2 - q^{i-1}\mu^2}{q-q^{-1}} [i] +ab \right)y_{i-1} + x_{i-1}  \mbox{ for }i\in \{1, \ldots, N-1\}.
 \end{align*}
 
 \item For $\lambda, \mu, c \in \mathbb{C}^*$ such that $\lambda \mu^{-1}= \pm q^n$ for some $n\in \{1, \ldots, (N-1)/2\}$, the module $\widetilde{P}(\lambda, \mu, c)$ has canonical basis $(x_0, \ldots, x_{N-1}, y_0, \ldots, y_{N-1})$ and module structure given by: 
 \begin{align*}
 & K^{1/2}x_i= \lambda q^i x_i, K^{1/2}y_i= \lambda q^i y_i, L^{1/2}x_i= \mu q^{-i}x_i, L^{1/2}y_i= \mu q^{-i} y_i  \mbox{ for }i \in \{0, \ldots, N-1\}; \\
 & Ex_{N-1} = cx_0, Ey_{N-1}=cy_0, Ex_i= x_{i+1}, Ey_i= y_{i+1} \mbox{ for }i\in \{0, \ldots, N-2\}; \\
 & Fx_0= 0, Fx_i= \left( \frac{ \mu^2q^{-i+1} - \lambda^2 q^{i-1} }{q-q^{-1}}[i] \right) x_{i-1}  \mbox{ for }i\in \{1, \ldots, N-1\}; \\
 & Fy_0=x_0, Fy_i=  \left( \frac{ \mu^2q^{-i+1} - \lambda^2 q^{i-1} }{q-q^{-1}}[i] \right) y_{i-1} + x_i  \mbox{ for }i\in \{1, \ldots, N-1\}.
 \end{align*}

 \item For $\mu \in \mathbb{C}^*$, $\varepsilon \in \{-1,+1\}$, $0\leq n \leq N-2$, the module $P_{\mu, \varepsilon, n}$ has canonical basis $(x_0, \ldots, x_{N-1}, y_0, \ldots, y_{N-1})$ and module structure given by: 
\begin{align*}
& K^{1/2} x_i = \varepsilon \mu A^{-2-n-2i}x_i, K^{1/2}y_i= \varepsilon \mu A^{n-2i}y_i, \quad L^{1/2} x_i= \mu A^{n+2i+2} x_i, L^{1/2}y_i=\mu A^{2i-n}y_i \mbox{, for }i\in \{0, \ldots, N-1\}; \\
& F x_i = x_{i+1}, \quad Fy_i= y_{i+1} \quad \mbox{( where }y_N=x_N=0)\mbox{, for }i\in \{0, \ldots, N-1\}; \\
& Ex_0=0, \quad Ex_i= -\mu^2[i][n+i+1] x_{i-1} \mbox{, for }i\in \{1, \ldots, N-1\}; \\
& Ey_i= \mu^2[i][n-i+1]y_{i-1} + x_{N-n+i-2} \mbox{, for }i\in \{0, \ldots, n\}, \quad E y_i = \mu^2[i] [n-i+1]y_{i-1}   \mbox{, for }i\in \{n+1, \ldots, N-1\}. 
\end{align*}

 \end{enumerate}
 
 \end{definition}
 
 \begin{definition}[Exceptional semi-weight representations]\label{def_exceptional_rep} For $0\leq n \leq N-1$, write $\overline{n}:=N-2-n$. 
 \begin{enumerate}
 \item For $\mu \in \mathbb{C}^*$, $\varepsilon\in \{-1, +1\}$, $0\leq n \leq N-2$, $k\geq 1$, the module $\Omega^{-k}_{\mu, \varepsilon, n}$ has basis $(e_i^j, \overline{e}_{i'}^{j'}, 0\leq i \leq n, 0\leq i' \leq \overline{n}, 1\leq j \leq k, 1\leq j' \leq k+1)$ and module structure given by:
 \begin{align*}
 & K^{1/2}e_i^j = \varepsilon \mu A^{n-2i}e_i^j, K^{1/2}\overline{e}_i^j =\varepsilon \mu A^{\overline{n}-2i}\overline{e}_i^j, L^{1/2}e_i^j = \mu A^{2i-n}e_i^j, L^{1/2}\overline{e}_i^j= \mu A^{2i-\overline{n}}\overline{e}_i^j; \\
 & F e_n^j = \overline{e}_0^{j+1},  F\overline{e}_{\overline{n}}^j = 0, Fe_i^j = e_{i+1}^j \mbox{ for }i\neq n , F \overline{e}_i^j = \overline{e}_{i+1}^j \mbox{ for } i\neq \overline{n}; \\
 & E e_0^j= \overline{e}_{\overline{n}}^j ,  E \overline{e}_0^j = 0,  Ee_i^j = \mu^2 [i][n-i+1] e_{i-1}^j, E \overline{e}_i^j = \mu^2 [i][\overline{n}-i+1]\overline{e}_{i-1}^j \mbox{ for } i\neq 0.
 \end{align*}
 \item  For $\mu \in \mathbb{C}^*$, $\varepsilon\in \{-1, +1\}$, $0\leq n \leq N-2$, $k\geq 1$, the module $\Omega^{k}_{\mu, \varepsilon, n}$ has basis $(e_i^j, \overline{e}_{i'}^{j'}, 0\leq i \leq n, 0\leq i' \leq \overline{n}, 1\leq j \leq k+1, 1\leq j' \leq k)$ and module structure given by:
 \begin{align*}
 & K^{1/2}e_i^j = \varepsilon \mu A^{n-2i}e_i^j, K^{1/2}\overline{e}_i^j =\varepsilon \mu A^{\overline{n}-2i}\overline{e}_i^j, L^{1/2}e_i^j = \mu A^{2i-n}e_i^j, L^{1/2}\overline{e}_i^j= \mu A^{2i-\overline{n}}\overline{e}_i^j; \\
 & Fe_n^{k+1}=0, F e_n^j = \overline{e}_0^{j} \mbox{ for }j\neq k+1,  F\overline{e}_{\overline{n}}^j = 0, Fe_i^j = e_{i+1}^j \mbox{ for }i\neq n , F \overline{e}_i^j = \overline{e}_{i+1}^j \mbox{ for } i\neq \overline{n}; \\
 & Ee_0^1=0, E e_0^j= \overline{e}_{\overline{n}}^{j-1} \mbox{ for }j\neq 1 ,  E \overline{e}_0^j = 0,  Ee_i^j = \mu^2 [i][n-i+1] e_{i-1}^j, E \overline{e}_i^j = \mu^2 [i][\overline{n}-i+1]\overline{e}_{i-1}^j \mbox{ for } i\neq 0.
 \end{align*}
\item   For $\mu \in \mathbb{C}^*$, $\varepsilon\in \{-1, +1\}$, $0\leq n \leq N-2$, $k\geq 1$ and $\lambda= \in \mathbb{CP}^1= \mathbb{C} \cup \{\infty \}$, the module  $M^k_{\mu, \varepsilon, n}(\lambda)$ has basis $(e_i^j, \overline{e}_{i'}^{j'}, 0\leq i \leq n, 0\leq i' \leq \overline{n}, 1\leq j \leq k, 1\leq j' \leq k)$ and module structure given as follows.
\par  If $\lambda \in \mathbb{C}$ then
 \begin{align*}
 & K^{1/2}e_i^j = \varepsilon \mu A^{n-2i}e_i^j, K^{1/2}\overline{e}_i^j =\varepsilon \mu A^{\overline{n}-2i}\overline{e}_i^j, L^{1/2}e_i^j = \mu A^{2i-n}e_i^j, L^{1/2}\overline{e}_i^j= \mu A^{2i-\overline{n}}\overline{e}_i^j; \\
 & Fe_n^k = \lambda \overline{e}_0^k,  F e_n^j = \lambda \overline{e}_0^{j} + \overline{e}_0^{j+1} \mbox{ for }j\neq k,  F\overline{e}_{\overline{n}}^j = 0, Fe_i^j = e_{i+1}^j \mbox{ for }i\neq n , F \overline{e}_i^j = \overline{e}_{i+1}^j \mbox{ for } i\neq \overline{n}; \\
 &  E e_0^j= \overline{e}_{\overline{n}}^j,  E \overline{e}_0^j = 0,  Ee_i^j = \mu^2 [i][n-i+1] e_{i-1}^j, E \overline{e}_i^j = \mu^2 [i][\overline{n}-i+1]\overline{e}_{i-1}^j \mbox{ for } i\neq 0.
 \end{align*}
If $\lambda= \infty \in \mathbb{CP}^1$, then 
\begin{align*}
 & K^{1/2}e_i^j = \varepsilon \mu A^{n-2i}e_i^j, K^{1/2}\overline{e}_i^j =\varepsilon \mu A^{\overline{n}-2i}\overline{e}_i^j, L^{1/2}e_i^j = \mu A^{2i-n}e_i^j, L^{1/2}\overline{e}_i^j= \mu A^{2i-\overline{n}}\overline{e}_i^j; \\
 &  F e_n^j =  \overline{e}_0^{j},  F\overline{e}_{\overline{n}}^j = 0, Fe_i^j = e_{i+1}^j \mbox{ for }i\neq n , F \overline{e}_i^j = \overline{e}_{i+1}^j \mbox{ for } i\neq \overline{n}; \\
 &  E e_0^k= 0, E e_0^j= \overline{e}_{\overline{n}}^{j+1} \mbox{ for }j \neq k,  E \overline{e}_0^j = 0,  Ee_i^j = \mu^2 [i][n-i+1] e_{i-1}^j, E \overline{e}_i^j = \mu^2 [i][\overline{n}-i+1]\overline{e}_{i-1}^j \mbox{ for } i\neq 0.
 \end{align*}

 \end{enumerate}
 \end{definition}
 We postpone the proof of the following theorem to the next subsection. Let $\mathcal{C}$ be the category of weight $\Dq$ modules and $\overline{\mathcal{C}}$ the category of semi-weight $\Dq$ modules.
 
 \begin{theorem}\label{theorem_representations_QG}
 \begin{enumerate}
 \item Every modules $V(\lambda, \mu, a,b), \widetilde{V}(\lambda, \mu, c), S_{\mu, \varepsilon, n}$ are weight indecomposable and every weight indecomposable $\Dq$ module is isomorphic to one of them. 
 \item Every modules $P(\lambda, \mu, a,b), \widetilde{P}(\lambda, \mu, c), P_{\mu, \varepsilon, n}, \Omega^k_{\mu, \varepsilon, n}, \Omega^{-k}_{\mu, \varepsilon, n}, M_{\mu, \varepsilon, n}^k(\lambda)$ are semi weight indecomposable and every indecomposable semi-weight module is either isomorphic to one of them or is a weight module.
 \item  The modules $S_{\mu, \varepsilon, n}$ are simple. The module $\widetilde{V}(\lambda, \mu, c)$ is simple if and only if either $c\neq 0$ or $\lambda \mu^{-1} \neq \pm q^{n-1}$ for all $n\in \{1, \ldots, N-1\}$. The module $V(\lambda, \mu, a, b)$ is simple if and only if either $\prod_{i\in \mathbb{Z}/N\mathbb{Z}} (ab+\frac{q^{1-i}\lambda^2 - q^{i-1}\mu^2}{q-q^{-1}}[i])\neq 0$ or $\lambda \mu^{-1} \neq \pm q^{n-1}$ for all $n\in \{1, \ldots, N-1\}$. Any simple $\Dq$-module is isomorphic to one of these modules.
 \item The weight representations $S_{\mu, \epsilon, N-1}$, $V(\lambda, \mu, a,b)$ and $\widetilde{V}(\lambda, \mu, c)$ are projective objects in $\mathcal{C}$ and any indecomposable projective object of $\mathcal{C}$ is isomorphic to one of them.
 \item The semi weight representations $S_{\mu, \epsilon, N-1}$, $P_{\mu, \epsilon, n}$, $P(\lambda, \mu, a,b)$ and $\widetilde{P}(\lambda, \mu, c)$ are projective objects in $\overline{\mathcal{C}}$ and any indecomposable projective object of $\overline{\mathcal{C}}$ is isomorphic to one of them.
\end{enumerate}
\end{theorem}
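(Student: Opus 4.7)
The plan is to reduce the classification to the action of the center on a given indecomposable semi-weight $\Dq$-module $V$. Since $V$ is indecomposable and semi-weight, $Z^0$ acts by a single character $x\in X(\mathbb{D}_1)$, so $V$ is a module over the finite-dimensional algebra $\Dq/\mathfrak{m}_x\Dq$. Applying Lemma \ref{lemma_FAL2} to $\mathbb{D}_1$, the quotient $Z/\mathfrak{m}_xZ$ is isomorphic to $\mathbb{C}^{\oplus N}$ when $\chi_x(\gamma_p)\neq \pm 2$ and to $\mathbb{C}\oplus \mathscr{D}^{\oplus (N-1)/2}$ otherwise. In the first case every indecomposable module with central character $x$ is automatically a weight module; in the second case $\gamma_p$ can act with a nontrivial Jordan block, and these Jordan blocks are precisely what produces the genuinely non-weight indecomposables.

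For the weight classification (items (1), (3), (4)), fix a simultaneous eigenvector $v$ of the commuting semisimple operators $K^{1/2}, L^{1/2}, H_\partial, E^N, F^N$. The commutation relations $EK^{1/2}=q^{-1}K^{1/2}E$ and $FK^{1/2}=qK^{1/2}F$ force $E$ and $F$ to shift the $K^{1/2}$-weight cyclically through $\mathbb{Z}/N\mathbb{Z}$, so $V$ is spanned by the $\Dq$-orbit of $v$ and has at most $N$ weight spaces. Three cases arise according to whether the scalars $a=E^N|_V$ and $b=F^N|_V$ vanish: both nonzero produces an $N$-dimensional cyclic module isomorphic to $V(\lambda,\mu,a,b)$; exactly one vanishing produces (after possibly exchanging the roles of $E$ and $F$) the module $\widetilde{V}(\lambda,\mu,c)$; and both vanishing reduces $V$ to a module over the small quantum group $\uq$ twisted by central scalars, which is classified by Suter \cite{Suter_Uqsl2Modules} and yields the family $S_{\mu,\varepsilon,n}$. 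The simplicity conditions of item (3) come from inspecting when the recurrence coefficients for $Ev_i$ admit a zero, which is exactly the obstruction to cyclic generation; projectivity in $\mathcal{C}$ follows by identifying each listed projective as the projective cover of its unique simple quotient in the corresponding central block.

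For the non-weight semi-weight modules (items (2) and (5)) in the case $\chi_x(\gamma_p)=\pm 2$, the Jordan decomposition of $\gamma_p$ combined with the weight classification above suggests the non-split self-extensions $P(\lambda,\mu,a,b)$, $\widetilde{P}(\lambda,\mu,c)$, and $P_{\mu,\varepsilon,n}$ as the simplest non-weight indecomposables, obtained by gluing two copies of the corresponding weight module along a rank-two Jordan action of $\gamma_p$. Longer iterated non-split extensions between $S_{\mu,\varepsilon,n}$ and $S_{\mu,\varepsilon,\overline{n}}$, with $\overline{n}:=N-2-n$, produce the string modules $\Omega^{\pm k}_{\mu,\varepsilon,n}$, while self-extensions admitting a one-parameter family of gluings yield the band modules $M^k_{\mu,\varepsilon,n}(\lambda)$ indexed by $\lambda\in\mathbb{CP}^1$. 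Projectivity in $\overline{\mathcal{C}}$ is verified by exhibiting $P_{\mu,\varepsilon,n}$, $P(\lambda,\mu,a,b)$ and $\widetilde{P}(\lambda,\mu,c)$ as the free rank-one $Z/\mathfrak{m}_xZ$-module in their block, which is the projective cover of the unique simple there.

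The hard part will be completeness of the classification in the tame blocks, namely proving that the listed families exhaust all indecomposables with central character satisfying $\chi_x(\gamma_p)=\pm 2$ and $0\leq n\leq N-2$. I would handle this by exhibiting the corresponding block of $\Dq/\mathfrak{m}_x\Dq$ as Morita equivalent to an explicit special biserial algebra on the two simples $S_{\mu,\varepsilon,n}$ and $S_{\mu,\varepsilon,\overline{n}}$, which captures the tame representation theory of $\uq$ in that block. The Butler--Ringel classification of indecomposable modules over string algebras then recovers exactly the strings $\Omega^{\pm k}_{\mu,\varepsilon,n}$ and the bands $M^k_{\mu,\varepsilon,n}(\lambda)$, completing the proof of all six items.
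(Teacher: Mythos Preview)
Your overall architecture is sound and parallels the paper's: split by the classical shadow $x\in X(\mathbb{D}_1)$, treat the generic and the central-at-$p$ cases separately, and in the latter reduce to the small quantum group. Two substantive differences are worth noting.

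First, for shadows in the fully Azumaya locus the paper does not carry out the direct weight-space analysis you sketch. Instead it computes $\mathcal{AL}(\mathbb{D}_1)$ explicitly (Lemma~\ref{lemm4}) and then invokes the Brown--Gordon theorem (Theorem~\ref{theorem_BG_FAL}): for $x\in\mathcal{FAL}(\mathbb{D}_1)$ one has $\Dq/\mathfrak{m}_x\Dq\cong\Mat_N(Z(x))$, so indecomposables over the block are just $\mathbb{C}^N\otimes U$ with $U$ an indecomposable $Z(x)$-module, and Lemma~\ref{lemma_FAL3} finishes. This is cleaner than tracking the $E^N,F^N$ eigenvalues directly and gives the projectivity statements for free. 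Your direct route works, but your case split is slightly off: the case $E^N=F^N=0$ does \emph{not} yield only the $S_{\mu,\varepsilon,n}$; it also produces the indecomposable non-simple weight modules $V(\varepsilon\mu A^n,\mu A^{-n},0,0)$ and $\widetilde{V}(\lambda,\mu,0)$ (which are already in the $V$ and $\widetilde{V}$ families, so the final list is correct, but your trichotomy as stated is misleading).

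Second, for the tame blocks the paper reduces (Lemma~\ref{lemm6}) to $\uq$ and then follows Suter's \emph{method} rather than citing his result: it decomposes $\uq$ into indecomposable projective left ideals (Lemma~\ref{lemm7}), shows each non-semisimple block is Morita equivalent to an explicit subalgebra $\mathcal{B}\subset\Mat_2(\mathcal{K})$ of the Kronecker algebra $\mathcal{K}=\mathbb{C}[X,Y]/(X^2,Y^2)$ (Lemma~\ref{lemm8}), and then reads off the indecomposables from the classical Kronecker-quiver classification. Your proposed Butler--Ringel string-algebra route is a legitimate alternative and would give the same answer, since the basic algebra of each block is indeed special biserial; but you should not simply cite \cite{Suter_Uqsl2Modules}, because that paper treats even-order roots of unity and the generator $k^{\pm 1}$ rather than $k^{\pm 1/2}$, so the argument genuinely has to be redone here (as the paper explicitly notes). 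Either approach requires you to first identify the basic algebra of the block, which is the real work.
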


Let us identify $X(\mathbb{D}_1)$ with the set of pairs $g=(g_+, g_-)$ where $g_+ \in B^+$ and $g_-\in B^-$  are upper and lower triangular matrices of $\SL_2(\mathbb{C})$. More precisely, for $x\in X(\mathbb{D}_1)$, we set 
\begin{align*}
{} & g_+:= \begin{pmatrix} \chi_x(\alpha_{++}^N) &  \chi_x(\alpha_{+-}^N) \\ 0 & \chi_x(\alpha_{--}^N) \end{pmatrix} = \chi_x\begin{pmatrix} K^{-N/2} & -(q-q^{-1})^N E^N K^{-N/2} \\ 0 & K^{N/2} \end{pmatrix}, \quad 
  \mbox{ and } \\
{}&  g_-:= \begin{pmatrix} \chi_x(\beta_{++}^N) &  0\\ \chi_x(\beta_{-+}^N)  & \chi_x(\beta_{--}^N) \end{pmatrix}= \chi_x \begin{pmatrix} L^{-N/2} & 0 \\ (q-q^{-1})^N L^{-N/2}F^N & L^{N/2} \end{pmatrix}
 . 
 \end{align*}
  Write $\varphi(g):=g_{-}^{-1}g_+$. $\widehat{X}(\mathbb{D}_1)$ is identified with the set of elements $(g, h_p, h_{\partial})$ with $g=(g_+, g_-) \in X(\mathbb{D}_1)$, $T_N(\gamma_p)=-\tr (\varphi(g))$ and $h_{\partial}^{-N}$ is the left upper matrix coefficient of $g_-g_+$. An indecomposable representation of weight   $(g, h_p, h_{\partial})$ is called cyclic/ semi-cyclic/ diagonal/ central if $\varphi(g)\in \SL_2(\mathbb{C})$ is not triangular/ triangular not diagonal/ diagonal not central/ central (i.e. $\varphi(g)=\pm \mathds{1}_2$) respectively. 
 \begin{notations} 
  Write $\Delta_+(\widehat{\mathfrak{sl}}_2):= \{ (n,m) \in \mathbb{N}^2 | (n-m)^2 \leq 1\}$ and decompose it as 
  \\ $\Delta_+(\widehat{\mathfrak{sl}}_2)= \{(0,1), (1,0)\} \sqcup \Delta^{\Re}_{++}(\widehat{\mathfrak{sl}}_2)\sqcup \Delta^{Im}_+(\widehat{\mathfrak{sl}}_2)$ where 
 $$  \Delta^{\Re}_{++}(\widehat{\mathfrak{sl}}_2)= \{ (k, k+1), k\geq1\} \cup \{(k+1, k), k\geq 1\}, \quad \Delta^{Im}_+(\widehat{\mathfrak{sl}}_2)=\{ (k,k), k\geq 1\}.$$
 Further write 
 $$ \Delta:= \{S, P\} \sqcup   \Delta^{\Re}_{++}(\widehat{\mathfrak{sl}}_2) \sqcup  \Delta^{Im}_+(\widehat{\mathfrak{sl}}_2)\times \mathbb{CP}^1.$$ 
 Given $\mu \in \mathbb{C}^*$, $\varepsilon \in \{-1, +1\}$, $n\in \{0, \ldots, N-2\}$, define a map $\underline{\sigma}_{\mu, \varepsilon, n}: \Delta \to \Indecomp(U_q\mathfrak{gl}_2)$ by
 $$ \underline{\sigma}_{\mu, \varepsilon, n}: S \mapsto S_{\mu, \varepsilon, n}, P\mapsto P_{\mu, \varepsilon, n}, (k+1, k)\mapsto \Omega^k_{\mu, \varepsilon, n}, (k, k+1) \mapsto \Omega^{-k}_{\mu, \varepsilon, n}, ((k,k), \Lambda) \mapsto M^k_{\mu, \varepsilon, n}(\Lambda).$$
 Consider the set $\Delta \sqcup \overline{\Delta}$ made of two copies of $\Delta$; said differently $\Delta \sqcup \overline{\Delta}:= \Delta \times \{0,1\}$ and for $\alpha \in \Delta$ we write $\alpha:= (\alpha, 0) \in \Delta \bigsqcup \overline{\Delta}$ and $\overline{\alpha}:= (\alpha, 1) \in \Delta \sqcup \overline{\Delta}$. For $\mu \in \mathbb{C}^*$, $\varepsilon \in \{-1, +1\}$, $n\in \{0, \ldots, (N-3)/2\}$, recall that $\overline{n}=N-n-2$ and  consider the map 
 $$\sigma_{\mu, \varepsilon, n}:= \underline{\sigma}_{\mu, \varepsilon, n}\sqcup \underline{\sigma}_{\mu, \varepsilon, \overline{n}} : \Delta \sqcup \overline{\Delta} \to  \Indecomp(\Dq).$$
 \end{notations}

  As a consequence of Theorem \ref{theorem_representations_QG}, we obtain the 

\begin{corollary}\label{coro_QG} Let $\overline{\mathcal{C}}$ be the category of finite dimensional semi-weight $D_q$ modules. 
\begin{enumerate}
\item The Azumaya locus of $\Dq \cong \overline{\mathcal{S}}_A(\mathbb{D}_1)$ is the set of elements $(g, h_p, h_{\partial})$ such that either $\varphi(g)$ is not central or $\varphi(g)=\pm \mathds{1}_2$ and $h_p=\mp 2$ so it is equal to the regular locus $\widehat{X}^{reg}(\mathbb{D}_1)$.
\item The fully Azumaya locus of $\Dq \cong \overline{\mathcal{S}}_A(\mathbb{D}_1)$ is the set of elements $g\in X(\mathbb{D}_1)$ such that $\varphi(g)\neq \pm \mathds{1}$. For such a $g$, the isomorphism classes of semi-weight indecomposable $D_q$ modules with classical shadow $g$ are completely determined by their full shadow.
\item  For $\widehat{g}=(g, h_p, h_{\partial})\in \widehat{X}(\mathbb{D}_1)$ with $\varphi(g)\neq \pm \mathds{1}$ then 
\begin{itemize}
\item If $\tr(\varphi(g))\neq \pm 2$, then up to isomorphism there is a unique indecomposable semi-weight module representation $S_{\widehat{g}}$ with maximal shadow $\widehat{g}$: it is the Azumaya representation which is both simple and projective in $\overline{\mathcal{C}}$.
\item If $\tr(\varphi(g))=\pm 2$, then up to isomorphism there exist two indecomposable semi-weight representations $S_{\widehat{g}}, P_{\widehat{g}}$ with maximal shadow $\widehat{g}$. $S_{\widehat{g}}$ is the Azumaya representation and is simple not projective. $P_{\widehat{g}}$ is projective in $\overline{\mathcal{C}}$ not simple and we have a non split exact sequence
$$ 0 \to S_{\widehat{g}} \to P_{\widehat{g}} \to S_{\widehat{g}} \to 0.$$
\end{itemize}
\item Let $\widehat{x}=(g,h_p, h_{\partial})\in \widehat{X}(\mathbb{D}_1)$ which is not in the Azumaya locus, so one has $g=\left( \varepsilon \begin{pmatrix} \Lambda & 0 \\ 0 & \Lambda^{-1} \end{pmatrix},  \begin{pmatrix} \Lambda & 0 \\ 0 & \Lambda^{-1} \end{pmatrix} \right)$ for some $\varepsilon = \pm 1$ and $\Lambda \in \mathbb{C}^*$ and $h_p=-\varepsilon (q^{n+1} + q^{-(1+n)})$ for some $n\in \{0, \ldots, (N-3)/2\}$. Let $\mu \in \mathbb{C}^*$ be the unique scalar such that $\mu^{-N} = \Lambda$ and $h_{\partial}= \varepsilon \mu^{-2}$. Then $\sigma_{\mu, \varepsilon, n}$ is a bijection between $\Delta \sqcup \overline{\Delta} $ and the set of isomorphism classes of indecomposable $\Dq$ modules with classical shadow $\widehat{x}$.
\end{enumerate}
\end{corollary}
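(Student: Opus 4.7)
The strategy is to use the complete classification of finite-dimensional indecomposable semi-weight $\Dq$ modules provided by Theorem \ref{theorem_representations_QG}, compute the maximal shadow of each module on the list, and then read off the bijection with $\widehat{X}(\mathbb{D}_1)$. Since $Z_{\mathbb{D}_1}$ is generated by $Z_{\mathbb{D}_1}^0$ together with $\gamma_p$ and $\alpha_\partial^{\pm 1}$, and under $\Psi$ these correspond to the Casimir $C$ and $H_\partial=K^{-1/2}L^{-1/2}$ via \eqref{eq_Casimir}, the maximal shadow is determined by the scalars by which $K^{\pm N/2}$, $L^{\pm N/2}$, $E^N$, $F^N$, $C$ and $H_\partial$ act. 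A routine inspection of the formulas in Definitions \ref{def_QGRep} and \ref{def_semiweight_modules} yields these scalars on each indecomposable: for instance on $V(\lambda,\mu,a,b)$ one gets $K^{N/2}\mapsto\lambda^N$, $L^{N/2}\mapsto\mu^N$, $E^N\mapsto a$, $F^N\mapsto b$ (using standard $q$-Pochhammer identities at an odd root of unity), so $V(\lambda,\mu,a,b)$ realises the Azumaya representation at the cyclic point $\widehat{g}$ encoded by these data, and analogously for $\widetilde{V}$ and $S_{\mu,\varepsilon,n}$.

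From this dictionary, parts (1) and (2) are immediate: by Theorem \ref{theorem_AL}, $\widehat{g}\in\mathcal{AL}(\mathbb{D}_1)$ iff there exists an $N$-dimensional simple module with maximal shadow $\widehat{g}$, which by the list happens iff either $\varphi(g)\neq\pm\mathds{1}_2$ (with Azumaya rep among $V,\widetilde{V},S_{\mu,\varepsilon,N-1}$) or $\varphi(g)=\pm\mathds{1}_2$ and $h_p=\mp 2$ (in which case only $S_{\mu,\varepsilon,N-1}$ survives). By Theorem \ref{theorem_smooth2} this equals $\widehat{X}^{reg}(\mathbb{D}_1)$, and the same description gives $\mathcal{FAL}(\mathbb{D}_1)=\{g:\varphi(g)\neq\pm\mathds{1}_2\}$; the classification by full shadow over this locus then comes from Theorem \ref{theorem_FAL}.

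For part (3), when $\varphi(g)\neq\pm\mathds{1}_2$ but $\tr(\varphi(g))=\pm 2$, the classification yields exactly two indecomposable semi-weight modules with maximal shadow $\widehat{g}$: the $N$-dimensional simple $S_{\widehat{g}}$ and the $2N$-dimensional $P_{\widehat{g}}=P(\lambda,\mu,a,b)$ or $\widetilde{P}(\lambda,\mu,c)$ according as $\widehat{g}$ is cyclic or semi-cyclic, with the non-split short exact sequence directly visible from the two ``floors'' $(x_i)$ and $(y_i)$ of basis vectors in Definition \ref{def_semiweight_modules}. For part (4), when $\varphi(g)=\varepsilon\mathds{1}_2$ and $h_p=-\varepsilon(q^{n+1}+q^{-n-1})$ with $0\leq n\leq(N-3)/2$, one fixes $\mu$ by $\mu^{-N}=\Lambda$ and $h_\partial=\varepsilon\mu^{-2}$; the remaining indecomposables with classical shadow $g$ are then drawn from $S_{\mu,\varepsilon,m},P_{\mu,\varepsilon,m},\Omega^{\pm k}_{\mu,\varepsilon,m},M^k_{\mu,\varepsilon,m}(\Lambda)$, and a direct computation of the Casimir eigenvalue on each forces $m\in\{n,\overline{n}\}$, producing the bijection $\sigma_{\mu,\varepsilon,n}\colon \Delta\sqcup\overline{\Delta}\to\Indecomp$ with the two copies corresponding to the two possible values of $m$.

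The main expected obstacle lies in the bookkeeping for part (4): one must verify that within each fixed floor ($m=n$ or $m=\overline{n}$) the decomposition $\Delta=\{S,P\}\sqcup\Delta^{\Re}_{++}(\widehat{\mathfrak{sl}}_2)\sqcup(\Delta^{Im}_+(\widehat{\mathfrak{sl}}_2)\times\mathbb{CP}^1)$ matches exactly the families $S,P,\Omega^{\pm k},M^k(\Lambda)$ from Definitions \ref{def_QGRep} and \ref{def_semiweight_modules}, and that barred/unbarred elements yield non-isomorphic modules; the latter follows from comparing multisets of $K^{1/2}$-eigenvalues, which differ between $n$ and $\overline{n}$ because $N$ odd forces $n\neq\overline{n}$ throughout the range $0\leq n\leq(N-3)/2$.
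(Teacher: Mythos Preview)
Your proposal is essentially the paper's own approach: the corollary is stated immediately after Theorem \ref{theorem_representations_QG} as a consequence, and the ingredients you invoke (Theorem \ref{theorem_AL} for the Azumaya criterion, Theorem \ref{theorem_smooth2} for the regular locus, Theorem \ref{theorem_FAL} for the full-shadow classification over $\mathcal{FAL}$, and the explicit module list for part (4)) are exactly the ones the paper assembles in Lemmas \ref{lemm4}, \ref{lemm5}, \ref{lemm6}.

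One computational slip: on $V(\lambda,\mu,a,b)$ you do \emph{not} get $E^N\mapsto a$. From the defining formulas, $E^N v_0 = a\cdot\prod_{i=1}^{N-1}\bigl(ab+[i]\tfrac{q^{1-i}\lambda^2-q^{i-1}\mu^2}{q-q^{-1}}\bigr)\,v_0$, which is the expression $e$ appearing in the proof of Lemma \ref{lemm2}; the $q$-Pochhammer collapse you allude to does not reduce this to $a$. This does not affect the logic of your argument---the point is only that the shadow is computable and covers every $\widehat g$ with $\varphi(g)\neq\pm\mathds{1}_2$---but the sentence as written is incorrect and should be amended.
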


In particular, Theorem \ref{main_theorem_intro} is verified for $\mathbb{D}_1$. 

\begin{remark}
Being a projective object in the category $\overline{\mathcal{C}}$ of semi-weight modules is different from being a projective module (i.e. from being a projective object in $\Dq-\Mod$). As we shall see, for $\widehat{g}=(g,h_p,h_{\partial})\in \widehat{X}(\mathbb{D}_1)$ with $\tr(\varphi(g))\neq \pm 2$, the Azumaya representation $S_{\widehat{g}}$ is a proper submodule of an indecomposable module of the form $P(\lambda, \mu, a, b)$ which is not semi-weight. So  $S_{\widehat{g}}$ 
is not a projective module though it is a projective object in $\overline{\mathcal{C}}$.
\end{remark}

 \subsection{Classification of semi-weight $\Dq$ modules}\label{sec_DqModules}
 
In this subsection, we prove Theorem  \ref{theorem_representations_QG}. Denote by $\mathcal{C}$ the category of  weight $\Dq$ modules and by $\overline{\mathcal{C}}$ its category of semi-weight modules. For $0\leq n \leq N-1$, we write $\overline{n}:= N-n-2$.
 
 \subsubsection{Representations in the fully Azumaya locus}
 
  \begin{lemma}\label{lemm1}
\begin{enumerate}
\item 
  For $0\leq n \leq N-2$, we have a non-split exact sequence 
\begin{equation}\label{suitexacte}
 0 \rightarrow S_{\mu, \varepsilon, \overline{n}} \xrightarrow{i} V(\varepsilon \mu A^n, \mu A^{-n}, 0, 0) \xrightarrow{p} S_{\mu, \varepsilon, n} \rightarrow 0, 
 \end{equation}
where the equivariant maps $i$ and $p$ are defined by $i(e_j):= v_{j+n+1}$ for $0\leq j  \leq N-n-2$ and $p(v_j):= \left\{ \begin{array}{ll} e_j &\mbox{, if }0\leq i \leq n; \\ 0 & \mbox{, else.} \end{array} \right.$ 
 \item One has non-split  exact sequences:
$$ 0 \to V(\varepsilon \mu A^{-2-n}, \mu A^{2+n}, 0, 0) \xrightarrow{\iota} P_{\mu, \varepsilon, n} \xrightarrow{p} V(\varepsilon \mu A^n, \mu A^{-n}, 0, 0) \to 0, $$
$$ 0 \to V(\lambda, \mu, a,b)  \xrightarrow{\iota} P(\lambda, \mu, a,b)  \xrightarrow{p}  V(\lambda, \mu, a,b) \to 0,$$
and 
$$0 \to \widetilde{V}(\lambda, \mu, c)  \xrightarrow{\iota} \widetilde{P}(\lambda, \mu, c)  \xrightarrow{p}  \widetilde{V}(\lambda, \mu, c)\to 0, $$
 where $\iota(v_i):= x_i$ and $p(x_i):=0$, $p(y_i):=v_i$ (resp. $\iota(w_i)=x_i$, $p(x_i)=0$, $p(y_i)=w_i$ in the third case). 
 \end{enumerate}
 \end{lemma}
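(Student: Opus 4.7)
The plan is to treat both parts in parallel. In each case I have to verify three things: that the specified linear maps are in fact $\Dq$-equivariant, that the resulting sequence is exact, and that it does not split.

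For the first step (well-definedness), it is enough to check that the maps $i$, $p$, $\iota$ intertwine the action of the generators $K^{\pm 1/2}, L^{\pm 1/2}, E, F$. The $K^{1/2}, L^{1/2}$ compatibility is immediate since it only requires matching eigenvalues, and these match by construction once we observe that $q = A^2$ has order $N$ (because $A^{1/2}$ has odd order $N$). For instance, in part (1), the weights of $v_{j+n+1}$ in $V(\varepsilon \mu A^n, \mu A^{-n}, 0, 0)$ are $(\varepsilon \mu A^{-n-2-2j}, \mu A^{n+2+2j})$, which equal $(\varepsilon\mu A^{\overline{n}-2j}, \mu A^{2j-\overline{n}})$ since $\overline{n} = N-n-2$ and $A^N = 1$. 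The compatibility with $F$ is similarly straightforward from the basis formulas. The one nontrivial verification is the compatibility with $E$ in part (1), where I need $[N-k] = -[k]$ (which holds because $q^N = 1$) to match $E v_{j+n+1} = -\mu^2 [j][j+n+1] v_{j+n}$ against $i(Ee_j) = \mu^2 [j][\overline{n}-j+1] v_{j+n} = \mu^2 [j][N-n-j-1]v_{j+n}$. Once these identifications are in place, exactness is automatic from the basis-level descriptions ($\iota, i$ are inclusions of basis subsets; $p$ is the quotient by the span of the images).

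For the non-splitness in part (1), I would argue that the middle module $V:=V(\varepsilon\mu A^n, \mu A^{-n}, 0, 0)$ is indecomposable. Since $A$ has odd order $N$, the $N$ joint $(K^{1/2},L^{1/2})$-eigenvalues are pairwise distinct, so any submodule is a sum of the one-dimensional weight lines $\mathbb{C}v_i$. The conditions $F v_{N-1} = 0$, $E v_0 = Ev_{n+1} = 0$, together with $Fv_i = v_{i+1}$ and $E v_i \in \mathbb{C}^* \cdot v_{i-1}$ for the other indices, force any nonzero proper submodule to be exactly $\langle v_{n+1},\ldots, v_{N-1}\rangle \cong S_{\mu,\varepsilon,\overline{n}}$. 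This proves the sequence does not split.

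For the three sequences in part (2), the argument is analogous but a little subtler because the weight spaces of the middle $P$-module are now two-dimensional (each weight of $V$ is matched by one $y$-vector and exactly one $x$-vector). A hypothetical splitting $s$ must therefore have the form $s(v_j) = y_j + c_j x_{\varphi(j)}$ where $\varphi$ is determined by the weight matching (identity for $P(\lambda,\mu,a,b)$ and $\widetilde{P}(\lambda,\mu,c)$, the shift $\varphi(j) = j-n-1 \bmod N$ for $P_{\mu,\varepsilon,n}$). The $F$-equivariance then forces the coefficients $c_j$ to satisfy $c_{j+1} = c_j$ for all $j$ together with a boundary condition, which pins them down (to $0$ in the $P_{\mu,\varepsilon,n}$ case, and to a common constant $c$ in the other two). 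Finally, testing $E$-equivariance on a single well-chosen weight vector produces the obstruction: in $P(\lambda,\mu,a,b)$ the term $E y_0 = a y_{N-1} + b^{-1}x_{N-1}$ contains an extra $b^{-1}x_{N-1} \neq 0$ that cannot be absorbed into $s(Ev_0) = a s(v_{N-1})$; in $\widetilde{P}(\lambda,\mu,c)$ the extra term $Fy_0 = x_0$ plays the same role; and in $P_{\mu,\varepsilon,n}$ the term $Ey_0 = x_{\overline{n}}$ appearing in the definition obstructs the splitting. I expect the main technical point to be the bookkeeping of indices (especially the shift $\varphi$ and the sign from $[N-k]=-[k]$) in the $P_{\mu,\varepsilon,n}$ case; the conceptual content is entirely contained in the fact that each $P$-module arises as a nontrivial self-extension (or cross-extension) class in $\Ext^1_{\overline{\mathcal{C}}}$, with a single explicit generator that does not vanish on any basis of a would-be section.
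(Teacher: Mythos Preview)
Your proposal is correct and is precisely the straightforward verification that the paper has in mind: the paper's own proof consists of the single sentence ``The proof is a straightforward verification left to the reader.'' Your weight, $F$-, and $E$-computations (in particular the use of $A^N=q^N=1$ and $[N-k]=-[k]$) are exactly the checks needed, and your non-splitting arguments via weight-space analysis are the natural ones.
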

 
 \begin{proof} The proof is a straightforward verification left to the reader. \end{proof}
 
 For $(\lambda, \mu, b) \in \mathbb{C}^*$ and $a \in \mathbb{C}$, the module $P(\lambda, \mu, a,b)$ of Definition \ref{def_semiweight_modules} is always well defined. Also for $c\in \mathbb{C}^*$, $\widetilde{P}(\lambda, \mu, c)$  is well defined as well. However they are not always semi weight modules.

\begin{lemma}\label{lemm2}
\begin{enumerate}
\item $P(\lambda, \mu, a,b)$ is a semi weight module if and only if 
$$ h_p:= -(q-q^{-1})^2(\lambda\mu)^{-1}ab-\lambda \mu^{-1}q - \mu\lambda^{-1}q^{-1} = \pm (q^n+q^{-n}) \quad \mbox{ for some }n\in \{1, \ldots, (N-1)/2\}$$
\item $\widetilde{P}(\lambda, \mu, c)$  is a semi weight module if and only if  $\lambda \mu^{-1}= \pm q^n$ for some $n\in \{1, \ldots, (N-1)/2\}$.
\item $P_{\lambda, \varepsilon, n}$ is always a semi weight module.
\end{enumerate}
\end{lemma}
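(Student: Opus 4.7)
The plan is to use the following general principle: a finite-dimensional indecomposable $\Dq$-module $M$ is semi-weight precisely when the Frobenius subalgebra $Z^0\subset Z(\Dq)$ acts on $M$ by a character, i.e.\ when every generator of $Z^0$ acts by a scalar. Under the isomorphism of Theorem \ref{theorem_skein_QG}, $Z^0$ is generated by the images $K^{\pm N/2}, L^{\pm N/2}, E^N, F^N$ (any $Fr$-image of a loop, such as $T_N(\gamma_p)$, is a polynomial in these). Since $K^{1/2}$ and $L^{1/2}$ act diagonally on each of the three modules considered, $K^{N/2}$ and $L^{N/2}$ are automatic scalars; only $E^N$ and $F^N$ need analysis.

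For assertion (3), the formulas defining $P_{\mu,\varepsilon,n}$ make $F^N=0$ directly, since $F$ is the cyclic shift $x_i\mapsto x_{i+1}, y_i\mapsto y_{i+1}$ with $x_N=y_N=0$; and $E^N=0$ follows from $Ex_0=0$ together with the vanishing of the factor $[i][n-i+1]$ at $i=n+1$, which forces every $E$-orbit starting in either subspace to reach a killed vector in fewer than $N$ steps. Hence $P_{\mu,\varepsilon,n}$ is always semi-weight.

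For (1) and (2), the modules $P$ and $\widetilde P$ are non-split self-extensions $0\to V\to P\to V\to 0$ of $V=V(\lambda,\mu,a,b)$ or $V=\widetilde V(\lambda,\mu,c)$, with the $x_i$'s spanning the submodule and the $y_i$'s mapping isomorphically onto the quotient. In the ordered basis $(x_0,\ldots,x_{N-1};y_0,\ldots,y_{N-1})$, both $E$ and $F$ take block form $\left(\begin{smallmatrix} A & B\\ 0 & A\end{smallmatrix}\right)$ with $A$ the corresponding action on $V$, so their $N$-th powers have off-diagonal block $\Gamma=\sum_{k=0}^{N-1}A^{N-1-k}BA^k$ and the semi-weight criterion is the vanishing of $\Gamma$. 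For $P(\lambda,\mu,a,b)$, $F^N=b\cdot I$ is already scalar (since $F$ cyclically shifts both basis sets with the same wrap factor), so only $E^N$ is at issue; a direct recursion on $E^k y_0$ yields $E^N y_0=C_V y_0+\beta_N x_0$ with $C_V=a\prod_{j=1}^{N-1}c_j$ the scalar of $E^N$ on $V$, and cyclic symmetry via $F$ propagates the same $\beta_N$ to every $E^N y_i$, reducing the condition to $\beta_N=0$. The parallel computation for $\widetilde P$ gives $F^N$ scalar iff $\prod_{i=1}^{N-1}d_i=0$, where $d_i=\frac{\mu^2 q^{-i+1}-\lambda^2 q^{i-1}}{q-q^{-1}}[i]$.

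The hard part is recognising these vanishing conditions in terms of $h_p$ or $\lambda\mu^{-1}$. For (1), the strategy is to write $\beta_N$ (for odd $N$) as $b^{-1}\mathcal{P}(0)-a\mathcal{P}'(0)$ with $\mathcal{P}(X)=\prod_{i=1}^{N-1}(X-c_i)$, and then factor the companion polynomial $\mathcal{Q}(X):=(X-ab)\mathcal{P}(X)=\prod_{i=1}^{N}(X-c_i)$ using the Casimir identity $c_i=C_0-(q^{-2i+1}\lambda^2+q^{2i-1}\mu^2)/(q-q^{-1})^2$, which presents $\mathcal{Q}$ as $(q-q^{-1})^{-2N}\prod_{\zeta^N=1}(Y+\zeta\mu^2+\zeta^{-1}\lambda^2)$ for $Y=(q-q^{-1})^2(X-C_0)$. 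Applying $\prod_{\zeta^N=1}(\zeta-r)=1-r^N$ to the two roots of $\mu^2 t^2+Yt+\lambda^2$ and invoking the Chebyshev identity $\zeta_+^N+\zeta_-^N=-(\lambda/\mu)^N T_N(Y/(\lambda\mu))$ (for odd $N$) collapses this product to $\mu^{2N}+\lambda^{2N}+\lambda^N\mu^N T_N(Y/(\lambda\mu))$; differentiating and evaluating at $X=0$ (where $Y/(\lambda\mu)=h_p$) yields the closed form
\[
\beta_N \;=\; \frac{\lambda^{N-1}\mu^{N-1}}{b(q-q^{-1})^{2(N-1)}}\,T_N'(h_p).
\]
Thus $\beta_N=0\Leftrightarrow T_N'(h_p)=0$, and for odd $N$ the zero set of $T_N'$ is exactly $\{\pm(q^n+q^{-n}):n\in\{1,\ldots,(N-1)/2\}\}$, proving (1). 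An analogous Chebyshev collapse identifies the zero locus of $\prod d_i$ with the stated set of values of $\lambda\mu^{-1}$ in (2). Carrying out this factorisation over the $N$-th roots of unity cleanly is the principal technical obstacle.
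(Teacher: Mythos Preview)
Your approach is correct and structurally matches the paper's: both reduce the semi-weight condition to checking that $E^N$ (for $P$) or $F^N$ (for $\widetilde P$) acts as a scalar, compute the off-diagonal block of this $N$-th power in the $x/y$ block form, and identify the resulting coefficient with the derivative of a Chebyshev-type polynomial evaluated at $h_p$.

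The difference lies only in how this identification is made. The paper introduces the one-parameter family $e_i(X):= -\frac{\lambda\mu}{(q-q^{-1})^2}(X+\lambda\mu^{-1}q^{1-2i}+\mu\lambda^{-1}q^{2i-1})$ so that the diagonal entries of $E$ are the values $e_i(h_p)$; the off-diagonal coefficient of $E^N$ then becomes $P'(h_p)$ for $P(X)=\prod_i e_i(X)-e$, and the Frobenius relation $T_N(\gamma_p)=Fr_{\mathbb{D}_1}(\gamma_p)$ identifies $P(X)$ with a nonzero multiple of $T_N(X)+\tr(\varphi(g))$, so the condition $P'(h_p)=0$ is immediately that $h_p$ be a double root of $T_N(X)+\mathrm{const}$. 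You instead bypass the Casimir substitution and establish the same identity by an explicit product over $N$-th roots of unity, factoring $\prod_{\zeta^N=1}(Y+\zeta\mu^2+\zeta^{-1}\lambda^2)$ via the quadratic $\mu^2 t^2+Yt+\lambda^2$ and the Chebyshev relation $\zeta_+^N+\zeta_-^N=\pm T_N(Y/(\lambda\mu))$. The paper's route is shorter and more conceptual once one notices that the matrix entries are the specialisations of a linear family in the Casimir variable; yours is self-contained and makes the algebraic mechanism behind the Chebyshev collapse fully explicit, at the cost of a longer computation.
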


\begin{proof}
 Denote by $\rho : \Dq \to \End (P(\lambda, \mu, a,b))$ the associated representation. Clearly we have $\rho(K^{N/2})= \lambda^N \id$, $\rho(L^{N/2})=\mu^N \id$, $\rho(F^N)=b \id$. 
Let $V:= \Span(x_i, i=0, \ldots, N-1) \subset P(\lambda, \mu, a,b)$ such that $V\cong V(\lambda, \mu, a, b)$ and denote by $\widehat{x}=(g,h_p,h_{\partial})$ its classical shadow. 
In particular: 
$$ h_p:= -(q-q^{-1})^2(\lambda\mu)^{-1}ab-\lambda \mu^{-1}q - \mu\lambda^{-1}q^{-1}$$
and  $\restriction{\rho}{V}(\gamma_p) = h_p \id_V$. Write
$$ e:= -\frac{(\lambda \mu)^N}{(q-q^{-1})^{2N}} \prod_{i\in \mathbb{Z}/N\mathbb{Z}} (h_p +\lambda \mu^{-1} q^{1-2i} + \mu\lambda^{-1}q^{2i-1}).$$
Clearly $\restriction{\rho(E^N)}{V}= e \id_V$. Let us prove that $\rho(E^N)=e \id$ if and only if $h_p \in \{ \pm( q^n+q^{-n}), n\in \{1, \ldots, N-1\} \}$. Write 
$$ e_i(X):= -\frac{\lambda \mu}{(q-q^{-1})^2} (X+ \lambda \mu^{-1} q^{1-2i} + \mu \lambda^{-1}q^{2i-1})$$
and $P(X):= \prod_{i \in \mathbb{Z}/N\mathbb{Z}} e_i(X) - e$ so that $P(h_p)=0$. Using the equality
$$ \rho(E) y_i= e_i(h_p)y_{i-1} +x_{i-1}, $$
we find that 
$$ \rho(E^N)y_i= e y_i + P'(h_p) x_i.$$
So $\rho(E^N)=e \id $ if and only if $h_p$ is a zero of $P(X)$ of multiplicity $>1$. On the other hand, since the minimal polynomial of $\restriction{\rho}{V}(\gamma_p)$ is $T_N(X)+\tr(\varphi(g))$ and since $P(\restriction{\rho}{V}(\gamma_p))=0$, we have $P(X)=-\frac{(\lambda \mu)^N}{(q-q^{-1})^{2N}} (T_N(X)+ \tr(\varphi(g)))$ so $h_p$ is a root or $P(X)$ of multiplicity $>1$ if and only if $h_p \in \{ \pm( q^n+q^{-n}), n\in \{1, \ldots, N-1\} \}$. 
\par The cases of  $\widetilde{P}(\lambda, \mu, c)$ and $P_{\lambda, \varepsilon, n}$  are done similarly (and much easier) and left to the reader.
\end{proof}

 \begin{lemma}\label{lemm3} 
 \begin{enumerate}
 \item The modules $S_{\mu, \varepsilon, n}, V(\lambda, \mu, a,b), \widetilde{V}(\lambda, \mu, c), P(\lambda, \mu, a,b), \widetilde{P}(\lambda, \mu, c), P_{\mu, \varepsilon, n}$ are indecomposable.
 \item The modules $S_{\mu, \varepsilon, n}$ are simple. The module $\widetilde{V}(\lambda, \mu, c)$ is simple if and only if either $c\neq 0$ or $\lambda \mu^{-1} \neq \pm q^{n-1}$ for all $n\in \{1, \ldots, N-1\}$. The module $V(\lambda, \mu, a, b)$ is simple if and only if either $\prod_{i\in \mathbb{Z}/N\mathbb{Z}} (ab+\frac{q^{1-i}\lambda^2 - q^{i-1}\mu^2}{q-q^{-1}}[i])\neq 0$ or $\lambda \mu^{-1} \neq \pm q^{n-1}$ for all $n\in \{1, \ldots, N-1\}$. 
  \end{enumerate}
\end{lemma}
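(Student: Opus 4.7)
The plan proceeds in two stages, corresponding to the two assertions of the lemma, both handled by a uniform weight-space analysis.

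For Part $(1)$ I would show that the endomorphism ring of each module is a local $\mathbb{C}$-algebra, which is equivalent to indecomposability in finite dimensions. Any endomorphism $\phi$ commutes with the commutative subalgebra generated by $K^{\pm 1/2}, L^{\pm 1/2}$, hence preserves the joint weight decomposition. For the ``small'' modules $S_{\mu,\varepsilon,n}$, $V(\lambda, \mu, a, b)$ and $\widetilde V(\lambda, \mu, c)$ of Definition \ref{def_QGRep}, inspection of the actions on the given bases together with $q$ having odd order $N$ shows that every joint weight is simple, so $\phi$ is diagonal in the weight basis. Compatibility with $F$ connecting adjacent weight vectors then forces all diagonal scalars to coincide, giving $\End \cong \mathbb{C}$. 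For the ``large'' modules $P(\lambda, \mu, a, b), \widetilde P(\lambda, \mu, c)$ and $P_{\mu,\varepsilon,n}$ of Definition \ref{def_semiweight_modules}, each joint weight space is two-dimensional with basis $\{x_i, y_i\}$. The submodule $\Span(x_i)$ is distinguished as $\Image(E^N)$ (or $\Image(F^N)$, depending on the case), so $\phi$ preserves it; on the quotient, $\phi$ becomes an endomorphism of a $V$-type module, hence a scalar $\alpha$ by the first paragraph. Writing $\phi(y_i) = \beta_i x_i + \alpha y_i$ and exploiting compatibility with $E$ and $F$ yields that all $\beta_i$ coincide with a single scalar $\beta$, so $\End \cong \mathbb{C}[T]/(T^2)$, a local ring, and the module is indecomposable.

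For Part $(2)$ I would work with submodules directly. Since weight spaces are one-dimensional, any nonzero submodule of $S_{\mu,\varepsilon,n}$, $V(\lambda, \mu, a, b)$ or $\widetilde V(\lambda, \mu, c)$ contains some basis vector, and simplicity reduces to showing that every basis vector generates the full module via iterated $E$- and $F$-action. For $S_{\mu,\varepsilon,n}$ the $E$-coefficients $\mu^2[i][n-i+1]$ are nonzero for $1\leq i \leq n$, since $[k]\neq 0$ for $0 < k < N$; iterated $E$ therefore brings any $e_i$ to a nonzero multiple of $e_0$, and iterated $F$ recovers every $e_j$, so $S_{\mu,\varepsilon,n}$ is simple. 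For $V(\lambda, \mu, a, b)$ and $\widetilde V(\lambda, \mu, c)$ I would trace the $E$- and $F$-cycles through the weight basis. Writing $c_i := ab+\frac{q^{1-i}\lambda^2 - q^{i-1}\mu^2}{q-q^{-1}}[i]$ (so that $c_0 = ab$), the product $\prod_{i\in \mathbb{Z}/N\mathbb{Z}} c_i$ equals $\det(EF)$ in the weight basis; its non-vanishing makes both $E$ and $F$ invertible and guarantees simplicity by cycling. When this product vanishes but $\lambda\mu^{-1} \neq \pm q^{n-1}$ for all $n\in\{1,\ldots,N-1\}$, the nondegenerate weight part $\frac{q^{1-i}\lambda^2 - q^{i-1}\mu^2}{q-q^{-1}}[i]$ is nonzero for $1\leq i \leq N-1$, which forces all $c_i$ with $i\neq 0$ to be generic and hence allows one to join any two weight vectors through a mixed $E$/$F$-path. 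Conversely, when both conditions fail one has $c_i = 0$ for some $i$ together with $c_0 = ab = 0$; parameter identifications then place $V(\lambda, \mu, a, b)$ in the range of the short exact sequence \eqref{suitexacte} of Lemma \ref{lemm1}, exhibiting a proper submodule isomorphic to a $S_{\mu,\varepsilon,\overline{n}}$. The argument for $\widetilde V(\lambda, \mu, c)$ is entirely parallel, with $c$ in the role of the cyclic $E$-twist and the structure constants $d_i = \frac{\mu^2 q^{1-i} - \lambda^2 q^{i-1}}{q-q^{-1}}[i]$ controlled by the same weight nondegeneracy condition.

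The main obstacle lies in Part $(2)$: one must carefully align the vanishing loci of the structure constants $c_i$ (respectively $d_i$) with the exceptional weights $\lambda\mu^{-1} = \pm q^{n-1}$ modulo $N$, and produce the explicit proper submodule witnessing non-simplicity in the doubly degenerate case via a Lemma \ref{lemm1}-type embedding. The indecomposability of the ``large'' modules in Part $(1)$ also requires some care to rule out weight-dependent endomorphisms mixing the $x_i$'s with the $y_i$'s, which is handled by the canonical identification of $\Span(x_i)$ with $\Image(E^N)$ or $\Image(F^N)$.
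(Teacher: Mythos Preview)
Your overall strategy (weight-space analysis, then either show $\End\cong\mathbb{C}$ or $\End\cong\mathbb{C}[T]/(T^2)$) is sound and close to the paper's for $P_{\mu,\varepsilon,n}$, but there is a genuine gap in your treatment of the large modules $P(\lambda,\mu,a,b)$, $\widetilde P(\lambda,\mu,c)$, $P_{\mu,\varepsilon,n}$.

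You write that ``$\Span(x_i)$ is distinguished as $\Image(E^N)$ (or $\Image(F^N)$)''. This cannot work: $E^N$ and $F^N$ lie in the image $Z^0_{\mathbf\Sigma}$ of the Frobenius morphism, and by the very definition of a \emph{semi-weight} module they act as scalars. Hence $\Image(E^N)$ is either $0$ or the whole module, never the proper subspace $\Span(x_i)$. (Concretely, for $P_{\mu,\varepsilon,n}$ one has $E^N=F^N=0$; for $P(\lambda,\mu,a,b)$ the content of Lemma~\ref{lemm2} is precisely that $E^N$ acts as the scalar $e$ exactly when the semi-weight condition holds.) So your argument for why an endomorphism $\phi$ must preserve $\Span(x_i)$ breaks down at this step.

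The correct replacement is the \emph{Casimir} element: by Equation~\eqref{eq_Casimir} the central element $C$ (equivalently $\gamma_p$) acts on each of these modules with a single generalized eigenvalue $c$, and a direct computation gives $(C-c)\,y_i=x_i$ and $(C-c)\,x_i=0$, so $\Span(x_i)=\Image(C-c)=\ker(C-c)$ is canonically $\phi$-invariant. This is exactly what the paper does for $P_{\mu,\varepsilon,n}$, where it sets $\pi:=\rho(C-c)$ and shows $\End=\mathbb{C}\,\id\oplus\mathbb{C}\pi\cong\mathbb{C}[\pi]/(\pi^2)$. For $P(\lambda,\mu,a,b)$ and $\widetilde P(\lambda,\mu,c)$ the paper instead bypasses the endomorphism computation entirely and deduces indecomposability directly from the non-split exact sequences $0\to V\to P\to V\to 0$ of Lemma~\ref{lemm1} (using that $V$ is simple there, since $b\neq 0$ forces $F$ to act bijectively). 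Either route works once you use $C-c$ rather than $E^N$.

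A smaller point on Part~(2): your sentence ``which forces all $c_i$ with $i\neq 0$ to be generic'' is not right as stated --- the $c_i$ for $i\neq 0$ can certainly vanish even when $\lambda\mu^{-1}\neq\pm q^{n-1}$, since $c_i=ab+(\text{weight part})$ depends on $ab$. The cleaner way to organize the simplicity argument for $V(\lambda,\mu,a,b)$ is to observe that if $b\neq 0$ then $F$ permutes the weight basis cyclically and $V$ is simple outright; the delicate case is $b=0$, where $ab=0$ and your weight analysis of the $c_i$ then applies directly.
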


\begin{proof} Let $V$ be one module of the form $S_{\mu, \varepsilon, n}, V(\lambda, \mu, a,b), \widetilde{V}(\lambda, \mu, c)$ and denote by $(u_i)_i$ its canonical basis (\textit{i.e.} $u_i$ is either $e_i$, $v_i$ or $w_i$ if $V$ has the form $S_{\mu, \varepsilon, n}, V(\lambda, \mu, a,b)$ or $\widetilde{V}(\lambda, \mu, c)$ respectively). Write $\rho : \Dq \rightarrow \End(V)$ the morphism associated to the module structure. Let $\End_{\mathcal{C}}(V)$ be the space of equivariant endomorphisms of $V$ and fix $0 \neq \theta \in \End_{\mathcal{C}}(V)$. By Definition \ref{def_QGRep}, the operator $\rho(K^{1/2})$ is diagonal in the basis $(u_i)_i$ and each eigenvalue arises with multiplicity one. Since $\theta$ commutes with $\rho(K^{1/2})$, it preserves the eigenspace $\mathbb{C}u_0$, hence there exists $z\in \mathbb{C}$ such that $\theta \cdot u_0= z u_0$. First suppose that $z\neq 0$. By definition, the vectors of the canonical basis are characterized by either $u_i= \rho(F)^i u_0$ or $u_i= \rho(E)^i u_0$. Since $\theta$ commutes with both $\rho(E)$ and $\rho(F)$, one has $\theta u_i= z u_i$ and $\theta$ is scalar. Next suppose that $\theta u_0=0$. Since $\theta\neq 0$, there exists an index $i_0$  and  $z\in \mathbb{C}^*$, such that $\theta u_{i_0}=z u_{i_0}$ and $\theta u_i = 0$ for all $0\leq i \leq i_0-1$. Let $W:= \Span \left( u_j, j\geq i_0 \right)\subset V$ and denote by $P_W$ the projection onto $W$ parallel to $\Span\left( u_j, 0\leq j \leq i_0 \right)$. Since the vectors $u_j$, for $j\geq i_0$ are obtained from $u_{i_0}$ by composition with some power of either $\rho(E)$ or $\rho(F)$, the equivariance of $\theta$ implies that $\theta= z P_W$. 
\vspace{2mm}
\par 
Conversely, if there exists such an equivariant projector $P_W$, then one has either $\rho(E)u_{i_0}=0$, if $V$ has the form $V_{\mu, \varepsilon, n}$ or $V(\lambda, \mu, a,b)$, and $\rho(F) u_{i_0}= 0$, if $V$ has the form $\widetilde{V}(\lambda, \mu, c)$. By Definition \ref{def_QGRep}, such an index $i_0$ does not exist for the modules $S_{\mu, \varepsilon, n}$, neither for the modules  $\widetilde{V}(\lambda, \mu, c)$ such that either $c\neq 0$ or $\lambda \mu^{-1} \neq \pm q^{n-1}$ for all $n\in \{1, \ldots, N-1\}$, nor for the modules $V(\lambda, \mu, a, b)$ such that either  $\prod_{i\in \mathbb{Z}/N\mathbb{Z}} (ab+\frac{q^{1-i}\lambda^2 - q^{i-1}\mu^2}{q-q^{-1}}[i])\neq 0$ or $\lambda \mu^{-1} \neq \pm q^{n-1}$ for all $n\in \{1, \ldots, N-1\}$. Hence in each of these cases, one has $\End_{\mathcal{C}}(V)= \mathbb{C} \id$ and these modules are simple. In the other cases, one finds that such an index $i_0$ is unique and by Lemmas \ref{lemm1},  the corresponding projector $P_W$ is equivariant. Hence  $\End_{\mathcal{C}}(V)=\id \oplus P_W$. Moreover in this case, $V$ is isomorphic to a module $V(\varepsilon \mu A^n, \mu A^{-n}, 0, 0)$
and since the exact sequence \eqref{suitexacte} in Lemma \ref{lemm1} does not split, $V$ is indecomposable. Similarly, the fact that the modules $P(\lambda, \mu, a,b), \widetilde{P}(\lambda, \mu, c)$ are indecomposable follows from the existence of the non-split exact sequences in Lemma \ref{lemm1}.
\par 
Let us prove that  $P_{\mu, \varepsilon, n}$ is indecomposable.  Let $\pi:= \rho(C - \frac{\mu^2}{q-q^{-1}} [n+1]) \in \End_{\mathcal{C}}(P_{\mu, \varepsilon, n})$ be the nilpotent map  such that   $\pi(x_j)=0$,  $\pi (y_i)=x_{N-n-i}$ for $i\leq n-1$, $\pi(y_i)=0$ for $i\geq n$ , and let us prove that $\End_{\mathcal{C}}(P_{\mu, \varepsilon, n})=\id \oplus \pi$. This will imply that $\End_{\mathcal{C}}(P_{\mu, \varepsilon, n})\cong \quotient{\mathbb{C}[\pi]}{(\pi^2)}$ is a local ring, thus that  $P_{\mu, \varepsilon, n}$ is indecomposable.
Let  $\theta \in \End_{\mathcal{C}}(P_{\mu, \varepsilon, n})$. Then $\theta(y_0)$ is an eigenvector of $K^{1/2}$ with eigenvalue $\varepsilon\mu A^n$ and is annihilated by $(FE)^2$, so there exists $c,c'\in \mathbb{C}$ such that $\theta(y_0)=cy_0+c' x_{N-n}$. Let us prove that $\theta= c\id + c' \pi$. First, since $\theta(F^i y_0) = F^i \theta(y_0)$, one finds:
$$ \theta(y_i) = \left\{ \begin{array}{ll}
cy_i + c' x_{N-n+i} & \mbox{, if }i\leq n-1; \\
cy_i & \mbox{ if }i\geq n.
\end{array} \right. 
= (c\id +c'\pi) (y_i).$$
Next, since $\theta(Ey_0)=E\theta(y_0)$ one finds $\theta(x_{N-n-1})=cx_{N-n-1}= (c\id +c' \pi) x_{N-n-1}$. Since $\theta(F^i x_{N-n-1})=F^i \theta(x_{N-n-1})$, one gets $\theta(x_i)=cx_i = (c\id+c' \pi)(x_i)$ for $i\geq N-n-1$ and since $\theta(E^i x_{N-n-1})= E^i \theta(x_{N-n-i})$ one finds that $\theta(x_i)=cx_i= (c\id +c' \pi)x_i$ for $i\leq N-n-1$. Thus $\theta= c\id +c' \pi$.

\end{proof}

\begin{lemma}\label{lemm4} \begin{enumerate}
\item The Azumaya locus $\mathcal{AL}(\mathbb{D}_1)$ is the set of elements $(g,h_p,h_{\partial})$ such that either $\varphi(g)\neq \pm \mathds{1}_2$ or $\varphi(g)=\pm \mathds{1}_2$ and $h_p=\mp 2$. 
\item The fully Azumaya locus $\mathcal{FAL}(\mathbb{D}_1)$ is the set of elements $g$ such that $\varphi(g)\neq \pm \mathds{1}_2$.
\end{enumerate}
\end{lemma}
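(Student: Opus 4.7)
The plan is to apply Theorem \ref{theorem_AL}(2), which identifies $\mathcal{AL}(\mathbb{D}_1)$ with the set of shadows of simple representations of maximal dimension $D_{\mathbb{D}_1} = N$, and combine this with the classification of simple $\Dq$-modules from Lemma \ref{lemm3}. That lemma shows the simple $N$-dimensional modules are precisely $S_{\mu,\varepsilon,N-1}$, the modules $V(\lambda,\mu,a,b)$ satisfying their simplicity condition, and the symmetric family $\widetilde V(\lambda,\mu,c)$.

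First I would compute the shadow invariants $(g_+, g_-, h_p, h_\partial)$ for each family using Theorem \ref{theorem_skein_QG}, the explicit formulas for $g_{\pm}$ in terms of $K^{\pm N/2}, L^{\pm N/2}, E^N, F^N$, and equation \eqref{eq_Casimir}. On $S_{\mu,\varepsilon,N-1}$, one has $E^N = F^N = 0$ and $(K^{1/2})^N = \varepsilon\mu^N$, $(L^{1/2})^N = \mu^N$, so $\varphi(g) = \varepsilon\mathds{1}_2$; the Casimir acts as $\frac{2\mu^2}{(q-q^{-1})^2}\id$, giving $h_p = -2\varepsilon = \mp 2$. On $V(\lambda,\mu,a,b)$, the operator $F^N$ acts as $b\,\id$ and $E^N$ as an explicit scalar $e(\lambda,\mu,a,b)$, so the lower-left entry of $\varphi(g) = g_-^{-1}g_+$ is proportional to $b$ and the upper-right to $e$, while the Casimir yields $h_p = -(q-q^{-1})^2(\lambda\mu)^{-1}ab - q\lambda\mu^{-1} - q^{-1}\mu\lambda^{-1}$; the case of $\widetilde V(\lambda,\mu,c)$ is symmetric, with the upper-right of $\varphi(g)$ proportional to $c$.

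The main step is to show that the union of these shadows equals $\{\varphi(g) \neq \pm\mathds{1}_2\} \cup \{\varphi(g) = \pm\mathds{1}_2,\, h_p = \mp 2\}$. For the forward inclusion, $\varphi(g) = \pm\mathds{1}_2$ forces both off-diagonal entries to vanish, hence $b = 0$ in the $V$-case (resp.\ $c = 0$ in the $\widetilde V$-case); the simplicity condition of Lemma \ref{lemm3} then constrains $\lambda\mu^{-1}$ to the one value $\pm q^{-1}$ compatible with $\lambda^N = \pm\mu^N$, and substituting into the formula for $h_p$ produces $h_p = \mp 2$. Conversely, I will proceed by case analysis on $\widehat x$: if the lower-left entry of $\varphi(g)$ is nonzero, realize $\widehat x$ by some $V(\lambda,\mu,a,b)$ with $b \neq 0$ (automatically simple); if only the upper-right is nonzero, use $\widetilde V$ with $c \neq 0$; if $\varphi(g)$ is diagonal with eigenvalues $\neq \pm 1$, use $V(\lambda,\mu,a,0)$ with $\lambda^N \neq \pm\mu^N$; and if $\varphi(g) = \pm\mathds{1}_2$ with $h_p = \mp 2$, use $S_{\mu,\pm,N-1}$ with $\mu$ chosen so that $h_\partial = \pm\mu^{-2}$. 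The main technical obstacle is to verify in each subcase that the free parameters can be adjusted to reproduce all four shadow invariants simultaneously; this uses the identity $\tr\varphi(g) = \mu^N\lambda^{-N} + \lambda^N\mu^{-N} + (q-q^{-1})^{2N}\mu^{-N}\lambda^{-N}be$ together with $T_N(h_p) = -\tr\varphi(g)$ to solve for $e$ once $b$ and $\lambda^N,\mu^N$ are fixed, and then recover $a$ from the formula for $h_p$.

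Part $(2)$ then follows immediately from $(1)$: if $\varphi(g) \neq \pm\mathds{1}_2$, every element of the fiber $\pi^{-1}(x)$ inherits the property $\varphi(g) \neq \pm\mathds{1}_2$ and thus lies in $\mathcal{AL}$, so $x \in \mathcal{FAL}$; conversely if $\varphi(g) = \pm\mathds{1}_2$, the equation $T_N(h_p) = \mp 2$ admits the solutions $h_p = \mp(q^n + q^{-n})$ for $n = 1,\ldots,(N-1)/2$, and by $(1)$ each of these fiber points lies outside $\mathcal{AL}$, so $x \notin \mathcal{FAL}$.
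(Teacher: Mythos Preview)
Your approach is essentially the same as the paper's: invoke Theorem \ref{theorem_AL}(2) to identify $\mathcal{AL}(\mathbb{D}_1)$ with the shadows of $N$-dimensional simples, and then use the classification from Lemma \ref{lemm3}. The paper's proof is much terser---it simply asserts that each $\widehat x$ with $\varphi(g)\neq\pm\mathds{1}_2$ is the shadow of some irreducible $V(\lambda,\mu,a,b)$ or $\widetilde V(\lambda,\mu,c)$, without the parameter-matching case analysis you carry out---so your write-up would be a more detailed version of the same argument.

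There is one genuine difference in the direction ``$\varphi(g)=\pm\mathds{1}_2$ and $h_p\neq\mp 2$ implies $\widehat x\notin\mathcal{AL}$''. You argue by exclusion: you show that every simple $N$-dimensional module has its shadow in the claimed set, hence anything outside cannot be such a shadow. The paper instead exhibits directly the small simple module $S_{\mu,\varepsilon,n}$ with $0\le n\le N-2$ having shadow $\widehat x$; since this simple has dimension $<N$, Theorem \ref{theorem_AL} forces $\widehat x\notin\mathcal{AL}$. The paper's route is shorter here and avoids your somewhat delicate forward-inclusion computation (forcing $\lambda\mu^{-1}=\pm q^{-1}$ from the simplicity criterion). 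Conversely, your explicit parameter-matching for the converse direction fills in a step the paper leaves implicit. One small caution: your claim that $V(\lambda,\mu,a,b)$ with $b\neq 0$ is ``automatically simple'' does not follow from Lemma \ref{lemm3} as stated; it is true (because $F$ then acts invertibly and $K^{1/2}$ has simple spectrum), but you should justify it separately rather than cite the lemma.
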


\begin{proof} The second assertion is an immediate consequence of the first one. To compute  $\mathcal{AL}(\mathbb{D}_1)$, recall from Theorem \ref{theorem_AL} that an irreducible  representation $\rho : \Dq \to \End(V)$ with classical shadow $\widehat{x}$ has dimension $N$ if and only $\widehat{x} \in \mathcal{AL}(\mathbb{D}_1)$. When $\widehat{x}= (g,h_p,h_{\partial})$ is such that  $\varphi(g)\neq \pm \mathds{1}_2$, then it is the classical shadow of an $N$-dimensional representation of the form $V(\lambda,\mu, a,b)$ or $\widetilde{V}(\lambda, \mu, c)$ which is irreducible by Lemma \ref{lemm3}. When $\varphi(g)=\pm \mathds{1}_2$ and $h_p=\mp 2$, then $\widehat{x}$ is the shadow of an $N$ dimensional representation $S_{\mu, \varepsilon, N-1}$ which is irreducible by Lemma \ref{lemm3}. So $\widehat{x} \in \mathcal{AL}(\mathbb{D}_1)$ in these cases. If $\varphi(g)=\pm \mathds{1}_2$ and $h_p\neq \mp 2$, then $\widehat{x}$ is the shadow of a representation $S_{\mu, \varepsilon, n}$ with $0\leq n \leq N-2$ whose dimension if $<N$ and which is irreducible by Lemma \ref{lemm3}, so $\widehat{x} \notin \mathcal{AL}(\mathbb{D}_1)$.

\end{proof}

 \begin{lemma}\label{lemm5} Let $\widehat{x}=(g,h_p,h_{\partial}) \in \widehat{X}(\mathbb{D}_1)$. 
 \begin{enumerate}
 \item If $\tr( \varphi(g))\neq \pm 2$, there exists (up to isomorphism) a unique indecomposable semi-weight representation with maximal shadow  $\widehat{x}$; it is of the form $V(\lambda, \mu, a,b)$ or $\widetilde{V}(\lambda, \mu, c)$.
 \item If $\tr( \varphi(g))= \pm 2$ and $\varphi(g)\neq \mp \mathds{1}_2$, there exist  (up to isomorphism) two indecomposable semi-weight representations with maximal shadow $\widehat{x}$; one is of the form $V(\lambda, \mu, a,b)$ or $\widetilde{V}(\lambda, \mu, c)$ and the other has the form $P(\lambda, \mu, a,b)$ or $\widetilde{P}(\lambda, \mu, c)$.
 \end{enumerate}
 \end{lemma}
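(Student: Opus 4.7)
The two statements will follow by combining the abstract classification of Theorem \ref{theorem_FAL} with an explicit realization using the families of Definitions \ref{def_QGRep} and \ref{def_semiweight_modules}.

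I will start by observing that in both cases $\varphi(g)\neq\pm\mathds{1}_2$, so Lemma \ref{lemm4}(1) places $\widehat{x}$ in the Azumaya locus $\mathcal{AL}(\mathbb{D}_1)$. Since $\mathring{\mathcal{P}}=\{p\}$ for $\mathbb{D}_1$, the integer $m(\widehat{x})=|\mathring{\mathcal{P}}_1|$ equals $0$ in case $(1)$ (because $\chi_x(\gamma_p)=-\tr(\varphi(g))\neq\pm 2$ puts $p\in\mathring{\mathcal{P}}_0$) and equals $1$ in case $(2)$ (assuming implicitly $h_p\neq\pm 2$, so that $p\in\mathring{\mathcal{P}}_1$). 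Theorem \ref{theorem_FAL}(4) then gives respectively $|\col(\widehat{x})|=1$ or $2$ isomorphism classes of indecomposable semi-weight modules with maximal shadow $\widehat{x}$: the Azumaya $N$-dimensional simple in case $(1)$, and in case $(2)$ in addition a $2N$-dimensional indecomposable which is projective in $\overline{\mathcal{C}}$ and fits in a non-split self-extension of the simple.

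To realize these modules concretely, I will fix $\lambda,\mu\in\mathbb{C}^*$ with $\lambda^N=\chi_x(K^{N/2})$, $\mu^N=\chi_x(L^{N/2})$ and then split on whether $\chi_x(F^N)$ vanishes. If $\chi_x(F^N)\neq 0$, I take $V(\lambda,\mu,a,b)$ with $b=\chi_x(F^N)$ and $a$ pinned down by the requirement that $\gamma_p$ act as $h_p$; simplicity is immediate, since the $K^{1/2}$-eigenlines are one-dimensional and $F$, acting cyclically with $F^N=b\neq 0$, admits no non-trivial shift-invariant subset of the index $\mathbb{Z}/N\mathbb{Z}$. If $\chi_x(F^N)=0$, then necessarily $\chi_x(E^N)\neq 0$ (see the next paragraph) and the symmetric construction $\widetilde{V}(\lambda,\mu,c)$ with $c=\chi_x(E^N)$ applies. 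In case $(2)$, reusing the same parameters I build the projective companion as $P(\lambda,\mu,a,b)$ or $\widetilde{P}(\lambda,\mu,c)$: indecomposability comes from Lemma \ref{lemm3}, the fact that it is semi-weight comes from Lemma \ref{lemm2} (its condition $h_p=\pm(q^n+q^{-n})$ with $n\geq 1$ is automatic since $T_N(h_p)=\chi_x(\gamma_p)=\pm 2$ and $h_p\neq\pm 2$), and its maximal shadow is $\widehat{x}$ by construction.

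The main technical obstacle is to justify, in case $(2)$, that the non-triviality of the Jordan block of $\varphi(g)$ forces exactly one of $\chi_x(E^N), \chi_x(F^N)$ to vanish. For this I will compute $\varphi(g)=g_-^{-1}g_+$ directly from the matrix formulas
\[
g_+=\chi_x\begin{pmatrix}K^{-N/2}&-(q-q^{-1})^NE^NK^{-N/2}\\0&K^{N/2}\end{pmatrix},\qquad g_-=\chi_x\begin{pmatrix}L^{-N/2}&0\\(q-q^{-1})^NL^{-N/2}F^N&L^{N/2}\end{pmatrix}.
\]
The diagonal of $\varphi(g)$ is $(\mu^N\lambda^{-N},\,\mu^{-N}\lambda^N+(q-q^{-1})^{2N}(\lambda\mu)^{-N}\chi_x(E^N)\chi_x(F^N))$; imposing both entries equal to $\pm 1$ yields $(\lambda/\mu)^N=\pm 1$ and $\chi_x(E^N)\chi_x(F^N)=0$. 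The non-centrality of $\varphi(g)$ then says that exactly one of the two off-diagonal entries is non-zero, which is precisely the condition that determines whether the $V$-family (lower-triangular Jordan block, $\chi_x(F^N)\neq 0$) or the $\widetilde{V}$-family (upper-triangular Jordan block, $\chi_x(E^N)\neq 0$) supplies the Azumaya simple module, and correspondingly whether $P$ or $\widetilde{P}$ supplies its non-split self-extension.
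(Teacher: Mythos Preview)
Your approach is exactly the paper's: both hypotheses force $\varphi(g)\neq\pm\mathds{1}_2$, so $g\in\mathcal{FAL}(\mathbb{D}_1)$ by Lemma~\ref{lemm4}(2), and then Theorem~\ref{theorem_FAL}(4) yields the count. The paper stops there; you additionally spell out the explicit realization via the $V,\widetilde V,P,\widetilde P$ families, which is a welcome elaboration and is mostly correct.

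There is, however, an error in your final paragraph. From $\tr(\varphi(g))=\pm 2$ and $\det(\varphi(g))=1$ you \emph{cannot} conclude that both diagonal entries of $\varphi(g)$ equal $\pm 1$, and hence you cannot conclude $\chi_x(E^N)\chi_x(F^N)=0$. For example, with $\mu^N\lambda^{-N}=t\neq\pm 1$ one can solve $(q-q^{-1})^{2N}(\lambda\mu)^{-N}ef=\pm 2-t-t^{-1}$ with both $e,f$ nonzero. What you actually need, and what is true, is the weaker statement that \emph{at least one} of $\chi_x(E^N),\chi_x(F^N)$ is nonzero: if both vanish then $\varphi(g)=\operatorname{diag}(\mu^N\lambda^{-N},\mu^{-N}\lambda^N)$ is diagonal, and trace $\pm 2$ with determinant $1$ then forces $\varphi(g)=\pm\mathds{1}_2$, contradicting the hypothesis. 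This suffices: if $\chi_x(F^N)\neq 0$ take the $V$/$P$ family, otherwise $\chi_x(E^N)\neq 0$ and take the $\widetilde V$/$\widetilde P$ family. Your simplicity argument for $V(\lambda,\mu,a,b)$ with $b\neq 0$ via the cyclic $F$-action on one-dimensional $K^{1/2}$-weight spaces is correct and self-contained.

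A small cosmetic point: you should cite Lemma~\ref{lemm4}(2) (the fully Azumaya locus statement) rather than (1), since Theorem~\ref{theorem_FAL}(4) requires $x\in\mathcal{FAL}(\mathbb{D}_1)$, not merely $\widehat{x}\in\mathcal{AL}(\mathbb{D}_1)$.
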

 
 \begin{proof} When $\varphi(g)\neq \pm \mathds{1}_2$, then $g\in \mathcal{FAL}(\mathbb{D}_1)$ by Lemma \ref{lemm4} so the lemma follows from Theorem \ref{theorem_FAL}
   \end{proof}
 
 \subsubsection{Representations outside the fully Azumaya locus}
 
 For $\mathscr{A}$ an algebra, we denote by $\Indecomp(\mathscr{A})$ the set of isomorphism classes of indecomposable $\mathscr{A}$-modules.
 \begin{definition}(Small quantum group) The (simply connected) \textbf{small quantum enveloping algebra} $\uq$ has generators $E,F, k^{\pm 1/2}$ and relations: 
 $$
Ek^{1/2}= q^{-1}k^{1/2}E; \quad   Fk^{1/2}=qk^{1/2}F; \quad F^N=E^N=k^{N/2}-1=0, \quad EF-FE= \frac{k-k^{-1}}{q-q^{-1}}. 
$$
 \end{definition}
 
Let $\rho: \Dq \to \End(V)$ be an indecomposable semi-weight representation whose shadow $g\in X(\mathbb{D}_1)$ is not in the fully Azumaya locus. By Lemma \ref{lemm4}, $g=\left( \varepsilon \begin{pmatrix} \Lambda & 0 \\ 0 & \Lambda^{-1} \end{pmatrix},  \begin{pmatrix} \Lambda & 0 \\ 0 & \Lambda^{-1} \end{pmatrix} \right)$ for some $\varepsilon \in \{-1, +1\}$  and $\Lambda \in \mathbb{C}^*$. 
Since $\rho$ is indecomposable,  there exists $h_{\partial}\in \mathbb{C}^*$ such that $\rho(H_{\partial})=h_{\partial}\id_V$. Let $\mu \in \mathbb{C}^*$ be the unique scalar such that $\Lambda= \mu^N$ and $h_{\partial}=\varepsilon \mu^{-2}$ and denote by $\Indecomp(U_q\mathfrak{gl}_2; \varepsilon, \mu) \subset \Indecomp(\Dq)$ the subset of modules with shadow $g$ and $h_{\partial}$. Define a map 
$$ (\cdot)_{\mu, \varepsilon}: \Indecomp(\uq) \to \Indecomp(\Dq; \varepsilon, \mu), \quad V\mapsto V_{\mu, \varepsilon}, $$
 by sending $\rho : \uq \to \End(V)$ to the representation $\rho_{\mu, \varepsilon}: \Dq \to \End(V_{\mu, \varepsilon})$ where $V_{\mu, \varepsilon}=V$ as a vector space and 
 $$ \rho_{\mu, \varepsilon}(K^{1/2})=\mu \varepsilon \rho(k^{1/2}), \rho_{\mu, \varepsilon}(L^{1/2})=\mu  \rho(k^{-1/2}), \rho_{\mu, \varepsilon}(E)=\mu^2 \rho(E), \rho_{\mu, \varepsilon}(F)= \rho(F).$$
 
 \begin{lemma}\label{lemm6} $ (\cdot)_{\mu, \varepsilon}: \Indecomp(\uq) \to \Indecomp(\Dq; \varepsilon, \mu)$ is a bijection. \end{lemma}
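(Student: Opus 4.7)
The plan is to construct an explicit inverse to $V \mapsto V_{\mu,\varepsilon}$ and verify that both directions preserve indecomposability. All three steps are essentially scaling computations, with the semi-weight verification being the only subtle point.

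Step 1 (Well-definedness). Given a $\uq$-module $V$, the rescaled operators defining $V_{\mu,\varepsilon}$ satisfy the relations of $\Dq$ by direct check: the scaling factors absorb the quantum commutation constants, and the Cartan relation $[E,F]_{\Dq} = (K-L)/(q-q^{-1})$ reduces to its $\uq$ analogue after observing that $K = \mu^2 k$ and $L = \mu^2 k^{-1}$ on $V_{\mu,\varepsilon}$ (using $\varepsilon^2 = 1$). Computing $K^{\pm N/2}$, $L^{\pm N/2}$, $\alpha_{+-}^N \propto E^N K^{-N/2}$ and $\beta_{-+}^N \propto L^{-N/2}F^N$ (which vanish since $E^N = F^N = 0$ in $\uq$) shows that the classical shadow of $V_{\mu,\varepsilon}$ is the prescribed $g$ and that the central element $H_\partial = K^{-1/2}L^{-1/2}$ acts as $\varepsilon\mu^{-2}$. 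Semi-weightness amounts to showing that $T_N(\gamma_p)$ acts semi-simply; using \eqref{eq_Casimir}, $\gamma_p$ acts as a scalar multiple of the Casimir $C$ of $\uq$, and $T_N(C)$ is known to be a central scalar in $\uq$, so $T_N(\gamma_p)$ is indeed a scalar.

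Step 2 (Inverse construction). Let $W \in \Indecomp(\Dq;\varepsilon,\mu)$. Since $H_\partial$ is central and $W$ is indecomposable semi-weight, $H_\partial$ acts by $\varepsilon\mu^{-2}$, so the (central) operator $K^{1/2}L^{1/2}$ acts by $\varepsilon\mu^2$ on $W$. The shadow $g$ forces $K^{N/2}$ to act as $(\varepsilon\mu)^N$ and $\alpha_{+-}^N, \beta_{-+}^N$ to act as zero on $W$, whence $E^N = F^N = 0$ since $K^{\pm N/2}, L^{\pm N/2}$ are invertible scalars. Define operators $k^{1/2} := (\varepsilon\mu)^{-1}K^{1/2}$, $E' := \mu^{-2}E$, $F' := F$ on $W$. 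Then $k^{N/2} = 1$, $(E')^N = (F')^N = 0$; the identity $L = \mu^2 k^{-1}$ (which follows from $K^{1/2}L^{1/2} = \varepsilon\mu^2$ and the definition of $k^{1/2}$) yields the $\uq$-Cartan relation $[E',F'] = (k - k^{-1})/(q-q^{-1})$, and the remaining commutations are immediate by scaling. This exhibits $W$ as a $\uq$-module $W^{\flat}$, and the constructions $V \mapsto V_{\mu,\varepsilon}$ and $W \mapsto W^{\flat}$ are manifestly mutually inverse.

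Step 3 (Indecomposability). The subalgebras of $\End(V)$ generated by the $\uq$-action on $V$ and the $\Dq$-action on $V_{\mu,\varepsilon}$ coincide, since each set of generators is obtained from the other by invertible scaling. Therefore $\End_{\Dq}(V_{\mu,\varepsilon}) = \End_{\uq}(V)$, and $V$ has local endomorphism ring if and only if $V_{\mu,\varepsilon}$ does; by Fitting's lemma, indecomposability is preserved in both directions. The only non-formal input is the scalarity of $T_N(C)$ on any finite-dimensional $\uq$-module used in Step 1; everything else is routine scaling.
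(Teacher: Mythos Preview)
Your proof is correct and is exactly the ``straightforward'' inverse construction the paper alludes to, just carried out in full. One small remark: the separate semi-weightness argument in Step~1 is unnecessary --- once you have shown that the generators $K^{\pm N/2}, L^{\pm N/2}, E^N, F^N$ of $Z^0_{\mathbb{D}_1}$ act as scalars (which is precisely your classical-shadow computation), semi-weightness is automatic, so you do not need to invoke that $T_N(C)$ is a scalar in $\uq$.
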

 
 \begin{proof} 
 The construction of the inverse map is straightforward and left to the reader.
 \end{proof}
 
 \begin{notations} We denote by $S_n, P_n, \Omega^{\pm k}_n, M^k_n(\Lambda)$ the $\uq$ modules which correspond to the modules $S_{\mu, \varepsilon, n}, P_{\mu, \varepsilon, n}, \Omega^{\pm k}_{\mu, \varepsilon, n}, M^k_{\mu, \varepsilon, n}(\Lambda)$ respectively.
 \end{notations}
 
 The classification of the indecomposable $\uq$ modules was accomplished in \cite{Suter_Uqsl2Modules} (see also \cite{GSTF_KLlogCFT}) with two differences: $(1)$ in  \cite{Suter_Uqsl2Modules} the parameter $q$ is a root of unity of even order and $(2)$ in  \cite{Suter_Uqsl2Modules} the algebra considered has generators $E,F, k^{\pm 1}$ instead of $E,F, k^{\pm 1/2}$. In the rest of the appendix, we adapt the arguments in \cite{Suter_Uqsl2Modules} to our setting in order to prove the following 
 
 \begin{proposition}\label{prop_uq}
 \begin{enumerate}
 \item The indecomposable $\uq$ modules are the ones isomorphic to the modules  $S_n, P_n$, $\Omega^{\pm k}_n, M^k_n(\Lambda)$ and these modules are pairwise inequivalent.
 \item The irreducible $\uq$ modules are the ones isomorphic to the modules $S_n$.
 \item The projective $\uq$ modules are the ones isomorphic to the modules $S_{N-1}, P_n$.
 \end{enumerate}
 \end{proposition}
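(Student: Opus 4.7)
The plan is to follow Suter's strategy from \cite{Suter_Uqsl2Modules}, with the adjustments needed for our odd order root of unity $N$ and the $k^{\pm 1/2}$ generators.

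First, I would establish the block decomposition of $\uq$. The simple modules are the highest-weight modules $S_n$ for $0 \le n \le N-1$, constructed by the usual $\mathfrak{sl}_2$ recipe; any finite-dimensional simple must contain a vector annihilated by $E$ by a standard nilpotency argument. Computing the action of the Casimir on each $S_n$ using \eqref{eq_Casimir} together with the central elements $k^{N/2}, E^N, F^N$, one sees that $S_n$ and $S_m$ share the same central character precisely when $\{n,m\}=\{n,\overline n\}$ with $\overline{n}=N-2-n$ (the extremal case $n=N-1$ is fixed). This yields a block decomposition
\[
\uq \;\cong\; B_{N-1} \oplus \bigoplus_{n=0}^{(N-3)/2} B_n,
\]
and the dimension count $\dim \uq = N^3 = N^2 + \sum_{n=0}^{(N-3)/2} (2N)^2/2$ (using $\dim P_n = \dim P_{\overline{n}} = 2N$) confirms it.

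Second, I would identify each block. Since $S_{N-1}$ has dimension $N$ and is alone in its block, $B_{N-1} \cong \Mat_N(\mathbb{C})$ is semisimple; this immediately yields that $S_{N-1}$ is both irreducible and projective. For each non-Steinberg block $B_n$, I would construct the projective covers $P_n, P_{\overline{n}}$ explicitly (they are the modules written down in Definition \ref{def_semiweight_modules}, restricted to $\uq$) and verify by direct computation that they have Loewy length three with head $S_n$, socle $S_n$, and radical quotient $\mathrm{rad}/\mathrm{soc}$ isomorphic to $S_{\overline{n}} \oplus S_{\overline{n}}$ (and symmetrically for $P_{\overline{n}}$). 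Then the basic algebra $A_n := \End_{B_n}(P_n \oplus P_{\overline{n}})^{op}$ is seen to be the path algebra of the double quiver with two vertices joined by arrows $\alpha \colon 1 \to 2$ and $\beta \colon 2 \to 1$, modulo the relations $\alpha\beta\alpha = 0$ and $\beta\alpha\beta = 0$. This is a standard special biserial (in fact gentle) algebra of tame type.

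Third, I would apply the Butler--Ringel classification of indecomposable modules over a special biserial algebra. Enumerating the finite strings in our quiver with the given relations, one obtains (beyond the simples $S_n, S_{\overline{n}}$ and projectives $P_n, P_{\overline{n}}$) the zigzag modules which match $\Omega^k_n$ and $\Omega^{-k}_n$ from Definition \ref{def_exceptional_rep}. The bands of this quiver, parametrized by $(k,\Lambda) \in \mathbb{Z}_{\ge 1} \times \mathbb{CP}^1$, give the one-parameter families $M^k_n(\Lambda)$. Matching the resulting string/band modules against the explicit bases of Definition \ref{def_exceptional_rep} is a bookkeeping exercise. Simplicity of the $S_n$ and pairwise non-isomorphism of all listed modules follow from their distinct dimensions, central characters, and socle series; projectivity of $S_{N-1}$ and $P_n, P_{\overline{n}}$ follows from the block analysis.

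The main obstacle will be the explicit identification of the relations of $A_n$: beyond verifying $\alpha\beta\alpha=\beta\alpha\beta=0$, one must show that the shorter monomials $\alpha\beta$ and $\beta\alpha$ are nonzero in $A_n$, which requires producing explicit length-two non-scalar endomorphisms of $P_n$ and $P_{\overline{n}}$ via the bases in Definition \ref{def_semiweight_modules}. Once the Morita equivalence with the gentle algebra is secured, the classification reduces to the Butler--Ringel theorem, and the adjustments relative to \cite{Suter_Uqsl2Modules} for odd $N$ amount to benign sign-tracking in the highest-weight computation (the existence of the central square roots $k^{\pm 1/2}$ merely doubles the central characters by a sign but does not alter the quiver).
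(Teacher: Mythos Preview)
Your proposal contains a genuine error in the identification of the basic algebra $A_n$. You correctly observe that $P_n$ has Loewy series with head $S_n$, socle $S_n$, and heart $S_{\overline n}\oplus S_{\overline n}$, but this forces $\dim\Ext^1(S_n,S_{\overline n})=2$, not $1$: the Gabriel quiver of $A_n$ must carry \emph{two} arrows from each vertex to the other, for four arrows in total. The paper confirms this directly by computing $\Hom_{\uq}(P_n,P_{\overline n})=\mathbb{C}\alpha_n\oplus\mathbb{C}\beta_n$. The algebra you describe---the path algebra of the oriented $2$-cycle $\alpha\colon 1\to 2$, $\beta\colon 2\to 1$ modulo $(\alpha\beta\alpha,\beta\alpha\beta)$---is a $6$-dimensional self-injective Nakayama algebra of \emph{finite} representation type, with exactly six indecomposables (the two simples, two length-two uniserials, and the two projectives). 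It therefore cannot account for the infinite string families $\Omega^{\pm k}_n$ nor for the $\mathbb{CP}^1$-parametrised bands $M^k_n(\Lambda)$, so your classification would be drastically incomplete.

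With the correct four-arrow quiver and the relations coming from the Loewy structure ($\alpha_{\overline n}\alpha_n=\beta_{\overline n}\beta_n=0$ together with the commutativity $\alpha_{\overline n}\beta_n=\beta_{\overline n}\alpha_n$, and the symmetric relations with $n$ and $\overline n$ swapped), the basic algebra \emph{is} special biserial of tame type, and your Butler--Ringel strategy could then be carried through. The paper, however, takes a different route: rather than presenting $\mathcal{B}=\End_{\uq}(P_n\oplus P_{\overline n})$ by quiver and relations, it embeds $\mathcal{B}$ into $\Mat_2(\mathcal{K})$ with $\mathcal{K}=\mathbb{C}[X,Y]/(X^2,Y^2)$, reduces the classification of indecomposable $\mathcal{B}$-modules to that of $\mathcal{K}$-modules via an involution argument (Lemma~\ref{lemm8}), and then invokes Kronecker's classification of representations of the Kronecker quiver $\bullet\rightrightarrows\bullet$. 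This bypasses the quiver-presentation step that tripped you up and makes the matching with the explicit modules of Definition~\ref{def_exceptional_rep} entirely transparent.
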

 
 The proof of Proposition \ref{prop_uq} is a straightforward adaptation of the arguments in \cite{Suter_Uqsl2Modules} so we reproduce them briefly and refer the reader to \cite{Suter_Uqsl2Modules} for more details. The first step is to decompose $\uq$ into a direct sum of indecomposable left ideals. Write $P_{N-1}:=S_{N-1}$. For $n\in \mathbb{Z}/N\mathbb{Z}$, set 
 $$\varphi_n:= \sum_{i\in \mathbb{Z}/N\mathbb{Z}}q^{-ni}k^{i/2}$$
 so $k^{1/2}\varphi_n= \varphi_n k^{1/2} = q^n \varphi_n$, $E\varphi_n=\varphi_{n+1}E$ and $F\varphi_n= \varphi_{n-1}F$. An easy application of the diamond Lemma for PBW bases shows that $\uq$ admits the following basis
 $$\{ F^a \varphi_n E^b, 0\leq a,b,n \leq N-1\}.$$
 In particular, if $x\in \uq$ is such that $Fx=0$ then there exists a unique $y\in \Span(F^a \varphi_nE^b, a\leq N-2)$ such that $Fy=x$. For $0\leq n \leq N-2$, applying this remark to $x_n:=E^{\overline{n}}F^{N-1}\varphi_n$ we obtain an element $\gamma_n \in \Span(F^a \varphi_nE^b, a\leq N-2)$ uniquely characterized by the fact that $F\gamma_n=x_n$.  Set $\gamma_{N-1}:= F^{N-1} \varphi_{N-1}$ and for $0\leq n \leq N-1$ set
 $$ P_n(0):= \uq \cdot \gamma_n, \quad P_n(h)= P_n(0)\cdot E^h \quad h\in \{0, \ldots, n\}.$$

 \begin{lemma}\label{lemm7} The algebra $\uq$ admits the direct sum decomposition into left ideals 
 $$ \uq =  \oplus_{n=0}^{N-1}\oplus_{h=0}^n P_n(h)$$
 where $P_n(h)$ is isomorphic to $P_n$ as a left module. Moreover  the indecomposable projective $\uq$ are the ones isomorphic to $P_n$.
 \end{lemma}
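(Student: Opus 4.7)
My plan follows the structure of Suter's argument for $\uq$ at even-order roots of unity in \cite{Suter_Uqsl2Modules}, adapted to the odd-order setting here. First I verify that $\uq$ has the PBW basis $\{F^a \varphi_n E^b : 0\leq a,n,b\leq N-1\}$, giving $\dim_{\mathbb{C}} \uq = N^3$. The existence and uniqueness of $\gamma_n$ follows because left multiplication by $F$ induces a linear bijection from $\Span(F^a\varphi_n E^b : a\leq N-2)$ onto $\Span(F^a\varphi_n E^b : 1\leq a\leq N-1)$; applied to $x_n = E^{\overline{n}}F^{N-1}\varphi_n$, which lies in the target, this yields the desired $\gamma_n$ in the source.

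The core of the argument is to show $P_n(0) := \uq\gamma_n$ is isomorphic to the $\uq$-specialization of the module $P_n$ from Definition \ref{def_semiweight_modules}(3). I will exhibit an explicit basis by identifying $\gamma_n$ with $y_0$ and $F^i \gamma_n$ with $y_i$ for $1\leq i\leq n$; the defining property $F\gamma_n = E^{\overline{n}}F^{N-1}\varphi_n$ produces $x_{N-n-2}$ in the pattern of Definition \ref{def_semiweight_modules}(3), and successive applications of $F$ and $E$ generate the remaining $x_i$. Verifying that the $E$-action takes the prescribed form reduces to iterating the commutation relation $EF - FE = (k-k^{-1})/(q-q^{-1})$ on the $\varphi_n$-weight space, which produces the characteristic coefficients $[i][n-i+1]$. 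Right multiplication by $E$ commutes with the left $\uq$-action, so it furnishes $\uq$-linear maps $P_n(0) \to P_n(h)$; the PBW basis shows these are injective for $0\leq h\leq n$, hence isomorphisms onto $P_n(h)$.

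To assemble the direct sum decomposition, I use the orthogonal idempotents $e_m := \varphi_m/N$, which satisfy $e_m e_{m'}=\delta_{mm'}e_m$ and $\sum_m e_m = 1$ (a finite-group Fourier inversion on $\mathbb{Z}/N\mathbb{Z}$, valid because $N$ is odd). This gives $\uq = \bigoplus_m \uq e_m$, and tracking the $\varphi$-component of $\gamma_n E^h$ through the PBW basis shows the various $P_n(h)$ sum to $\uq$. A dimension count $\sum_{n=0}^{N-1}(n+1)\dim P_n = 2N\cdot \tfrac{(N-1)N}{2} + N\cdot N = N^3 = \dim \uq$ then forces the sum to be direct. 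Projectivity of each $P_n$ is immediate since $P_n(h)$ is a direct summand of the left regular module; indecomposability was already established in Lemma \ref{lemm3} (the argument carries over verbatim under the specialization $\Dq\to\uq$).

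For the classification of indecomposable projectives, the basic structure theory of finite-dimensional $\mathbb{C}$-algebras gives a bijection between isomorphism classes of indecomposable projective modules and of simple modules; Proposition \ref{prop_uq}(2) (proved in parallel) identifies the latter with the $N$ modules $S_0,\ldots,S_{N-1}$, so we have exactly $N$ indecomposable projectives, and the list $P_0,\ldots,P_{N-1}$ exhausts it by pairwise non-isomorphism (distinct $(n,h)$-pairs give non-isomorphic tops). The main technical obstacle is the explicit identification $\uq\gamma_n \cong P_n$ in step two, a bookkeeping-heavy computation matching generators and $E,F$-relations; the rest of the proof is then a dimension count and standard Wedderburn-style reasoning.
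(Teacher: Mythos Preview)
Your proposal is correct and follows the same overall line as the paper for the decomposition: establish the PBW basis, construct $\gamma_n$, identify $P_n(0)\cong P_n$ explicitly, propagate to $P_n(h)$ via right multiplication by $E$, and finish with the dimension count $\sum_{n=0}^{N-1}(n+1)\dim P_n = N^3$. The paper's proof is terser (it simply asserts ``trivial pairwise intersection'' and ``straightforward verification'' for the isomorphism $P_n(h)\cong P_n$), but the substance is the same.

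The one genuine difference is in the classification of indecomposable projectives. The paper argues directly from Krull--Schmidt: once $\uq=\bigoplus_{n,h}P_n(h)$ with each summand indecomposable, any indecomposable projective is a summand of some $\uq^{\oplus k}$ and hence isomorphic to some $P_n$. (The paper's sentence about tensor products $P_a\otimes P_b$ is an aside and not needed for this conclusion.) You instead invoke the bijection between indecomposable projectives and simples, citing Proposition~\ref{prop_uq}(2) for the list of simples. This is valid, but note that in the paper's logical order Proposition~\ref{prop_uq} is proved \emph{after} Lemma~\ref{lemm7} and relies on the machinery built from it; so your ``proved in parallel'' remark needs to be cashed out by an independent highest-weight argument for the simples of $\uq$ (which is standard and easy). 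The Krull--Schmidt route avoids this detour entirely and is the cleaner choice here.
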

 
 \begin{proof} Clearly the $P_n(h)$ are left ideals by definition and  have trivial pairwise intersection. The fact that their direct sum is the whole $\uq$ follows by dimension counting. The fact that $P_n(h)$ is isomorphic to $P_n$ is a straightforward verification left to the reader. Since the $P_n$ are summands of the free rank $1$ $\uq$ module, they are projective. Conversely, we easily see that the tensor product $P_a\otimes P_b$ decomposes as a direct sum of modules $P_n$, so any indecomposable summand of a free $\uq$ module is isomorphic to a $P_n$. 
 \end{proof} 
 
 Set 
 $$ \mathbb{B}_{N-1}:= \oplus_{h=0}^{N-1}P_{N-1}(h), \quad \mathbb{B}_n:= \left( \oplus_{h=0}^n P_n(h) \right) \oplus \left( \oplus_{h'=0}^{\overline{n}} P_{\overline{n}}(h')\right).$$
 We obtain a decomposition 
 $$\uq = \mathbb{B}_{N-1}\oplus \oplus_{n=0}^{(N-3)/2} \mathbb{B}_n $$
 where the $\mathbb{B}_n$ are bilateral ideals. So classifying the indecomposable representations of $\uq$ amount to classifying the indecomposable representations of each $\mathbb{B}_n$. Since $\mathbb{B}_{N-1}\cong \Mat_N(\mathbb{C})$, it admits a unique indecomposable representation which corresponds to $S_{N-1}=P_{N-1}$. Fix $0\leq n\leq (N-3)/2$ and consider the algebra
 $$\mathcal{B}:= \End_{\uq} (P_n \oplus P_{\overline{n}}).$$
 
 By Morita theory, the functor $(P_n\oplus P_{\overline{n}})\otimes_{\uq}\bullet : \mathcal{B}^{op}-\Mod \to \mathbb{B}_n-\Mod$ is an equivalence with quasi inverse $\Hom_{\mathbb{B}_n}(P_n\otimes P_{\overline{n}}, \mathbb{B}_n)\otimes_{\mathbb{B}_n} \bullet$. Recall from the proof of Lemma \ref{lemm3} that $\End_{\uq}(P_n)=\mathbb{C}id_{P_n} \oplus \mathbb{C} \pi_n$ where $\pi_n^2=0$. We easily see that $\Hom_{\uq}(P_n, P_{\overline{n}})=\mathbb{C}\alpha_n \oplus \mathbb{C} \beta_n$ where $\alpha_n, \beta_n$ are characterized by the fact that they send the cyclic vector $y_0\in P_n$ to $\alpha_n(y_0)=x_0\in P_{\overline{n}}$ and $\beta_n(y_0)=y_{n+1}\in P_{\overline{n}}$. Note that $\alpha_{\overline{n}}\beta_n= \beta_{\overline{n}}\alpha_n= \pi_n$,  
 so $\mathcal{B}$ is generated by the morphisms $\id_{P_n}, \id_{P_{\overline{n}}}, \alpha_n, \alpha_{\overline{n}}, \beta_n$ and $\beta_{\overline{n}}$. Consider the Kronecker algebra $\mathcal{K}:= \quotient{\mathbb{C}[X,Y]}{(X^2, Y^2)}$ and the embedding $j: \mathcal{B} \hookrightarrow \Mat_2(\mathcal{K})$ defined by 
 $$ j(\id_{P_n})= \begin{pmatrix} 1 & 0 \\ 0&0 \end{pmatrix}, j(\id_{P_{\overline{n}}})= \begin{pmatrix} 0& 0 \\ 0 & 1 \end{pmatrix}, j(\alpha_n)=\begin{pmatrix} 0 & 0 \\ X & 0 \end{pmatrix}, j_{\beta_n} = \begin{pmatrix} 0 & 0 \\ Y & 0 \end{pmatrix}, j(\alpha_{\overline{n}})= \begin{pmatrix} 0 & X \\ 0 & 0 \end{pmatrix}, j(\beta_{\overline{n}})= \begin{pmatrix} 0 & Y \\ 0 & 0 \end{pmatrix}.$$
 The transposition ${}^t : \Mat_2(\mathcal{K}) \to \Mat_2(\mathcal{K})^{op}$ restricts to an isomorphism $t: \mathcal{B} \xrightarrow{\cong} \mathcal{B}^{op}$.  Consider the $\Mat_2(\mathcal{K})-\mathcal{K}$ bimodule $\mathcal{K}^{\oplus 2}$. The functor $(\mathcal{K}^{\oplus 2})\otimes_{\mathcal{K}} \bullet: \mathcal{K}-\Mod \to \Mat_2(\mathcal{K})-\Mod$ is clearly an equivalence. 
\par 
 Consider the involutive automorphism $\Theta: \mathcal{B}\to \mathcal{B}$ defined by the conjugacy by $\begin{pmatrix} 0& 1 \\ 1 &0 \end{pmatrix} \in  \Mat_2(\mathcal{K})$, said differently $\Theta (\id_{P_n})=\id_{P_{\overline{n}}}$, $\Theta (\alpha_n)=\alpha_{\overline{n}}$ and $\Theta(\beta_n)= \beta_{\overline{n}}$. For $L$ a $\mathcal{B}$-module with module morphism $\rho: \mathcal{B}\to \End(L)$, let $\overline{L}$ be the $\mathcal{B}$-module with module structure $\overline{\rho} (x):= \rho(\Theta(x))$, $x\in \mathcal{B}$. Consider the two projectors $P:= \frac{1+\Theta}{2}$, $Q:= \frac{1-\Theta}{2}$  in $\Mat_2(\mathcal{K})$ so that $P+Q=\id$ and $PQ=0$. Given $L \in \Indecomp( \Mat_2(\mathcal{K}))$, we decompose it as $L=L^+\oplus L^-$ where $L^+:= PL$ and $L^-:= QL$ are two indecomposable $\mathcal{B}$ modules such that $L^-= \overline{L^+}$. We denote by $(\cdot)_+, (\cdot)_-: \Indecomp(\Mat_2(\mathcal{K})) \to \Indecomp(\mathcal{B})$ sending $L$ to $L_+$ and $L_-$ respectively. We can now define two maps $F_+, F_-: \Indecomp(\mathcal{K})\to \Indecomp(\mathbb{B}_n)$ as the compositions: 
 \begin{multline*}
 F_{+, -}: \Indecomp(\mathcal{K}) \xrightarrow[\cong]{ (\mathcal{K}^{\oplus 2})\otimes_{\mathcal{K}} \bullet} \Indecomp( \Mat_2(\mathcal{K})) \\
  \xrightarrow{ (\cdot)_{+, -}} \Indecomp(\mathcal{B}) \xrightarrow[\cong]{ t^*} \Indecomp(\mathcal{B}^{op}) \xrightarrow[\cong]{(P_n\otimes P_{\overline{n}})\otimes \bullet} \Indecomp(\mathbb{B}_n), 
 \end{multline*}
 and a map 
 $$ F:= F_+ \sqcup F_- : \Indecomp(\mathcal{K})\sqcup \Indecomp(\mathcal{K}) \to \Indecomp(\mathbb{B}_n).$$
 
 \begin{lemma}\label{lemm8} $ F: \Indecomp(\mathcal{K})\sqcup \Indecomp(\mathcal{K}) \to \Indecomp(\mathbb{B}_n)$ is a bijection.
 \end{lemma}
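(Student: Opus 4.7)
The plan is to realize \(F\) as a composition of three equivalences of module categories and one two-to-one cover, verifying each step separately.

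I would first dispose of the three outer arrows in the definition of \(F_{\pm}\). The functor \((\mathcal{K}^{\oplus 2})\otimes_{\mathcal{K}}\bullet\) is an equivalence because \(\mathcal{K}^{\oplus 2}\) is the standard progenerator realizing Morita equivalence between \(\mathcal{K}\) and \(\Mat_2(\mathcal{K})\). Dually, Lemma \ref{lemm7} exhibits \(\mathbb{B}_n\) as \(P_n^{\oplus(n+1)}\oplus P_{\overline{n}}^{\oplus(\overline{n}+1)}\), so \(P_n\oplus P_{\overline{n}}\) is a projective generator of the category of \(\mathbb{B}_n\)-modules with endomorphism ring \(\mathcal{B}\); this produces the Morita equivalence \((P_n\oplus P_{\overline{n}})\otimes_{\mathcal{B}^{op}}\bullet\) between \(\mathcal{B}^{op}\)-modules and \(\mathbb{B}_n\)-modules. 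The transposition isomorphism \(t:\mathcal{B}\xrightarrow{\cong}\mathcal{B}^{op}\) identifies \(\Indecomp(\mathcal{B})\) with \(\Indecomp(\mathcal{B}^{op})\). Each of these three steps is immediately a bijection on isomorphism classes of indecomposable objects.

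The heart of the proof is therefore to establish that \((\cdot)_+\sqcup(\cdot)_-\) defines a bijection \(\Indecomp(\Mat_2(\mathcal{K}))\sqcup\Indecomp(\Mat_2(\mathcal{K}))\xrightarrow{\cong}\Indecomp(\mathcal{B})\). The key observation, obtained by counting dimensions and computing products in the image of \(j\), is that \(\Mat_2(\mathcal{K})=\mathcal{B}\oplus\mathcal{B}\tau\) with \(\tau:=E_{12}+E_{21}\), which exhibits \(\Mat_2(\mathcal{K})\cong\mathcal{B}\rtimes\mathbb{Z}/2\) as a crossed-product extension of degree two whose non-trivial element \(\tau\) implements \(\Theta\) by conjugation. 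Clifford theory for such an extension then supplies the required correspondence: for any indecomposable \(\Mat_2(\mathcal{K})\)-module \(L\), the splitting produced by the idempotents \(P,Q\) yields an indecomposable pair of \(\mathcal{B}\)-modules \(L^+,L^-\) with \(L^-\cong\overline{L^+}\); conversely, any indecomposable \(\mathcal{B}\)-module \(M\) with \(M\not\cong\overline{M}\) is recovered as a summand of the unique indecomposable \(\Mat_2(\mathcal{K})\)-module obtained by inducing \(\Mat_2(\mathcal{K})\otimes_{\mathcal{B}}M\), which restricts back to \(M\oplus\overline{M}\). Doubling by the coproduct \(\sqcup\) precisely accounts for the choice between the two summands, producing the bijection.

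The main obstacle is to verify the hypothesis that no indecomposable \(\mathcal{B}\)-module is \(\Theta\)-self-conjugate, since any \(M\cong\overline{M}\) would be attained only once by \(F\) and so break bijectivity. Every \(\mathcal{B}\)-module carries a grading by the two orthogonal idempotents \(\id_{P_n}\) and \(\id_{P_{\overline{n}}}\), and \(\Theta\) swaps these gradings. In the relevant range \(0\leq n\leq(N-3)/2\) one has \(n\neq\overline{n}\) (the exceptional case \(n=N-1\) is handled separately because \(\mathbb{B}_{N-1}\cong\Mat_N(\mathbb{C})\) is semisimple), and a self-conjugacy \(M\cong\overline{M}\) would require a \(\mathcal{B}\)-equivariant isomorphism interchanging the two graded pieces and intertwining \(\alpha_n\leftrightarrow\alpha_{\overline{n}}\), \(\beta_n\leftrightarrow\beta_{\overline{n}}\). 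A direct structural check, performed inductively on length using the relations \(\alpha_{\overline{n}}\beta_n=\beta_{\overline{n}}\alpha_n=\pi_n\) and the nilpotency \(\pi_n^2=0\), rules out the existence of such an isomorphism on any indecomposable \(\mathcal{B}\)-module. With this exclusion of \(\Theta\)-fixed points on \(\Indecomp(\mathcal{B})\), Clifford theory upgrades to the bijection asserted by the lemma.
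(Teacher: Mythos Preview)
Your overall architecture matches the paper's: both reduce to showing that
\[
(\cdot)_+\sqcup(\cdot)_-:\Indecomp(\Mat_2(\mathcal{K}))\sqcup\Indecomp(\Mat_2(\mathcal{K}))\to\Indecomp(\mathcal{B})
\]
is a bijection, and both exploit the decomposition $\Mat_2(\mathcal{K})=\mathcal{B}\oplus\mathcal{B}\theta$ with $\theta=E_{12}+E_{21}$. Where you invoke Clifford theory abstractly, the paper writes down the induced module $\widehat{M}=M\oplus\overline{M}$ by hand and asserts it is the inverse of $(\cdot)_{+,-}$. So the two arguments are the same in spirit.

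The substantive gap in your proposal is the step ruling out $\Theta$-self-conjugate indecomposables. You are right that this is exactly what is needed (and indeed the paper's own assertion that $\widehat{M}$ is indecomposable is equivalent to it), but your justification does not work as stated. The relations you invoke, $\alpha_{\overline{n}}\beta_n=\beta_{\overline{n}}\alpha_n=\pi_n$ and $\pi_n^2=0$, are themselves invariant under $\Theta$ (which swaps $n\leftrightarrow\overline{n}$), so no argument built purely on these relations can possibly distinguish $M$ from $\overline{M}$. An ``inductive structural check on length'' would have to use some asymmetry, and you have not identified one. Concretely: nothing prevents an indecomposable $\mathcal{B}$-module from having $\dim(e_1M)=\dim(e_2M)$ and admitting a grading-swapping automorphism compatible with the relations, as far as the bare relations go.

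In the paper's treatment this point is not argued abstractly either; it is effectively settled \emph{a posteriori} by the explicit computation of $F_+$ and $F_-$ on every Kronecker indecomposable (carried out immediately after the lemma), where one sees that $F_+(V)$ and $F_-(V)$ land in the distinct families indexed by $n$ and $\overline{n}$ respectively, hence are never isomorphic. If you want a self-contained abstract proof, you need a genuine invariant separating $M$ from $\overline{M}$; one clean route is to observe that restriction of any indecomposable $\Mat_2(\mathcal{K})$-module $\mathbb{C}^2\otimes_{\mathcal{K}}V$ to $\mathcal{B}$ always admits a nonzero endomorphism anticommuting with $\theta$ (equivalently, a nonzero $h\in\End_{\mathbb{C}}(V)$ anticommuting with both $X$ and $Y$), forcing the restriction to split, and then check this anticommuting endomorphism exists for each entry of Kronecker's list.
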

 
 \begin{proof} We need to prove that $(\cdot)_+ \sqcup (\cdot)_-: \Indecomp(\Mat_2(\mathcal{K}))\sqcup \Indecomp(\Mat_2(\mathcal{K})) \to \Indecomp(\mathcal{B})$ is a bijection or, equivalently, that the map $(\cdot)_{+,-} :\Indecomp(\Mat_2(\mathcal{K})) \to \quotient{\Indecomp(\mathcal{B})}{(M\sim \overline{M})}$ induced by both $(\cdot)_+$ and $(\cdot)_-$, is a bijection.  First, note that $\Mat_2(\mathcal{K})=\mathcal{B}\oplus \mathcal{B}\theta$ where $\theta= \begin{pmatrix} 0& 1 \\ 1 &0 \end{pmatrix}$.
 Let $M \in \Indecomp(\mathcal{B})$ with module structure $\rho: \mathcal{B} \to \End(M)$ and consider the $\Mat_2(\mathcal{K})$ representation $\widehat{\rho}: \Mat_2(\mathcal{K})\to \End(\widehat{M})$ where $\widehat{M}=M\oplus \overline{M}$ and 
 $$ \overline{\rho}(x) = \begin{pmatrix} \rho(x) & 0 \\ 0 & \overline{\rho}(x) \end{pmatrix}, \quad \overline{\rho}(x\theta)= \begin{pmatrix} 0 & \rho(x) \\  \overline{\rho}(x) & 0 \end{pmatrix}, \quad \mbox{ for }x\in \mathcal{B}.
 $$
 Then $\widehat{M} \in \Indecomp(\Mat_2(\mathcal{K}))$ and $(\widehat{M})_{+}=M$, $(\widehat{M})_-=\overline{M}$ so the application $M \mapsto \widehat{M}$ is the inverse of $(\cdot)_{+,-}$ which is thus a bijection.

 \end{proof}

It remains to prove that $\Indecomp(\mathcal{K})$ is in bijection with $\Delta$. This classification was accomplished by Kronecker in \cite{Kronecker_KroneckerAlgebra} and is easier to state using quiver representations in the framework of \cite{Kac_QuiverRep}. First, note that if $\rho: \Indecomp(\mathcal{K})\to \End(V)$ is an indecomposable representation with $\rho(XY)\neq 0$, then there exists $v\in V$ with $\rho(XY)v\neq 0$ and the subspace $\Span(v, \rho(X)v, \rho(Y)v, \rho(XY)v)\subset V$ is a $\mathcal{K}$ submodule isomorphic to $\mathcal{K}$ (seen as left module over itself). Since this module is free, it is projective and thus $V\cong \mathcal{K}$. 
In order to make the two modules $F_+(\mathcal{K})$ and $F_-(\mathcal{K})$ explicit, note that by Lemma \ref{lemm6}, the projection map $P_n\to S_n$ given by Lemma \ref{lemm1} (sending $y_0$ to $e_0$) is a  projective cover, so 
$$ \Ext^1(S_n, S_{\overline{n}})= \Hom_{\uq}(P_n, P_{\overline{n}})= \mathbb{C}\alpha_n \oplus \mathbb{C} \beta_n.$$
The extension of $S_n$ by $S_{\overline{n}}$ corresponding to $z_1 \alpha_n + z_2 \beta_n$ is the module $M_n(z_1,z_2)$ with basis $(v_i, i=0, \ldots N-1)$ and module structure:
\begin{align*}
{}& k^{1/2}v_i= A^{n-2i}v_i, Fv_{N-1}=0, F v_n= z_2 v_{n+1}, Fv_i=v_{i+1} \mbox{ for }i\neq n, N-1, \\ {}& Ev_0=z_1 v_{N-1}, Ev_i= [i][n-i+1]v_{i-1} \mbox{ for }i\neq 0.
\end{align*}
The extension is given by  the exact sequence
$$ 0 \to S_{\overline{n}} \xrightarrow{i} M_n(z_1,z_2) \xrightarrow{p} S_n \to 0$$
where $i(e_j)=v_{n+j+1}$, $p(v_i)=e_i$ if $i\leq n$ and $p(v_i)=0$ else. Note that $M_n(0,0)=S_{\overline{n}}\oplus S_n$ and that, for $(z_1,z_2)\neq (0,0)$, then $M_n(z_1,z_2)\cong M_n^1(\lambda)$ where $\lambda=z_1:z_2 \in \mathbb{CP}^1$.
We denote by $S_n \xrightarrow{ z_1 \alpha_n + z_2\beta_n} S_{\overline{n}}$ the extension $M_n(z_1,z_2)$. Then $\mathcal{K}$, seen as a $\mathcal{K}$ module over itself, decomposes as 

$$
\mathcal{K}=
\begin{tikzcd}
{} & \mathbb{C} \ar[ld, "X"] \ar[rd, "Y"'] & {} \\
\mathbb{C}X \ar[rd, "Y"] & {} & \mathbb{C}Y \ar[ld, "X"'] \\
{} & \mathbb{C} XY & {}
\end{tikzcd},  
$$
$$
\mbox{ so }
F_+(\mathcal{K})= 
\begin{tikzcd}
{} & S_n \ar[ld, "\alpha_n"] \ar[rd, "\beta_n"'] & {} \\
S_{\overline{n}} \ar[rd, "\beta_{\overline{n}}"] & {} & S_{\overline{n}} \ar[ld, "\alpha_{\overline{n}}"'] \\
{} & S_n & {}
\end{tikzcd} 
=P_n 
\mbox{ and }
F_-(\mathcal{K})= 
\begin{tikzcd}
{} & S_{\overline{n}} \ar[ld, "\alpha_{\overline{n}}"] \ar[rd, "\beta_{\overline{n}}"] & {} \\
S_n \ar[rd, "\beta_n"] & {} & S_n \ar[ld, "\alpha_n"] \\
{} & S_n & {}
\end{tikzcd} 
=P_{\overline{n}}.
$$

We now classify the representations of the quotient algebra $\quotient{\mathcal{K}}{(XY)}$. Consider the Kronecker quiver 
$Q:= \begin{tikzcd}
a \ar[r, bend left,  "X"] \ar[r, bend right, "Y"] &  b \end{tikzcd}
$ and let $\mathbb{C}Q$ be its path algebra; it is generated by the constant paths $1_a, 1_b$ and the two paths $X,Y$. Consider the embedding $i: \quotient{\mathcal{K}}{(XY)}\hookrightarrow \mathbb{C}Q$  defined by $i(X)=X$, $i(Y)=Y$ and $i(1)=1_a+1_b$. Recall that a representation $(\rho, V)$ of $Q$ is the data of two vector spaces $V_a$ and $V_b$ and two linear morphisms $\rho(X), \rho(Y): V_a \to V_b$ depicted as $\begin{tikzcd}
V_a \ar[r, bend left,  "\rho(X)"] \ar[r, bend right, "\rho(Y)"] &  V_b \end{tikzcd}
$ and that such a representation induces a $\mathbb{C}Q$-module structure on $V_a\oplus V_b$ in the obvious way such that one obtains an equivalence $\mathbb{C}Q-\Mod \cong \Rep(Q)$ (see \cite{Kac_QuiverRep} for details). The set of positive roots of $Q$ 
 is $\Delta_+(\widehat{\mathfrak{sl}}_2)=\{ (n,m) \in \mathbb{N}^2, (n-m)^2\leq 1\}$, the real positive  roots are the $(n,m)$ for which $(n-m)^2=1$ and the imaginary positive roots are the $(n,n), n\geq 1$. If $(\rho, V)$ is an indecomposable representation of $Q$, then the pair $(\dim(V_a), \dim(V_b))$ is a positive root and every positive roots (distinct from $(0,0)$) arise that way (see \cite{Kac_QuiverRep}). To each real positive root corresponds exactly one indecomposable representation and each imaginary positive root corresponds to several indecomposable representations: for the quiver $Q$, one has a moduli space of such representations parametrized by $\mathbb{CP}^1$. Let us now describe the indecomposable representations of $\mathcal{K}$, and thus of $\mathbb{B}_n$. 
 \par $(1)$ The real roots $(1,0)$ and $(0,1)$ correspond to the representations 
 $\begin{tikzcd} \mathbb{C} \ar[r, bend left,  "0"] \ar[r, bend right, "0"] &  0 \end{tikzcd}$ and  $\begin{tikzcd} 0 \ar[r, bend left,  "0"] \ar[r, bend right, "0"] &  \mathbb{C} \end{tikzcd}$ both induces the trivial representation $\mathbb{C}$ of $\mathcal{K}$ which itself induces the modules $F_+(\mathbb{C})=S_n$ and $F_-(\mathbb{C})=S_{\overline{n}}$.
 \par $(2)$ For $k\geq 1$, the real positive root $(k+1,k)$ corresponds to the representation 
 $$\begin{tikzcd} \mathbb{C}^{k+1} \ar[r, bend left,  "M_1"] \ar[r, bend right, "M_2"] &  \mathbb{C}^k \end{tikzcd}, \quad
  M_1= \begin{pmatrix} 1 & {} & 0 & 0\\ {} & \ddots &{} &  \vdots  \\ 0 & {} & 1 & 0 & \end{pmatrix}
 , \quad M_2=
 \begin{pmatrix} 0 & 1 & {} & 0 \\ \vdots & {} & \ddots & {} \\ 0 & 0 & {} & 1 \end{pmatrix}
 .$$
  The image by $F_+$ of this representation is the extension
  $$ \begin{tikzcd}
  S_n \ar[rd, "\beta_n"] & {} & S_n \ar[ld, "\alpha_n"] \ar[rd, "\beta_n"] & {} &S_n \ar[ld, "\alpha_n"]  & \ldots  \ar[rd, "\beta_n"] & {} &  S_n \ar[ld, "\alpha_n"] \\
   {} & S_{\overline{n}} & {} & S_{\overline{n}} & {} & \ldots & S_{\overline{n}} & {}
   \end{tikzcd}$$
 which is $\Omega^k_n$. Similarly, the image by $F_-$ is $\Omega^k_{\overline{n}}$.

 \par $(3)$ For $k\geq 1$, the real positive root $(k,k+1)$ corresponds to the representation 
 $$\begin{tikzcd} \mathbb{C}^{k} \ar[r, bend left,  "M_1"] \ar[r, bend right, "M_2"] &  \mathbb{C}^{k+1} \end{tikzcd}, \quad
  M_1= \begin{pmatrix} 1 & {} & 0 \\ {} & \ddots & {} \\ 0 & {} & 1 \\ 0 & \ldots & 0 \end{pmatrix}
 , \quad M_2=
 \begin{pmatrix}
 0 & \ldots & 0 \\ 1 & {} & 0 \\ {} & \ddots & {} \\ 0 & {} & 1 
 \end{pmatrix}
  .$$
   The image by $F_+$ of this representation is the extension
  $$ \begin{tikzcd}
  {} & S_n \ar[rd, "\beta_n"] \ar[ld, "\alpha_n"]& {} & S_n \ar[ld, "\alpha_n"] \ar[rd, "\beta_n"] & \ldots &  S_n \ar[rd, "\alpha_n"] &{} \\
    S_{\overline{n}} & {} & S_{\overline{n}} & {} & S_{\overline{n}} & \ldots & S_{\overline{n}} 
   \end{tikzcd}$$
 which is $\Omega^{-k}_n$. Similarly, the image by $F_-$ is $\Omega^{-k}_{\overline{n}}$.
 
 \par $(4)$ For $k\geq 1$ the imaginary positive root $(k,k)$ is the dimension of a family of indecomposable representations indexed by $\lambda \in \mathbb{CP}^1= \mathbb{C}\cup \{\infty\}$ defined by 
 $$\begin{tikzcd} \mathbb{C}^{k} \ar[r, bend left,  "\mathbf{1}_n"] \ar[r, bend right, "J_n(\lambda)"] &  \mathbb{C}^{k} \end{tikzcd} \mbox{, if }\lambda \in \mathbb{C}; \begin{tikzcd} \mathbb{C}^{k} \ar[r, bend left,  "J_n(0)"] \ar[r, bend right, "\mathbf{1}_n"] &  \mathbb{C}^{k} \end{tikzcd} \mbox{, if }\lambda= \infty\mbox{, where }
 J_n(\lambda)= \begin{pmatrix}
 \lambda & 1  & {} & 0 \\
 {} & \ddots & \ddots & {} \\
 {} & {} & \ddots & 1 \\
 0 & {} & {} & \lambda 
 \end{pmatrix}
  .$$
 The image by $F_+$ of this representation is the extension
 
 $$ \begin{tikzcd}
 S_n \ar[rrd, "\beta_n"] \ar[d, "\alpha_n +\lambda \beta_n"] &{}&  S_n \ar[rrd, "\beta_n"] \ar[d, "\alpha_n +\lambda \beta_n"] &{}& S_n\ar[d, "\beta_n"] \ar[rd, "\beta_n"] & \ldots & S_n \ar[d, "\beta_n"] \\
 S_{\overline{n}} &{}& S_{\overline{n}} &{}& S_{\overline{n}} & \ldots & S_{\overline{n}} 
 \end{tikzcd}$$
 if $\lambda \in \mathbb{C}$ and 
 
 $$ \begin{tikzcd}
 S_n \ar[rd, "\alpha_n"] \ar[d, "\beta_n"] & S_n \ar[rd, "\alpha_n"] \ar[d, "\beta_n"] & S_n\ar[d, "\beta_n"] \ar[rd, "\alpha_n"]& \ldots & S_n \ar[d, "\beta_n"] \\
 S_{\overline{n}} & S_{\overline{n}} & S_{\overline{n}} & \ldots & S_{\overline{n}} 
 \end{tikzcd}$$ 
 if $\lambda = \infty$
 which is $M_n^k(\lambda)$. Similarly, the image by $F_-$ is $M_{\overline{n}}^k(\lambda)$.

 \begin{proof}[Proof of Proposition \ref{prop_uq}]
 The classification of indecomposable $\uq$ modules follows from Lemma \ref{lemm8} and the above classification of $\Indecomp(\mathcal{K})$. The fact that the $P_n$ are the unique projective indecomposable was proved in Lemma \ref{lemm6}. For $0\leq n \leq N-2$, then the modules $P_n$, $\Omega^{-k}_n$, $\Omega^{k}_n$ and $M_n^k (\lambda)$ admit the proper submodules $S_{\overline{n}}$, $S_{\overline{n}}^{\oplus k+1}$, $S_{\overline{n}}^{\oplus k}$ and $S_{\overline{n}}^{\oplus k}$ respectively, so they are not simple and the $S_n$ are the unique simple $\uq$ modules.  
 
 \end{proof}

 \begin{proof}[Proof of Theorem \ref{theorem_representations_QG}]
 Theorem \ref{theorem_representations_QG} is a consequence of Lemmas  \ref{lemm5}, \ref{lemm6} and Proposition \ref{prop_uq}.
 \end{proof}

 \section{Reduced stated skein algebras of 2P-marked surfaces and quantum cluster algebras} \label{sec_cluster_algebras}

 A \textbf{2P-marked surface} is a connected marked surface $\mathbf{\Sigma}$ which has  at least two boundary edges and no inner puncture (i.e. every boundary component contains at least one boundary edge). As we shall see, the reduced stated skein algebra $\overline{\mathcal{S}}_A(\mathbf{\Sigma})$ of a 2P-marked surface is isomorphic to a quantum cluster algebra $\mathcal{A}_q(\mathbf{\Sigma})$ whose Azumaya loci have been studied in \cite{MNTY_AzumayaClusterAlgebras}. We will prove that  these algebras are Azumaya. To relate these two algebras, we first introduce the \textbf{localized M\"uller algebra} $\mathcal{M}^0_A(\mathbf{\Sigma})$ following \cite{Muller} and define two isomorphisms $\mathcal{M}^0_A(\mathbf{\Sigma}) \cong \overline{\mathcal{S}}_A(\mathbf{\Sigma})$ and $\mathcal{A}_q(\mathbf{\Sigma}) \cong \mathcal{M}^0_A(\mathbf{\Sigma})$.

\subsection{M\"uller algebras and their localizations}\label{sec_Muller}

During this subsection and the next one, we denote by $k$ an arbitrary commutative unital ring and $A^{1/2}\in k^{\times}$ an invertible element (not necessarily a root of unity).

\begin{definition}[M\"uller skein algebras](\cite{Muller})\label{def_Muller} Let $\mathbf{\Sigma}$ be an essential marked surface. 
\begin{enumerate}
\item The \textbf{M\"uller algebra} $\mathcal{M}_A(\mathbf{\Sigma})$ is the quotient of the $k$-free module generated by isotopy classes of (non stated) tangles $T\subset \mathbf{\Sigma}\times (0,1)$ by the skein relations 
\begin{equation*} 
\begin{tikzpicture}[baseline=-0.4ex,scale=0.5,>=stealth]	
\draw [fill=gray!45,gray!45] (-.6,-.6)  rectangle (.6,.6)   ;
\draw[line width=1.2,-] (-0.4,-0.52) -- (.4,.53);
\draw[line width=1.2,-] (0.4,-0.52) -- (0.1,-0.12);
\draw[line width=1.2,-] (-0.1,0.12) -- (-.4,.53);
\end{tikzpicture}
=A
\begin{tikzpicture}[baseline=-0.4ex,scale=0.5,>=stealth] 
\draw [fill=gray!45,gray!45] (-.6,-.6)  rectangle (.6,.6)   ;
\draw[line width=1.2] (-0.4,-0.52) ..controls +(.3,.5).. (-.4,.53);
\draw[line width=1.2] (0.4,-0.52) ..controls +(-.3,.5).. (.4,.53);
\end{tikzpicture}
+A^{-1}
\begin{tikzpicture}[baseline=-0.4ex,scale=0.5,rotate=90]	
\draw [fill=gray!45,gray!45] (-.6,-.6)  rectangle (.6,.6)   ;
\draw[line width=1.2] (-0.4,-0.52) ..controls +(.3,.5).. (-.4,.53);
\draw[line width=1.2] (0.4,-0.52) ..controls +(-.3,.5).. (.4,.53);
\end{tikzpicture}
\quad 
\begin{tikzpicture}[baseline=-0.4ex,scale=0.5,rotate=90] 
\draw [fill=gray!45,gray!45] (-.6,-.6)  rectangle (.6,.6)   ;
\draw[line width=1.2,black] (0,0)  circle (.4)   ;
\end{tikzpicture}
= -(A^2+A^{-2}) 
\begin{tikzpicture}[baseline=-0.4ex,scale=0.5,rotate=90] 
\draw [fill=gray!45,gray!45] (-.6,-.6)  rectangle (.6,.6)   ;
\end{tikzpicture}
;
\end{equation*}

\begin{equation*}
\begin{tikzpicture}[baseline=-0.4ex,scale=0.5,>=stealth]
\draw [fill=gray!45,gray!45] (-.7,-.75)  rectangle (.4,.75)   ;
\draw[->] (0.4,-0.75) to (.4,.75);
\draw[line width=1.2] (0.4,-0.3) to (0,-.3);
\draw[line width=1.2] (0.4,0.3) to (0,.3);
\draw[line width=1.1] (0,0) ++(90:.3) arc (90:270:.3);
\end{tikzpicture}
=0; 
\quad 
\heightexch{->}{}{} = A \heightexch{<-}{}{}
\end{equation*}
The multiplication in $\mathcal{M}_A(\mathbf{\Sigma})$ is defined by stacking the tangles as in Definition \ref{def_skein}.
\item For $p\in \mathcal{P}^{\partial}$, let $\alpha(p) \in \mathcal{M}(\mathbf{\Sigma})$ be the class of the corner arc. The \textbf{localized M\"uller algebra} $\mathcal{M}^0_A(\mathbf{\Sigma}):= \mathcal{M}_A(\mathbf{\Sigma})[\alpha_p^{-1}, p\in \mathcal{P}^{\partial}]$ is the algebra obtained from $\mathcal{M}_A(\mathbf{\Sigma})$ by formally inverting the elements $\alpha(p)$.
\end{enumerate}
\end{definition}

A diagram $D$ defines an element $[D] \in \mathcal{M}^0_A(\mathbf{\Sigma})$ and $\mathcal{M}^0_A(\mathbf{\Sigma})$ is spanned by elements of the form $[D]\alpha(p_1)^{-k_1}\ldots \alpha(p_n)^{-k_n}$ where $\mathcal{P}^{\partial}=\{p_1, \ldots, p_n\}$ and $k_i \geq 0$. An element $[D]\alpha(p_1)^{-k_1}\ldots \alpha(p_n)^{-k_n}$ is called \textit{reduced} if for each $p_i \in \mathcal{P}^{\partial}$, either $k_i=0$ or $D$ does not contain any component isotopic to $\alpha(p_i)$. 

\begin{definition}\label{def_Muller_basis2}  Let $\underline{\mathcal{B}}^M \subset \mathcal{M}^0_A(\mathbf{\Sigma})$ be the subset of reduced elements $[D]\alpha(p_1)^{-k_1}\ldots \alpha(p_n)^{-k_n}$ such that $D$ is simple.\end{definition}

\begin{proposition}\label{prop_Muller_basis2}(\cite[Proposition $5.3$]{Muller}) $\underline{\mathcal{B}}^M$ is a basis of $\mathcal{M}^0_A(\mathbf{\Sigma})$.
\end{proposition}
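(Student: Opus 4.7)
The plan is to establish spanning and linear independence separately, and I expect linear independence to be the main obstacle. The guiding idea throughout is a correspondence between reduced elements $[D]\alpha(p_1)^{-k_1}\cdots\alpha(p_n)^{-k_n}$ and the stated-diagram basis $\mathcal{B}^M$ of Proposition~\ref{prop_Muller_basis}, passing through the isomorphism $\mathcal{M}^0_A(\mathbf{\Sigma})\cong\overline{\mathcal{S}}_A(\mathbf{\Sigma})$ promised further on (from \cite{LeYu_SSkeinQTraces}). Under this identification, $\alpha(p)$ corresponds to $\alpha(p)_{++}$ and its inverse $\alpha(p)^{-1}$ corresponds to $\alpha(p)_{--}$, both of which are non-bad stated arcs.

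For spanning, I would first reduce any tangle to a linear combination of simple diagrams in $\mathcal{M}_A(\mathbf{\Sigma})$. Repeatedly applying the Kauffman relation removes all crossings; the loop relation kills contractible loops; and the vanishing bigon relation $\bigonheightcurve{}{}=0$ (together with removal of trivial loops) eliminates any trivial open component, except for components isotopic to a corner arc $\alpha(p)$ which we keep. So $\mathcal{M}_A(\mathbf{\Sigma})$ is spanned by classes $[D]$ with $D$ simple. Then in the localization, if $D$ has $m_i\geq 0$ components parallel to $\alpha(p_i)$, I would use the height-exchange relation $\heightexch{->}{}{} = A\heightexch{<-}{}{}$ to factor $[D]=A^{n}[D']\prod_i\alpha(p_i)^{m_i}$ for some integer $n$, with $D'$ having no such parallel components. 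Combining with the localization factors $\alpha(p_i)^{-k_i}$ then yields a reduced expression in $\underline{\mathcal{B}}^M$.

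For linear independence, I would construct a bijection $\Psi:\underline{\mathcal{B}}^M\to\mathcal{B}^M$ as follows. Given a reduced element $[D]\alpha(p_1)^{-k_1}\cdots\alpha(p_n)^{-k_n}$, let $D'$ be the diagram obtained by adjoining $k_i$ parallel copies of $\alpha(p_i)$ next to $\partial\Sigma$ for each $i$ with $k_i>0$, and define a state $s$ on $D'$ by: every non-corner-arc endpoint receives state $+$; for each puncture $p_i$ with $k_i>0$, declare $\varepsilon_{p_i}=-$ (so the new copies carry $--$); for each puncture $p_i$ with $k_i=0$, declare $\varepsilon_{p_i}=+$. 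The reducedness hypothesis ($k_i>0\Rightarrow D$ has no component parallel to $\alpha(p_i)$) is exactly the condition that makes the single sign $\varepsilon_{p_i}$ per puncture in Definition~\ref{def_Muller_basis1} well defined. Up to a power of $A^{1/2}$ coming from height reorderings, $\Psi$ sends each reduced element to the stated diagram $[D',s]\in\mathcal{B}^M$; it is a bijection onto $\mathcal{B}^M$, and since $\mathcal{B}^M$ is a basis by Proposition~\ref{prop_Muller_basis}, linear independence of $\underline{\mathcal{B}}^M$ follows.

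The main obstacle is the bookkeeping of height-reordering factors when passing between a reduced expression with $\alpha(p_i)^{-k_i}$ outside the diagram and its interpretation as a stated simple diagram with $\varepsilon_{p_i}=-$ components inside the diagram, and verifying that the inverse of $\alpha(p)_{++}$ in $\overline{\mathcal{S}}_A(\mathbf{\Sigma})$ really is $\alpha(p)_{--}$ (a fact recalled from \cite{CostantinoLe19} just after the definition of $\mathcal{B}^L$). The either/or dichotomy in the definition of $\underline{\mathcal{B}}^M$ (either $k_i=0$, or $D$ has no $\alpha(p_i)$-parallel component) matches exactly the existence of a single sign $\varepsilon_{p_i}$ per puncture in Definition~\ref{def_Muller_basis1}, so this matching is the combinatorial crux of the argument.
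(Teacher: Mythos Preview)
The paper does not give its own proof of this proposition; it is quoted directly from M\"uller's paper, whose argument is intrinsic to the Muller skein algebra and predates stated skein algebras. Your route is genuinely different: you deduce linear independence of $\underline{\mathcal{B}}^M$ from Proposition~\ref{prop_Muller_basis} by pushing forward along $f$, which is precisely the observation in the Remark after Theorem~\ref{theorem_SSkein_Muller} read in one direction. This is valid and economical within the present paper, since Proposition~\ref{prop_Muller_basis} is proved independently via the L\^e basis $\mathcal{B}^L$.

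One caution on logical order: you phrase the argument as ``passing through the isomorphism $\mathcal{M}^0_A(\mathbf{\Sigma})\cong\overline{\mathcal{S}}_A(\mathbf{\Sigma})$ promised further on.'' In the paper's own logic (see the Remark), that isomorphism is \emph{derived from} Propositions~\ref{prop_Muller_basis} and~\ref{prop_Muller_basis2} together, so invoking it here risks circularity. Fortunately your argument does not actually need the isomorphism: it only needs that $f$ is a well-defined algebra map (immediate from comparing skein relations, as noted before Theorem~\ref{theorem_SSkein_Muller}) and that it carries $\underline{\mathcal{B}}^M$ bijectively, up to powers of $A^{1/2}$, onto $\mathcal{B}^M$. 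Linear independence of $\mathcal{B}^M$ then pulls back. You should state it this way. The spanning argument is fine; your aside about corner arcs being ``kept'' is harmless but unnecessary, since corner arcs have endpoints on distinct boundary edges (or encircle a puncture) and are never trivial in the sense of Definition~\ref{def_tangles}.
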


As remarked in \cite{LeStatedSkein}, by comparing the skein relations in Definitions \ref{def_skein} and \ref{def_Muller}, we see that  there is an algebra morphism $f: \mathcal{M}_A(\mathbf{\Sigma})\to \overline{\mathcal{S}}_A(\mathbf{\Sigma})$ sending the class $[T]$ of a tangle to $f([T]):= [T, s^+]$ where $s^+$ sends every points of $\partial T$ to $+$. Since $\alpha(p)_{++}$ has inverse $\alpha(p)_{--}$, $f$ extends to a morphism (denoted by the same letter) $f: \mathcal{M}^0_A(\mathbf{\Sigma})\to \overline{\mathcal{S}}_A(\mathbf{\Sigma})$ sending $\alpha(p)^{-1}$ to $\alpha(p)_{--}$. 

\begin{theorem}\label{theorem_SSkein_Muller}(\cite[Theorem $5.2$]{LeYu_SSkeinQTraces}) $f: \mathcal{M}^0_A(\mathbf{\Sigma})\to \overline{\mathcal{S}}_A(\mathbf{\Sigma})$ is an isomorphism.
\end{theorem}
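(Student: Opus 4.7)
The plan is to prove that $f$ is an isomorphism by showing it sends the M\"uller basis $\underline{\mathcal{B}}^M$ of $\mathcal{M}^0_A(\mathbf{\Sigma})$ (Proposition \ref{prop_Muller_basis2}) bijectively (up to invertible scalars) to the M\"uller basis $\mathcal{B}^M$ of $\overline{\mathcal{S}}_A(\mathbf{\Sigma})$ (Proposition \ref{prop_Muller_basis}).

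First, I would verify that $f$ is well-defined as an algebra morphism. The Kauffman-bracket crossing and loop relations are identical on both sides. The height-exchange relation $\heightexch{->}{}{} = A \heightexch{<-}{}{}$ in $\mathcal{M}_A(\mathbf{\Sigma})$ corresponds to the case $\heightexch{->}{+}{+} = A \heightexch{<-}{+}{+}$ in $\overline{\mathcal{S}}_A(\mathbf{\Sigma})$, which is one of the relations in \cite[Lemma 2.4]{LeStatedSkein} used in the proof of Lemma \ref{lemma_heightexch}. The vanishing relation for a boundary bigon with $++$ states coincides with $\bigonheightcurve{+}{+}=0$, which is included in \eqref{eq: skein 2}. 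Finally, since $\alpha(p)_{++}\alpha(p)_{--}=1$, the assignment $\alpha(p)^{-1}\mapsto \alpha(p)_{--}$ extends $f$ consistently to $\mathcal{M}^0_A(\mathbf{\Sigma})$.

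Next, I would construct an explicit bijection $\phi: \underline{\mathcal{B}}^M \to \mathcal{B}^M$. Given a reduced element $b=[D]\alpha(p_1)^{-k_1}\cdots\alpha(p_n)^{-k_n}\in \underline{\mathcal{B}}^M$, form the stated diagram $(D',s')$ by adding $k_i$ parallel copies of $\alpha(p_i)$ to $D$ for each $i$, and setting $s'=+$ on all endpoints of $D$ and $s'=-$ on both endpoints of each added arc. Reducedness of $b$ ensures that whenever $k_i>0$, $D$ has no component parallel to $\alpha(p_i)$, so every family of components of $D'$ parallel to a given $\alpha(p)$ carries a single consistent sign $\varepsilon_p\in\{-,+\}$; moreover every non-corner arc of $D'$ carries state $+$ at both ends. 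Hence $[D',s']\in \mathcal{B}^M$, and we set $\phi(b):=[D',s']$. Conversely, given $[D'',s'']\in \mathcal{B}^M$ with sign $\varepsilon_p$ on the family of arcs parallel to $\alpha(p)$, for each $p$ with $\varepsilon_p=-$ remove the $k_p$ parallel corner arcs from $D''$ and keep the rest to define $D$, together with the factors $\alpha(p)^{-k_p}$; this produces a reduced element of $\underline{\mathcal{B}}^M$ whose image under $\phi$ is $[D'',s'']$. Thus $\phi$ is a bijection.

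Finally, I would show that for every $b\in \underline{\mathcal{B}}^M$ there exists $n(b)\in \mathbb{Z}$ such that $f(b)=A^{n(b)/2}\phi(b)$. Indeed, $f(b)$ is by construction the stacking product $[D,s^+]\cdot \prod_i \alpha(p_i)_{--}^{k_i}$, which, as a stated tangle in standard position, projects to the same stated diagram $(D',s')$ as $\phi(b)$ but with a possibly different total ordering of the boundary points along each boundary edge; by Lemma \ref{lemma_heightexch} (which reduces any two standard-position lifts of a diagram in $\mathcal{B}^M$ to the same element up to a power of $A^{1/2}$), these differ only by an invertible scalar. Since $\underline{\mathcal{B}}^M$ is a basis of $\mathcal{M}^0_A(\mathbf{\Sigma})$ and its image under $f$ is a set of invertible scalar multiples of the basis $\mathcal{B}^M$ of $\overline{\mathcal{S}}_A(\mathbf{\Sigma})$, indexed bijectively by $\phi$, the map $f$ is an isomorphism. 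The main delicate point will be bookkeeping the height-exchange scalars and checking that $\phi$ is genuinely well-defined on both directions, in particular that the "reduced" convention matches exactly the "consistent sign $\varepsilon_p$" condition in Definition \ref{def_Muller_basis1}.
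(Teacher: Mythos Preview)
Your proposal is correct and follows essentially the same approach as the paper: the paper's argument (given as a remark immediately following the theorem) also compares the two M\"uller bases via Lemma~\ref{lemma_heightexch}, observing that for each $b\in\underline{\mathcal{B}}^M$ there is $n_b\in\mathbb{Z}$ with $f(A^{n_b/2}b)\in\mathcal{B}^M$, so that $f$ carries a basis to a basis. Your write-up is simply a more detailed unpacking of this sketch, making explicit the bijection $\phi$ and the verification that the reduced condition on one side matches the sign-consistency condition $\varepsilon_p$ on the other.
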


\begin{remark} 
By comparing Definitions \ref{def_Muller_basis2} and \ref{def_Muller_basis1} and using Lemma \ref{lemma_heightexch}, we see that for each $b \in \underline{\mathcal{B}}^M$ there exists $n_b \in \mathbb{Z}$ such that $f$ sends the basis $\{ A^{n_b/2} b , b \in \underline{\mathcal{B}}^M\}$ to the basis $\mathcal{B}^M$
so the theorem follows from Propositions \ref{prop_Muller_basis} and \ref{prop_Muller_basis2}.
\end{remark}

\subsection{Quantum cluster algebras}\label{sec_QCA}

Quantum cluster algebras were introduced in \cite{BerensteinZelevinsky} after the introduction of classical cluster algebras in \cite{FominZelevinsky} and their Poisson structure in \cite{GekhtmanShapiroVainshtein03}. We will closely follow the notations in \cite{Muller}.

\begin{definition}[Quantum cluster algebras and their upper algebras]
Let $\mathcal{F}$ be a skew field with $k\subset \mathcal{F}$ and such that $\mathcal{F}$ is purely transcendental  over $k$.
\begin{enumerate}
\item A \textbf{quantum seed} in $\mathcal{F}$ is a triple $\mathbb{S}=(B, \Lambda, M)$ where:
\begin{itemize}
\item the \textbf{exchange matrix} $B$ is an $N\times ex$ $\mathbb{Z}$-valued matrix where $ex\subset \{1, \ldots, N\}$ such that, if $\pi: \mathbb{Z}^N \to \mathbb{Z}^{ex}$ denotes the projection, then $\pi B$ is skew-symmetric; 
\item the \textbf{compatibility matrix} $\Lambda$ is an $N\times N$ skew symmetric matrix valued in $\mathbb{Z}$ such that $\Lambda B= D \iota$ where $D$ is diagonal matrix with positive diagonal elements and $\iota$ is the matrix of the inclusion $\mathbb{Z}^{ex} \hookrightarrow \mathbb{Z}^N$; 
\item $M: \mathbb{Z}^N \to \mathcal{F} \setminus\{ 0 \}$ is a map such that $M(\alpha)M(\beta)= A^{\frac{1}{2} \Lambda(\alpha, \beta)} M(\alpha + \beta)$ and such that $\mathcal{F}$ is the field of fractions of the quantum torus $k M(\mathbb{Z}^N)$. 
\end{itemize}
\item For $\mathbb{S}=(B, \Lambda, M)$ a quantum seed and $i \in ex$, the \textbf{mutation of }$\mathbb{S}$ \textbf{ in the direction of }$i$ is the quantum seed $\mathbb{S}'=(B', \Lambda', M')$ characterized by the following properties:
\begin{itemize}
\item $B'_{jk} = \left\{
 \begin{array}{ll}
-B_{jk} & \mbox{, if }i=j \mbox{ or }i=k; \\
 B_{jk} + \frac{1}{2}( |B_{ji}| B_{ik} + B_{ji} |B_{ik}|) & \mbox{, else.}
\end{array} \right.$;
\item for $\alpha \in \mathbb{Z}^N$ such that $\alpha_i=0$ then $M(\alpha)=M'(\alpha)$; 
\item one has 
$$ M'(e_i) = M \left( -e_i + \sum_{B_{ji}>0} B_{ji}e_j \right) + M\left( -e_i - \sum_{B_{ji}<0} B_{ji}e_j \right).$$
\end{itemize}
By \cite[Section $4.4$]{BerensteinZelevinsky},  $\mathbb{S}'$ exists and is unique. We denote by $\Mut(\mathbb{S})$ the set of quantum seeds which can be obtained from $\mathbb{S}$ by a finite sequence of mutations. 
\item The \textbf{quantum cluster algebra} $\mathcal{A}_q(\mathbb{S})$ is the $k$ subalgebra of $\mathcal{F}$ generated by elements $M'(\alpha)$ for  $\mathbb{S}'=(B',\Lambda',M')\in \Mut(\mathbb{S})$ and $\alpha \in \mathbb{Z}^N$ such that $\alpha_i\geq 0$ for $i\in ex$ . 
\item The \textbf{quantum upper cluster algebra} $\mathcal{U}_q(\mathbb{S})$ is the intersection: 
$$ \mathcal{U}_q(\mathbb{S}) := \bigcap_{\mathbb{S}'=(B', \Lambda', M')\in \Mut(\mathbb{S})} k M'(\mathbb{Z}^N).$$
\end{enumerate}
\end{definition}
By \cite[Corollary $5.2$]{BerensteinZelevinsky}, one has $\mathcal{A}_q(\mathbb{S}) \subset \mathcal{U}_q(\mathbb{S})$ (this is the so-called Laurent phenomenon). In many cases, this inclusion is an equality. One can associate to any triangulated marked surface $(\mathbf{\Sigma}, \Delta)$ a quantum seed. This was first remarked at the classical level in \cite{GekhtmanShapiroVainshtein05} and extended to the quantum case in \cite{Muller}.

\begin{definition}[Seeds associated to triangulated marked surfaces]
\begin{enumerate}
\item A connected marked surface $\mathbf{\Sigma}$ is \textbf{triangulable} if it is essential, has no inner puncture  and is not a disc with one or two boundary edges. For such a $\mathbf{\Sigma}$, an $\mathcal{A}$-\textbf{triangulation} $\Delta$ is a maximal set of pairwise disjoint and pairwise non isotopic arcs with an indexation $\Delta= \{x_1, \ldots, x_{|\Delta|}\}$ (recall from Definition \ref{def_tangles} than an arc is an open connected diagram without crossing). Clearly every corner arc $\alpha(p)$ belongs to $\Delta$ and we denote by $\mathring{\Delta}\subset \Delta$ the set of arcs which are not corner arcs and by $ex \subset \{1, \ldots, |\Delta|\}$ the subset of indices $i$ such that $x_i \in \mathring{\Delta}$. For $x_i \in \Delta$, we denote by $\partial_1 x_i$ and $\partial_2 x_i$ its two endpoints (indexed arbitrarily). For two endpoints $\partial_i x$ and $\partial_j y$ which belong to the same boundary edge $a$ (where $x,y \in \Delta$, $i,j \in \{1,2\}$) such that $\partial_ix <_{\mathfrak{o}^+} \partial_j y$, we say that $\partial_i x$ and $\partial_j y$ are \textbf{consecutive} if there does not exist $z\in \Delta$ and $k\in \{1,2\}$ such that  $\partial_ix <_{\mathfrak{o}^+} \partial_k z <_{\mathfrak{o}^+} \partial_j y$.
\item Let $(\mathbf{\Sigma}, \Delta)$ be a triangulated marked surface and  $\mathcal{F}_{\mathbf{\Sigma}}$ the field of fraction of $\mathcal{M}_A(\mathbf{\Sigma})$ (obtained by inverting every non zero elements). Let $\mathbb{S}_{(\mathbf{\Sigma}, \Delta)}= (B^{\Delta}, \Lambda^{\Delta}, M^{\Delta})$ be the quantum seed over $\mathcal{F}_{\mathbf{\Sigma}}$ defined by: 
\begin{itemize}
\item For $i \in \{1, \ldots, |\Delta|\}$ and $j\in ex$,  
$$ B^{\Delta}_{i,j}:= \sum_{u,v \in \{1,2\}} 
\left\{ \begin{array}{ll}
-1 & \mbox{, if }\partial_u x_i \mbox{ and }\partial_v x_j \mbox{ are in the same boundary edge }  \mbox{ and } \partial_u x_i <_{\mathfrak{o}^+} \partial_v x_j \mbox{ are consecutive}; \\
+1 & \mbox{, if }\partial_u x_i \mbox{ and }\partial_v x_j \mbox{ are in the same boundary edge }  \mbox{ and } \partial_v x_j <_{\mathfrak{o}^+} \partial_u x_i \mbox{ are consecutive};  \\
0 & \mbox{, if } \partial_u x_i \mbox{ and }\partial_v x_j \mbox{ else.} 
\end{array} \right. $$
\item  For $i, j  \in \{1, \ldots, |\Delta|\}$,  
$$ \Lambda^{\Delta}_{i,j}:= \sum_{u,v \in \{1,2\}} 
\left\{ \begin{array}{ll}
0 & \mbox{, if } \partial_u x_i \mbox{ and }\partial_v x_j \mbox{ are in two distinct boundary edges;} \\
+1 & \mbox{, if }\partial_u x_i \mbox{ and }\partial_v x_j \mbox{ are in the same boundary edge }  \mbox{ and } \partial_u x_i <_{\mathfrak{o}^+} \partial_v x_j; \\
-1 & \mbox{, if }\partial_u x_i \mbox{ and }\partial_v x_j \mbox{ are in the same boundary edge }  \mbox{ and } \partial_v x_j <_{\mathfrak{o}^+} \partial_u x_i. 
\end{array} \right. $$
\item $M^{\Delta}(e_i):= x_i$ for $i\in \{1, \ldots, |\Delta|\}$.
\end{itemize}
By \cite[Proposition $7.8$]{Muller}, $\mathbb{S}_{(\mathbf{\Sigma}, \Delta)}$ is indeed a quantum seed.
\item By \cite[Theorem $7.9$]{Muller}, one has $\Mut(\mathbb{S}_{(\mathbf{\Sigma}, \Delta)}) = \{ \mathbb{S}_{(\mathbf{\Sigma}, \Delta')}, \Delta' \mbox{ an }\mathcal{A}-\mbox{ triangulation of }\mathbf{\Sigma} \}$. Therefore $\mathcal{A}_q(\mathbb{S}_{(\mathbf{\Sigma}, \Delta)})$ and $\mathcal{U}_q(\mathbb{S}_{(\mathbf{\Sigma}, \Delta)})$ only depend on $\mathbf{\Sigma}$ and not on $\Delta$; we denote them by $\mathcal{A}_q(\mathbf{\Sigma})$ and $\mathcal{U}_q(\mathbf{\Sigma})$ respectively.
\end{enumerate}
\end{definition}

\begin{theorem}\label{theorem_QCA_Muller} Let $(\mathbf{\Sigma}, \Delta)$ be a triangulated marked surface. 
\begin{enumerate}
\item \cite[Theorem $7.16$]{Muller} We have the inclusions $\mathcal{A}_q(\mathbf{\Sigma}) \subset \mathcal{M}^0_A (\mathbf{\Sigma}) \subset \mathcal{U}_q(\mathbf{\Sigma})$.
\item \cite[Theorem $9.7$]{Muller} If moreover $\mathbf{\Sigma}$ is a 2P-marked surface, then $\mathcal{A}_q(\mathbf{\Sigma}) = \mathcal{M}^0_A (\mathbf{\Sigma}) =\mathcal{U}_q(\mathbf{\Sigma})$.
\end{enumerate}
\end{theorem}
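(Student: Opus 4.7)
The plan is to realize mutations of quantum seeds geometrically as flips of $\mathcal{A}$-triangulations and to identify the resulting skein identity with the quantum cluster exchange relation, then to use the M\"uller basis $\underline{\mathcal{B}}^M$ of Proposition \ref{prop_Muller_basis2} to establish the Laurent phenomenon for $\mathcal{M}^0_A(\mathbf{\Sigma})$.

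For the first inclusion $\mathcal{A}_q(\mathbf{\Sigma}) \subset \mathcal{M}^0_A(\mathbf{\Sigma})$, I would fix a triangulation $\Delta$ and, for each inner arc $x_i \in \mathring{\Delta}$, consider the quadrilateral of $\Delta$ having $x_i$ as a diagonal. A direct local computation using the skein relations in Definition \ref{def_Muller} produces a quantum Ptolemy identity of the form
$$ x_i\, x_i' \;=\; A^{s_1}\, x_{j_1} x_{k_1} + A^{s_2}\, x_{j_2} x_{k_2}$$
in $\mathcal{M}^0_A(\mathbf{\Sigma})$, where $x_i'$ is the arc obtained by flipping $x_i$ and the factors on the right run over the sides of the quadrilateral; this is the content of localizing at the corner arcs. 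The core bookkeeping step is checking that the exponents and the combinatorics of signs coming from the height-order conventions match exactly the quantum mutation formula applied to $\mathbb{S}_{(\mathbf{\Sigma},\Delta)}$ at index $i$, using the explicit form of $B^{\Delta}$ and $\Lambda^{\Delta}$. Once this is established, together with the fact that $\Mut(\mathbb{S}_{(\mathbf{\Sigma},\Delta)})$ is exactly the set of seeds coming from triangulations, an induction on the sequence of flips connecting $\Delta$ to an arbitrary $\Delta'$ shows that every mutated cluster variable is an element of $\mathcal{M}^0_A(\mathbf{\Sigma})$.

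For the second inclusion $\mathcal{M}^0_A(\mathbf{\Sigma}) \subset \mathcal{U}_q(\mathbf{\Sigma})$, I would use Proposition \ref{prop_Muller_basis2}: every element of $\mathcal{M}^0_A(\mathbf{\Sigma})$ is a $k$-linear combination of reduced elements $[D]\prod_{p}\alpha(p)^{-k_p}$ with $D$ a simple diagram. For each such element and each triangulation $\Delta'$, one expresses $[D]$ as a Laurent polynomial in the arcs $x_1', \ldots, x_{|\Delta'|}'$ by iteratively resolving the intersections of $D$ with the edges of $\Delta'$ via the skein relations; the inversion of the corner arcs in $\mathcal{M}^0_A(\mathbf{\Sigma})$ is exactly what is needed to convert the resulting sum into an element of the quantum torus $k M^{\Delta'}(\mathbb{Z}^{|\Delta'|})$. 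Since this works uniformly in $\Delta'$, the class lies in the intersection defining $\mathcal{U}_q(\mathbf{\Sigma})$.

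The hard part is the second item, which asserts equality $\mathcal{A}_q(\mathbf{\Sigma}) = \mathcal{U}_q(\mathbf{\Sigma})$ when $\mathbf{\Sigma}$ has no inner puncture. I would approach this via local acyclicity: cutting $\mathbf{\Sigma}$ along a non-corner arc decomposes the seed into seeds of strictly simpler 2P marked surfaces, and by a covering argument an element of $\mathcal{U}_q(\mathbf{\Sigma})$ lifts compatibly to these simpler pieces. Induction then reduces the problem to the base cases of triangles and once-punctured bigons, where $\mathcal{A}_q=\mathcal{U}_q$ can be verified by hand (both are quantum tori in the triangle case, and the punctured bigon seed admits only finitely many mutation classes). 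The main obstacle is this local acyclicity step, because it fails for marked surfaces possessing inner punctures, and it is precisely the absence of inner punctures in the 2P hypothesis that makes the reduction work.
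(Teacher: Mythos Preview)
The paper does not give its own proof of this theorem: it is stated with explicit citations to \cite[Theorem~7.16]{Muller} and \cite[Theorem~9.7]{Muller} and used as a black box. So there is nothing to compare your argument against within this paper.

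That said, your sketch is broadly in line with how M\"uller's proofs actually proceed: the skein Ptolemy relation realizes mutation, an intersection-resolution argument with respect to any triangulation gives the Laurent expansion, and the equality $\mathcal{A}_q=\mathcal{U}_q$ for 2P surfaces is obtained via local acyclicity. One small inaccuracy: your stated base cases for the local acyclicity induction are off. A 2P-marked surface has no inner punctures, so the once-punctured bigon $\mathbb{D}_1$ never appears in the reduction; the induction terminates instead at marked discs (polygons), where every arc is a corner arc and the seed has no exchangeable variables, making $\mathcal{A}_q=\mathcal{U}_q$ a quantum torus trivially. The punctured bigon would in fact be problematic precisely because its inner puncture obstructs the cutting argument, which is why the 2P hypothesis is needed.
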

 
 Therefore, for a 2P-marked surface $\mathbf{\Sigma}$ which is not the bigon,  using Theorems \ref{theorem_SSkein_Muller} and \ref{theorem_QCA_Muller}, we have an isomorphism $f: \overline{\mathcal{S}}_A(\mathbf{\Sigma}) \xrightarrow{\cong} \mathcal{A}_q(\mathbf{\Sigma})$. 
 
 \subsection{Representations of reduced stated skein algebras of 2P-surfaces}
 
 For now on, we suppose that $k=\mathbb{C}$ and that $A^{1/2}\in \mathbb{C}^*$ is a root of unity of odd order $N\geq 3$. So $\mathcal{A}_q(\mathbb{S})$ is now a complex algebra.
 
 \begin{definition}[Frobenius morphisms of quantum cluster algebras] Suppose that the field $\mathcal{F}$ is purely transcendental over $\mathbb{C}$ and let $\mathbb{S}= (B, \Lambda, M)$ a quantum seed for $\mathcal{F}$. Let $Z_{\mathbb{S}}$ denote the center of $\mathcal{A}_q(\mathbb{S})$. Let $\mathcal{A}_{+1}(\mathbb{S})$ be the classical cluster algebra where we set $A^{1/2}=+1$.
 \begin{enumerate}
 \item The \textbf{Frobenius morphism} $Fr_{\mathbb{S}}: \mathcal{A}_{+1}(\mathbb{S}) \hookrightarrow Z_{\mathbb{S}}$ is the embedding defined by $Fr_{\mathbb{S}}(M(\alpha))= M(N \alpha)$. 
 \item We write $\widehat{X}(\mathbb{S}):= \Specm (Z_{\mathbb{S}})$ and $X(\mathbb{S}):= \Specm(\mathcal{A}_{+1}(\mathbb{S}))$ and denote by $\pi: \widehat{X}(\mathbb{S}) \to X(\mathbb{S})$ the dominant map defined by $Fr_{\mathbb{S}}$. 
 \end{enumerate}
 \end{definition}
 Note that 
 $$Fr_{\mathbb{S}}(M(\alpha))M(\beta)=M(N\alpha) M(\beta)=(A^{1/2})^N M(N\alpha + \beta)= M(\beta)= M(\beta+ N\alpha)= M(\beta)Fr_{\mathbb{S}}(M(\alpha))$$ so the image of the Frobenius is indeed central.
Since $\mathcal{A}_q(\mathbb{S})\subset \mathcal{F}$ is a subalgebra of a field, it is a domain. It is affine and is finitely generated over the image of $Fr_{\mathbb{S}}$ so it is almost Azumaya. We denote by 
$\mathcal{AL}(\mathbb{S}) \subset \widehat{X}(\mathbb{S})$ its Azumaya locus  and by  $\mathcal{FAL}(\mathbb{S}) \subset X(\mathbb{S})$ its fully Azumaya locus (i.e. the set of $x\in X(\mathbb{S})$ such that $\pi^{-1}(x)\subset \mathcal{AL}(\mathbb{S})$).

 \begin{theorem}\label{theorem_MNKY}(\cite[Theorem $6.1$]{MNTY_AzumayaClusterAlgebras}) Suppose that the skew field $\mathcal{F}$ is a purely transcendental over $\mathbb{C}$ and let $\mathbb{S}= (B, \Lambda, M)$ a quantum seed for $\mathcal{F}$. 
 Suppose that 
 \begin{enumerate}
 \item  $N$ is prime to the diagonal entries of the diagonal matrix $D$ defined by $\Lambda B = D \iota$ (recall that $\iota$ is the matrix of the inclusion $\mathbb{Z}^{ex} \hookrightarrow \mathbb{Z}^N$) and 
 \item one has $\mathcal{A}_q(\mathbb{S})=\mathcal{U}_q(\mathbb{S})$.
 \end{enumerate}
 Then the fully Azumaya locus of $\mathcal{A}_q(\mathbb{S})$ contains the regular locus of $X(\mathbb{S})$. In particular, if $X(\mathbb{S})$ is smooth, then  $\mathcal{A}_q(\mathbb{S})$ is Azumaya. 
 \end{theorem}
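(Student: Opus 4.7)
The plan is to exploit the cluster structure by localizing in each chart. By Theorem \ref{theorem_AL}, it suffices to show that for every smooth point $x\in X(\mathbb{S})$ and every lift $\widehat{x}\in\pi^{-1}(x)$, the fiber $\mathcal{A}_q(\mathbb{S})_{\widehat{x}}$ is isomorphic to $\Mat_D(\mathbb{C})$; equivalently, by openness of the Azumaya locus, it is enough to show that some dense open cover of the regular locus is contained in $\mathcal{AL}(\mathbb{S})$.

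The first step is local: for each seed $\mathbb{S}'=(B',\Lambda',M')\in\Mut(\mathbb{S})$, the quantum torus $T_{\mathbb{S}'}:=\mathbb{C} M'(\mathbb{Z}^N)$ is a twisted group algebra of $\mathbb{Z}^N$, and at a root of unity of odd order $N$ any such quantum torus is an Azumaya algebra over its center (which is itself a Laurent polynomial ring indexed by the appropriate sublattice determined by $\Lambda'$ and $N$). The coprimality hypothesis $\gcd(N, D_{ii})=1$ enters precisely here: it forces the index $[\mathbb{Z}^N : L]$ — where $L=\{\alpha:\Lambda'\alpha\in N\mathbb{Z}^N\}$ controls the center — to equal $N^{2\dim-2\mathrm{rk}(\pi B)}$, so that the PI-degree of $T_{\mathbb{S}'}$ matches that of $\mathcal{A}_q(\mathbb{S})$ and $Z_{\mathbb{S}}\hookrightarrow Z(T_{\mathbb{S}'})$ is a localization. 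In particular, the open subset $\widehat{U}_{\mathbb{S}'}\subset\widehat{X}(\mathbb{S})$ defined by nonvanishing of all cluster variables of $\mathbb{S}'$ sits inside $\mathcal{AL}(\mathbb{S})$, because the localization of $\mathcal{A}_q(\mathbb{S})$ along $\widehat{U}_{\mathbb{S}'}$ is a central localization of the Azumaya algebra $T_{\mathbb{S}'}$.

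The second step is global: one must prove that $\bigcup_{\mathbb{S}'\in\Mut(\mathbb{S})}\widehat{U}_{\mathbb{S}'}$ contains $\pi^{-1}(X^{\mathrm{reg}}(\mathbb{S}))$. This is where the hypothesis $\mathcal{A}_q(\mathbb{S})=\mathcal{U}_q(\mathbb{S})$ is indispensable: the complement of $\bigcup_{\mathbb{S}'}\widehat{U}_{\mathbb{S}'}$ is the vanishing locus of the ideal generated by all cluster variables from all seeds, and the equality with the upper cluster algebra forces this ``deep locus'' to have high codimension, hence to miss the smooth stratum. Concretely, I would argue by showing that any smooth point $x$ at which every cluster variable of every seed vanishes would force a non-trivial element of $\mathcal{U}_q(\mathbb{S})\setminus\mathcal{A}_q(\mathbb{S})$ to exist in the completed local ring, contradicting our hypothesis.

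I expect the covering step of the second paragraph to be the main obstacle: the chart-by-chart computation of the Azumaya locus is an essentially routine quantum-torus calculation, but the global statement that cluster charts cover the smooth locus is a genuinely geometric property of cluster varieties that depends on $\mathcal{A}_q=\mathcal{U}_q$ (without it, one easily produces smooth points lying in no cluster chart, for which the conclusion fails). Once this covering is in place, combining it with the first step and Theorem \ref{theorem_AL} immediately yields $\pi^{-1}(X^{\mathrm{reg}}(\mathbb{S}))\subset\mathcal{AL}(\mathbb{S})$, i.e.\ $X^{\mathrm{reg}}(\mathbb{S})\subset\mathcal{FAL}(\mathbb{S})$, which is the claim.
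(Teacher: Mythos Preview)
The paper does not prove this theorem; it is quoted verbatim from \cite[Theorem~6.1]{MNTY_AzumayaClusterAlgebras} and used as a black box to deduce Corollary~\ref{coro_2Psurfaces}. So there is no ``paper's own proof'' to compare against.

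That said, your sketch captures the correct first step: for each seed $\mathbb{S}'$, the quantum torus $T_{\mathbb{S}'}$ is Azumaya over its center, and the coprimality hypothesis on $N$ is exactly what makes the PI-degrees match up so that the open chart $\widehat{U}_{\mathbb{S}'}$ lands in $\mathcal{AL}(\mathbb{S})$. This is the routine part.

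Your second step, however, has a genuine gap. The assertion that the union of cluster charts covers the smooth locus of $X(\mathbb{S})$ is \emph{not} what is proved in \cite{MNTY_AzumayaClusterAlgebras}, and your proposed mechanism (``a smooth point where all cluster variables vanish would produce an element of $\mathcal{U}_q\setminus\mathcal{A}_q$'') is not a valid argument: the equality $\mathcal{A}_q=\mathcal{U}_q$ is an algebraic identity of subrings of $\mathcal{F}$ and says nothing directly about which closed points lie in a chart. What the hypothesis $\mathcal{A}_q=\mathcal{U}_q$ actually buys is that the complement of $\bigcup_{\mathbb{S}'}\widehat{U}_{\mathbb{S}'}$ has codimension at least $2$ (this is a standard consequence of the Laurent phenomenon plus $\mathcal{A}=\mathcal{U}$). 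The passage from ``Azumaya on a codimension-$\geq 2$ open'' to ``Azumaya over the whole regular locus'' is then not automatic: it requires a separate argument showing that $\mathcal{A}_q(\mathbb{S})$ is a maximal order (or, in the language of \cite{MNTY_AzumayaClusterAlgebras}, controlling the discriminant ideal), after which one invokes purity-type results for Azumaya algebras over regular schemes. Your outline skips this entirely, and without it the proof does not close.
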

 
 Theorem \ref{theorem_MNKY} permits to prove Theorem \ref{main_theorem_intro} for 2P-marked surfaces. We will also use the following: 
 
 \begin{theorem}( \cite[Theorem $1$]{KojuMCGRepQT})\label{theorem_MCG} If $\mathbf{\Sigma}= (\Sigma_{g,1}, \{a\})$ is a connected marked surface with a single boundary component which has a single boundary edge, then $\overline{\mathcal{S}}_A(\mathbf{\Sigma})$ is Azumaya.
 \end{theorem}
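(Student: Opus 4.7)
The plan is to identify $\overline{\mathcal{S}}_A(\mathbf{\Sigma})$ for $\mathbf{\Sigma}=(\Sigma_{g,1},\{a\})$ with a quantum torus, from which the Azumaya property at the odd root of unity $A$ will follow from the classical theory of quantum tori.

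First I would fix an ideal triangulation $\Delta$ of $\mathbf{\Sigma}$; an Euler characteristic count gives $|\Delta|=6g-2$, consisting of $6g-3$ interior arcs together with the corner arc $\alpha(p)$, where $p$ is the unique boundary puncture. The quantum trace map embeds $\overline{\mathcal{S}}_A(\mathbf{\Sigma})\hookrightarrow\mathcal{Z}_A(\mathbf{\Sigma},\Delta)$ into the balanced Chekhov--Fock quantum torus, and Theorem \ref{theorem_SSkein_Muller} identifies the source with the localized M\"uller algebra $\mathcal{M}^0_A(\mathbf{\Sigma})$. The distinguishing feature of this surface is that, because $\partial\Sigma_{g,1}$ contains a single boundary edge, the corner arc $\alpha(p)$ coincides (up to a power) with the central invertible element $\alpha_\partial$ of Definition \ref{def_central_elements}. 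Exploiting this, I would prove directly, via M\"uller's basis (Proposition \ref{prop_Muller_basis2}), that $\mathcal{M}^0_A(\mathbf{\Sigma})$ is in fact a quantum torus $\mathbb{T}_\Lambda$ on the generators indexed by the arcs of $\Delta$, the compatibility matrix $\Lambda$ being the Chekhov--Fock form read off $\Delta$.

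Once this identification is in hand, a standard argument of De Concini--Procesi type (which underlies Theorem \ref{theorem_MNKY}) implies that at a primitive $N$-th root of unity with $N$ odd, the quantum torus $\mathbb{T}_\Lambda$ is Azumaya over its center provided the invariant factors of $\Lambda$ are coprime to $N$. For the Chekhov--Fock compatibility matrix, those invariant factors are divisors of $2$, hence coprime to the odd integer $N$. Computing the center then produces the PI-degree $N^{3g-1}=D_\mathbf{\Sigma}$ predicted by Theorem \ref{theorem_center}(3), confirming that $\overline{\mathcal{S}}_A(\mathbf{\Sigma})$ is Azumaya.

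The main obstacle is the first step: establishing that $\mathcal{M}^0_A(\mathbf{\Sigma})$ is a genuine quantum torus rather than a proper subalgebra of one. M\"uller's cluster-theoretic identification (Theorem \ref{theorem_QCA_Muller}(2)), which would give this for free, requires the 2P-marked hypothesis that our $\mathbf{\Sigma}$ fails. A workable substitute is an explicit Poincar\'e--Birkhoff--Witt argument adapted to this setting: pick an ordering of the arcs of $\Delta$ with $\alpha(p)$ placed last, use the M\"uller skein relations together with the height exchange moves of Lemma \ref{lemma_heightexch} to derive the $q$-commutation relations among the generators, and then verify against the M\"uller basis $\underline{\mathcal{B}}^M$ of Definition \ref{def_Muller_basis2} that ordered monomials furnish a $\mathbb{C}$-basis. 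This combinatorial bookkeeping is the technical heart of the argument, and the fact that $\alpha(p)$ is central and invertible is precisely what enables the identification to go through in this otherwise non-2P case.
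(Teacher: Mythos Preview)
There is a genuine gap: the central claim that $\mathcal{M}^0_A(\Sigma_{g,1},\{a\})$ is a quantum torus on the arcs of a triangulation is false for $g\geq 1$, and the proposed verification cannot succeed.

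First, a quantum torus requires all generators to be invertible, but in $\mathcal{M}^0_A(\mathbf{\Sigma})$ only the corner arcs are inverted. An interior arc $\alpha\in\mathring{\Delta}$ is \emph{not} a unit: at $A=+1$ the isomorphism of Corollary~\ref{coro_classical_limit} sends $\alpha$ to (a sign times) a matrix-coefficient function $\rho\mapsto\rho(\alpha)_{ij}$ on $\overline{\mathcal{R}}_{\SL_2}(\mathbf{\Sigma})$, and this function has zeroes (the single constraint $\rho(\alpha(p))\in\SL_2^1$ on the boundary holonomy does not prevent an interior holonomy from having a vanishing entry). A non-unit at $A=+1$ cannot be a unit in the integral form. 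Second, your PBW check cannot match the M\"uller basis: elements of $\underline{\mathcal{B}}^M$ are classes $[D]\alpha(p)^{-k}$ with $D$ an arbitrary simple diagram, and for $g\geq 1$ such $D$ contain essential \emph{closed curves}, which are not monomials in the arcs of any fixed $\Delta$. This is exactly why, even in the 2P case, Theorem~\ref{theorem_QCA_Muller} only identifies $\mathcal{M}^0_A$ with a cluster algebra $\mathcal{A}_q=\mathcal{U}_q$ --- an \emph{intersection} of quantum tori, and a proper subalgebra of each one --- never with a single quantum torus. (Incidentally, an Euler-characteristic count gives $|\Delta|=6g-1$, not $6g-2$.)

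The paper does not supply its own proof of this statement; it is quoted from \cite{KojuMCGRepQT}, whose argument proceeds by constructing, for every character of the center, an irreducible representation of the maximal dimension $D_{\mathbf{\Sigma}}=N^{3g-1}$, rather than by identifying the algebra with a quantum torus.
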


 \begin{corollary}\label{coro_2Psurfaces}
 For $\mathbf{\Sigma}$ a connected essential marked surface without inner puncture, then $\overline{\mathcal{S}}_A(\mathbf{\Sigma})$ is Azumaya and every semi-weight representation is a weight representation.
 \end{corollary}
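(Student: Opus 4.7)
The plan is to split into two cases based on whether $\mathbf{\Sigma}$ has one or several boundary edges. Since $\mathbf{\Sigma}$ is essential, connected, and has no inner puncture, every boundary component contains at least one boundary edge, so either $|\mathcal{A}|=1$ (in which case $\mathbf{\Sigma}=(\Sigma_{g,1},\{a\})$ is a single-edge surface) or $|\mathcal{A}|\geq 2$ (in which case $\mathbf{\Sigma}$ is a 2P-marked surface in the sense of Section \ref{sec_cluster_algebras}). In the first case, Theorem \ref{theorem_MCG} directly yields that $\overline{\mathcal{S}}_A(\mathbf{\Sigma})$ is Azumaya, so there is nothing more to do.

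For the 2P case, I would combine Theorem \ref{theorem_SSkein_Muller} and Theorem \ref{theorem_QCA_Muller}(2) to identify $\overline{\mathcal{S}}_A(\mathbf{\Sigma})$ with the quantum cluster algebra $\mathcal{A}_q(\mathbf{\Sigma})=\mathcal{U}_q(\mathbf{\Sigma})$ associated to any ideal triangulation $\Delta$ of $\mathbf{\Sigma}$, and then invoke Theorem \ref{theorem_MNKY}. Two hypotheses must be checked to apply it: first, $\mathcal{A}_q=\mathcal{U}_q$, which is exactly Theorem \ref{theorem_QCA_Muller}(2); second, $N$ must be coprime to the diagonal entries of the matrix $D$ appearing in $\Lambda^\Delta B^\Delta = D\,\iota$. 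Here the entries of $\Lambda^\Delta$ and $B^\Delta$ are in $\{-2,-1,0,1,2\}$ by their combinatorial definition, so $D$ has entries which are (small) even integers; since $N$ is odd, this coprimality is automatic. Smoothness of $X(\mathbf{\Sigma})$ is provided by Theorem \ref{theorem_smooth} (which applies since we assumed $|\mathcal{A}|\geq 2$ and $\mathbf{\Sigma}$ is connected). Theorem \ref{theorem_MNKY} then gives $\mathcal{FAL}(\mathbf{\Sigma})=X(\mathbf{\Sigma})$, so the Azumaya locus of $\overline{\mathcal{S}}_A(\mathbf{\Sigma})$ is all of $\widehat{X}(\mathbf{\Sigma})$, i.e.\ $\overline{\mathcal{S}}_A(\mathbf{\Sigma})$ is Azumaya.

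For the second assertion, the point is that when $\mathring{\mathcal{P}}=\emptyset$ Lemma \ref{lemma_FAL2} reduces to $Z(x)\cong \bigotimes_{\partial\in\Gamma^\partial} Z(x;\partial)\cong \mathbb{C}^{\oplus N^{|\Gamma^\partial|}}$, which is a reduced (hence semisimple) commutative $\mathbb{C}$-algebra. Given a finite-dimensional semi-weight representation $V$, the decomposition $V=\bigoplus_{x\in X(\mathbf{\Sigma})} V_x$ into $Z_{\mathbf{\Sigma}}^0$-isotypic components is $Z_{\mathbf{\Sigma}}$-stable (the two centers commute), and each $V_x$ is a module over $Z(x)$; since $Z(x)$ is semisimple, $V_x$ is $Z_{\mathbf{\Sigma}}$-semisimple, whence $V$ is weight.

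I do not expect genuine obstacles: both Azumaya statements are reductions to already-established results (Theorems \ref{theorem_MCG}, \ref{theorem_MNKY}, \ref{theorem_QCA_Muller}, and the smoothness Theorem \ref{theorem_smooth}), and the ``semi-weight equals weight'' part is immediate from Lemma \ref{lemma_FAL2}. The only mild point that would require a careful check in writing is the coprimality condition between $N$ and the diagonal entries of $D$ in the quantum seed; this is the verification one should not skip, though the odd-order hypothesis on $N$ makes it routine.
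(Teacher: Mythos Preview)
Your approach is essentially the same as the paper's: split on the number of boundary edges, invoke Theorem \ref{theorem_MCG} for the single-edge case, and for the 2P case combine the cluster-algebra identification (Theorems \ref{theorem_SSkein_Muller}, \ref{theorem_QCA_Muller}) with smoothness (Theorem \ref{theorem_smooth}) to apply Theorem \ref{theorem_MNKY}.

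Two small issues. First, the bigon $\mathbb{B}$ is a 2P-marked surface that is \emph{not} triangulable (the definition of ``triangulable'' explicitly excludes a disc with one or two boundary edges), so your cluster-algebra argument does not cover it; the paper treats this case separately by observing that $\overline{\mathcal{S}}_A(\mathbb{B})\cong\mathbb{C}[X^{\pm 1}]$ is commutative, hence trivially Azumaya. Second, your coprimality check is too loose: knowing only that the entries of $\Lambda^\Delta$ and $B^\Delta$ lie in $\{-2,-1,0,1,2\}$ does not by itself force the diagonal entries of $D$ to be even. The paper instead cites M\"uller's explicit computation that $\Lambda^\Delta B^\Delta = 4\iota$, so $D=4I$, and then oddness of $N$ gives the required coprimality.

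Your argument for ``semi-weight $\Rightarrow$ weight'' via Lemma \ref{lemma_FAL2} (showing $Z(x)$ is semisimple when $\mathring{\mathcal{P}}=\emptyset$) is correct and in fact slightly more direct than the paper's appeal to Theorem \ref{theorem_FAL}.
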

 
 \begin{proof} Let us  prove that  $\overline{\mathcal{S}}_A(\mathbf{\Sigma})$ is Azumaya. The second assertion will then follow from Theorem \ref{theorem_FAL}.
  If $\mathbf{\Sigma}=\mathbb{B}$ is the bigon (a disc with two boundary edges), then $\overline{\mathcal{S}}_A(\mathbb{B})\cong \mathbb{C}[X^{\pm 1}]$ is commutative, so the result is obvious.
 If $\mathbf{\Sigma}= (\Sigma_{g,1}, \{a\})$ has a single boundary component which has a single boundary edge, it is Theorem \ref{theorem_MCG}.
  Else, $\mathbf{\Sigma}$ is a triangulable 2P marked surface; let $\Delta$ be an  $\mathcal{A}$-triangulation. The isomorphism  $f: \overline{\mathcal{S}}_{+1}(\mathbf{\Sigma}) \xrightarrow{\cong} \mathcal{A}_{+1}(\mathbf{\Sigma})$ induces an isomorphism $X(\mathbf{\Sigma}) \cong X(\mathbb{S}_{(\mathbf{\Sigma}, \Delta)})$ so Theorem \ref{theorem_smooth} implies that $X(\mathbb{S}_{(\mathbf{\Sigma}, \Delta)})$ is smooth and it suffices to prove that the two hypotheses of Theorem \ref{theorem_MNKY} are satisfied for $\mathbb{S}_{(\mathbf{\Sigma}, \Delta)}$. In the proof of \cite[Proposition $7.8$]{Muller}, it is proved that $\Lambda^{\Delta} B^{\Delta}=4 \iota$ and since $N$ is odd, the first hypothesis is satisfied. The second follows from Theorem \ref{theorem_QCA_Muller}. Therefore Theorem \ref{theorem_MNKY} implies that $\mathcal{A}_q(\mathbf{\Sigma})\cong \overline{\mathcal{S}}_A(\mathbf{\Sigma})$ is Azumaya. 
 \end{proof}

 \section{Gluing marked surfaces together} \label{sec_gluing}
 
  If $V_1$ and $V_2$ are modules over $ \overline{\mathcal{S}}_A(\mathbf{\Sigma}_1)$ and $ \overline{\mathcal{S}}_A(\mathbf{\Sigma}_2)$ respectively and $a_1, a_2$ some boundary edges of $\mathbf{\Sigma}_1$ and $\mathbf{\Sigma}_2$ respectively, then $V_1\otimes V_2$ is a  $\overline{\mathcal{S}}_A(\mathbf{\Sigma}_1 \cup_{a_1 \# a_2} \mathbf{\Sigma}_2)$-module by precomposing with $\theta_{a_1 \# a_2}$. 
 The goal of this section is to prove the 
 
 \begin{theorem}\label{theorem_gluing} Let  $\mathbf{\Sigma}_1$ and $\mathbf{\Sigma}_2$ be marked surfaces and for $i=1,2$ let $a_i$ be a boundary edges of $\mathbf{\Sigma}_i$ such that the connected component of $\partial \Sigma_i$ which contains $a_i$ also contains at least another boundary edge. 
 Write $\mathbf{\Sigma}:=\mathbf{\Sigma}_1 \cup_{a_1\# a_2} \mathbf{\Sigma}_2$.
 Then 
$$ \widehat{p}_{a_1 \# a_2} \left(\mathcal{AL}(\mathbf{\Sigma}_1) \times \mathcal{AL}(\mathbf{\Sigma}_2)\right) = \mathcal{AL} (\mathbf{\Sigma}).$$
Moreover, any indecomposable (semi) weight $\overline{\mathcal{S}}_A(\mathbf{\Sigma})$-module is isomorphic to a module $V_1\otimes V_2$ with $V_i$ a (semi) weight indecomposable $ \overline{\mathcal{S}}_A(\mathbf{\Sigma})$ module. Conversely, any such $ \overline{\mathcal{S}}_A(\mathbf{\Sigma})$ module $V_1\otimes V_2$ is indecomposable of the same kind (weight or semi weight).
\end{theorem}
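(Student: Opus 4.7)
The strategy combines three ingredients: the multiplicativity of PI-degrees $D_{\mathbf{\Sigma}} = D_{\mathbf{\Sigma}_1}D_{\mathbf{\Sigma}_2}$ (Lemma \ref{lemma_additivityD}), the surjectivity of $\widehat{p}_{a_1\# a_2}$ on classical shadows (Theorem \ref{theorem_gluing_surjective}), and a density result describing the embedding $\Theta := \theta_{a_1\# a_2} : \overline{\mathcal{S}}_A(\mathbf{\Sigma}) \hookrightarrow \mathcal{A} := \overline{\mathcal{S}}_A(\mathbf{\Sigma}_1)\otimes\overline{\mathcal{S}}_A(\mathbf{\Sigma}_2)$. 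The hypothesis on boundary components is used in two places: it guarantees that no component is small after gluing so that Lemma \ref{lemma_additivityD} applies, and it ensures that the boundary-central elements $\alpha_{\partial_i} \in \overline{\mathcal{S}}_A(\mathbf{\Sigma}_i)$ associated to the components containing $a_i$ are well-defined and invertible. Combined with Frobenius compatibility $\Theta \circ Fr = Fr \circ \Theta$, this will let us control $\mathcal{A}$ as an extension of $\Theta(\overline{\mathcal{S}}_A(\mathbf{\Sigma}))$.

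I would begin with the tensor decomposition. Given an indecomposable (semi-)weight $\overline{\mathcal{S}}_A(\mathbf{\Sigma})$-module $V$ with classical shadow $x$, use Theorem \ref{theorem_gluing_surjective} to lift $x$ to a pair $(x_1,x_2) \in X(\mathbf{\Sigma}_1) \times X(\mathbf{\Sigma}_2)$, which makes $V$ a module over the finite-dimensional quotient $\overline{\mathcal{S}}_A(\mathbf{\Sigma})_x$. Via $\Theta$, this quotient embeds into the tensor product $\overline{\mathcal{S}}_A(\mathbf{\Sigma}_1)_{x_1}\otimes\overline{\mathcal{S}}_A(\mathbf{\Sigma}_2)_{x_2}$. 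Using the M\"uller basis (Proposition \ref{prop_Muller_basis}), I would exhibit the tensor algebra as generated over the image of $\Theta$ by a single arc-class $\beta$ crossing the glued edge (after inverting $\alpha_{\partial_i}$); Frobenius compatibility forces $\beta^N \in \Theta(Z_{\mathbf{\Sigma}})$. Mimicking the Morita analysis from the proof of Theorem \ref{theorem_FAL}, this structure is enough to extend the $\overline{\mathcal{S}}_A(\mathbf{\Sigma})_x$-action on $V$ to the tensor product quotient: the eigenspace decomposition for $\beta$ is controlled by its central $N$-th power, and the resulting $\mathcal{A}$-module decomposes as $V_1 \otimes V_2$ with $V_i$ an $\overline{\mathcal{S}}_A(\mathbf{\Sigma}_i)_{x_i}$-module.

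The Azumaya locus equality then follows by dimension counting: applying the decomposition to an Azumaya representation at $\widehat{x} \in \mathcal{AL}(\mathbf{\Sigma})$ of dimension $D_{\mathbf{\Sigma}} = D_{\mathbf{\Sigma}_1}D_{\mathbf{\Sigma}_2}$ forces $\dim V_i = D_i$, and irreducibility of $V$ combined with indecomposability forces irreducibility of each $V_i$, placing $\widehat{x}$ in $\widehat{p}_{a_1\# a_2}(\mathcal{AL}(\mathbf{\Sigma}_1)\times\mathcal{AL}(\mathbf{\Sigma}_2))$; conversely, tensors of Azumaya representations produce $D_{\mathbf{\Sigma}}$-dimensional modules which the density argument shows are irreducible, hence Azumaya by Theorem \ref{theorem_AL}. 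For preservation of indecomposability in both directions, the key identity $\End_{\overline{\mathcal{S}}_A(\mathbf{\Sigma})}(V_1\otimes V_2) = \End_{\overline{\mathcal{S}}_A(\mathbf{\Sigma}_1)}(V_1)\otimes\End_{\overline{\mathcal{S}}_A(\mathbf{\Sigma}_2)}(V_2)$ (a local ring iff both factors are) uses that $\beta$-eigenspace projectors commute with the $\overline{\mathcal{S}}_A(\mathbf{\Sigma})$-action, again by the $\beta^N$ relation. The main obstacle is the density step: making rigorous that $\mathcal{A}$ is generated over $\Theta(\overline{\mathcal{S}}_A(\mathbf{\Sigma}))$ by a single edge-crossing arc $\beta$ together with $\alpha_{\partial_i}^{\pm 1}$ and that $\beta^N$ lands in $\Theta(Z_{\mathbf{\Sigma}})$. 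This will require a careful computation with the explicit splitting formula for $\Theta$ on M\"uller basis elements, exploiting the fact that $Fr_{\mathbf{\Sigma}}$ sends loops to $T_N$ of themselves and stated arcs to their $N$-fold parallels.
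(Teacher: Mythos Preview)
Your overall shape is close to the paper's, but there is a real gap in the density/extension step. The paper does not try to \emph{extend} the $\overline{\mathcal{S}}_A(\mathbf{\Sigma})$-action on $V$ to an $\mathcal{A}$-action; instead it shows that after quotienting by the correct shadow, the splitting morphism $\widetilde{\theta}_{a_1\#a_2}$ becomes an \emph{isomorphism} of finite-dimensional algebras, so the module categories coincide on the nose. For weight modules one quotients by $\widehat{x}\in\widehat{X}(\mathbf{\Sigma})$; for semi-weight modules one quotients by the ``middle'' shadow $\widetilde{x}=(x,h_\partial)\in\Specm(Z^1_{\mathbf{\Sigma}})$, using Lemma~\ref{lemma_partial} to know that each $\alpha_\partial$ already acts by a scalar on any indecomposable semi-weight module. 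Your choice to work only at the level of the classical shadow $x$ is too coarse: the boundary central elements $\alpha_{\partial_i}$ have $N$-th powers in $Z^0$ but are not themselves in $Z^0$, so they do not become scalars in $\overline{\mathcal{S}}_A(\mathbf{\Sigma}_i)_{x_i}$ and the induced map on $x$-quotients is not surjective.

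The density statement you actually need is Lemma~\ref{lemma_gluing}: every element of $\mathcal{A}$ is a product of something in $\Theta(\overline{\mathcal{S}}_A(\mathbf{\Sigma}))$ with something in $Z^1_{\mathbf{\Sigma}_1}\otimes Z^1_{\mathbf{\Sigma}_2}$. Your formulation --- a single edge-crossing arc $\beta$ with $\beta^N\in\Theta(Z_{\mathbf{\Sigma}})$ --- is not correct: for an arc $\beta$ in $\mathbf{\Sigma}_1$, the element $\beta^{(N)}\otimes 1$ lies in $Z^0_{\mathbf{\Sigma}_1}\otimes 1$, which is generally \emph{not} contained in $\Theta(Z^0_{\mathbf{\Sigma}})$ (the latter is only the ``diagonal'' Frobenius image). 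Once Lemma~\ref{lemma_gluing} is established, surjectivity of $\widetilde{\theta}_{a_1\#a_2}$ at the $\widetilde{x}$-level is immediate since the $Z^1$-factors become scalars, and injectivity follows from the inclusion $\theta_{a_1\#a_2}^{-1}(Z_{\mathbf{\Sigma}_1}\otimes 1+1\otimes Z_{\mathbf{\Sigma}_2})\subset Z_{\mathbf{\Sigma}}$ (read off from Theorem~\ref{theorem_center}). The tensor decomposition, preservation of indecomposability in both directions, and the Azumaya-locus equality via $D_{\mathbf{\Sigma}}=D_{\mathbf{\Sigma}_1}D_{\mathbf{\Sigma}_2}$ and Theorem~\ref{theorem_AL} are then formal consequences of having an algebra isomorphism at the quotient level; no extension of actions or eigenspace analysis for $\beta$ is needed.
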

 
Let $Z^1_{\mathbf{\Sigma}}$ be the subalgebra of $Z_{\mathbf{\Sigma}}$ generated by $Z^0_{\mathbf{\Sigma}}$ and the elements $\alpha_{\partial}^{\pm 1}$ for $\partial \in \Gamma^{\partial}$; thus $Z^0_{\mathbf{\Sigma}}\subset Z^1_{\mathbf{\Sigma}} \subset Z_{\mathbf{\Sigma}}$.
 
 \begin{lemma}\label{lemma_gluing} Under the hypotheses of Theorem \ref{theorem_gluing}, any element of $ \overline{\mathcal{S}}_A(\mathbf{\Sigma}_1)\otimes \overline{\mathcal{S}}_A(\mathbf{\Sigma}_2)$ is the product of an element of the image of  $\theta_{a_1 \# a_2}$ with an element of $Z^1_{\mathbf{\Sigma}_1}\otimes Z^1_{\mathbf{\Sigma}_2}$.
 \end{lemma}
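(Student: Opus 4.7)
Plan:

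The plan is to use the Muller basis (Proposition \ref{prop_Muller_basis}) together with an induction on the number of boundary endpoints on $a_1, a_2$, using the boundary invariants $\alpha_{\partial_i}^{\pm 1} \in Z^1_{\mathbf{\Sigma}_i}$ as the lever.

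First I would handle the trivial case. If $D_1, D_2$ are diagrams in $\mathbf{\Sigma}_1, \mathbf{\Sigma}_2$ with no endpoint on $a_1, a_2$ respectively, then their disjoint union defines a stated diagram in $\mathbf{\Sigma} := \mathbf{\Sigma}_1 \cup_{a_1 \# a_2} \mathbf{\Sigma}_2$ which is disjoint from the gluing curve $c$, so $\theta_{a_1 \# a_2}([D_1 \sqcup D_2, (s_1, s_2)]) = [D_1, s_1] \otimes [D_2, s_2]$. Writing $A_i \subset \overline{\mathcal{S}}_A(\mathbf{\Sigma}_i)$ for the subalgebra generated by such classes, this shows $A_1 \otimes A_2 \subset \text{Im}(\theta_{a_1 \# a_2})$.

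For a general Muller-basis element $[D_1, s_1] \otimes [D_2, s_2]$ of $\overline{\mathcal{S}}_A(\mathbf{\Sigma}_1) \otimes \overline{\mathcal{S}}_A(\mathbf{\Sigma}_2)$, I would induct on $n := |\partial_{a_1} D_1| + |\partial_{a_2} D_2|$. The hypothesis that $a_i$ shares its boundary component $\partial_i$ with another edge ensures that the two punctures $p_i, p_i'$ adjacent to $a_i$ are boundary punctures, and that under gluing they merge with the $a_2$-adjacent punctures of $\mathbf{\Sigma}_2$ into boundary punctures $p, p'$ of $\mathbf{\Sigma}$. The identity $\theta_{a_1 \# a_2}(\alpha(p)_{\pm \pm}) = \alpha(p_1)_{\pm \pm} \otimes \alpha(p_2)_{\pm \pm}$ (modulo bad-arc vanishing, cf.\ Lemma \ref{lemma_Muller_basis}) furnishes an explicit element of $\text{Im}(\theta_{a_1 \# a_2})$ that couples an $a_1$-endpoint with an $a_2$-endpoint. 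Multiplying $[D_1, s_1] \otimes [D_2, s_2]$ by suitable such $\theta$-images and by powers of $\alpha_{\partial_1}^{\pm 1} \otimes 1$ and $1 \otimes \alpha_{\partial_2}^{\pm 1}$ will let me swap an $a_1$-endpoint of $D_1$ for an $a_2$-endpoint of $D_2$ and then cancel a matching pair via the bad-arc relation, strictly reducing $n$.

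The main obstacle will be ensuring that this argument is multiplicative rather than only linear: multiplying by $\alpha_{\partial_i}^{\pm 1}$ introduces corner arcs at \emph{every} puncture of $\partial_i$ simultaneously, so one has to verify that the extraneous corner arcs away from $a_i$ live in $A_i$ and can thus be absorbed back into the $\text{Im}(\theta_{a_1 \# a_2})$ factor via the first step. The hypothesis that $\partial_i$ contains an edge besides $a_i$ is indispensable here: without it, the unique boundary puncture of $\partial_i$ would yield a corner arc with both endpoints on $a_i$, obstructing any endpoint transport across the gluing. The combinatorial bookkeeping rests on the quantum-commutation and height-exchange scalars from Lemma \ref{lemma_heightexch}, together with the Muller-basis normal form.
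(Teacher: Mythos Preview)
Your inductive scheme has a real gap: the operations you list do not strictly decrease $n=|\partial_{a_1}D_1|+|\partial_{a_2}D_2|$. Multiplying by $\theta_{a_1\#a_2}(\alpha(p)_{\pm\pm})=\alpha(p_1)_{\pm\pm}\otimes\alpha(p_2)_{\pm\pm}$ removes one $a_1$-endpoint (after merging via the height-exchange relation) but simultaneously creates one $a_2$-endpoint, so $n$ is unchanged; multiplying by $1\otimes\alpha_{\partial_2}^{\pm1}$ then adds \emph{two} $a_2$-endpoints (from $\alpha(p_2)_{\pm\pm}$ and $\alpha(p_2')_{\pm\pm}$), and any subsequent cancellation only brings you back to the same $n$. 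A quick lattice check confirms the obstruction: with only $\alpha_{\partial_i}^{\pm1}$ and the coupled $\theta$-images of corner arcs, the achievable corner-arc exponents span a proper sublattice of $\mathbb{Z}^4$, so you can never isolate $\alpha(p_1)_{\pm\pm}\otimes 1$. You also never invoke the Frobenius morphism, which is the essential source of \emph{uncoupled} central elements $(\alpha(p_i)_{\pm\pm})^N\otimes 1\in Z^0_{\mathbf{\Sigma}_1}\otimes 1\subset Z^1_{\mathbf{\Sigma}_1}\otimes Z^1_{\mathbf{\Sigma}_2}$.

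The paper's argument avoids induction entirely. It first proves directly that each individual corner arc $\alpha(p_1)_{\pm\pm}\otimes 1$ lies in $R$, by combining three ingredients: (i) an arc $\beta$ from $b_1$ to $c_1$ lying wholly in $\Sigma_1$ (hence $\theta(\beta_{+-})=\beta_{+-}\otimes 1$ automatically), which in $\overline{\mathcal S}_A(\mathbf\Sigma_1)$ equals $A^{1/2}\alpha(p_1)_{--}\alpha(p_1')_{++}$; (ii) the Frobenius image $(\alpha(p_1)_{++})^N\otimes 1$; (iii) $\alpha_{\partial_1}^{-1}$; plus the parity trick $\alpha(p_1)_{++}=(\alpha(p_1)_{--})^{N-1}(\alpha(p_1)_{++})^N$. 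With individual corner arcs in hand, the paper uses a one-shot ``pushing trick'': extend every $a_2$-endpoint of $D^0\subset\Sigma_2$ across the gluing curve to $b_1$, obtaining $\widehat{D^0}\subset\Sigma$ with $\theta([\widehat{D^0},s^+])=(\alpha(p_1')_{++})^m\otimes[D^0,s^+]$; the left factor is now invertible in $R$. Your proposal contains neither ingredient (i)--(ii) nor this pushing construction, and without them the reduction step does not go through.
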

 
 \begin{proof}
 Let us introduce some notations illustrated in Figure \ref{fig_gluing_surjective} $(i)$.
Let $\partial_1$ and $\partial_2$ be the boundary components of $\Sigma_1$ and $\Sigma_2$ containing $a_1$ and $a_2$. The orientations of $\Sigma_1, \Sigma_2$ induces orientations of $\partial_1$ and $\partial_2$ thus a cyclic order on their sets of boundary edges and an orientation of each boundary edge. Let $p_1, p_1'$ be the two punctures adjacent to $a_1$ such that $a_1$ is oriented from $p_1'$ to $p_1$. Let $p_2, p_2'$ be the two punctures adjacent to $a_2$ such that $a_2$ is oriented from $p_2$ to $p_2'$.  Write $\mathbf{\Sigma}= \mathbf{\Sigma}_1\cup_{a_1 \# a_2} \mathbf{\Sigma}_2$. Let $d\subset \Sigma$ be the common image of $a_1$ and $a_2$ in the quotient map $\pi:\Sigma_1 \sqcup \Sigma_2 \to \Sigma$ and write $p:= \pi(p_1)=\pi(p_2)$ and $p':= \pi(p_1')=\pi(p_2')$. Let $b_1, c_1$ be the boundary edges in $\partial_1$ such that $p_1$ is adjacent to $a_1$ and $c_1$ and $p_2$ is adjacent to $a_1$ and $b_1$ (if $\partial_1$ only has two boundary edges, then $c_1=b_1$). Let $b_2,c_2$ be the boundary edges in $\partial_2$ such that $p_2$ is adjacent to $a_2$ and $c_2$ and $p_2'$ is adjacent to $b_2$ and $a_2$. Let $R \subset \overline{\mathcal{S}}_A(\mathbf{\Sigma}_1)\otimes \overline{\mathcal{S}}_A(\mathbf{\Sigma}_2)$ be the subalgebra of elements which are a product of an element of $\theta_{a_1 \# a_2}\left( \overline{\mathcal{S}}_A(\mathbf{\Sigma}) \right)$ with an element of 
$Z^1_{\mathbf{\Sigma}_1}\otimes Z^1_{\mathbf{\Sigma}_2}$.
 We need to show that for every $[D_1, s_1] \in \mathcal{B}^M(\mathbf{\Sigma}_1)$ and for every $[D_2,s_2]\in \mathcal{B}^M(\mathbf{\Sigma}_2)$ then $[D_1,s_1]\otimes 1 \in R$ and $1\otimes [D_2,s_2] \in R$. 
 Let us first prove the following:
 \par \textbf{Claim:} For $\varepsilon = \pm$,  then the four elements  $\alpha(p_1)_{\varepsilon \varepsilon} \otimes 1$, $\alpha(p'_1)_{\varepsilon \varepsilon} \otimes 1$, $1\otimes \alpha(p_2)_{\varepsilon \varepsilon} $ and $1\otimes \alpha(p'_2)_{\varepsilon \varepsilon} $ are in $R$.  
 \\ We prove the claim for $\alpha(p_1)_{++}\otimes 1$ and leave the other similar cases to the reader. 
 \par $\bullet$ $\underline{ \alpha(p_1)_{--} \alpha(p_1')_{++} \otimes 1  \in R:}$ Choose two points $v\in b_1$, $w\in c_1$ and consider an arc $\beta$ oriented from $v$ to $w$ such that $\beta$ can be isotoped relatively to its boundary to an arc $\beta'\subset \partial_1$ containing $a_1$. As illustrated in Figure \ref{fig_gluing_surjective} $(ii)$, one has $\theta_{a_1 \# a_2} (\beta_{+-})= A^{1/2}\alpha(p_1)_{--}\alpha(p_1')_{++}\otimes 1 \in R$.
 \par $\bullet$ $\underline{ \alpha(p_1)_{--} \alpha(p_1')_{--} \otimes 1  \in R:}$ Let $(p_1', p_1, p_3, p_4, \ldots, p_n)$ be the punctures of $\partial_1$ cyclically ordered. Clearly, for $i=3, \ldots, n$,  $\alpha(p_i)_{--}\otimes 1$ is in the image of $\theta_{a_1 \# a_2}$. Moreover $\alpha_{\partial_1}^{-1}= \alpha(p_1)_{--}\alpha(p_1')_{--} \alpha(p_3)_{--}\ldots \alpha(p_n)_{--} \in Z_{\mathbf{\Sigma}_1}$, therefore $ \alpha(p_1)_{--} \alpha(p_1')_{--} \otimes 1  \in R$.
 \par $\bullet$ $\underline{ \alpha(p_1)_{++}^N \otimes 1  \in R:}$ By Lemma \ref{lemma_heightexch}, there exists $n\in \mathbb{Z}$ such that $(\alpha(p_1)_{++})^N = A^{n/2} \alpha(p_1)_{++}^{(N)}$. Since $\alpha(p_1)_{++}^{(N)}$ is in the image of the Frobenius morphism by Theorem \ref{theorem_Frobenius}, one has $(\alpha(p_1)_{++})^N \otimes 1 \in R$. 
 \par $\bullet$ $\underline{ \alpha(p_1)_{++} \otimes 1  \in R:}$ First one has $(\alpha(p_1)_{--})^2= (\alpha(p_1)_{--} \alpha(p_1')_{++} )(\alpha(p_1)_{--} \alpha(p_1')_{--} ) \in R$. Write $N=2k+1$. Then $\alpha(p_1)_{++} = (\alpha(p_1)_{--})^{2k} (\alpha(p_1)_{++})^N \in R.$
 \par The other cases of the claim are proved similarly. Let $[D,s] \in \mathcal{B}^M(\mathbf{\Sigma}_2)$ and let us prove that $1\otimes [D,s] \in R$. By Lemma \ref{lemma_heightexch} there exist $n\in \mathbb{Z}$ and $D^0\subset D$ a sub diagram such that $[D,s]=A^{n/2} [D^0, s^+] \prod_{p \in \mathcal{P}^{\partial}} \alpha(p)_{--}^{n_p}$, where $s^+$ is totally positive in the sense that it sends every endpoints in $\partial D_0$ to $+$. Since every elements $1 \otimes (\alpha(p)_{--})^{n_p}$ are in $R$, it suffices to prove that $1\otimes [D^0, s^+] \in R$ for a totally positive simple stated diagram. For such a $D^0 \subset \Sigma_2$, let $\widehat{D^0} \subset \Sigma$ be the simple diagram obtained from $D^0$ by pushing slightly each point of $\partial_{a_2} D^0$ to $b_1$ while forming a corner arc as in Figure \ref{fig_gluing_surjective} $(iii)$. If $m:= |\partial_{a_2}D^0|$ then  as illustrated in Figure \ref{fig_gluing_surjective} $(iii)$, one has 
 $$ \theta_{a_1 \# a_2} ([\widehat{D^0}, s^+]) = (\alpha(p'_1)_{++})^m \otimes [D^0, s^+].$$
 Since $\alpha(p'_1)_{++}\otimes 1 \in R$ by the claim, we obtain that $1\otimes [D^0, s^+] \in R$. So every element of the form $1\otimes [D,s]$ is in $R$. We prove that the elements $[D,s]\otimes 1$ belong to $R$ similarly. This concludes the proof.

 \begin{figure}[!h] 
\centerline{\includegraphics[width=14cm]{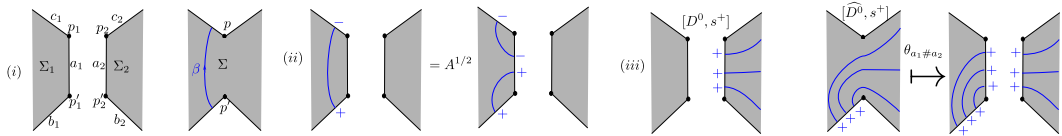} }
\caption{$(i)$ The marked surfaces $\mathbf{\Sigma}_1, \mathbf{\Sigma}_2, \mathbf{\Sigma}$ and some boundary edges; $(ii)$ An illustration of the equality  $\theta_{a_1 \# a_2} (\beta_{+-})= A^{1/2}\alpha(p_1)_{--}\alpha(p_1')_{++}\otimes 1$; $(iii)$ A diagram $(D^0, s^+)$, its associated diagram $(\widehat{D^0}, s^+)$ and an illustration of the equality $ \theta_{a_1 \# a_2} ([\widehat{D^0}, s^+]) = (\alpha(p'_1)_{++})^m \otimes [D^0, s^+]$.
} 
\label{fig_gluing_surjective} 
\end{figure}

 \end{proof}
 
 \begin{lemma}\label{lemma_partial} Let $\rho:  \overline{\mathcal{S}}_A(\mathbf{\Sigma})\to \End(V)$  be  an indecomposable semi-weight representation and $\partial \in \Gamma^{\partial}$. Then  there exists $h_{\partial}\in \mathbb{C}^*$ such that $\rho(\alpha_{\partial})=h_{\partial}\id_V$.
 \end{lemma}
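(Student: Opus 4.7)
The plan is to first reduce the statement to the observation that, for an indecomposable semi-weight module, the subring $Z^0_{\mathbf{\Sigma}}$ acts through a single character. Indeed, by definition $V$ decomposes as a direct sum of $Z^0_{\mathbf{\Sigma}}$-isotypic components $V_\chi$. Since $Z^0_{\mathbf{\Sigma}}\subset Z_{\mathbf{\Sigma}}$ is central in $\overline{\mathcal{S}}_A(\mathbf{\Sigma})$, each $V_\chi$ is a submodule, so indecomposability forces $V=V_\chi$ for a unique character $\chi:Z^0_{\mathbf{\Sigma}}\to\mathbb{C}$. In particular $\rho(z)=\chi(z)\mathrm{id}_V$ for every $z\in Z^0_{\mathbf{\Sigma}}$.

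Next, I would apply this to the element $\alpha_{\partial}^N$. By Theorem \ref{theorem_center} one has $\alpha_{\partial}^N=Fr_{\mathbf{\Sigma}}(\alpha_{\partial})\in Z^0_{\mathbf{\Sigma}}$, so the previous step gives $\rho(\alpha_{\partial}^N)=c\,\mathrm{id}_V$ for some $c\in\mathbb{C}$; since $\alpha_\partial$ is invertible by Definition~\ref{def_central_elements}, we have $c\in\mathbb{C}^*$. On the other hand $\alpha_{\partial}\in Z_{\mathbf{\Sigma}}$ is central (Theorem~\ref{theorem_center}), so $\rho(\alpha_{\partial})$ belongs to $\mathrm{End}_{\overline{\mathcal{S}}_A(\mathbf{\Sigma})}(V)$. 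Because $V$ is finite dimensional and indecomposable, this endomorphism ring is local, so $\rho(\alpha_{\partial})$ has a unique eigenvalue $h_{\partial}\in\mathbb{C}$, and can be written $\rho(\alpha_{\partial})=h_{\partial}\mathrm{id}_V+M$ with $M$ nilpotent. Invertibility of $\rho(\alpha_\partial)$ forces $h_\partial\in\mathbb{C}^*$.

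The main (and really only) delicate step is showing that the nilpotent part $M$ vanishes. Expanding the identity $\rho(\alpha_{\partial})^N=c\,\mathrm{id}_V$ gives
$$ c\,\mathrm{id}_V=(h_{\partial}\mathrm{id}_V+M)^N=h_{\partial}^N\mathrm{id}_V+\sum_{k=1}^{N}\binom{N}{k}h_{\partial}^{N-k}M^k,$$
so that $h_{\partial}^N=c$ and $\sum_{k=1}^{N}\binom{N}{k}h_{\partial}^{N-k}M^k=0$. If $M\neq 0$, let $m\geq 2$ be minimal with $M^m=0$; multiplying the latter relation by $M^{m-2}$ annihilates every term with $k\geq 2$, leaving $N\,h_{\partial}^{N-1}M^{m-1}=0$. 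This is a contradiction because $N\in\mathbb{C}^*$ (the root of unity order is odd, hence nonzero in $\mathbb{C}$), $h_{\partial}\in\mathbb{C}^*$ and $M^{m-1}\neq 0$ by minimality of $m$. Therefore $M=0$ and $\rho(\alpha_{\partial})=h_{\partial}\mathrm{id}_V$ with $h_\partial\in\mathbb{C}^*$, as desired.
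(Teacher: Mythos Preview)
Your proof is correct and follows essentially the same route as the paper's. Both arguments use that $\alpha_{\partial}^N\in Z^0_{\mathbf{\Sigma}}$ acts by a scalar $c\in\mathbb{C}^*$ on an indecomposable semi-weight module, and then exploit that $X^N-c$ has only simple roots to force $\rho(\alpha_{\partial})$ to be scalar; the paper phrases this via the minimal polynomial (which must be of the form $(X-h_\partial)^n$ by indecomposability and must divide $X^N-c$), while you unfold the same computation with the binomial expansion. One cosmetic remark: your parenthetical ``the root of unity order is odd, hence nonzero in $\mathbb{C}$'' is unnecessary---$N\geq 3$ is a positive integer, so $N\neq 0$ in $\mathbb{C}$ regardless of parity.
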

 
 \begin{proof} The operator $\rho(\alpha_{\partial})$ is annihilated by the polynomial $P(X)=X^{N} - \chi_x(\alpha_{\partial})$ so the minimal polynomial of $\rho(\alpha_{\partial})$ divides $P(X)$. Since $\rho(\alpha_{\partial})$ commutes with the image of $\rho$ and since $\rho$ is indecomposable, its minimal polynomial has the form $(X-c)^n$. 
 Since $P(X)$ has its roots of multiplicity one, the minimal polynomial of  $\rho(\alpha_{\partial})$ has the form $X-h_{\partial}$ for some $h_{\partial}$ such that $h_{\partial}^N=\chi_x(\alpha_{\partial})$.
 \end{proof}
 
  Therefore, for $\rho:  \overline{\mathcal{S}}_A(\mathbf{\Sigma})\to \End(V)$   an indecomposable semi-weight representation, one can associate a tuple $\widetilde{x}=(x; h_{\partial})_{\partial \in \Gamma^{\partial}} \in \Specm(Z^1_{\mathbf{\Sigma}})=: \widetilde{X}(\mathbf{\Sigma})$ which we call the middle shadow of $\rho$. Then $\rho$ factorizes through the algebra 
  $$ \overline{\mathcal{S}}_A(\mathbf{\Sigma})_{\widetilde{x}} := \quotient{\overline{\mathcal{S}}_A(\mathbf{\Sigma})}{\widetilde{x}\overline{\mathcal{S}}_A(\mathbf{\Sigma})}.$$

 \begin{proof}[Proof of Theorem \ref{theorem_gluing}]
 \par \underline{\textit{The case of weight representations:}}
 Let $\widehat{x} \in \widehat{X}(\mathbf{\Sigma})$. By Theorem \ref{theorem_gluing_surjective}, there exist $\widehat{x}_i \in  \widehat{X}(\mathbf{\Sigma}_i)$, $i=1,2$, such that $\widehat{p}_{a_1\# a_2}(\widehat{x}_1, \widehat{x}_2)=\widehat{x}$. The splitting morphism $\theta_{a_1\# a_2}$ induces an injective morphism 
 $$ \widetilde{\theta}_{a_1 \# a_2} : \overline{\mathcal{S}}_A(\mathbf{\Sigma})_{\widehat{x}}  \to \overline{\mathcal{S}}_A(\mathbf{\Sigma}_1)_{\widehat{x}_1}\otimes \overline{\mathcal{S}}_A(\mathbf{\Sigma}_2)_{\widehat{x}_2}.$$
 By Lemma \ref{lemma_gluing}, $ \widetilde{\theta}_{a_1 \# a_2}$ is surjective. To prove that it is injective, recall that by definition, $ \widetilde{\theta}_{a_1 \# a_2}$ is obtained from the injective morphism $ {\theta}_{a_1 \# a_2}$ by passing to the quotient. So we need to prove the inclusion $ {\theta}_{a_1 \# a_2}^{-1}\left( \mathcal{I}_{\widehat{x}_1}\otimes 1 + 1\otimes \mathcal{I}_{\widehat{x}_2}\right) \subset \mathcal{I}_{\widehat{x}}$; this follows from the inclusion $ {\theta}_{a_1 \# a_2}^{-1}\left(Z_{\mathbf{\Sigma}_1}\otimes 1 + 1\otimes Z_{\mathbf{\Sigma}_2}\right) \subset Z_{\mathbf{\Sigma}}$ which immediately follows from the description of the centers in Theorem \ref{theorem_center}.
 \par So  $\widetilde{\theta}_{a_1 \# a_2}$ is an isomorphism. By definition, an indecomposable weight $ \overline{\mathcal{S}}_A(\mathbf{\Sigma})$ module with full shadow $\widehat{x}$ is the same  than an indecomposable representation of the finite dimensional algebra $ \overline{\mathcal{S}}_A(\mathbf{\Sigma})_{\widehat{x}}$ (and similarly for $\mathbf{\Sigma}_i$). Therefore there is a $1:1$ correspondance between isomorphism classes of pairs of indecomposable weight representation $(V_1, V_2)$ where $V_i$ has full shadow $\widehat{x}_i$ and the set of isomorphism classes of weight $ \overline{\mathcal{S}}_A(\mathbf{\Sigma})$ representations with full shadow $\widehat{x}$; the isomorphism sends $(V_1,V_2)$ to $V_1\otimes V_2$. By Theorem \ref{theorem_AL}, $\widehat{x} \in \mathcal{AL}(\mathbf{\Sigma})$ if and only if $\dim \left(\overline{\mathcal{S}}_A(\mathbf{\Sigma})_{\widehat{x}}\right) = (D_{\mathbf{\Sigma}})^2$ and if  $\widehat{x} \notin \mathcal{AL}(\mathbf{\Sigma})$ then $\dim \left(\overline{\mathcal{S}}_A(\mathbf{\Sigma})_{\widehat{x}}\right) < (D_{\mathbf{\Sigma}})^2$ (and similarly for $\mathbf{\Sigma}_i$). By Lemma \ref{lemma_additivityD}, we have $D_{\mathbf{\Sigma}}=D_{\mathbf{\Sigma}_1}D_{\mathbf{\Sigma}_2}$ so $\widehat{x} \in \mathcal{AL}(\mathbf{\Sigma})$ if and only if $x_i \in \mathcal{AL}(\mathbf{\Sigma}_i)$ for $i=1,2$.
 \vspace{2mm}
 \par  \underline{\textit{The case of semi weight representations:}} Let $\widetilde{x}=(x, h_{\partial})_{\partial \in \Gamma^{\partial}}\in \widetilde{X}(\mathbf{\Sigma})$ and denote by $\widetilde{p}_{a_1\# a_2} : \widetilde{X}(\mathbf{\Sigma}_1)\times  \widetilde{X}(\mathbf{\Sigma}_2) \to  \widetilde{X}(\mathbf{\Sigma})$ the morphism induced by $\theta_{a_1\# a_2}$.
 As showed in the proof of Theorem \ref{theorem_gluing_surjective}, there exists $(\widetilde{x}_1, \widetilde{x}_2) \in \widetilde{X}(\mathbf{\Sigma}_1)\times  \widetilde{X}(\mathbf{\Sigma}_2)$ such that  $\widetilde{p}_{a_1\# a_2} (\widetilde{x}_1, \widetilde{x}_2)= \widetilde{x}$. $\theta_{a_1\# a_2}$ induces a morphism 
 $$ \widetilde{\theta}_{a_1 \# a_2} : \overline{\mathcal{S}}_A(\mathbf{\Sigma})_{\widetilde{x}}  \to \overline{\mathcal{S}}_A(\mathbf{\Sigma}_1)_{\widetilde{x}_1}\otimes \overline{\mathcal{S}}_A(\mathbf{\Sigma}_2)_{\widetilde{x}_2}.$$
 Again Lemma \ref{lemma_gluing} proves the surjectivity of $ \widetilde{\theta}_{a_1 \# a_2}$ and the injectivity follows from the inclusion 
 \\ $ {\theta}_{a_1 \# a_2}^{-1}\left( \mathcal{I}_{\widetilde{x}_1}\otimes 1 + 1\otimes \mathcal{I}_{\widetilde{x}_2}\right) \subset \mathcal{I}_{\widetilde{x}}$, so  $ \widetilde{\theta}_{a_1 \# a_2}$ is an isomorphism. Therefore we have a $1:1$ correspondance between indecomposable representations $V$ of  $ \overline{\mathcal{S}}_A(\mathbf{\Sigma})$ with shadow $\widetilde{x}$ and pairs $(V_1, V_2)$ with $V_i$ an indecomposable $ \overline{\mathcal{S}}_A(\mathbf{\Sigma}_i)$ semi weight module with shadow $\widetilde{x}_i$ (the correspondence sends $(V_1, V_2)$ to $V_1\otimes V_2$). This concludes the proof.
 \end{proof}

 \section{Classification of semi-weight representations of reduced stated skein algebras}\label{sec_classification}
 
 Let $\mathbf{\Sigma}$  be a connected essential marked surface.
 For $x\in X(\mathbf{\Sigma})$, we denote by $\rho_x: \Pi_1(\Sigma, \mathbb{V}) \to \SL_2$ the functor associated by Corollary \ref{coro_classical_limit}. Recall that $x$ \textit{is central at} $p\in \mathring{\mathcal{P}}$ if $\rho_x(\gamma_p)=\pm \mathds{1}_2$ for $\gamma_p$ a peripheral loop encircling $p$.
\par   Let $\widehat{x}=(x,h_p,h_{\partial})_{p, \partial} \in \widehat{X}(\mathbf{\Sigma})$ and decompose the set of inner punctures as $\mathring{\mathcal{P}}= \mathring{\mathcal{P}}_0 \sqcup \mathring{\mathcal{P}}_1 \sqcup \mathring{\mathcal{P}}_2$ where 
\begin{align*}
&  \mathring{\mathcal{P}}_2:= \{p\in \mathring{\mathcal{P}}, \mbox{ such that } \rho_x(\gamma_p)=\pm \mathds{1}_2 \mbox{ and }h_p \neq \mp 2\} \\
&  \mathring{\mathcal{P}}_1:= \{p\in \mathring{\mathcal{P}}\setminus \mathring{\mathcal{P}}_2, \mbox{ such that } \tr(\rho_x(\gamma_p)) = \pm 2 \mbox{ and }h_p \neq \pm 2\}.
\end{align*}
\begin{definition}[Colorings]
A map $\sigma : \mathring{\mathcal{P}}\to \Delta \bigsqcup \overline{\Delta} $ is called a \textbf{coloring compatible with} $\widehat{x}$ if $(1)$ for $p\in \mathring{\mathcal{P}}_0$, then $\sigma(p)=S$ and $(2)$ if $p\in \mathring{\mathcal{P}}_1$, then $\sigma(p)\in \{S,P\}$. By convention, if $\mathring{\mathcal{P}}=\emptyset$, we consider that every $\widehat{x}$ has a unique coloring. The set of colorings compatible with $\widehat{x}$ will be denoted by $\col(\widehat{x})$ and we write $m(\widehat{x}):= | \mathring{\mathcal{P}}_2|$.
\end{definition}
Let $\Indecomp(\mathbf{\Sigma})$ be the set of isomorphism classes of indecomposable semi-weight $\overline{\mathcal{S}}_A(\mathbf{\Sigma})$ modules and let $\Indecomp(\mathbf{\Sigma}, \widehat{x})\subset \Indecomp(\mathbf{\Sigma})$ the subset of these modules with maximal shadow $\widehat{x}$. 
Denote by $\overline{\mathcal{C}}$  the category of semi weight $\overline{\mathcal{S}}_A(\mathbf{\Sigma})$ modules. The main theorem of this paper is the following:
 
\begin{theorem}\label{main_theorem} Let $\mathbf{\Sigma}=(\Sigma, \mathcal{A})$ be a connected essential marked surface  which either has a boundary component with at least two boundary edges or which does not have any inner puncture.
 Let  $\widehat{x}=(x, h_p, h_{\partial}) \in \widehat{X}(\mathbf{\Sigma})$.
\begin{enumerate}
\item We have $\mathcal{AL}(\mathbf{\Sigma})= \widehat{X}^{reg}(\mathbf{\Sigma})$, i.e. the Azumaya locus of $\overline{\mathcal{S}}_A(\mathbf{\Sigma})$ is equal to the regular locus of $\widehat{X}(\mathbf{\Sigma})$. Moreover 
 $\widehat{x}\in \mathcal{AL}(\mathbf{\Sigma})$ if and only if $m=0$.
\item The fully Azumaya locus $\mathcal{FAL}(\mathbf{\Sigma})$ of  $\overline{\mathcal{S}}_A(\mathbf{\Sigma})$ is the locus of representations which are not central at any inner puncture. For $x\in \mathcal{FAL}(\mathbf{\Sigma})$ the indecomposable semi-weight representations with classical shadow $x$ are classified (up to isomorphism) by their full shadows, so their set  is in $1:1$ correspondance with the fiber $\widehat{\pi}^{-1}(x) \subset \widehat{\mathcal{X}}(\mathbf{\Sigma})$.
\item The indecomposable semi-weight representations with maximal shadow  $\widehat{x}$ are in $1:1$ correspondence with the set of colorings compatible with $\widehat{x}$ so one has 
$$ \Indecomp(\mathbf{\Sigma})= \bigsqcup_{\widehat{x}\in \widehat{X}(\mathbf{\Sigma})} \Indecomp(\mathbf{\Sigma}, \widehat{x}) \cong \bigsqcup_{\widehat{x}\in \widehat{X}(\mathbf{\Sigma})} \col(\widehat{x}).$$
\item The indecomposable weight representations with full shadow  $\widehat{x}$ correspond to the colorings taking values in $\{S,\overline{S}, ((1,1), 1), \overline{((1,1), 1)}\}$: there are thus $4^m$ such representations.
\item The irreducible representations with full shadow  $\widehat{x}$ correspond to the colorings taking values in $\{S, \overline{S}\}$: there are thus $2^m$ such representations.
\item The indecomposable semi-weight representations with maximal shadow $\widehat{x}$ which are projective objects in $\overline{\mathcal{C}}$ correspond to the colorings sending the elements of $\mathring{\mathcal{P}}_1$ to $P$ and the elements of $\mathring{\mathcal{P}}_2$ to an element in $\{P, \overline{P}\}$: there are thus $2^m$ such representations.
\end{enumerate}
\end{theorem}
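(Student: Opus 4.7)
The plan is to reduce the classification to three cases already handled in the paper: $2P$-marked surfaces (Corollary \ref{coro_2Psurfaces}), genus $g$ surfaces with a single boundary edge (Theorem \ref{theorem_MCG}), and the punctured bigon $\mathbb{D}_1$ (Corollary \ref{coro_QG}), glued together using the gluing Theorem \ref{theorem_gluing}. When $\mathbf{\Sigma}$ has no inner puncture, it is either a $2P$-marked surface or a surface of the form $(\Sigma_{g,1},\{a\})$; in both cases $\overline{\mathcal{S}}_A(\mathbf{\Sigma})$ is Azumaya by Corollary \ref{coro_2Psurfaces} or Theorem \ref{theorem_MCG}. Since $\mathring{\mathcal{P}}=\emptyset$, $\col(\widehat{x})$ is a singleton for every $\widehat{x}$ and all six items of the theorem reduce to the Azumaya property combined with Theorems \ref{theorem_AL} and \ref{theorem_FAL}.

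Assume now that $\mathbf{\Sigma}$ has a boundary component $\partial_*$ containing at least two boundary edges and exactly $k\geq 1$ inner punctures $p_1,\dots,p_k$. I will construct a presentation
$$ \mathbf{\Sigma} \cong \mathbf{\Sigma}_0 \cup_{a_1} \mathbb{D}_1^{(1)} \cup_{a_2} \mathbb{D}_1^{(2)} \cup \cdots \cup_{a_k} \mathbb{D}_1^{(k)}, $$
where $\mathbf{\Sigma}_0$ is a $2P$-marked surface of the same genus as $\mathbf{\Sigma}$, each $\mathbb{D}_1^{(i)}$ is glued along one of its two boundary edges to a boundary edge of the current intermediate surface, and the inner puncture of $\mathbb{D}_1^{(i)}$ corresponds to $p_i$. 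Geometrically, pick pairwise disjoint embedded arcs $\gamma_1,\dots,\gamma_k$ in $\Sigma$ with both endpoints on boundary edges of $\partial_*$ and with each $\gamma_i$ bounding, together with an arc of $\partial_*$, a disk containing only $p_i$ in its interior; cutting along the $\gamma_i$ detaches $k$ punctured bigons and leaves a $2P$-marked surface $\mathbf{\Sigma}_0$. The hypothesis $|\partial_*\cap \mathcal{A}|\geq 2$ allows the $\gamma_i$ to be chosen so that at every intermediate step the boundary component hosting the next gluing edge still contains at least two boundary edges, so Theorem \ref{theorem_gluing} applies inductively.

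Applying Theorem \ref{theorem_gluing} $k$ times yields a bijection between isomorphism classes of indecomposable semi-weight $\overline{\mathcal{S}}_A(\mathbf{\Sigma})$-modules with maximal shadow $\widehat{x}$ and tuples $(V_0, V_{p_1},\dots, V_{p_k})$ where $V_0$ is indecomposable semi-weight over $\overline{\mathcal{S}}_A(\mathbf{\Sigma}_0)$ and each $V_{p_i}$ is indecomposable semi-weight over $\overline{\mathcal{S}}_A(\mathbb{D}_1^{(i)})$, with shadows compatible through the maps $\widehat{p}_{a_i}$ of Theorem \ref{theorem_gluing_surjective}. By Corollary \ref{coro_2Psurfaces}, $V_0$ is determined by its maximal shadow $\widehat{y}_0\in \widehat{X}(\mathbf{\Sigma}_0)$ and is the simple projective Azumaya module of dimension $D_{\mathbf{\Sigma}_0}$. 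By Corollary \ref{coro_QG}, the isomorphism classes of $V_{p_i}$ with maximal shadow $\widehat{x}_{p_i}$ are in canonical bijection with: the singleton $\{S\}$ when the classical shadow of $\widehat{x}_{p_i}$ lies in $\mathcal{FAL}(\mathbb{D}_1)$ and the associated trace differs from $\pm 2$ (corresponding to $p_i\in \mathring{\mathcal{P}}_0$); the pair $\{S,P\}$ when the shadow is in $\mathcal{FAL}(\mathbb{D}_1)$ with trace $\pm 2$ (case $p_i\in \mathring{\mathcal{P}}_1$); and the full set $\Delta\sqcup\overline{\Delta}$ when the shadow is outside $\mathcal{FAL}(\mathbb{D}_1)$ (case $p_i\in \mathring{\mathcal{P}}_2$). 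Factorwise this matches the definition of $\col(\widehat{x})$, yielding item (3). Items (4)--(6) follow by tracking through Corollary \ref{coro_QG}: a tensor product is a weight module, is simple, or is projective in $\overline{\mathcal{C}}$ if and only if every tensor factor is, and the explicit subset of $\Delta\sqcup\overline{\Delta}$ classifying the weight, simple, or projective factors of Corollary \ref{coro_QG} matches exactly the subset distinguished in the corresponding item of the theorem.

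For item (1), Lemma \ref{lemma_additivityD} together with $D_{\mathbb{D}_1}=N$ gives $D_{\mathbf{\Sigma}}=D_{\mathbf{\Sigma}_0}\cdot N^k$. By Theorem \ref{theorem_AL}(2), $\widehat{x}\in \mathcal{AL}(\mathbf{\Sigma})$ iff there exists an irreducible module of maximal dimension $D_{\mathbf{\Sigma}}$ with maximal shadow $\widehat{x}$; via the tensor decomposition this occurs iff every $V_{p_i}$ has dimension $N$, iff every $\widehat{x}_{p_i}$ lies in $\mathcal{AL}(\mathbb{D}_1)$, iff $\mathring{\mathcal{P}}_2=\emptyset$, i.e.\ $m(\widehat{x})=0$. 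Theorem \ref{theorem_smooth2} identifies this condition with $\widehat{x}\in \widehat{X}^{reg}(\mathbf{\Sigma})$, proving (1); item (2) then follows from (1) together with Theorem \ref{theorem_FAL}(4). The main obstacle is the geometric decomposition step: verifying that the arcs $\gamma_i$ can be chosen with the stated properties, that each cut-off piece is indeed isomorphic to $\mathbb{D}_1$ with the correct identification of puncture invariants on both sides of the gluing, and that the intermediate boundary components retain enough edges for Theorem \ref{theorem_gluing} to apply at every step; the subsequent bookkeeping (matching weight/simple/projective conditions between the main theorem and Corollary \ref{coro_QG} factor by factor) is then routine.
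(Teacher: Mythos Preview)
Your proposal is correct and follows essentially the same approach as the paper: decompose $\mathbf{\Sigma}$ as a $2P$-marked surface $\mathbf{\Sigma}_0$ with copies of $\mathbb{D}_1$ glued on (one per inner puncture), then apply Theorem \ref{theorem_gluing} iteratively together with Corollary \ref{coro_2Psurfaces} for $\mathbf{\Sigma}_0$ and Corollary \ref{coro_QG} for each $\mathbb{D}_1$ factor, with Theorem \ref{theorem_smooth2} and Theorem \ref{theorem_FAL} supplying items (1)--(2). Your treatment is in fact slightly more explicit than the paper's about the geometric decomposition and about checking the boundary-edge hypothesis of Theorem \ref{theorem_gluing} at each stage; note that your separate invocation of Theorem \ref{theorem_MCG} in the no-inner-puncture case is redundant, since Corollary \ref{coro_2Psurfaces} already covers that case.
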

 
 \begin{proof}
 If $\mathbf{\Sigma}$ does not have inner puncture then the theorem follows from Corollary \ref{coro_2Psurfaces}. Else, it contains a boundary component with at least two boundary edges.
 Write $\mathring{\mathcal{P}}=\{p_1, \ldots, p_n\}$.
$\mathbf{\Sigma}$ can be obtained from a 2P-marked surface $\mathbf{\Sigma}^0$ and $n$ copies of $\mathbb{D}_1$ together, i.e.
 $$ \mathbf{\Sigma}= \mathbf{\Sigma}^0 \cup_{a_1 \# b_1} \mathbb{D}_1 \cup_{a_2\# b_2} \ldots \cup_{a_n \# b_n} \mathbb{D}_1$$
 for some boundary edges $a_i,b_i$. 
 Let $\theta: \overline{\mathcal{S}}_A (\mathbf{\Sigma}) \hookrightarrow \overline{\mathcal{S}}_A (\mathbf{\Sigma}^0) \otimes \overline{\mathcal{S}}_A (\mathbb{D}_1)^{\otimes n}$ be the induced splitting morphism and $\pi: \widehat{X}(\mathbf{\Sigma}^0) \times \widehat{X}(\mathbb{D}_1)^{\times n} \to \widehat{X}(\mathbf{\Sigma})$ the morphism defined by the restriction of $\theta$ to the center. By Theorem \ref{theorem_gluing}, one has $\pi( \mathcal{AL}(\mathbf{\Sigma}^0) \times \mathcal{AL}(\mathbb{D}_1)^{\times n})= \mathcal{AL}(\mathbf{\Sigma})$. By Corollary \ref{coro_2Psurfaces}, $\mathcal{AL}(\mathbf{\Sigma}^0)= \widehat{X}(\mathbf{\Sigma}^0)$ and by Corollary \ref{coro_QG}, $\mathcal{AL}(\mathbb{D}_1)$ is the set of elements $(g, h_p, h_{\partial})$ such that either $\varphi(g) \neq \pm \mathds{1}_1$ or $\varphi(g)=\pm 2$ and  $h_p = \mp 2$. Therefore $\mathcal{AL}(\mathbf{\Sigma})$ is the set of elements $\widehat{x}=(x,h_p, h_{\partial})$ such that for every $p\in \mathring{\mathcal{P}}$ then either $\rho_x(\alpha_p)\neq \pm \mathds{1}_1$ or $\rho_x(\alpha_p)=\pm \mathds{1}_1$ and $h_p=\mp2$. Together with Theorem \ref{theorem_smooth2}, this proves $(1)$. 
 Assertion $(2)$ follows from Assertion $(1)$ together with Theorem \ref{theorem_FAL}.
 
 Let $\widehat{x} \in \widehat{X}(\mathbf{\Sigma})$. By Theorem \ref{theorem_gluing_surjective}, there exists $(\widehat{x}_0, \widehat{x}_1, \ldots, \widehat{x}_n) \in \widehat{X}(\mathbf{\Sigma}^0) \times \widehat{X}(\mathbb{D}_1)^{ n} $ such that $p(\widehat{x}_0, \widehat{x}_1, \ldots, \widehat{x}_n) =\widehat{x}$. 
  By Theorem \ref{theorem_gluing} the (semi) weight indecomposable $\overline{\mathcal{S}}_A(\mathbf{\Sigma})$ module with classical shadow $\widehat{x}$ have the form 
 $V_0 \otimes V_1 \otimes \ldots \otimes V_n$ where $V_0$ is an indecomposable (semi) weight  $\overline{\mathcal{S}}_A (\mathbf{\Sigma}^0)$ module with shadow $\widehat{x}_0$ and $V_i$ is an indecomposable (semi) weight $\overline{\mathcal{S}}_A (\mathbb{D}_1)$-module with shadow $\widehat{x}_i$. By Corollary \ref{coro_2Psurfaces} $V_0$ is unique up to isomorphism.
For $i\in \{1, \ldots, n\}$, write $\widehat{x}_i= (g_i, h_{p_i}, h_{\partial}^i)$ and let $I^2:= \{i \mbox{ such that }p_i \in \mathring{\mathcal{P}}^2\}$ and $I^1:= \{i \mbox{ such that }p_i \in \mathring{\mathcal{P}}^1\}$.
 By Corollary \ref{coro_QG}, for $i\in \{1, \ldots, n\}$, we have up to isomorphism: $(i)$ a single possible $V_i$ if $i\notin I^1 \cup I^2$, $(ii)$ two possible semi-weight modules $V_i$ if $i\in I^1$ and one of them is a  weight module, $(iii)$ 
 if $i \in I^2$, the indecomposable semi-weight modules $V_i$ are in $1:1$ correspondance with $\Delta\sqcup \overline{\Delta}$. Therefore, the isomorphism classes of semi weight $\overline{\mathcal{S}}_A(\mathbf{\Sigma})$ modules 
with shadow $\widehat{x}$ are in $1:1$ correspondance with the set of colorings admissible at $\widehat{x}$ and we have proved $(3)$. Such an indecomposable representation $V_0 \otimes V_1 \otimes \ldots \otimes V_n$ is weight/ simple/projective in  $\overline{\mathcal{C}}$ if and only if each of the $V_i$ has the same property so $(4)$, $(5)$, $(6)$ follow from  Corollary \ref{coro_QG}.
  This  concludes the proof.
  
 \end{proof}
 
 \section{Representations of unreduced stated skein algebras and the \QMAs}\label{sec_QMS}
 
 \subsection{Statement of the main result}
 
 Let $\mathbf{\Sigma}$ be an essential  marked surface and denote by $\mathbf{\Sigma}^*$ the marked surface obtained from $\mathbf{\Sigma}$ by gluing a triangle along each boundary edge of $\mathbf{\Sigma}$. So each boundary edge $a$ of $\mathbf{\Sigma}$ corresponds to two boundary edges, say $a'$ and $a''$ in $\mathbf{\Sigma}^*$. Let $i: \mathbf{\Sigma} \to \mathbf{\Sigma}^*$ be the embedding which is the identity outside a small neighborhood of $\mathcal{A}$ and embedding $a$ into $a'$. It is proved in  \cite{LeYu_SSkeinQTraces} that the morphism $i_* : \mathcal{S}_A(\mathbf{\Sigma}) \to \overline{\mathcal{S}}_A(\mathbf{\Sigma}^*)$  induces an injective morphism 
  $$ j : \mathcal{S}_A(\mathbf{\Sigma}) \hookrightarrow \overline{\mathcal{S}}_A(\mathbf{\Sigma}^*).$$
 Therefore, for each (semi-weight) representation $r: \overline{\mathcal{S}}_A(\mathbf{\Sigma}^*)\to \End(V)$, we can associate a representation $r^*= r\circ j : \mathcal{S}_A(\mathbf{\Sigma})  \to \End(V)$. Such a representation will be called a \textbf{reduced representation}. Note that since we assume that $\mathbf{\Sigma}$ is essential, then each connected component of  $\mathbf{\Sigma}^*$ as a boundary component with at least two boundary edges. Thus Theorem \ref{main_theorem} provides a classification of such reduced representations and thus a lot of examples. The goal of this subsection is to prove the 
 
 \begin{theorem}\label{theorem_Skein}
 Let $\mathbf{\Sigma}=(\Sigma_{g,n}, \mathcal{A})$ be a connected essential marked surface of genus $g$ with $n$ boundary components such that each boundary component contains at most one boundary edge. So it has $k:= n-|\mathcal{A}|$ inner punctures $p_1, \ldots, p_k$ and $|\mathcal{A}|$ boundary punctures $p_{\partial_1}, \ldots, p_{\partial_{|\mathcal{A}|}}$.
 \begin{enumerate}
 \item The center $\underline{Z}_{\mathbf{\Sigma}}$ of $\mathcal{S}_A(\mathbf{\Sigma})$ is generated by the image $\underline{Z}^0_{\mathbf{\Sigma}}$ of the Frobenius morphism together with the peripheral curves $\gamma_{p_1}, \ldots, \gamma_{p_k}$. So the closed points of $\widehat{\underline{X}}(\mathbf{\Sigma}):= \Specm(\underline{Z}_{\mathbf{\Sigma}})$ are in $1:1$ correspondence with the set of elements $\widehat{\rho}=(\rho, h_{p_1}, \ldots, h_{p_k})$ with $\rho: \Pi_1(\Sigma, \mathbb{V})\to \SL_2$ and $h_{p_i}\in \mathbb{C}$ is such that $T_N(h_{p_i})=-\tr(\rho(\gamma_{p_i}))$. 
 \item The PI-degree $\underline{D}_{\mathbf{\Sigma}}$ of  $\mathcal{S}_A(\mathbf{\Sigma})$ is equal to $N^{3g-3+n+2|\mathcal{A}|}(= D_{\mathbf{\Sigma}^*})$. 
 \item For $\rho: \Pi_1(\Sigma, \mathbb{V})\to \SL_2$, write $\mu(\rho):= (\rho(\alpha(p_{\partial_1})), \ldots, \rho(\alpha(p_{\partial_{|\mathcal{A}|}})) \in (\SL_2)^{|\mathcal{A}|}$.
 Let  $\widehat{\rho}=(\rho, h_{p_i}) \in \widehat{\underline{X}}(\mathbf{\Sigma})$.
 \begin{itemize}
 \item[(i)] If $\mu(\rho)\in (\SL_2^0)^{|\mathcal{A}|}$ and for each $1\leq i \leq k$, either $\tr(\rho(\gamma_{p_i}))\neq \pm 2$ or $\tr(\rho(\gamma_{p_i}))=\pm 2$ and $h_{p_i}=\mp 2$, then $\widehat{\rho}$ belongs to the Azumaya locus of $\mathcal{S}_A(\mathbf{\Sigma})$.
 \item[(ii)] Suppose that either $\mu(\rho)\in (\SL_2^1)^{|\mathcal{A}|}$ or that $\mu(\rho)\in (\SL_2^0)^{|\mathcal{A}|}$ and there exists $i$ such that $\tr(\rho(\gamma_{p_i}))=\pm 2$ and $h_{p_i}\neq \mp 2$, then $\widehat{\rho}$ does not belong to the Azumaya locus of $\mathcal{S}_A(\mathbf{\Sigma})$.
\end{itemize}
 \end{enumerate}
 \end{theorem}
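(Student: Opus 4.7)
The strategy is to transfer information from $\overline{\mathcal{S}}_A(\mathbf{\Sigma}^*)$ to $\mathcal{S}_A(\mathbf{\Sigma})$ through L\^e--Yu's embedding $j$, exploiting the fact that $\mathbf{\Sigma}^*$ has the same genus and boundary components as $\mathbf{\Sigma}$ but with $2|\mathcal{A}|$ boundary edges. My first move would be to show that $j$ becomes an isomorphism after localization, or equivalently that $j$ induces an equality of skew fraction fields $Q(\mathcal{S}_A(\mathbf{\Sigma})) = Q(\overline{\mathcal{S}}_A(\mathbf{\Sigma}^*))$. The skein interpretation of Alekseev's morphism mentioned in the acknowledgments should provide the explicit recipe: every stated arc of $\mathbf{\Sigma}^*$ may be expressed, after splitting along the added triangles, as a product of an element coming from $\mathcal{S}_A(\mathbf{\Sigma})$ and of  stated arcs in the glued triangles, the latter being invertible elements of the triangle algebras. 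Combined with the fact that the triangle algebra is a quantum torus, this gives $\underline{D}_{\mathbf{\Sigma}} = D_{\mathbf{\Sigma}^*} = N^{3g-3+n+2|\mathcal{A}|}$ via Theorem \ref{theorem_center}, proving item (2).

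For the center in item (1), the peripheral loops $\gamma_{p_i}$ around inner punctures are obviously central, and the image of the Frobenius morphism is central by Theorem \ref{theorem_Frobenius}. To see that these generate $\underline{Z}_{\mathbf{\Sigma}}$, I would use the embedding $j$ together with the description of $Z_{\mathbf{\Sigma}^*}$ from Theorem \ref{theorem_center}: the generators $\gamma_p$ for $p \in \mathring{\mathcal{P}}(\mathbf{\Sigma}^*) = \mathring{\mathcal{P}}(\mathbf{\Sigma})$ lie in $j(\mathcal{S}_A(\mathbf{\Sigma}))$, while the boundary elements $\alpha_\partial^{\pm 1}$ of $\mathbf{\Sigma}^*$ lie outside $j(\mathcal{S}_A(\mathbf{\Sigma}))$ but become invertible scalars after localization, accounting precisely for the $|\mathcal{A}|$-dimensional discrepancy between $\widehat{X}(\mathbf{\Sigma}^*)$ and $\widehat{\underline{X}}(\mathbf{\Sigma})$. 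Comparing Krull dimensions (the subalgebra generated by Frobenius and the $\gamma_{p_i}$ has the expected transcendence degree $6g-6+2n+2|\mathcal{A}|+k$, matching that of $\widehat{\underline{X}}(\mathbf{\Sigma})$) concludes the identification.

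For the Azumaya locus in item (3), I would establish a correspondence between closed points of $\widehat{\underline{X}}(\mathbf{\Sigma})$ and those of $\widehat{X}(\mathbf{\Sigma}^*)$ via the restriction map $\pi^* \colon \widehat{X}(\mathbf{\Sigma}^*) \to \widehat{\underline{X}}(\mathbf{\Sigma})$ induced by $j$. For part (i), given $\widehat{\rho}$ satisfying the stated conditions, I would lift it to a point $\widehat{x}^* \in (\pi^*)^{-1}(\widehat{\rho})$ lying in $\mathcal{AL}(\mathbf{\Sigma}^*)$: the condition $\mu(\rho) \in (\SL_2^0)^{|\mathcal{A}|}$ ensures the lift exists as a point of $\widehat{X}(\mathbf{\Sigma}^*)$ (the non-vanishing of the corner-arc matrix coefficient is exactly what is required to descend from $\SL_2$ to the reduced variety at each glued triangle), while the conditions on $h_{p_i}$ are exactly the inner-puncture conditions of Theorem \ref{main_theorem_intro}(1). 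The Azumaya representation of $\overline{\mathcal{S}}_A(\mathbf{\Sigma}^*)$ at $\widehat{x}^*$ has dimension $D_{\mathbf{\Sigma}^*}=\underline{D}_{\mathbf{\Sigma}}$, and, using equality of fraction fields, its restriction via $j$ remains irreducible, placing $\widehat{\rho}$ in $\mathcal{AL}$. For part (ii), when $\mu(\rho) \in (\SL_2^1)^{|\mathcal{A}|}$, every irreducible with this classical shadow kills all bad arcs and hence factors through $\overline{\mathcal{S}}_A(\mathbf{\Sigma})$, whose PI-degree $N^{3g-3+n+|\mathcal{A}|}$ is strictly smaller than $\underline{D}_{\mathbf{\Sigma}}$, so by Theorem \ref{theorem_AL}(2) $\widehat{\rho}\notin\mathcal{AL}$; when $\mu(\rho) \in (\SL_2^0)^{|\mathcal{A}|}$ and some $h_{p_i}$ fails the condition, every lift $\widehat{x}^*$ fails the condition of Theorem \ref{main_theorem_intro}(1), so every irreducible above $\widehat{x}^*$ has dimension strictly less than $D_{\mathbf{\Sigma}^*}$, and a dimension-counting argument transfers the same bound to representations of $\mathcal{S}_A(\mathbf{\Sigma})$. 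The hardest step will be the last one: one must show that every irreducible representation of $\mathcal{S}_A(\mathbf{\Sigma})$ with classical shadow $\widehat{\rho}$ (when $\mu(\rho) \in (\SL_2^0)^{|\mathcal{A}|}$) does arise as a restriction of an indecomposable representation of $\overline{\mathcal{S}}_A(\mathbf{\Sigma}^*)$ over some lift $\widehat{x}^*$, and that irreducibility of the restriction is preserved; this is exactly the content of Alekseev's morphism giving an explicit equivalence between representations on both sides after inverting appropriate corner-arc elements.
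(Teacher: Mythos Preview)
Your overall strategy is right: transfer from $\overline{\mathcal{S}}_A(\mathbf{\Sigma}^*)$ to $\mathcal{S}_A(\mathbf{\Sigma})$ via $j$, lift points along $j^*$, and exploit Theorem~\ref{main_theorem} for $\mathbf{\Sigma}^*$. But the key mechanism you propose is incorrect, and this propagates into the later steps.

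\textbf{The fraction fields are not equal.} The skew fraction fields $Q(\mathcal{S}_A(\mathbf{\Sigma}))$ and $Q(\overline{\mathcal{S}}_A(\mathbf{\Sigma}^*))$ have centers of transcendence degrees $6g-6+3n+3|\mathcal{A}|$ and $6g-6+3n+4|\mathcal{A}|$ respectively (compare $\dim \mathcal{R}_{\SL_2}(\mathbf{\Sigma})$ with $\dim \overline{\mathcal{R}}_{\SL_2}(\mathbf{\Sigma}^*)$), so they cannot coincide. What the paper proves instead (Lemma~\ref{lemma_Skein2}) is that after inverting bad arcs, $\overline{\mathcal{S}}_A(\mathbf{\Sigma}^*)$ is generated by the image of $j^{loc}$ \emph{together with} $Z_{\mathbf{\Sigma}^*}$. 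This is exactly the discrepancy you noticed between the centers, and it is the right statement: it implies that for any lift $\widehat{\rho}^*\in (j^*)^{-1}(\widehat{\rho})$ the induced map on fibres
\[
j_{\widehat{\rho}}\colon \mathcal{S}_A(\mathbf{\Sigma})_{\widehat{\rho}} \longrightarrow \overline{\mathcal{S}}_A(\mathbf{\Sigma}^*)_{\widehat{\rho}^*}
\]
is \emph{surjective}. Surjectivity of $j_{\widehat{\rho}}$, not equality of fraction fields, is what forces the pullback $r\circ j$ of an irreducible $r$ to remain irreducible (via Schur). The PI-degree then drops out of the representation theory: one has irreducibles of dimension $D_{\mathbf{\Sigma}^*}$ on an open dense set, hence (by density of the Azumaya locus) $\underline{D}_{\mathbf{\Sigma}}=D_{\mathbf{\Sigma}^*}$.

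\textbf{The Krull-dimension argument for the center is not enough.} Two integral domains of the same Krull dimension, one contained in the other, need not be equal. The paper instead proves $\underline{Z}_{\mathbf{\Sigma}}=\underline{Z}'_{\mathbf{\Sigma}}$ (Lemma~\ref{lemma_Skein4}) by first observing $j(\underline{Z}_{\mathbf{\Sigma}})=Z_{\mathbf{\Sigma}^*}\cap\operatorname{Im}(j)$ (from Lemma~\ref{lemma_Skein2}) and then computing this intersection in an explicit basis $T\mathcal{B}_{\mathbf{\Sigma}^*}$ adapted to both subspaces.

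\textbf{The case you flag as hardest is in fact the easiest.} For $\mu(\rho)\in(\SL_2^0)^{|\mathcal{A}|}$ with some bad $h_{p_i}$, you do not need to show that every irreducible at $\widehat{\rho}$ arises by restriction. By Theorem~\ref{theorem_AL}(2), to show $\widehat{\rho}\notin\mathcal{AL}$ it suffices to exhibit \emph{one} irreducible of dimension $<\underline{D}_{\mathbf{\Sigma}}$ with that shadow. Take any irreducible $r$ over any lift $\widehat{\rho}^*$ (which is not in $\mathcal{AL}(\mathbf{\Sigma}^*)$, so $\dim<D_{\mathbf{\Sigma}^*}$); then $r\circ j$ is irreducible by surjectivity of $j_{\widehat{\rho}}$ and has the same dimension.
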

 
 \begin{remark} Soon after the prepublication of the present paper, Yu computed in \cite{Yu_CenterSSkein} the center and PI-degrees of every stated skein algebras, thus generalizing the first and second items of Theorem \ref{theorem_Skein}. As detailed in  Subsection \ref{sec_QMA}, in the particular case where $|\mathcal{A}|=1$, our theorem re-proves some results of \cite{BaseilhacRoche_LGFT1,BaseilhacRoche_LGFT2, GanevJordanSafranov_FrobeniusMorphism}.
 \end{remark}
 
 \subsection{Quantum groups and their braided transmutation}
 
 Let us introduce two marked surfaces: 
 the \textbf{bigon} $\mathbb{B}=(D^2, \{a_L, a_r\})$ is a disc with two boundary edges; the \textbf{punctured monogon} $\mathbf{m}_1=(\Sigma_{0,2}, \{b\})$ is an annulus with a single boundary edge. The skein algebras of the bigon and the punctured monogon have  quantum group interpretations as follows.
 
 \begin{definition}[Quantum groups and their transmutation] Set $q:= A^2$.
 \begin{enumerate}
 \item The \textbf{quantum group} $\mathcal{O}_q\SL_2$ is the algebra defined by the generators $a,b,c,d$ and relations
 $$ab = q^{-1}ba, \quad  ac=q^{-1}ca,
\quad  db = q bd, \quad dc=q cd,
\quad ad=1+q^{-1}bc, \quad  da=1 + q bc, 
\quad bc=cb.$$
 \item The \textbf{braided quantum group} $B_q\SL_2$ is the algebra with the same underlying vector space than $\mathcal{O}_q\SL_2$ but with product given by 
 \begin{align*}
& ba=q^{2} ab, \quad ca=q^{-2}ac, \quad da=ad, \quad bc=cb+(1-q^{-2})a(d-a) \\
&db=bd+(1-q^{-2})ab, \quad cd=dc+(1-q^{-2})ca, \quad, ad-q^{2}cb=1.
\end{align*} 
\item The \textbf{Frobenius morphism} $Fr: \mathcal{O}[\SL_2] \to \mathcal{O}_q\SL_2$ is the central embedding defined by $Fr(x):= x^N$ for $x=a,b,c,d$. 
 \end{enumerate}
 \end{definition}
 
 \begin{remark}
 The quantum group $\mathcal{O}_q\SL_2$ has a natural coribbon Hopf algebra structure with coproduct and antipode
 
 \begin{equation*}
 \begin{pmatrix} \Delta (a) & \Delta (b) \\ \Delta(c) & \Delta(d) \end{pmatrix} 
 = 
 \begin{pmatrix} a & b \\ c & d \end{pmatrix} 
 \boxtimes 
 \begin{pmatrix} a & b \\ c & d \end{pmatrix} 
\quad
 \begin{pmatrix} \epsilon(a) & \epsilon(b) \\ \epsilon(c) & \epsilon(d) \end{pmatrix} =
\begin{pmatrix} 1 &0 \\ 0& 1 \end{pmatrix}  
\quad
\begin{pmatrix} S(a) & S(b) \\ 	S(c) & S(d) \end{pmatrix} 
	= 
	\begin{pmatrix} d & -q b \\ -q^{-1}c & a \end{pmatrix} .
 \end{equation*}
and with co-R matrix and co-twist given by
$$ r \left(  \begin{pmatrix} a & b \\ c & d \end{pmatrix} 
 \boxtimes 
 \begin{pmatrix} a & b \\ c & d \end{pmatrix} \right)
= \begin{pmatrix} q^{1/2} & 0 & 0 & 0 \\ 0 & 0 &q^{-1/2} & 0 \\ 0 & q^{-1/2} & q^{1/2}-q^{-3/2} & 0 \\ 0 & 0 & 0 & q^{1/2} \end{pmatrix}
\quad
\Theta  \begin{pmatrix} a & b \\ c & d \end{pmatrix} = -q^{3/2}\begin{pmatrix} 1 &0 \\ 0& 1 \end{pmatrix}.$$
Majid's transmutation is a procedure which associates to any coribbon Hopf $H$ algebra an Hopf algebra object $BH$ in the category $H-\Comod$  where $BH=H$ as a coalgebra and the \textit{transmuted product} is given by 
$$
\underline{\mu} (x\otimes y) := \sum x_{(2)}y_{(2)} r( S(x_{(1)}) x_{(3)} \otimes S(y_{(1)})) 
$$
and the \textit{transmuted antipode} is 
$$
 \underline{S} (x) = \sum S(x_{(2)})r((S^{2}(x_{(3)}) S(x_{(1)}) \otimes x_{(4)}))
 $$
 $B_q\SL_2$ is the transmutation of $\mathcal{O}_q\SL_2$ so this explains its origin. We refer to \cite{Majid_BQG_Rank, Majid_BraidedHopfAlg, Majid_QGroups} for details.
 \end{remark}
 
 For $i,j \in \{-, +\}$, consider the elements $\beta_{ij}:= \adjustbox{valign=c}{\includegraphics[width=0.8cm]{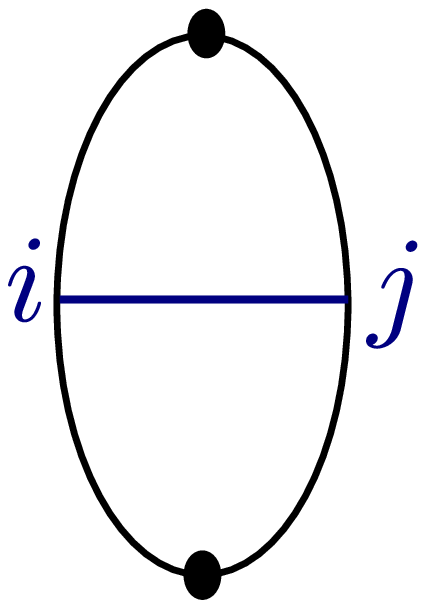}} \in \mathcal{S}_A(\mathbb{B})$ and $\gamma_{ij} := \adjustbox{valign=c}{\includegraphics[width=1cm]{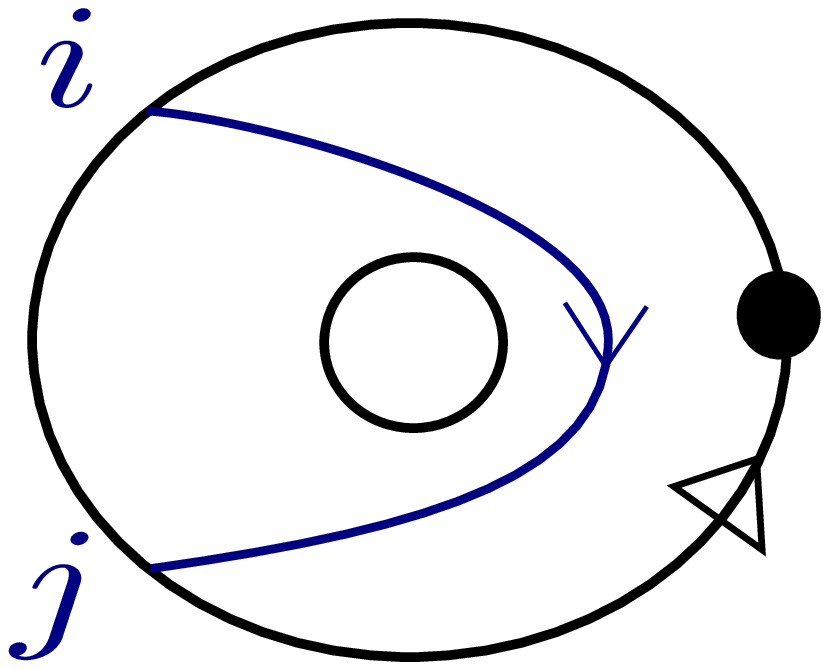}} \in \mathcal{S}_A(\mathbf{m}_1)$.
 
 \begin{theorem}
 \begin{enumerate}
 \item (\cite{KojuQuesneyClassicalShadows, CostantinoLe19}) One has an algebra isomorphism $\varphi_1: \mathcal{S}_A(\mathbb{B}) \cong \mathcal{O}_q[\SL_2]$ sending $\beta_{++}$, $\beta_{+-}$, $\beta_{-+}$, $\beta_{--}$ to $a,b,c,d$ respectively.
\item (\cite{CostantinoLe19}, see also \cite{KojuMurakami_QCharVar, LeSikora_SSkein_SLN}) One has an algebra isomorphism $\varphi_2: \mathcal{S}_A(\mathbf{m}_1) \cong B_q[\SL_2]$ defined by 
$$\varphi_2\begin{pmatrix} a & b \\ c & d \end{pmatrix} := \begin{pmatrix} 0 & -A^{5/2} \\ A^{1/2} & 0 \end{pmatrix} \begin{pmatrix} \gamma_{++} & \gamma_{+-} \\ \gamma_{-+} & \gamma_{--} \end{pmatrix} =\begin{pmatrix} -A^{5/2}\gamma_{-+} & -A^{5/2}\gamma_{--} \\ A^{1/2}\gamma_{++} & A^{1/2}\gamma_{+-} \end{pmatrix}.$$
 \end{enumerate}
 \end{theorem}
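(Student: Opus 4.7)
The plan is to establish both isomorphisms by generators-and-relations, with the key input being the skein relations (\ref{eq: skein 1}) and (\ref{eq: skein 2}) together with the basis results (L\^e's basis $\mathcal{B}^L$ or M\"uller's basis $\mathcal{B}^M$) proved earlier. In both cases, the scheme is: (a) show that the stated arcs $\beta_{ij}$ (resp.\ $\gamma_{ij}$) generate the stated skein algebra; (b) verify that they satisfy the defining relations of the target algebra, so that $\varphi_1$ (resp.\ $\varphi_2$) is a well-defined surjective algebra morphism; (c) exhibit a PBW-type basis on each side and match them to conclude injectivity.

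For the bigon, step (a) is easy: every diagram in $\mathbb{B}=(D^{2},\{a_L,a_R\})$ has no inner puncture and the only nontrivial simple diagrams are arcs from $a_L$ to $a_R$, all parallel, so $\mathcal{B}^L$ consists of monomials in the $\beta_{ij}$ with $\mathfrak{o}^+$-increasing states. For step (b) the commutation relations $\beta_{+\varepsilon}\beta_{-\varepsilon'}$ vs.\ $\beta_{-\varepsilon'}\beta_{+\varepsilon}$ come directly from the height-exchange relations recalled in Lemma~\ref{lemma_heightexch}, applied in turn on $a_L$ and on $a_R$; the ``quantum determinant = $1$'' identity $\beta_{++}\beta_{--}-q^{-1}\beta_{+-}\beta_{-+}=1$ is the classical ``bigon identity'', obtained by applying the skein relation \eqref{eq: skein 2} to a bigon that can be simplified using the relation $\begin{tikzpicture}[baseline=-0.4ex,scale=0.35,>=stealth]\draw [fill=gray!45,gray!45] (-.7,-.75)  rectangle (.4,.75);\draw[->] (0.4,-0.75) to (.4,.75);\draw[line width=1.2] (0.4,-0.3) to (0,-.3);\draw[line width=1.2] (0.4,0.3) to (0,.3);\draw[line width=1.1] (0,0) ++(90:.3) arc (90:270:.3);\draw (0.65,0.3) node {\scriptsize{$+$}}; \draw (0.65,-0.3) node {\scriptsize{$-$}};\end{tikzpicture}=A^{-1/2}$. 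For step (c), ordered monomials in $\{\beta_{++}^i\beta_{+-}^j\beta_{-+}^k\beta_{--}^\ell\}$ with one of $j,k$ equal to $0$ form $\mathcal{B}^L$ and match the PBW basis of $\mathcal{O}_q\SL_2$.

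For the punctured monogon, step (a) uses the fact that the only inner puncture $p$ is encircled by a loop $\gamma_p$ and that, using the skein relation $\gamma_p=-A^2\gamma_{+-}\gamma_{-+} -A^{-2}\gamma_{-+}\gamma_{+-}+\ldots$ (obtained by resolving one crossing after isotoping the loop to pass through the boundary edge), every simple diagram in $\mathbf{m}_1$ can be expressed as a monomial in the $\gamma_{ij}$. The subtle point is step (b): the transmuted relations of $B_q\SL_2$, in contrast with those of $\mathcal{O}_q\SL_2$, are nontrivial because in $\mathbf{m}_1$ two arcs with endpoints on the single boundary edge can be ``braided around the puncture''. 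Concretely, one takes two parallel copies of $\gamma$, isotopes one of them so that it loops once around $p$, and then resolves the resulting crossings via \eqref{eq: skein 1}; the resulting skein identity yields precisely the six braided commutation relations in the definition of $B_q\SL_2$, with the prefactors $(q^2,q^{-2},1,1-q^{-2},\ldots)$ arising from the co-$R$ matrix of $\mathcal{O}_q\SL_2$. This step is the main obstacle: one must carefully track sign and power-of-$A$ normalizations, which is precisely why the isomorphism $\varphi_2$ is twisted by the matrix $\begin{pmatrix}0 & -A^{5/2}\\ A^{1/2} & 0\end{pmatrix}$ (this factor corresponds to $-q^{1/2}\widehat{R}$ and encodes the half-twist needed to turn each arc into a correctly framed element of $\mathbf{m}_1$). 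Finally, step (c) compares the M\"uller basis $\mathcal{B}^M$ of $\mathcal{S}_A(\mathbf{m}_1)$ with the natural PBW basis $\{a^ib^jc^k d^\ell\}$ (with $i\ell =0$) of $B_q\SL_2$ — which has the same underlying vector space as $\mathcal{O}_q\SL_2$ — to conclude bijectivity.

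An alternative, more conceptual route for (2) is to invoke the functoriality of $\mathcal{S}_A$ with respect to gluing: observe that $\mathbf{m}_1$ is obtained from $\mathbb{B}$ by self-gluing $a_L$ to $a_R^{\mathrm{op}}$, and that Majid's transmutation construction is expressed by exactly the same combinatorial formula (coproduct, antipode, and co-$R$ matrix of $\mathcal{O}_q\SL_2$ applied in a specific order) as the splitting morphism $\theta_{a_L \# a_R}$ of Definition~\ref{def_gluing_map}. Identifying these two formulas under $\varphi_1$ gives $\varphi_2$ essentially for free and bypasses the long direct verification; the main obstacle then becomes matching the framing/twist conventions precisely, which again accounts for the prefactor $\begin{pmatrix}0 & -A^{5/2}\\ A^{1/2} & 0\end{pmatrix}$.
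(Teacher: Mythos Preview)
The paper does not give its own proof of this theorem: both items are stated as cited results from \cite{KojuQuesneyClassicalShadows, CostantinoLe19} and \cite{CostantinoLe19, KojuMurakami_QCharVar, LeSikora_SSkein_SLN} respectively, with no argument supplied in the body of the paper. So there is no proof here to compare against. That said, your direct generators-and-relations scheme (steps (a)--(c) in each case) is indeed the approach taken in those references, and your sketch is broadly correct.

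Two issues are worth flagging. First, a small slip in step~(c) for the bigon: you claim the L\^e basis consists of monomials $\beta_{++}^i\beta_{+-}^j\beta_{-+}^k\beta_{--}^\ell$ with one of $j,k$ equal to zero. This is the wrong pair. In $\mathcal{O}_q\SL_2$ it is $a$ and $d$ that cannot both appear (the relation $ad=1+q^{-1}bc$ eliminates one), while $b$ and $c$ commute and may appear simultaneously. Correspondingly, the $\mathfrak{o}^+$-increasing condition on parallel arcs in $\mathbb{B}$ forces one of $i,\ell$ to vanish, not one of $j,k$.

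Second, and more seriously, your ``alternative conceptual route'' for part~(2) rests on a false topological claim: $\mathbf{m}_1$ is \emph{not} obtained from $\mathbb{B}$ by self-gluing $a_L$ to $a_R$. Self-gluing the two boundary edges of a disc produces an annulus with \emph{no} boundary edges (each of the two punctures of the bigon closes up to an unmarked boundary circle), whereas $\mathbf{m}_1=(\Sigma_{0,2},\{b\})$ carries one boundary edge. Consequently the splitting morphism $\theta_{a_L\#a_R}$ of Definition~\ref{def_gluing_map} does not land in $\mathcal{S}_A(\mathbf{m}_1)$, and the proposed identification with Majid's transmutation formula cannot be made in this way. (A correct topological realization is, for instance, to self-glue two of the three edges of the triangle $\mathbb{T}$, which does yield $\mathbf{m}_1$; but then the combinatorics are no longer a direct transcription of the transmutation formula and one is essentially back to the direct verification.) So this alternative route, as written, does not go through; your main approach via explicit skein computations is the one that works.
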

 
 Note that both isomorphisms $\varphi_1$ and $\varphi_2$ intertwines the Frobenius morphisms coming from quantum groups and skein algebras (here we consider that $\mathcal{O}_q\SL_2$ and $B_q\SL_2$ are equal as $\mathcal{O}[\SL_2]$ modules).
 
 \begin{theorem}\label{theorem_QG_free} Let $Z^0 \subset \mathcal{O}_q\SL_2$ the image of the Frobenius morphism. 
 \begin{enumerate}
 \item (\cite{DeConciniLyubashenko_OqG}, see also  \cite[Proposition $2.2$]{BrownGordon_OqG}) $\mathcal{O}_q\SL_2$ is free over $Z^0$ of rank $N^3$. 
 \item (\cite{DabrowskiReinaZampa_BasisOqSL2_Z0}) An explicit basis of $\mathcal{O}_q\SL_2$ over $Z^0$ is given by 
 $$\mathcal{B}_0:= \{ a^m b^n c^{s'}:  1\leq m \leq s' \leq N-1, 0\leq n \leq N-1\} \sqcup \{b^n c^{s''}d^r: 0\leq n,r \leq N-1, 0 \leq s'' \leq N-1-r\}.$$
 \end{enumerate}
 \end{theorem}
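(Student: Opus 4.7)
The plan is to derive both items in parallel from the PBW structure of $\mathcal{O}_q\SL_2$ combined with an explicit computation inside the Frobenius subalgebra $Z^0$. First, I would apply the diamond lemma to the defining relations (the six $q$-commutations together with the reduction $ad \to 1 + q^{-1}cb$), obtaining the PBW basis
\[ \mathcal{P} \;:=\; \{a^m b^n c^s : m, n, s \geq 0\} \,\sqcup\, \{b^n c^s d^r : n, s \geq 0,\, r \geq 1\} \]
of $\mathcal{O}_q\SL_2$ as a $\mathbb{C}$-vector space. All cubic overlap ambiguities (those involving $a, b, d$ or $a, c, d$) resolve from the quadratic commutations; as a by-product $\mathcal{O}_q\SL_2$ is an integral domain.

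For item $(1)$, the crucial step is to describe $Z^0$ explicitly and derive a central relation linking the Frobenius generators. Since $q^N = 1$ with $N$ odd, the elements $a^N, b^N, c^N, d^N$ pairwise commute and lie in $Z^0$, and they generate it. The key identity is
\[ a^n d^n \;=\; \prod_{k=0}^{n-1}\bigl(1 + q^{-(2k+1)}bc\bigr), \]
proved by induction using $a \cdot f(bc) = f(q^{-2}bc) \cdot a$ at each step. Setting $n = N$: since $\gcd(2, N) = 1$, the exponents $-(2k+1) \bmod N$ exhaust $\mathbb{Z}/N\mathbb{Z}$, and the elementary identity $\prod_{j=0}^{N-1}(1 + q^j x) = 1 + x^N$ (valid for $N$ odd) collapses the product to $1 + b^N c^N$, giving $a^N d^N - b^N c^N = 1$ and identifying $Z^0 \cong \mathcal{O}[\SL_2]$. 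To obtain freeness and the rank $N^3$, one computes a closed fiber of $\mathcal{O}_q\SL_2$ over $Z^0$ at a convenient point of $\Specm(Z^0) = \SL_2(\mathbb{C})$, such as the identity matrix: the quotient by $(a^N - 1, b^N, c^N, d^N - 1)$ is generated as a $\mathbb{C}$-algebra by $a, b, c$ alone (since in the fiber $a^N = 1$ makes $a$ invertible with $d = a^{N-1}(1 + q^{-1}cb)$ forced by $ad = 1 + q^{-1}cb$), with basis $\{a^m b^n c^s : 0 \leq m, n, s \leq N-1\}$ of size exactly $N^3$. Checking that every closed fiber has the same dimension and invoking a standard semicontinuity argument over the smooth irreducible variety $\SL_2(\mathbb{C})$ then yields freeness of rank $N^3$.

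For item $(2)$, given that $\mathcal{O}_q\SL_2$ is free of rank $N^3$ over $Z^0$, it suffices to verify $|\mathcal{B}_0| = N^3$ and that $\mathcal{B}_0$ spans. The cardinality is an elementary count: $\tfrac{N^2(N-1)}{2} + \tfrac{N^2(N+1)}{2} = N^3$. For spanning, one reduces an arbitrary monomial in $\mathcal{P}$ to a $Z^0$-linear combination of elements of $\mathcal{B}_0$ by iteratively applying the local relation $ad = 1 + q^{-1}cb$ (to trade between the $a$-branch and the $d$-branch of $\mathcal{P}$) together with the central relation $a^N d^N - b^N c^N = 1$ in $Z^0$ (to eliminate monomials whose exponents would overflow the bounds $m \leq s'$ or $s'' + r \leq N - 1$). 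The main obstacle is the combinatorial bookkeeping: one must show that the two families of $\mathcal{B}_0$, with their asymmetric constraints $1 \leq m \leq s' \leq N - 1$ and $0 \leq s'' \leq N - 1 - r$, carve out exactly an $N^3$-element normal form with no hidden $Z^0$-linear dependencies, precisely because these constraints encode the obstructions to overlap between the $a$-branch and the $d$-branch once one has quotiented by the central determinant relation. Linear independence of $\mathcal{B}_0$ over $Z^0$ is then forced by the numerical coincidence $|\mathcal{B}_0| = \mathrm{rank}_{Z^0}\mathcal{O}_q\SL_2 = N^3$.
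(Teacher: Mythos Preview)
The paper does not prove this theorem; both items are quoted from external references (De Concini--Lyubashenko, Brown--Gordon, and Dabrowski--Reina--Zampa) with no argument given. So there is no in-paper proof to compare against, and I evaluate your sketch on its own merits.

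Your ingredients are the right ones --- the PBW basis via the diamond lemma, the inductive identity $a^n d^n = \prod_{k=0}^{n-1}(1+q^{-(2k+1)}bc)$ collapsing at $n=N$ to $a^Nd^N = 1 + b^Nc^N$, and the cardinality count $|\mathcal{B}_0|=N^3$ are all correct and cleanly stated. The genuine gap is in your argument for item~(1). You compute a \emph{single} fibre (at the identity) and then write ``Checking that every closed fibre has the same dimension and invoking a standard semicontinuity argument\ldots''. But upper semicontinuity of fibre dimension only tells you that special fibres can be \emph{larger} than the generic one; knowing one fibre has dimension $N^3$ gives you nothing about the others. The step you defer --- verifying constant fibre dimension --- is the entire content of the freeness claim, and it is not easier than the problem itself.

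The fix is to reverse the order: prove (2) directly, and (1) falls out. Concretely, show that the multiplication map $Z^0 \otimes_{\mathbb{C}} \mathbb{C}\mathcal{B}_0 \to \mathcal{O}_q\SL_2$ is a $\mathbb{C}$-linear isomorphism by matching PBW bases on both sides (taking the PBW basis $\{A^iB^jC^k\}\sqcup\{B^jC^kD^l:l\geq 1\}$ of $Z^0\cong\mathcal{O}[\SL_2]$ and checking that multiplying against $\mathcal{B}_0$ bijects onto the PBW basis of $\mathcal{O}_q\SL_2$, up to powers of $q$). This combinatorial bijection is exactly what the asymmetric constraints $1\leq m\leq s'\leq N-1$ and $0\leq s''\leq N-1-r$ are engineered to produce, as you correctly intuit in your last paragraph; carrying it out gives linear independence and spanning simultaneously, hence freeness of rank $N^3$. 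Your current logic uses (1) to deduce linear independence in (2), which is circular once you notice that (1) itself was never established.
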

 
 Let $\gamma_p \in \mathcal{S}_A(\mathbf{m}_1)$ be the peripheral curve around the unique inner puncture. A simple skein computation shows that $\gamma_p= A^{1/2}\gamma_{-+} - A^{5/2}\gamma_{+-}$ so $\omega:= \varphi_2^{-1}(\gamma_p)=-q^{-1} a -q d\in B_q\SL_2$. 
 
 \begin{lemma}\label{lemma_centerBqSL2}
Let  $Z'\subset B_q\SL_2$ be the subalgebra generated by $Z^0$ and $\omega$. Then  $\dim_{Q(Z')} (B_q\SL_2\otimes_{Z'} Q(Z'))=N$.
 \end{lemma}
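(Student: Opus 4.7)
The plan is to compute $[Q(Z'):Q(Z^0)]$ and combine this with Theorem \ref{theorem_QG_free}(1). Since $B_q\SL_2=\mathcal{O}_q\SL_2$ as $\mathcal{O}[\SL_2]$-modules, $B_q\SL_2$ is free of rank $N^3$ over $Z^0$, so $B_q\SL_2\otimes_{Z^0}Q(Z^0)$ has $Q(Z^0)$-dimension $N^3$. Once $Z'=Z^0[\omega]$ is seen to be module-finite over $Z^0$, standard localization of domains gives $Z'\otimes_{Z^0}Q(Z^0)=Q(Z')$, whence $B_q\SL_2\otimes_{Z^0}Q(Z^0)=B_q\SL_2\otimes_{Z'}Q(Z')$ as $Q(Z^0)$-vector spaces. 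The desired $Q(Z')$-dimension is therefore $N^3/[Q(Z'):Q(Z^0)]$, and I aim to show the extension degree is exactly $N^2$.

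The crucial observation is that the $B_q\SL_2$-relation $da=ad$ places $\omega=-q^{-1}a-qd$ inside the commutative polynomial subring $\mathbb{C}[a,d]\subset B_q\SL_2$: there is no nontrivial polynomial identity relating $a$ and $d$ alone, as any such would contradict the freeness of the PBW-basis $\mathcal{B}_0$ of Theorem \ref{theorem_QG_free}. This subring is free of rank $N^2$ over $\mathbb{C}[a^N,d^N]$ with basis $\{a^id^j\}_{0\le i,j\le N-1}$, and the fraction-field extension $\mathbb{C}(a,d)/\mathbb{C}(a^N,d^N)$ is Galois with group $\mu_N\times\mu_N$ acting by $(\zeta_1,\zeta_2)\cdot(a,d)=(\zeta_1 a,\zeta_2 d)$. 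By algebraic independence of $a$ and $d$, the $N^2$ Galois conjugates $-q^{-1}\zeta_1 a-q\zeta_2 d$ of $\omega$ are pairwise distinct; hence the stabilizer of $\omega$ is trivial and its minimal polynomial $P(X)\in\mathbb{C}[a^N,d^N][X]$ has degree exactly $N^2$.

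To lift this to $Q(Z^0)$, apply the $\mathcal{O}[\SL_2]$-algebra map $Fr$ to the classical identity $ad-bc=1$ to obtain $a^Nd^N-b^Nc^N=1$ in $Z^0$; using this to eliminate $c^N$ identifies $Q(Z^0)\simeq\mathbb{C}(a^N,d^N)(b^N)$ as a purely transcendental extension of $\mathbb{C}(a^N,d^N)$. Since $P(X)$ is irreducible in $\mathbb{C}(a^N,d^N)[X]$ and its coefficients do not involve $b^N$, Gauss's lemma applied to the UFD $\mathbb{C}(a^N,d^N)[b^N]$ guarantees that $P(X)$ remains irreducible in $\mathbb{C}(a^N,d^N)(b^N)[X]=Q(Z^0)[X]$. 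Writing $Q(Z')=Q(Z^0)(\omega)$ and comparing degrees yields $[Q(Z'):Q(Z^0)]=\deg P=N^2$, so $\dim_{Q(Z')}(B_q\SL_2\otimes_{Z'}Q(Z'))=N^3/N^2=N$, as claimed.

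The main obstacle is establishing that the Galois orbit of $\omega$ has the full expected size $N^2$: this rests on algebraic independence of $a$ and $d$ inside the commutative subring $\mathbb{C}[a,d]\subset B_q\SL_2$ (so that the linear forms $-q^{-1}\zeta_1 a-q\zeta_2 d$ are genuinely distinct) together with the preservation of irreducibility under the purely transcendental extension by $b^N$. A minor technicality is transporting the freeness of rank $N^3$ in Theorem \ref{theorem_QG_free}(1) from $\mathcal{O}_q\SL_2$ to $B_q\SL_2$; this is legitimate because the two share the same $Z^0$ (the commutative subalgebra generated by the central elements $a^N,b^N,c^N,d^N$) and have identical underlying $Z^0$-module structures by the stated identification.
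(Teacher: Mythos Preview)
Your key claim, that $[Q(Z'):Q(Z^0)]=N^2$, is incorrect: in fact $\omega$ satisfies a monic polynomial of degree $N$ over $Z^0$. This is exactly the Chebyshev relation coming from Theorem~\ref{theorem_Frobenius}: the Frobenius morphism sends the peripheral loop $\gamma_p$ to $T_N(\gamma_p)$, so $T_N(\gamma_p)\in \underline{Z}^0_{\mathbf{m}_1}$, and since $\omega=\varphi_2^{-1}(\gamma_p)$ with $\varphi_2$ intertwining the two Frobenius morphisms, one gets $T_N(\omega)\in Z^0$. This is precisely the relation the paper's proof invokes to conclude $[Z':Z^0]=N$. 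Your Galois orbit of size $N^2$ therefore cannot be the minimal-polynomial degree over $Q(Z^0)$.

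The slip in your argument is a conflation of two different ``$N$-th powers''. You work in the commutative $B_q$-subalgebra generated by $a,d$ and pass to $\mathbb{C}[a^N,d^N]$ using the $B_q$-product. But $Z^0=Fr(\mathcal{O}[\SL_2])$ is generated by the $\mathcal{O}_q$-product powers $a^N,b^N,c^N,d^N$; skein-theoretically these are the $N$-fold \emph{parallel copies} $\alpha_{ij}^{(N)}$, which the paper explicitly warns (just before Theorem~\ref{theorem_Frobenius}) differ from $(\alpha_{ij})^N$ when both endpoints lie on the same boundary edge---exactly the situation in $\mathbf{m}_1$. Hence your identification $Q(Z^0)\simeq\mathbb{C}(a^N,d^N)(b^N)$ (with $B_q$-powers) is unjustified, and indeed $T_N(\omega)$ lies in $Z^0\cap\langle a,d\rangle_{B_q}$ while having nontrivial cross-terms in the $B_q$-monomials $a^id^j$, so $Z^0\cap\langle a,d\rangle_{B_q}\supsetneq\mathbb{C}[a^N_{B_q},d^N_{B_q}]$. (As an aside: the paper's own proof writes $[B_q\SL_2:Z^0]=N^2$ where Theorem~\ref{theorem_QG_free} gives $N^3$; with the correct numbers $N^3/N=N^2$, which is the value actually used downstream in the proof of Lemma~\ref{lemma_Skein1}. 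So the stated $N$ appears to be a typo for $N^2$, but your route to $N$ via $N^3/N^2$ rests on the wrong extension degree.)
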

 
 \begin{proof} As a module over $Z^0$, $Z'\cong \quotient{Z^0[\omega]}{( T_N(\omega)=Fr(a+d))}$ so $Z'$ is freely generated over $Z^0$ of rank $N$. Since $B_q\SL_2$ is freely generated over $Z^0$ of rank $N^2$, we obtain
 $$ \dim_{Q(Z')} (B_q\SL_2\otimes_{Z'} Q(Z')) = \frac{[B_q\SL_2:Z^0]}{[Z':Z^0]}= \frac{N^2}{N}= N.$$
 \end{proof}

 For now on and by abuse of notations, we will identify $\mathcal{O}_q\SL_2$ with $\mathcal{S}_A(\mathbb{B})$ and $B_q\SL_2$ with $\mathcal{S}_A(\mathbf{m}_1)$ using $\varphi_1$ and $\varphi_2$ respectively. 
 \par 
 Let $\mathbf{\Sigma}$ be a marked surface and $\alpha \subset \Sigma$ an arc. Let $v,w$ be the endpoints of $\alpha$ and $a,b$ be the boundary edges containing $v,w$ respectively. If $a=b$ we suppose that $v<_a w$. Write $\alpha_{ij} \in \mathcal{S}_A(\mathbf{\Sigma})$ for the class of $\alpha$ with state $s(v)=i$ and $s(w)=j$. When $a\neq b$, one has an algebra morphism 
 $$ \varphi_{\alpha}: \mathcal{O}_q\SL_2 \to \mathcal{S}_A(\mathbf{\Sigma}), \quad \varphi_{\alpha}(\beta_{ij}):= \alpha_{ij}.$$
 If $a=b$, one has an algebra morphism 
 $$ \varphi_{\alpha}: B_q\SL_2 \to \mathcal{S}_A(\mathbf{\Sigma}), \quad \varphi_{\alpha} (\gamma_{ij}):= \alpha_{ij}.$$
 Note that, since $\mathcal{O}_q\SL_2$ and $B_q\SL_2$ are equal as vector spaces, in both cases ($a=b$ or $a\neq b$) we get a linear morphism $\varphi_{\alpha} : \mathcal{O}_q\SL_2 \to \mathcal{S}_A(\mathbf{\Sigma})$. 
 
 \begin{theorem}(\cite[Corollary $3.8$]{KojuPresentationSSkein}) \label{theorem_graph_linear_isom}
 Suppose that $\mathbf{\Sigma}$ is essential and let $\Gamma$ be a presenting graph with edges $\mathcal{E}(\Gamma)=\{\alpha_1, \ldots, \alpha_n\}$. Consider the linear morphism
  $$\varphi^{\Gamma}:= \otimes_{\alpha \in \mathcal{E}(\Gamma)} \varphi_{\alpha}: (\mathcal{O}_q\SL_2)^{\otimes  \mathcal{E}(\Gamma)} \to \mathcal{S}_A(\mathbf{\Sigma}), \quad \varphi^{\Gamma}(x_1\otimes \ldots \otimes x_n)= \varphi_{\alpha_1}(x_1)\ldots \varphi_{\alpha_n}(x_n).$$
  Then $\varphi^{\Gamma}$ is a linear isomorphism which intertwines the Frobenius morphisms $Fr_{\mathbf{\Sigma}}$ and $Fr^{\otimes \mathcal{E}(\Gamma)}$. 
 \end{theorem}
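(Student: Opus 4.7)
The plan is to prove this in two steps (linear bijection first, Frobenius intertwining afterwards), proceeding by induction on the number $n$ of edges of the presenting graph $\Gamma$.

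For the base case $n=1$, the marked surface $\mathbf{\Sigma}$ must be either the bigon $\mathbb{B}$ (when the unique edge has its two endpoints at distinct vertices) or the punctured monogon $\mathbf{m}_1$ (when the edge is a loop). In each case, $\varphi^{\Gamma} = \varphi_{\alpha}$ is precisely the isomorphism $\varphi_1$ or $\varphi_2$ recalled just before the theorem. For the inductive step, I fix an edge $\alpha_n \in \mathcal{E}(\Gamma)$ and take a closed regular (ribbon) neighborhood $N(\alpha_n) \subset \Sigma$ meeting $\partial \Sigma$ in two small sub-arcs of the boundary edges containing the endpoints of $\alpha_n$. The complement (slightly thickened) $\Sigma' := \overline{\Sigma \setminus N(\alpha_n)}$ deformation retracts onto the sub-graph $\Gamma' := \Gamma \setminus \alpha_n$, so $\Gamma'$ is a presenting graph for a marked surface $\mathbf{\Sigma}'$, and $\mathbf{\Sigma}$ is reconstructed from $\mathbf{\Sigma}'$ and $\mathbf{\Sigma}_{\alpha_n}$ (which is $\mathbb{B}$ or $\mathbf{m}_1$) by gluing along one or two boundary edges.

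The core identity I would establish is the commutativity of the diagram
\begin{equation*}
\begin{tikzcd}
(\mathcal{O}_q\SL_2)^{\otimes \mathcal{E}(\Gamma')} \otimes \mathcal{O}_q\SL_2 \ar[r,"\varphi^{\Gamma'} \otimes \varphi_{\alpha_n}"] \ar[d,"\varphi^{\Gamma}"'] & \mathcal{S}_A(\mathbf{\Sigma}')\otimes \mathcal{S}_A(\mathbf{\Sigma}_{\alpha_n}) \\
\mathcal{S}_A(\mathbf{\Sigma}) \ar[ru,"\theta_{\partial}"',hook] & {}
\end{tikzcd}
\end{equation*}
where $\theta_{\partial}$ is the (possibly iterated) splitting morphism of Theorem \ref{theorem_gluing}. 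The right-hand map is the tensor of the inductive hypothesis (for $\mathbf{\Sigma}'$) and the base case (for $\mathbf{\Sigma}_{\alpha_n}$), so it is a linear isomorphism; the bottom map $\theta_{\partial}$ is injective. Thus injectivity of $\varphi^{\Gamma}$ is automatic. For surjectivity, the strategy is to show that every L\^e basis element $[D,s] \in \mathcal{B}^L(\mathbf{\Sigma})$ lies in the image: isotoping $D$ to lie in a tubular neighborhood of $\Gamma$ and then using the stated skein relations \eqref{eq: skein 1}, \eqref{eq: skein 2} to break $D$ along an arc transverse to $\alpha_n$ (this produces a linear combination of stated diagrams that split as a product of a diagram living away from $N(\alpha_n)$ and a bundle of parallel copies of $\alpha_n$). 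The bundle of $k$ parallel stated copies of $\alpha_n$ is exactly $\varphi_{\alpha_n}$ applied to a monomial in $\mathcal{O}_q\SL_2$ (or $B_q\SL_2$ when $\alpha_n$ is a loop), while the remaining diagram lies in the image of $\varphi^{\Gamma'}$ by induction. Commutativity of the diagram then shows the L\^e basis element lies in the image of $\varphi^{\Gamma}$, completing surjectivity.

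The Frobenius intertwining is then deduced from the characterization of $Fr_{\mathbf{\Sigma}}$ given in Theorem \ref{theorem_Frobenius}: since the image of $Fr_{\mathbf{\Sigma}}$ is generated by $N$-th powers of the generators and by $T_N$ of loops, it suffices to check the relation on a generating family of arcs along $\Gamma$, where the identity $\varphi_{\alpha}(x^N) = \varphi_{\alpha}(x)^{(N)}$ is tautological (note $\alpha_{ij}^{(N)}$ equals $\varphi_{\alpha}(\beta_{ij}^N)$ since $N$ parallel copies of $\alpha$ are exactly $\varphi_{\alpha}$ of the monomial $\beta_{ij}^N$, by Lemma \ref{lemma_heightexch} up to an $A$-power that vanishes at $A=+1$).

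The principal obstacle is the surjectivity step — more precisely, the careful management of height orderings and states when the splitting morphism $\theta_{\partial}$ introduces internal sums over states at the newly cut boundary edges, so that one must show the preimage of a pure product $x' \otimes x''$ under $\theta_{\partial}$ indeed corresponds to a product $\varphi^{\Gamma'}(\ldots) \cdot \varphi_{\alpha_n}(\ldots)$ in $\mathcal{S}_A(\mathbf{\Sigma})$; this is the content of the computations in \cite{KojuPresentationSSkein}, and essentially reduces to an identity of the form $\theta_{\partial}(\varphi_{\alpha}(x)) = \sum_{(x)} \varphi_{\alpha'}(x_{(1)}) \otimes \varphi_{\alpha''}(x_{(2)})$ where the sum reproduces the coproduct of $\mathcal{O}_q\SL_2$, which is precisely the compatibility with the comodule structure that makes the bigon a "co-groupoid object" in skein categories.
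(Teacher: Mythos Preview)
The paper does not itself prove this statement; it is quoted from \cite[Corollary~3.8]{KojuPresentationSSkein}, so there is no in-paper proof to compare against. That said, your inductive setup has a structural gap.

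The problem is the simultaneous claim that $\Gamma'=\Gamma\setminus\{\alpha_n\}$ is a presenting graph for $\mathbf{\Sigma}':=\overline{\Sigma\setminus N(\alpha_n)}$ \emph{and} that $\mathbf{\Sigma}$ is recovered from $\mathbf{\Sigma}'\sqcup\mathbf{\Sigma}_{\alpha_n}$ via the gluing of Definition~\ref{def_marked_surfaces}. These are incompatible. A presenting graph for $\mathbf{\Sigma}'$ must have vertex set equal to the set of boundary-edge midpoints of $\mathbf{\Sigma}'$, so if $\Gamma'$ (whose vertex set is $\mathbb{V}$) is to present $\mathbf{\Sigma}'$ then $|\mathcal{A}'|=|\mathcal{A}|$. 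But each gluing of Definition~\ref{def_marked_surfaces} deletes two boundary edges, so gluing in a bigon along two edges forces $|\mathcal{A}|=|\mathcal{A}'|+2-4=|\mathcal{A}'|-2$. Concretely: the triangle $\mathbb{T}$ has a two-edge presenting graph, yet there is no $\mathbf{\Sigma}'$ with a one-edge presenting graph (i.e.\ $\mathbb{B}$ or $\mathbf{m}_1$) such that $\mathbf{\Sigma}'\cup\mathbb{B}$ in the sense of the paper yields $\mathbb{T}$ with three boundary edges.

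There is a second, related symptom. If your diagram did commute with $\varphi^{\Gamma'}\otimes\varphi_{\alpha_n}$ bijective and $\theta_\partial$ injective, then $\theta_\partial$ would automatically be \emph{surjective} as well, and $\varphi^\Gamma=\theta_\partial^{-1}\circ(\varphi^{\Gamma'}\otimes\varphi_{\alpha_n})$ would be bijective with no further argument---your separate surjectivity discussion would be redundant. But splitting morphisms are essentially never surjective (already for $\mathbb{B}=\mathbb{B}\cup_{a\#b}\mathbb{B}$ the splitting is the coproduct $\mathcal{O}_q\SL_2\to\mathcal{O}_q\SL_2^{\otimes 2}$). This is another indication that the diagram cannot commute as written.

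The argument in \cite{KojuPresentationSSkein} does not proceed by excising an edge; it first gives a finite presentation of $\mathcal{S}_A(\mathbf{\Sigma})$ with generators the stated arcs $(\alpha_k)_{ij}$ along the edges of $\Gamma$, and the linear isomorphism (and Frobenius compatibility) is then read off from a PBW-type basis for that presentation.
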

 Here we have identified $\mathcal{S}_{+1}(\mathbf{\Sigma})\cong \mathcal{R}_{\SL_2}(\mathbf{\Sigma})$ with $\mathcal{O}[\SL_2]^{\otimes \mathcal{E}(\Gamma)}$ using the isomorphism $\varphi_{\Gamma}$ of Section \ref{sec_moduli_spaces} which sends a representation $\rho$ to $(\rho(\alpha_i))_{i=1, \ldots, n}$. Note that $\varphi^{\Gamma}$ is a quantum analogue of $\varphi_{\Gamma}$. 
 
 \subsection{Proof of Theorem \ref{theorem_Skein}}

 \begin{lemma}\label{lemma_Skein1} Let $\mathbf{\Sigma}$ be a connected essential marked surface with underlying surface $\Sigma_{g,n}$ and let $\mathring{n}:=|\mathring{\mathcal{P}}|$ represent the number of inner punctures. Let 
  $\underline{Z}'_{\mathbf{\Sigma}}\subset \underline{Z}_{\mathbf{\Sigma}}$ be the algebra generated by the image of the Frobenius and the peripheral curves $\gamma_{p}$ for $p\in \mathring{\mathcal{P}}$ and denote by $\underline{Z}^0_{\mathbf{\Sigma}}\subset \underline{Z}'_{\mathbf{\Sigma}}$ the image of the Frobenius morphism. 
 \begin{enumerate}
 \item As a $\underline{Z}_{\mathbf{\Sigma}}^0$-module,  $\mathcal{S}_A(\mathbf{\Sigma})$ is free of rank $R_0=N^{6g-6+3|\mathcal{A}| + 3n}$. 
 \item Let $K:= Q(\underline{Z}'_{\mathbf{\Sigma}})$ be the fraction field of $\underline{Z}'_{\mathbf{\Sigma}}$. Then $R:= \dim_K \mathcal{S}_A(\mathbf{\Sigma})\otimes_{\underline{Z}'_{\mathbf{\Sigma}}} K=R_0 N^{-\mathring{n}}$.
 \item Moreover if   $\mathbf{\Sigma}$ contains at most one boundary edge per boundary component, then  $R= (D_{\mathbf{\Sigma}^*})^2$.
 \end{enumerate}
  \end{lemma}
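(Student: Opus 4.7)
The plan is to reduce all three rank computations to Theorem \ref{theorem_QG_free} via the graph decomposition of Theorem \ref{theorem_graph_linear_isom}. For item (1), I would fix a presenting graph $\Gamma$ for $\mathbf{\Sigma}$ as in Example \ref{example_presenting_graph}, which yields $|\mathcal{E}(\Gamma)| = 2g-2+|\mathcal{A}|+n$. Since $\varphi^\Gamma$ intertwines $Fr_{\mathbf{\Sigma}}$ with the tensor product of edge-by-edge Frobenius morphisms, it transports the freeness of $\mathcal{O}_q\SL_2$ over $Fr(\mathcal{O}[\SL_2])$ of rank $N^3$ (Theorem \ref{theorem_QG_free}) into a freeness of $\mathcal{S}_A(\mathbf{\Sigma})$ over $\underline{Z}^0_{\mathbf{\Sigma}}$ of rank $(N^3)^{|\mathcal{E}(\Gamma)|} = N^{6g-6+3|\mathcal{A}|+3n} = R_0$.

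For item (2), the key step is to show that $\underline{Z}'_{\mathbf{\Sigma}}$ is free over $\underline{Z}^0_{\mathbf{\Sigma}}$ of rank $N^{\mathring{n}}$. By Theorem \ref{theorem_Frobenius}, each peripheral curve $\gamma_p$ satisfies the monic relation $T_N(\gamma_p) = Fr_{\mathbf{\Sigma}}(\gamma_p) \in \underline{Z}^0_{\mathbf{\Sigma}}$, producing a surjection
\[ \psi \colon \underline{Z}^0_{\mathbf{\Sigma}}[X_p,\; p \in \mathring{\mathcal{P}}]/\bigl(T_N(X_p) - Fr_{\mathbf{\Sigma}}(\gamma_p)\bigr) \twoheadrightarrow \underline{Z}'_{\mathbf{\Sigma}} \]
whose source is a free $\underline{Z}^0_{\mathbf{\Sigma}}$-module of rank $N^{\mathring{n}}$. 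Both sides are domains (the target as a subalgebra of $\mathcal{S}_A(\mathbf{\Sigma})$, which is a domain by Theorem \ref{theorem_domain}), so it suffices to check that $\psi$ has generic rank $N^{\mathring{n}}$. I would verify this by counting the fiber of $\Specm(\underline{Z}'_{\mathbf{\Sigma}}) \to X(\mathbf{\Sigma})$ over a generic $x$: the equation $T_N(h_p) = \chi_x(\gamma_p)$ has $N$ distinct solutions in $h_p$ whenever $\chi_x(\gamma_p) \neq \pm 2$, so the generic fiber has exactly $N^{\mathring{n}}$ closed points. This forces $\psi$ to be an isomorphism and gives $[K : Q(\underline{Z}^0_{\mathbf{\Sigma}})] = N^{\mathring{n}}$. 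Combining with item (1), the tower of extensions $\underline{Z}^0_{\mathbf{\Sigma}} \subset \underline{Z}'_{\mathbf{\Sigma}} \subset \mathcal{S}_A(\mathbf{\Sigma})$ and multiplicativity of dimensions under localization yield $R = R_0 \cdot N^{-\mathring{n}}$.

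Item (3) is then a direct arithmetic consequence. Under the hypothesis that each boundary component contains at most one boundary edge, the inner punctures are exactly the unmarked boundary components, so $\mathring{n} = n - |\mathcal{A}|$, giving
\[ R = N^{6g-6+3|\mathcal{A}|+3n - (n-|\mathcal{A}|)} = N^{6g-6+4|\mathcal{A}|+2n} = \bigl(N^{3g-3+n+2|\mathcal{A}|}\bigr)^2 = (D_{\mathbf{\Sigma}^*})^2, \]
since $\mathbf{\Sigma}^*$ has the same genus $g$ and the same $n$ boundary components as $\mathbf{\Sigma}$ but $2|\mathcal{A}|$ boundary edges. The main obstacle I anticipate is the injectivity of $\psi$ in item (2): while the dimension-counting argument is clean once one fixes a generic stratum, one must verify that the non-central locus $\{x \in X(\mathbf{\Sigma}) \colon \chi_x(\gamma_p)\neq \pm 2 \text{ for all } p \in \mathring{\mathcal{P}}\}$ is nonempty and Zariski dense. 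This follows from the geometric interpretation of $X(\mathbf{\Sigma})$ as (an open subset of) a relative representation variety in Section \ref{sec_moduli_spaces}, in which the peripheral traces are non-constant regular functions.
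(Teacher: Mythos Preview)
Your treatment of items (1) and (3) matches the paper's. For item (2) you take a different route, and the step you yourself flag as the main obstacle---the injectivity of $\psi$---is not resolved by your fiber-counting argument.

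The circularity is this: to count $N^{\mathring{n}}$ closed points in the fiber of $\Specm(\underline{Z}'_{\mathbf{\Sigma}}) \to X(\mathbf{\Sigma})$ over a generic $x$, you need to know that every tuple $(h_p)_p$ with $T_N(h_p)=\chi_x(\gamma_p)$ actually \emph{extends} to a character of $\underline{Z}'_{\mathbf{\Sigma}}$. A priori such a tuple only determines a character of the source $M$, and whether it descends to $\underline{Z}'_{\mathbf{\Sigma}}$ is precisely the question of whether there are extra relations among the $\gamma_p$ over $\underline{Z}^0_{\mathbf{\Sigma}}$, i.e., whether $\ker\psi=0$. So the fiber count presupposes what you want to prove. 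Your fallback ``both sides are domains'' is only justified for the target; showing $M$ is a domain would require irreducibility of each $T_N(X)-Fr_{\mathbf{\Sigma}}(\gamma_p)$ over the successive extensions, which is a separate (and not entirely trivial) argument.

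The paper avoids this by pushing the graph isomorphism further. With $\Gamma$ chosen so that every peripheral loop $\gamma_p$ is an edge (remove a corner arc from $\mathbb{G}$, not a $\gamma_p$), one observes that $\varphi_{\gamma_p}:B_q\SL_2\to\mathcal{S}_A(\mathbf{\Sigma})$ sends the element $\omega\in B_q\SL_2$ to the closed peripheral curve $\gamma_p$. Hence the linear isomorphism $\varphi^{\Gamma}$ carries $(Z')^{\otimes\mathcal{E}_1}\otimes(Z^0)^{\otimes\mathcal{E}_2}$ isomorphically onto $\underline{Z}'_{\mathbf{\Sigma}}$, and the rank over $K$ factors edge by edge: $N^2$ per peripheral edge (Lemma~\ref{lemma_centerBqSL2}) and $N^3$ per remaining edge. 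The linear independence of $1,\omega,\ldots,\omega^{N-1}$ over $Z^0$ is then only needed in the single algebra $B_q\SL_2$, where it can be checked directly against the explicit basis of Theorem~\ref{theorem_QG_free}.
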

 
 \begin{proof}
 Let $\Gamma$ be the presenting of Example  \ref{example_presenting_graph} where $\mathcal{E}(\Gamma)$ is obtained from $\mathbb{G}$ by removing a corner arc $\alpha(p_{\partial_0})$. 
 By Theorem \ref{theorem_graph_linear_isom}, one has the isomorphism $\varphi^{\Gamma}:= \otimes_{\alpha \in \mathcal{E}(\Gamma)} \varphi_{\alpha}: (\mathcal{O}_q\SL_2)^{\otimes  \mathcal{E}(\Gamma)} \xrightarrow{\cong} \mathcal{S}_A(\mathbf{\Sigma})$ which commutes with the Frobenius morphisms. By Theorem \ref{theorem_QG_free}, $\mathcal{O}_q\SL_2$ is free over $\mathcal{O}[\SL_2]$ of rank $N^3$ so $\mathcal{S}_A(\mathbf{\Sigma})\cong (\mathcal{O}_q\SL_2)^{\otimes  \mathcal{E}(\Gamma)}$ is free over $Z^0_{\mathbf{\Sigma}}\cong (\mathcal{O}[\SL_2])^{\otimes  \mathcal{E}(\Gamma)}$ of rank 
  $$R_0:= [\mathcal{S}_A(\mathbf{\Sigma}):  \underline{Z}_{\mathbf{\Sigma}}^0] = [\mathcal{O}_q\SL_2 : Z^0]^{|\mathcal{E}(\Gamma)|}= (N^3)^{2g-2+n + |\mathcal{A}|}= N^{6g-6 +3n +3|\mathcal{A}|}.$$
  Note that an explicit basis for $\mathcal{S}_A(\mathbf{\Sigma})$ over $Z^0_{\mathbf{\Sigma}}$ is given by $\varphi^{\Gamma}( \mathcal{B}_0^{\otimes \mathcal{E}(\Gamma)})$. 
 Consider the decomposition $\mathcal{E}(\Gamma)= \mathcal{E}_1 \sqcup \mathcal{E}_2$ where $\mathcal{E}_1$ corresponds to the edges of the form $\gamma_p$ with $p\in \mathcal{\mathring{P}}$. Then $\varphi^{\Gamma}$ sends isomorphically the subspace $(Z')^{\otimes \mathcal{E}_1}\otimes (Z^0)^{\otimes \mathcal{E}_2}$ to $\underline{Z}'_{\mathbf{\Sigma}}$. So using Lemma \ref{lemma_centerBqSL2} and writing $n=\mathring{n}+n^{\partial}$, we obtain the equalities
 $$R:= \dim_K \mathcal{S}_A(\mathbf{\Sigma})= (\dim_{Q(Z)} B_q\SL_2\otimes_Z)^{|\mathcal{E}_1|}  ([\mathcal{O}_q\SL_2 : Z^0])^{|\mathcal{E}_2|}= (N^2)^{\mathring{n}}(N^3)^{2g-2+n_{\partial}+ |\mathcal{A}|} = R_0 N^{-\mathring{n}}.$$
 Eventually, if $\mathbf{\Sigma}$ has at most one boundary edge per boundary component, then $| \mathcal{A}|= n^{\partial}$ so 
 $$R= N^{6g-6+2 \mathring{n} +3n^{\partial} + 3|\mathcal{A}|}= N^{6g-6+2(\mathring{n} + n^{\partial}) + 4|\mathcal{A}|}= \left( N^{3g-3+n+2|\mathcal{A}|}\right)^2 = (D_{\mathbf{\Sigma}^*})^2.$$
 \end{proof}

 \par 
 We now want to define the localization $\mathcal{S}_A(\mathbf{\Sigma})^{loc}=\mathcal{S}_A(\mathbf{\Sigma})[(\alpha(p_{\partial_j})_{-+})^{-1}]$  obtained from $\mathcal{S}_A(\mathbf{\Sigma})$ by localizing by all bad arcs $\alpha(p_{{\partial}_j})_{-+}$. Recall that if $R$ is a ring and $S\subset R$ is a multiplicative subset not containing $0$, then the localization $R[S^{-1}]$ has a well-defined ring structure  if $S$ satisfies the following \textit{Ore condition}: 
  $$ \forall b,s \in R\times S, \quad \exists b_1,s_1 \in R\times S, \mbox{ such that } bs_1=sb_1.$$
  In this case, we can endow $R[S^{-1}]$ with the product $(as^{-1})\cdot (bt^{-1}):= (ab_1)(ts_1)^{-1}$. 
  \begin{lemma}(\cite[Lemma $4.4(a)$]{LeYu_SSkeinQTraces}) For every bad arc $\alpha_{bad}$ and every basis element $[D,s] \in \mathcal{B}$ there exists $n\in \mathbb{Z}$ such that $\alpha_{bad}[D,s]= A^n [D,s]\alpha_{bad}$. In particular
   the multiplicative subset $S\subset \mathcal{S}_A(\mathbf{\Sigma})$ generated by bad arcs satisfies the Ore condition.
  \end{lemma}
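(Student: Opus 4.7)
The plan is two-fold: first, establish the key commutation relation $\alpha_{bad}[D,s]=A^n[D,s]\alpha_{bad}$ for every basis element $[D,s]\in\mathcal{B}^L$ and every bad arc $\alpha_{bad}=\alpha(p)_{-+}$; second, deduce the Ore condition formally from it.

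For the commutation, I would begin by isotoping the bad arc into an arbitrarily small disc neighborhood of the boundary puncture $p$, so that its only interaction with the strands of $D$ occurs on the two boundary edges $a,b$ adjacent to $p$ (with $\alpha_{bad}$ carrying state $-$ at $a$ and state $+$ at $b$). Stacking $\alpha_{bad}$ above versus below $[D,s]$ then yields the same underlying tangle, differing only in the height ordering of the two endpoints of $\alpha_{bad}$ relative to those of $D$ on $a$ and $b$. The key computation is to slide the bad arc's endpoints past each $D$-endpoint on $a$ and on $b$ one at a time using the height-exchange relations of \cite[Lemma $2.4$]{LeStatedSkein}. At edge $a$, the bad arc's state is $-$, so each exchange is of type $(-,-)$ or $(+,-)$ and yields only a scalar in $\{A^{\pm1}\}$. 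At edge $b$, exchanges against a $D$-endpoint of state $+$ are also scalar, but exchanges against a $D$-endpoint of state $-$ trigger the full $(-,+)$ skein relation
$$A^{1/2}\heightexch{->}{-}{+}-A^{5/2}\heightexch{->}{+}{-}=\heightcurve,$$
producing an additional cap correction term.

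The main obstacle will be controlling these cap terms at edge $b$. My plan is to induct on the number of $D$-endpoints of state $-$ at $b$. When a cap is produced, it joins the bad arc's endpoint at $b$ to a $D$-endpoint of state $-$ at $b$; tracing the connectivity, the resulting stated diagram has strictly fewer endpoints at $b$ and contains a new internal arc. Re-expressing it in the basis $\mathcal{B}^L$ using the reduction procedure from the proof of Proposition \ref{prop_Muller_basis}, one recognizes the result, up to a scalar, as a combination of $[D,s]\alpha_{bad}$ together with terms covered by the inductive hypothesis. This makes the cap contributions telescope into a single power of $A$.

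Granted the commutation, the Ore condition is a formal expansion. Given $s=\alpha_1\cdots\alpha_k\in S$ a product of bad arcs and $b\in\mathcal{S}_A(\mathbf{\Sigma})$ expanded as $b=\sum_i c_i b_i$ in $\mathcal{B}^L$, iterating the commutation factor by factor gives $sb_i=A^{N_i}b_i s$, hence $b_is=A^{-N_i}sb_i$, for some $N_i\in\mathbb{Z}$. Summing yields $bs=s\bigl(\sum_i c_iA^{-N_i}b_i\bigr)$, so taking $s_1:=s$ and $b_1:=\sum_i c_iA^{-N_i}b_i\in\mathcal{S}_A(\mathbf{\Sigma})$ gives $bs_1=sb_1$, which is the Ore condition.
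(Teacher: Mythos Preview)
The paper does not supply its own proof of this lemma; it is quoted from \cite[Lemma~4.4(a)]{LeYu_SSkeinQTraces}. There is therefore no in-paper argument to compare against, so I comment directly on your sketch.

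Your overall strategy---reduce to height exchanges on the two boundary edges adjacent to $p$, then deduce Ore formally---is the right one, and your deduction of the Ore condition in the final paragraph is fine. However, your analysis of the exchanges contains an error that leads you into an unnecessary and underspecified induction. You claim that at edge $b$ (where the bad arc carries state $+$), exchanging heights against a $D$-endpoint of state $-$ triggers the $(-,+)$ relation with a cap correction. It does not. Once the bad arc is pushed close to the puncture $p$, its $b$-endpoint sits at the \emph{highest} $\mathfrak{o}^+$-position on $b$ (since $\mathfrak{o}^+$ on $b$ points toward $p$) and its $a$-endpoint at the \emph{lowest} $\mathfrak{o}^+$-position on $a$ (since $\mathfrak{o}^+$ on $a$ points away from $p$). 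Consequently the union $\alpha_{bad}\cup(D,s)$ is again simple and $\mathfrak{o}^+$-increasing: on $b$ the bad arc's $+$ lies above every $D$-state, on $a$ its $-$ lies below every $D$-state. In the height-exchange picture the pair $(\text{top},\text{bottom})$ along $\mathfrak{o}^+$ is therefore always one of $(+,+)$, $(-,-)$, or $(+,-)$, each of which contributes only a power of $A$ by \cite[Lemma~2.4]{LeStatedSkein}; the problematic $(-,+)$ case never arises. This is exactly the mechanism behind Lemma~\ref{lemma_heightexch} of the present paper. Both products $\alpha_{bad}[D,s]$ and $[D,s]\alpha_{bad}$ are thus powers of $A$ times the standard-position class of the union, and their ratio gives the desired $A^n$. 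Your step-6 induction on cap terms is not needed---which is fortunate, since as written (``one recognizes the result'') it is not a proof.
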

  
  Therefore $\mathcal{S}_A(\mathbf{\Sigma})^{loc}=\mathcal{S}_A(\mathbf{\Sigma})[(\alpha(p_{\partial_j})_{-+})^{-1}]$ is well defined.
  
 \begin{lemma}\label{lemma_Skein2} 
  $j(\alpha(p_{\partial_j})_{-+})$ is invertible in $\overline{\mathcal{S}}_A(\mathbf{\Sigma}^*)$ so the morphism $j$ induces a morphism
 $$ j^{loc}: \mathcal{S}_A(\mathbf{\Sigma})^{loc} \hookrightarrow \overline{\mathcal{S}}_A(\mathbf{\Sigma}^*).$$
Moreover
 $\overline{\mathcal{S}}_A(\mathbf{\Sigma}^*)$ is generated, as an algebra, by the image of $j^{loc}$ together with its center $Z_{\mathbf{\Sigma}^*}$. In particular $j\left( \underline{Z}_{\mathbf{\Sigma}}\right) \subset Z_{\mathbf{\Sigma}^*}$.
 \end{lemma}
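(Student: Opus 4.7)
The strategy is to analyze $j(\alpha(p_{\partial_j})_{-+})$ via the splitting morphism associated with the decomposition $\mathbf{\Sigma}^{*}=\mathbf{\Sigma}\cup_{\mathcal{A}}\bigsqcup_{a\in\mathcal{A}}\mathbb{T}_{a}$, where a triangle $\mathbb{T}_{a}$ is glued along each $a\in\mathcal{A}$. Iterating Theorem~\ref{theorem_gluing} produces an injective algebra morphism
\[
\theta:\overline{\mathcal{S}}_{A}(\mathbf{\Sigma}^{*})\hookrightarrow \overline{\mathcal{S}}_{A}(\mathbf{\Sigma})\otimes \bigotimes_{a\in\mathcal{A}}\overline{\mathcal{S}}_{A}(\mathbb{T}_{a}),
\]
and by \cite{CostantinoLe19} each factor $\overline{\mathcal{S}}_{A}(\mathbb{T}_{a})$ is a quantum torus whose three canonical generators are the three corner arcs of $\mathbb{T}_{a}$, pairwise invertible. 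Denoting by $q_{a}$ the new boundary puncture of $\mathbf{\Sigma}^{*}$ between the edges $a'$ and $a''$, the image $i(\alpha^{\mathbf{\Sigma}}(p_{\partial_j}))$ in $\mathbf{\Sigma}^{*}$ is an arc running from $v_{a'}$ to $v_{b'}$ past $q_{a}$, $a''$, and $p_{\partial_j}$. Isotoping it to meet each gluing curve transversely exactly once and applying Definition~\ref{def_gluing_map} writes $\theta(j(\alpha(p_{\partial_j})_{-+}))$ as a sum over intermediate states of tensor products of stated corner arcs in the three factors. The bad-arc relations in $\overline{\mathcal{S}}_{A}(\mathbf{\Sigma}^{*})$ kill the corner arcs $\alpha^{\mathbf{\Sigma}^{*}}(q)_{-+}$ at every boundary puncture $q$ of $\mathbf{\Sigma}^{*}$, and combined with the identity $\alpha_{++}\alpha_{--}=1+q^{-1}\alpha_{+-}\alpha_{-+}$, the sum collapses to a monomial in the invertible corner arcs $\alpha^{\mathbf{\Sigma}^{*}}(q)_{\pm\pm}$. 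By injectivity of $\theta$, this shows $j(\alpha(p_{\partial_j})_{-+})$ is invertible in $\overline{\mathcal{S}}_{A}(\mathbf{\Sigma}^{*})$.

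Bad arcs in $\mathcal{S}_{A}(\mathbf{\Sigma})$ commute with any basis element up to an $A$-power (as already exploited in \cite{LeYu_SSkeinQTraces} to define the localized M\"uller algebra), so the multiplicative set they generate satisfies the Ore condition. The universal property of Ore localization, applied to the invertibility above, produces the morphism $j^{loc}:\mathcal{S}_{A}(\mathbf{\Sigma})^{loc}\to \overline{\mathcal{S}}_{A}(\mathbf{\Sigma}^{*})$, with injectivity inherited from $j$. For the generation claim, use the M\"uller basis of $\overline{\mathcal{S}}_{A}(\mathbf{\Sigma}^{*})\cong\mathcal{M}^{0}_{A}(\mathbf{\Sigma}^{*})$: any element is a sum of reduced terms $[D]\prod_{q}\alpha^{\mathbf{\Sigma}^{*}}(q)^{-k_{q}}$ with $D$ a simple diagram. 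Applying $\theta$ and invoking the quantum-torus structure of the triangle algebras, every such term becomes a tensor product of three monomials in invertible corner arcs of $\mathbb{T}_a$; using Step~1 in reverse, each of these invertible factors can be absorbed either into an element of $j^{loc}(\mathcal{S}_{A}(\mathbf{\Sigma})^{loc})$ (via the explicit formulas of the first paragraph) or into the central boundary element $\alpha_{\partial}^{\mathbf{\Sigma}^{*}}\in Z_{\mathbf{\Sigma}^{*}}$ (which is the product of all corner arcs along~$\partial$). Hence every M\"uller basis element of $\overline{\mathcal{S}}_{A}(\mathbf{\Sigma}^{*})$ has the form $j^{loc}(x)\cdot z$ with $x\in\mathcal{S}_{A}(\mathbf{\Sigma})^{loc}$ and $z\in Z_{\mathbf{\Sigma}^{*}}$.

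The final inclusion $j(\underline{Z}_{\mathbf{\Sigma}})\subset Z_{\mathbf{\Sigma}^{*}}$ then follows automatically: for $z\in\underline{Z}_{\mathbf{\Sigma}}$, $j(z)$ commutes with $j^{loc}(\mathcal{S}_{A}(\mathbf{\Sigma})^{loc})$ (by centrality of $z$ in $\mathcal{S}_A(\mathbf{\Sigma})$ together with the universal property of Ore localization) and tautologically with $Z_{\mathbf{\Sigma}^{*}}$; the generation statement then forces $j(z)$ to commute with all of $\overline{\mathcal{S}}_A(\mathbf{\Sigma}^*)$. The main technical obstacle is the explicit computation sketched in the first paragraph: tracking the precise $A$-powers and bad-arc cancellations that reduce $\theta(j(\alpha(p_{\partial_j})_{-+}))$ to an invertible monomial requires careful diagrammatic bookkeeping, and this is precisely the content of the "skein interpretation of Alekseev's morphism" suggested by T.~L\^e and acknowledged in the introduction.
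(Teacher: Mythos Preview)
Your invertibility argument has a logical gap: after applying the splitting morphism $\theta$ you are working in $\overline{\mathcal{S}}_A(\mathbf{\Sigma})\otimes\bigotimes_a\overline{\mathcal{S}}_A(\mathbb{T}_a)$, not in $\overline{\mathcal{S}}_A(\mathbf{\Sigma}^*)$, yet you invoke ``bad-arc relations in $\overline{\mathcal{S}}_A(\mathbf{\Sigma}^*)$'' and conclude invertibility ``by injectivity of $\theta$''. Injectivity of $\theta$ does \emph{not} imply that an element invertible in the codomain is invertible in the domain; you would need to exhibit the inverse inside the image of $\theta$, and you do not. Moreover, after splitting, the $\overline{\mathcal{S}}_A(\mathbf{\Sigma})$-component of $\theta(j(\alpha(p_{\partial_j})_{-+}))$ involves the stated arcs $\alpha(p_{\partial_j})_{st}$ for various $(s,t)$, and only the bad one $(s,t)=(-,+)$ vanishes there; the term $\alpha(p_{\partial_j})_{+-}$ survives and is not a monomial in invertibles. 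So the claimed collapse to a monomial is not justified.

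The generation argument has a more serious gap. You apply $\theta$ to a M\"uller basis element and assert that ``every such term becomes a tensor product of three monomials in invertible corner arcs of $\mathbb{T}_a$'', entirely ignoring the $\overline{\mathcal{S}}_A(\mathbf{\Sigma})$ tensor factor, which carries all the interesting topology of the surface. Even granting that the triangle factors are monomials, you give no mechanism for expressing the $\overline{\mathcal{S}}_A(\mathbf{\Sigma})$-part as something in $\Image(j^{loc})$; this is the whole difficulty, since the reduced algebra $\overline{\mathcal{S}}_A(\mathbf{\Sigma})$ is a \emph{quotient} of $\mathcal{S}_A(\mathbf{\Sigma})$, not a subalgebra, so there is no canonical lift. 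The phrase ``using Step~1 in reverse'' does not supply one.

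The paper proceeds quite differently and avoids the splitting into triangles altogether. It first treats the model case $\mathbf{\Sigma}=\mathbf{m}_1$, $\mathbf{\Sigma}^*=\mathbb{D}_1$ by an explicit skein computation yielding $j_{\mathbf{m}_1}(\gamma_{-+})=A^{-1/2}\beta_{++}\alpha_{--}$, visibly a product of invertible corner arcs \emph{in} $\overline{\mathcal{S}}_A(\mathbb{D}_1)$. The hard part is generation in this case: the paper shows step by step that every generator $\alpha_{ij},\beta_{kl}$ of $D_qB$ lies in the subalgebra $A_{\mathbf{m}_1}$ generated by $\Image(j^{loc}_{\mathbf{m}_1})$ and the center, crucially using the central peripheral element $\gamma_p$, the boundary invariants $\alpha_{\pm\pm}\beta_{\pm\pm}$, and the Frobenius $N$-th powers together with the parity trick $N=2k+1$. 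Only then does it pass to general $\mathbf{\Sigma}$ via the quantum moment maps $\mu_q,\mu_q^*$ (embeddings of $B_q\SL_2$ and $D_qB$ coming from annular neighborhoods of the boundary), reducing generation to the $\mathbf{m}_1$ case by a local skein manipulation that slides endpoints off the edges $a''$. None of these ingredients appear in your sketch.
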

 
 \begin{proof}
 Let us first assume that $\mathbf{\Sigma}= \mathbf{m}_1$. Then $\mathbf{m}_1^* = \mathbb{D}_1$, so $j_{\mathbf{m}_1}$ embeds $B_q\SL_2$ into $\Dq$. This map is a skein analogue of a celebrated Majid's morphism in quantum groups theory (see \cite{Majid_QGroups}). Let $\alpha_{ij}, \beta_{kl} \in \Dq$ be the stated arcs defined in Section \ref{sec_D1} (so $\alpha_{-+}=\beta_{+-}=0$). A simple computation, shown in Figure \ref{fig_MajidMorphism}, shows that 
 \begin{align*}
 {}&  j_{\mathbf{m}_1} (\gamma_{++})= A^{-1/2} \beta_{++}\alpha_{+-}, &  j_{\mathbf{m}_1} (\gamma_{--})= A^{-1/2} \beta_{-+}\alpha_{--}, \\
 {} &  j_{\mathbf{m}_1} (\gamma_{-+})= A^{-1/2} \beta_{++}\alpha_{--}, &  j_{\mathbf{m}_1} (\gamma_{+-})= A^{-1/2} \beta_{-+}\alpha_{+-}- A^{-5/2}\beta_{--}\alpha_{++}.
  \end{align*}
    
 \begin{figure}[!h] 
\centerline{\includegraphics[width=9cm]{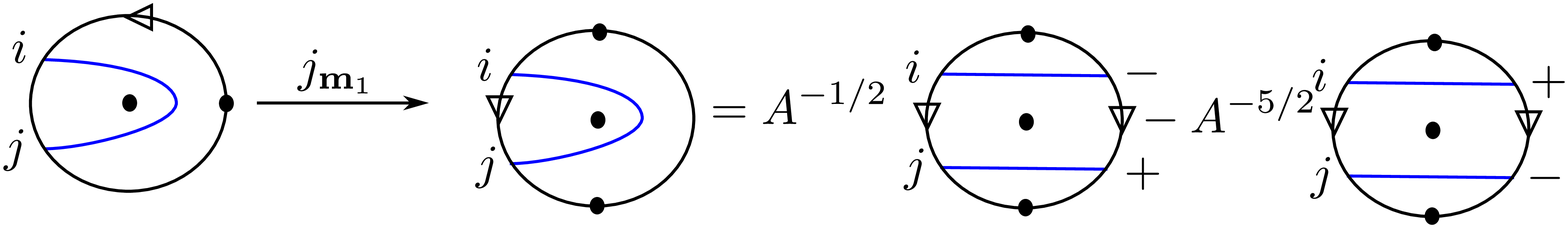} }
\caption{An illustration of the equality $j_{\mathbf{m}_1}(\gamma_{ij})= A^{-1/2} \beta_{j+}\alpha_{i-} - A^{-5/2} \beta_{j-}\alpha_{i+}$.
} 
\label{fig_MajidMorphism} 
\end{figure} 

  In particular for the bad arc $\gamma_{-+}$ we see that  $ j_{\mathbf{m}_1} (\gamma_{-+})$ is invertible with inverse $ j_{\mathbf{m}_1} (\gamma_{-+})^{-1}= A^{1/2} \alpha_{++}\beta_{--}$. From these formulas, together with the fact that $\alpha_{++}\beta_{++}, \alpha_{--}\beta_{--} \in Z_{\mathbb{D}_1}$ and that the peripheral central element $\gamma_p$ associated to the unique inner puncture $p$ of $\mathbb{D}_1$ satisfies
  $$ \gamma_p= -q^{-1}\beta_{--}\alpha_{++} -q \beta_{++}\alpha_{--} + \beta_{-+}\alpha_{+-} \in Z_{\mathbb{D}_1},$$
   we deduce that the algebra $A_{\mathbf{m}_1}\subset \Dq$ generated by the image of $j_{\mathbf{m}_1}^{loc}$ together with $Z_{\mathbb{D}_1}$, contains every elements of the form $\beta_{kl}\alpha_{ij}$ except possibly $\beta_{--}\alpha_{+-}$ and $\beta_{-+}\alpha_{++}$.  To prove that these two elements also belong to $A_{\mathbf{m}_1}$, note that $\beta_{--}^2=(\beta_{--}\alpha_{++})(\beta_{--}\alpha_{--}) \in A_{\mathbf{m}_1}$ so $\beta_{--}\alpha_{+-}=  \beta_{--}^2(\beta_{++}\alpha_{+-}) \in A_{\mathbf{m}_1}$. Similarly, $\beta_{-+}\alpha_{++}= (\beta_{-+}\alpha_{--})\alpha_{++}^2= (\beta_{-+}\alpha_{--})  (\beta_{--}\alpha_{++})(\beta_{++}\alpha_{++})\in A_{\mathbf{m}_1}$. 
  Therefore for every $i,j,k,l \in \{ \pm \}$, $\beta_{kl}\alpha_{ij}\in A_{\mathbf{m}_1}$. So $\alpha_{ij}^2 = (\beta_{++}\alpha_{ij} )( \beta_{--}\alpha_{ij}) \in A_{\mathbf{m}_1}$ and similarly $\beta_{kl}^2 \in A_{\mathbf{m}_1}$.
  Write $N=2n+1$ and recall that $\alpha_{ij}^N, \beta_{kl}^N \in A_{\mathbf{m}_1}$. So $\alpha_{++}= \left( \alpha_{++}^2 \right)^{n+1} \alpha_{--}^N \in A_{\mathbf{m}_1}$ and similarly $\alpha_{--}, \beta_{++}, \beta_{--} \in A_{\mathbf{m}_1}$. 
  This implies that $\alpha_{+-}= \beta_{++}(\beta_{--}\alpha_{+-}) \in A_{\mathbf{m}_1}$ and similarly $\beta_{-+}= (\beta_{-+}\alpha_{++})\alpha_{--}\in A_{\mathbf{m}_1}$. So all elements $\alpha_{ij}, \beta_{kl}$ belong to $A_{\mathbf{m}_1}$. Since these elements generate $\Dq$, we have proved that $A_{\mathbf{m}_1}=\Dq$.
  
  \vspace{2mm}
  \par When $\mathbf{\Sigma}=(\Sigma, \mathcal{A})$ is a general surface satisfying the hypotheses of Theorem \ref{theorem_Skein}, for each boundary puncture $p_{\partial_j}$ contained in a boundary component $\partial_j$, by embedding $\mathbf{m}_1$ into a neighborhood of $\partial_j$, we obtain an algebra morphism $\mu_q^j: B_q\SL_2 \to \mathcal{S}_A(\mathbf{\Sigma})$ sending $\alpha(p)_{kl}$ to $\alpha(p_{\partial_j})_{kl}$. The morphism 
  $$\mu_q:= \otimes_{j=1}^{|\mathcal{A}|} \mu_q^j : (B_q\SL_2)^{\otimes \mathcal{A}} \to \mathcal{S}_A(\mathbf{\Sigma})$$ is called the \textbf{quantum moment map} in literature (see \cite{KojuMCGRepQT, KojuSurvey} and reference therein). Similarly, by embedding a copy of  $\mathbb{D}_1$ in the neighborhood of $\partial_j$ seen as a boundary component of $\mathbf{\Sigma}^*$ this time (recall that $\Sigma=\Sigma^*$ but that each $\partial_j$ contains now two boundary edges), we obtain an algebra morphism $\mu_q^* : (\Dq)^{\otimes |\mathcal{A}|} \to \overline{\mathcal{S}}_A(\mathbf{\Sigma}^*)$ such that the following diagram commutes: 
  $$ \begin{tikzcd}
  (B_q\SL_2)^{\otimes |\mathcal{A}|} 
  \ar[rr, hook, "(j_{\mathbf{m}_1})^{\otimes |\mathcal{A}|}"] 
  \ar[d, "\mu_q"] &{}&
   (\Dq)^{\otimes |\mathcal{A}|} 
   \ar[d, "\mu_q^*"] \\
  \mathcal{S}_A(\mathbf{\Sigma}) \ar[rr, hook, "j_{\mathbf{\Sigma}}"]
   &{}& \overline{\mathcal{S}}_A(\mathbf{\Sigma}^*)
  \end{tikzcd}$$
   \begin{figure}[!h] 
\centerline{\includegraphics[width=4cm]{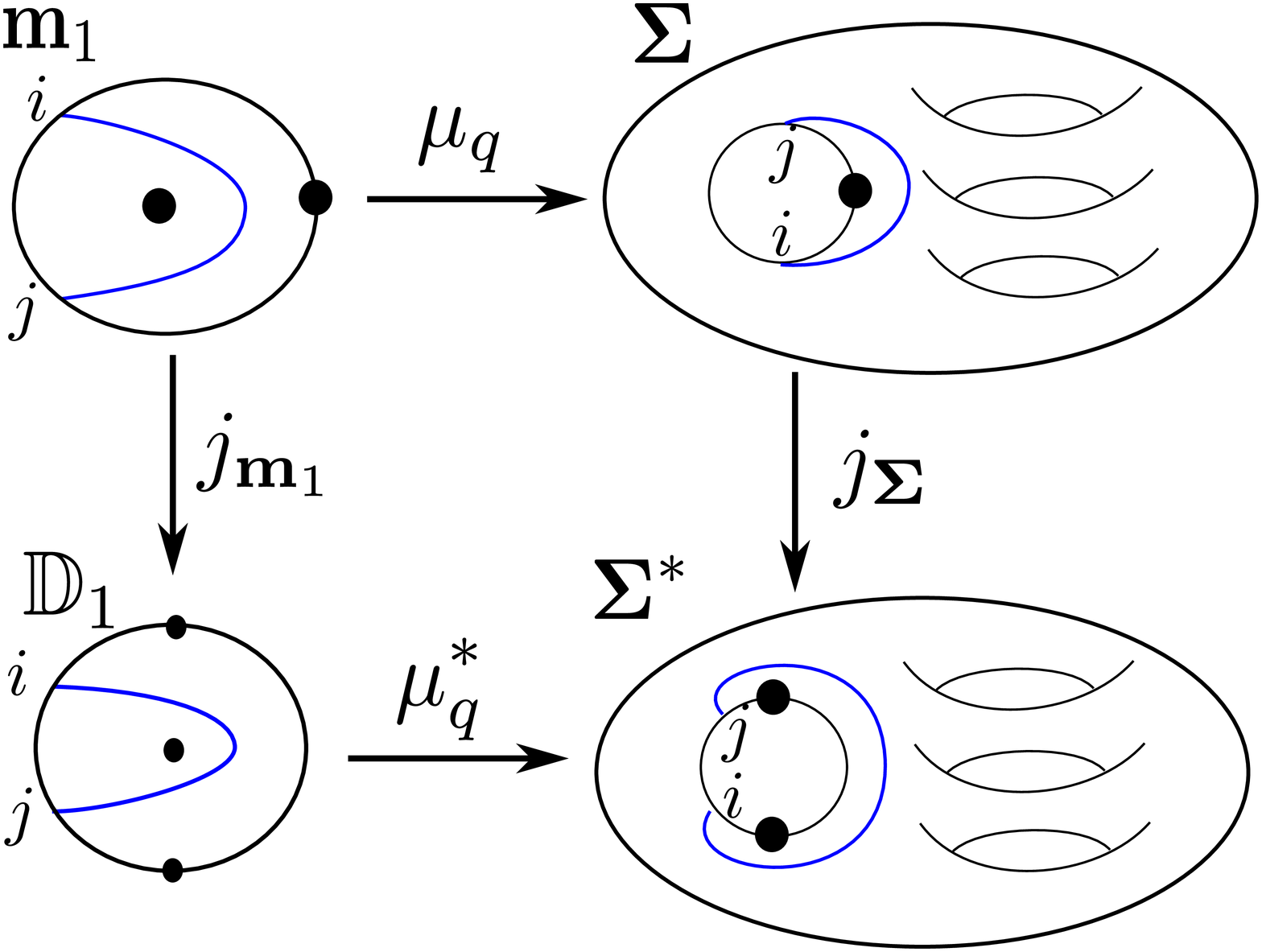} }
\caption{An illustration of the quantum moment maps $\mu_q$ and $\mu_q^*$.
} 
\label{fig_qMomentMaps} 
\end{figure} 
  So the fact that $j_{\mathbf{m}_1}(\alpha(p)_{-+})$ is invertible implies that $j_{\mathbf{\Sigma}}(\alpha(p_{\partial_j})_{-+}) $ is invertible as well so $j^{loc}_{\mathbf{\Sigma}}$ is well defined. Let $A_{\mathbf{\Sigma}} \subset \overline{\mathcal{S}}_A(\mathbf{\Sigma}^*)$ be the subalgebra generated by the image of $j_{\mathbf{\Sigma}}^{loc}$ and the center $Z_{\mathbf{\Sigma}^*}$. By the preceding case and the commutativity of the above diagram, we see the image $\Image(\mu_q^*)$ of $\mu_q^*$ is included in $A_{\mathbf{\Sigma}}$. Let $a_1, \ldots, a_{|\mathcal{A}|}$ be the boundary edges of $\mathbf{\Sigma}$. By construction, each $a_i$ corresponds to two boundary edges $a_i'$ and $a_i''$ of $\mathbf{\Sigma}^*$ and the image of $j_{\mathbf{\Sigma}}$ is spanned by the classes $[D,s]$ of stated diagrams such that $D\cap a_i'' = \emptyset$ for all $1\leq i \leq |\mathcal{A}|$. Let $[D,s] \in \overline{\mathcal{S}}_A(\mathbf{\Sigma}^*)$ be the class of an arbitrary stated diagram. As illustrated in Figure \ref{fig_slide}, by performing skein relations in small neighborhoods of each boundary component $\partial_j$, we see that $[D,s]$ is equal to a sum $[D,s]= \sum_i x_i [D_i,s_i]$ where $x_i \in \Image(\mu_q^*)$ and $D_i \cap a_j''=\emptyset$ for all $j$. This proves that $[D,s]\in A_{\mathbf{\Sigma}}$ so $A_{\mathbf{\Sigma}}=\overline{\mathcal{S}}_A(\mathbf{\Sigma}^*)$ and the proof is complete.
     \begin{figure}[!h] 
\centerline{\includegraphics[width=6cm]{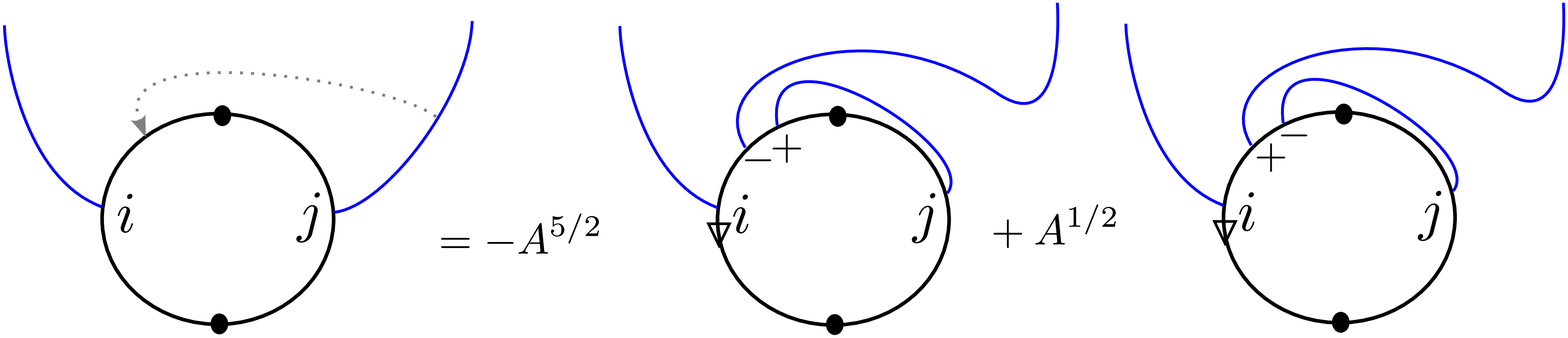} }
\caption{Using a skein relation, we can write any class $[D,s]$ as a linear combination of products of elements in the image of $\mu_q^*$ (here corner arcs $\alpha(p)_{j+}$ and $\alpha(p)_{j-}$) and elements $[D_i, s_i]$ where the $D_i$ do not intersect the boundary arcs $a''$ (here the boundary arc on the right).
} 
\label{fig_slide} 
\end{figure} 
 \end{proof}

 \begin{lemma}\label{lemma_Skein4} We have $\underline{Z}_{\mathbf{\Sigma}}=\underline{Z}'_{\mathbf{\Sigma}}$.
 \end{lemma}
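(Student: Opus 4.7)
The plan is to sandwich $\underline{Z}_{\mathbf{\Sigma}}$ between $\underline{Z}'_{\mathbf{\Sigma}}$ and the integral closure of $\underline{Z}'_{\mathbf{\Sigma}}$ in $Q(\underline{Z}'_{\mathbf{\Sigma}})$, and then argue these three rings coincide. The inclusion $\underline{Z}'_{\mathbf{\Sigma}}\subset \underline{Z}_{\mathbf{\Sigma}}$ is immediate: the image of the Frobenius morphism is central by Theorem \ref{theorem_Frobenius}, and each peripheral loop $\gamma_{p_i}$ can be isotoped into an arbitrarily small annular neighborhood of the inner puncture $p_i$, disjoint from any prescribed finite stated diagram, so it commutes with every generator of $\mathcal{S}_A(\mathbf{\Sigma})$. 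It thus remains to establish $\underline{Z}_{\mathbf{\Sigma}}\subset\underline{Z}'_{\mathbf{\Sigma}}$.

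For this I would first pin down the PI-degree. The embedding $j:\mathcal{S}_A(\mathbf{\Sigma})\hookrightarrow \overline{\mathcal{S}}_A(\mathbf{\Sigma}^*)$ of Lemma \ref{lemma_Skein2} is an injection of prime PI algebras, so the PI-degree of $\mathcal{S}_A(\mathbf{\Sigma})$ is at most that of $\overline{\mathcal{S}}_A(\mathbf{\Sigma}^*)$, namely $D_{\mathbf{\Sigma}^*}$ by Theorem \ref{theorem_center}. For the reverse inequality I would pull back an Azumaya representation $V$ of $\overline{\mathcal{S}}_A(\mathbf{\Sigma}^*)$ (of dimension $D_{\mathbf{\Sigma}^*}$) along $j$: Lemma \ref{lemma_Skein2} says $\overline{\mathcal{S}}_A(\mathbf{\Sigma}^*)$ is generated by $j^{loc}(\mathcal{S}_A(\mathbf{\Sigma})^{loc})$ together with $Z_{\mathbf{\Sigma}^*}$, and in $V$ the central generators act by scalars while $j$ of every bad arc is already invertible in $\overline{\mathcal{S}}_A(\mathbf{\Sigma}^*)$, so $j(\mathcal{S}_A(\mathbf{\Sigma}))$ spans the full operator algebra $\End(V)$ and acts irreducibly; hence the PI-degree is exactly $D_{\mathbf{\Sigma}^*}$. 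Posner--Formaneck then yields $\dim_{Q(\underline{Z}_{\mathbf{\Sigma}})}\mathcal{S}_A(\mathbf{\Sigma})\otimes_{\underline{Z}_{\mathbf{\Sigma}}}Q(\underline{Z}_{\mathbf{\Sigma}})=(D_{\mathbf{\Sigma}^*})^2$, and together with Lemma \ref{lemma_Skein1}(3) (which gives the same number for $\underline{Z}'_{\mathbf{\Sigma}}$) and the multiplicativity of the degrees along $\underline{Z}'_{\mathbf{\Sigma}}\subset \underline{Z}_{\mathbf{\Sigma}}$ one obtains $[Q(\underline{Z}_{\mathbf{\Sigma}}):Q(\underline{Z}'_{\mathbf{\Sigma}})]=1$, i.e. $Q(\underline{Z}_{\mathbf{\Sigma}})=Q(\underline{Z}'_{\mathbf{\Sigma}})$.

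To upgrade this equality of fraction fields to an equality of rings I would invoke normality of $\underline{Z}'_{\mathbf{\Sigma}}$. By construction $\underline{Z}'_{\mathbf{\Sigma}}\cong \underline{Z}^0_{\mathbf{\Sigma}}[X_1,\dots,X_k]/(T_N(X_i)-Fr_{\mathbf{\Sigma}}(\gamma_{p_i}))$ is a finite free extension of $\underline{Z}^0_{\mathbf{\Sigma}}\cong\mathcal{O}[\mathcal{R}_{\SL_2}(\mathbf{\Sigma})]$ (Theorem \ref{theorem_classical_limit}), which is the coordinate ring of a smooth affine variety; hence $\underline{Z}'_{\mathbf{\Sigma}}$ is Cohen--Macaulay and Serre's criterion reduces normality to the $R_1$ condition. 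A Jacobian computation shows the singular locus of $\Spec\underline{Z}'_{\mathbf{\Sigma}}$ consists of those $(x,h_{p_1},\dots,h_{p_k})$ for which some $T_N'(h_{p_{i_0}})=0$ (forcing $\tr\rho_x(\gamma_{p_{i_0}})=\pm 2$) \emph{and} the differential of $x\mapsto\tr\rho_x(\gamma_{p_{i_0}})$ vanishes at $x$; since $\tr$ on $\SL_2$ has critical locus $\{\pm\mathds{1}_2\}$ of codimension $3$ and, using the presenting graph of Example \ref{example_presenting_graph}, the evaluation map $\rho\mapsto\rho(\gamma_{p_{i_0}})$ on $\mathcal{R}_{\SL_2}(\mathbf{\Sigma})$ is submersive, this singular locus has codimension $\geq 2$. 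Thus $\underline{Z}'_{\mathbf{\Sigma}}$ is integrally closed in $Q(\underline{Z}'_{\mathbf{\Sigma}})$, and since $\underline{Z}_{\mathbf{\Sigma}}$ is integral over $\underline{Z}^0_{\mathbf{\Sigma}}\subset\underline{Z}'_{\mathbf{\Sigma}}$ with the same fraction field, $\underline{Z}_{\mathbf{\Sigma}}=\underline{Z}'_{\mathbf{\Sigma}}$.

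The delicate point will be the codimension estimate in the last step when the peripheral loop $\gamma_{p_{i_0}}$ has been removed from the generating set $\mathbb{G}$ to build the presenting graph $\Gamma$: in this case the evaluation map is no longer a coordinate projection but a word in the remaining generators, and one must verify submersivity from the non-abelian structure of $\pi_1(\Sigma)$ rather than by inspection. An alternative, more geometric, rephrasing would be to mimic the proof of Theorem \ref{theorem_smooth2} and directly identify the smooth locus of $\Spec\underline{Z}'_{\mathbf{\Sigma}}$, which is what I would ultimately pursue if the direct Jacobian argument turns out too involved.
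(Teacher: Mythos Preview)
Your argument is correct and takes a genuinely different route from the paper. The paper's proof is entirely combinatorial: it uses the inclusion $j(\underline{Z}_{\mathbf{\Sigma}})=Z_{\mathbf{\Sigma}^*}\cap\Image(j)$ (a consequence of Lemma \ref{lemma_Skein2}), passes to the Chebyshev-modified L\^e basis $T\mathcal{B}_{\mathbf{\Sigma}^*}$, observes that both $Z_{\mathbf{\Sigma}^*}$ and $\Image(j)$ are spanned by explicit subsets $S_1,S_2$ of that basis, and checks that every element of $S_1\cap S_2$ already lies in $j(\underline{Z}'_{\mathbf{\Sigma}})$. No PI-degree, no normality, no tangent-space computation.

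Your approach trades that basis bookkeeping for ring-theoretic machinery: you first extract the PI-degree of $\mathcal{S}_A(\mathbf{\Sigma})$ (this is essentially the argument the paper later uses inside the proof of Theorem \ref{theorem_Skein}, so you are reordering rather than adding input), deduce $Q(\underline{Z}_{\mathbf{\Sigma}})=Q(\underline{Z}'_{\mathbf{\Sigma}})$ via Lemma \ref{lemma_Skein1}(3), and close with normality of $\underline{Z}'_{\mathbf{\Sigma}}$ by Serre's criterion. This is more conceptual and would transport well to settings where no convenient basis is available, at the price of importing the $R_1$/$S_2$ machinery and a smoothness analysis. One simplification you may have missed: your ``delicate point'' evaporates here, because $\mathbf{\Sigma}$ is essential, so in Example \ref{example_presenting_graph} one can (and in the proof of Lemma \ref{lemma_Skein1} one does) remove a corner arc $\alpha(p_{\partial_0})$ rather than any $\gamma_{p_i}$. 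Then every evaluation $\rho\mapsto\rho(\gamma_{p_i})$ is a coordinate projection, the gradients $\nabla g_i$ live in distinct $T^*_{\rho(\gamma_{p_i})}\SL_2$ summands, and linear dependence forces one of them to vanish; the singular locus therefore has codimension $3$, and the alternative route you sketch at the end is unnecessary.
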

 
 \begin{proof}
 By Lemma \ref{lemma_Skein2}, we have $j(\underline{Z}_{\mathbf{\Sigma}}) = Z_{\mathbf{\Sigma}^*} \cap \Image(j)$ so we need to prove the inclusion $Z_{\mathbf{\Sigma}^*} \cap \Image(j) \subset j(\underline{Z}'_{\mathbf{\Sigma}})$ to conclude. Let $\mathcal{B}_{\mathbf{\Sigma}}$ and $\mathcal{B}_{\mathbf{\Sigma}^*}$ be the L\^e bases of $\mathcal{S}_A(\mathbf{\Sigma})$ and $\overline{\mathcal{S}}_A(\mathbf{\Sigma}^*)$ respectively. So they are made of the classes $[D,s]$ where $D$ is simple, $s$ is $\mathfrak{o}^+$ increasing with the additional assumption that $[D,s]\in \mathcal{B}_{\mathbf{\Sigma}^*}$ does not contain any bad arc. Since $j$ sends $\mathcal{B}_{\mathbf{\Sigma}}$ to the subset $j(\mathcal{B}_{\mathbf{\Sigma}})\subset \mathcal{B}_{\mathbf{\Sigma}^*}$ of $[D,s]$ which do not intersect the boundary edges $a''$ for $a\in \mathcal{A}$, we see that $\Image(j)$ is spanned by $j(\mathcal{B}_{\mathbf{\Sigma}})$. 
  For  $(D,s)\in \mathcal{B}_{\mathbf{\Sigma}^*}$, denote by $T(D,s)$ the class of the diagram obtained from $(D,s)$ as follows. Fix an arbitrary total order $\prec$ on the set of connected diagrams in $\mathbf{\Sigma}^*$. Write $(D,s)=\sqcup_{i=1}^m \beta_i^{(n_i)}$ for the connected components of $(D,s)$ such that the $\beta_i$ are pairwise distinct ordered by $i<j \Rightarrow \beta_i \prec \beta_j$ and $\beta_i^{(n_i)}$ means that we have $n_i$ parallel copies of $\beta_i$. If $\beta_i$ is a loop, write $T(\beta_i^{(n_i)}):= T_{n_i}(\beta_i) \in \overline{\mathcal{S}}_A(\mathbf{\Sigma}^*)$. If $\beta_i$ is a stated arc, set $T(\beta_i^{(n_i)})$ to be the tangle made of $n_i$ parallel copies of $\beta_i$ pushed in the framing direction. Eventually set $T(D,s):=T(\beta_1^{(n_1)})\ldots T(\beta_m^{(n_m)}) $ (ordered product). 
  By \cite[Proposition $2.14$]{KojuQuesneyClassicalShadows} the set  $T\mathcal{B}_{\mathbf{\Sigma}^*}$ made of elements $T(D,s)$ for $[D,s]\in \mathcal{B}_{\mathbf{\Sigma}^*}$ is a basis of $\overline{\mathcal{S}}_A(\mathbf{\Sigma}^*)$. On the one hand,  $Z_{\mathbf{\Sigma}^*}$ is spanned by the subset $S_1\subset T\mathcal{B}_{\mathbf{\Sigma}^*}$ made of elements $T(D,s)$ where in the decomposition $(D,s)=\sqcup_{i=1}^m \beta_i^{(n_i)}$ for every $1\leq i \leq n$ then either $\beta_i$ is a peripheral curve $\gamma_p$ around an inner puncture, a boundary element $\alpha_{\partial}^{\pm 1}$  or $N$ divides $n_i$. On the other hand, $\Image(j)$ is spanned by the subset $S_2 \subset T\mathcal{B}_{\mathbf{\Sigma}^*}$ made of elements $T(D,s)$ such that $D$ does not intersect any boundary edge $a''$ for $a\in \mathcal{A}$. Therefore $j(\underline{Z}_{\mathbf{\Sigma}}) = Z_{\mathbf{\Sigma}^*} \cap \Image(j)$ is spanned by $S_1\cap S_2$: these elements are all in $j(\underline{Z}_{\mathbf{\Sigma}^*}')$. So $j(\underline{Z}_{\mathbf{\Sigma}^*})=j(\underline{Z}_{\mathbf{\Sigma}})$ and we conclude using the injectivity of $j$.  
 
 \end{proof}

  Let $\underline{\widehat{X}}(\mathbf{\Sigma})=\Specm (\underline{Z}_{\mathbf{\Sigma}})$ and $j^*: \widehat{X}(\mathbf{\Sigma}^*) \to \underline{\widehat{X}}(\mathbf{\Sigma})$ the dominant map defined by the embedding $j: \underline{Z}_{\mathbf{\Sigma}} \hookrightarrow  Z_{\mathbf{\Sigma}^*}$. 
  
  \begin{lemma}\label{lemma_Skein3} The image of $j^*$ is the set of elements $\widehat{\rho}=(\rho, h_{p_1}, \ldots, h_{p_k}) \in \underline{\widehat{X}}(\mathbf{\Sigma})$ such that $\mu(\rho)\in \SL_2^0 \times \ldots \times \SL_2^0$. 
  \end{lemma}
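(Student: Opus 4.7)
The plan is to reduce to the classical limit and identify the image of the map there using the reduced condition defining $\overline{\mathcal{R}}_{\SL_2}(\mathbf{\Sigma}^*)$, then lift back to the quantum centers. By Lemma \ref{lemma_Skein4}, $\underline{Z}_{\mathbf{\Sigma}}$ is generated by the Frobenius image $\underline{Z}^0_{\mathbf{\Sigma}}$ together with the peripheral curves $\gamma_{p_i}$ around inner punctures. Under $j$, which intertwines the Frobenius morphisms and preserves inner peripheral curves (inner punctures are not affected by gluing triangles), the puncture invariants satisfy $h_{p_i}=h^*_{p_i}$, while the boundary invariants $h^*_{\partial^*}$ of $\mathbf{\Sigma}^*$ are not constrained by $j(\underline{Z}_{\mathbf{\Sigma}})$. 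Therefore it suffices to identify the image of the classical projection $j^*_{+1} : \overline{\mathcal{R}}_{\SL_2}(\mathbf{\Sigma}^*) \to \mathcal{R}_{\SL_2}(\mathbf{\Sigma}) = \underline{X}(\mathbf{\Sigma})$, which by Corollary \ref{coro_classical_limit} is precomposition with the functor $\iota_* : \Pi_1(\Sigma, \mathbb{V}) \to \Pi_1(\Sigma^*, \mathbb{V}^*)$ induced by the inclusion.

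For each boundary edge $a$ of $\mathbf{\Sigma}$ contained in the boundary component $\partial_j$ with unique boundary puncture $p_{\partial_j}$, gluing a triangle $\mathbb{T}_a$ along $a$ produces two new boundary punctures in $\mathbf{\Sigma}^*$: the \emph{inner} puncture $p_a^{\mathrm{in}}$ between $a'$ and $a''$ inside $\mathbb{T}_a$, and the \emph{outer} puncture $p_a^{\mathrm{out}}$ formed by fusing the remaining two corners of $\mathbb{T}_a$ with $p_{\partial_j}$. Since $\mathbb{T}_a$ is contractible and the boundary component of $\partial \Sigma^*$ containing $a'$ and $a''$ is isotopic to $\partial_j$, comparing the peripheral loops around them gives an identity of the form
$$
\iota_*(\alpha(p_{\partial_j})) \,=\, \alpha(p_a^{\mathrm{out}}) \cdot \alpha(p_a^{\mathrm{in}})
$$
in $\Pi_1(\Sigma^*, v_{a'})$, up to a choice of orientation of one of the factors. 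For $\rho^* \in \overline{\mathcal{R}}_{\SL_2}(\mathbf{\Sigma}^*)$, the reduced condition then forces $A := \rho^*(\alpha(p_a^{\mathrm{out}}))$ and $B := \rho^*(\alpha(p_a^{\mathrm{in}}))$ to lie in $\SL_2^1$, and yields $\rho(\alpha(p_{\partial_j})) = AB$.

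A direct matrix computation shows that for $A = \bigl(\begin{smallmatrix} 0 & a \\ -a^{-1} & * \end{smallmatrix}\bigr)$ and $B = \bigl(\begin{smallmatrix} 0 & b \\ -b^{-1} & * \end{smallmatrix}\bigr)$ in $\SL_2^1$, the product $AB$ has top-left entry $-a/b \neq 0$, so it lies in $\SL_2^0$; this gives the forward inclusion $\Image(j^*_{+1}) \subset \{\rho : \mu(\rho) \in (\SL_2^0)^{|\mathcal{A}|}\}$. For the reverse inclusion, given any $C \in \SL_2^0$ one can solve $AB=C$ with $A,B\in \SL_2^1$: fix $A\in \SL_2^1$ freely, set $B:=A^{-1}C$, and observe that the top-left entry of $B$ vanishes precisely when a single linear equation on the parameters of $A$ is satisfied, admitting a one-parameter family of solutions because $C_{++}\neq 0$. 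Solving such a system independently at each triangle produces an extension $\rho^* \in \overline{\mathcal{R}}_{\SL_2}(\mathbf{\Sigma}^*)$; choosing any compatible $N$-th roots $h^*_{\partial^*}$ and letting $h^*_{p_i}:=h_{p_i}$ then yields a point of $\widehat{X}(\mathbf{\Sigma}^*)$ projecting to $(\rho, h_{p_i})$ under $j^*$.

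The main obstacle is pinning down the identity $\iota_*(\alpha(p_{\partial_j})) = \alpha(p_a^{\mathrm{out}}) \cdot \alpha(p_a^{\mathrm{in}})$ with the correct orientation, so that it is the $++$ entry (rather than, say, the $--$ entry) that is forced to be nonzero by the reduced condition. This can be verified either via a direct isotopy argument in $\mathbf{\Sigma}^*$ exploiting the contractibility of $\mathbb{T}_a$, or more algebraically by tracking the behaviour of the splitting morphism $\theta_{a_1\#a_2}$ on the L\^e basis and comparing with the classical matrix-coefficient identification of Corollary \ref{coro_classical_limit}.
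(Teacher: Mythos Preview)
Your proposal is correct and follows essentially the same route as the paper: reduce to the classical restriction map $\overline{\mathcal{R}}_{\SL_2}(\mathbf{\Sigma}^*)\to \mathcal{R}_{\SL_2}(\mathbf{\Sigma})$, observe that $\alpha(p_{\partial_j})$ factors as a product of the two new corner arcs, and conclude via the elementary fact that multiplication $\SL_2^1\times\SL_2^1\to\SL_2$ has image exactly $\SL_2^0$. The only noteworthy difference is in how surjectivity of this last map is established: you argue directly by solving $AB=C$ for $B=A^{-1}C$ and adjusting the free parameters of $A$, whereas the paper uses the Bruhat-type factorizations $\SL_2^0=B^-B^+$ and $\SL_2^1=B^- w B^+$ with $w=\bigl(\begin{smallmatrix}0&1\\-1&0\end{smallmatrix}\bigr)$, writing $M=M_-M_+$ and setting $A=M_-w$, $B=-wM_+$. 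Both are one-line computations.

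Two small remarks. First, your term ``inner puncture'' for $p_a^{\mathrm{in}}$ risks confusion with the technical notion $p\in\mathring{\mathcal{P}}$; it is a boundary puncture. Second, the orientation check you flag as the ``main obstacle'' is not treated with any more care in the paper than you give it: the identity $\alpha(p_\partial)=\alpha(p''_\partial)\alpha(p'_\partial)$ is simply asserted from the picture, and the consistency with the $(++)$-entry convention follows from the orientation conventions built into Definition~\ref{def_modulispaces} and Corollary~\ref{coro_classical_limit}. So you are not missing anything there.
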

  
  \begin{proof} For each boundary edge $a \in \mathcal{A}$ of $\mathbf{\Sigma}$ fix a point $v_a \in a$ and let $\mathbb{V}:=\{v_a\}_{a\in \mathcal{A}}$. By definition $\mathbf{\Sigma}^*$ is the marked surface with $\Sigma$ as underlying surface and for which each boundary edge $a$ has been  replaced by two boundary edges $a'$ and $a''$.
  For each $a\in \mathcal{A}$ fix $v_{a'}:= v_a \in a'$ and $v_{a''}\in a''$ and $\mathbb{V}^*=\{v_{a'}, v_{a''}\}_{a\in \mathcal{A}}$. Using Theorem \ref{theorem_classical_limit}, we identify $\underline{\widehat{X}}(\mathbf{\Sigma})$ with the set of elements $\widehat{\rho}=(\rho, h_{p_i})_i$ where $\rho: \Pi_1(\Sigma, \mathbb{V}) \to \SL_2$ and $h_{p_i}\in \mathbb{C}$ is such that $\tr(\rho(\gamma_{p_i}))=-T_N(h_{p_i})$. Recall that $\widehat{X}(\mathbf{\Sigma}^*)$ is identified as the set of elements $\widehat{\rho}^*=(\rho, h_{p_i}, h_{\partial_j})_{i,j}$ where $\rho: \Pi_1(\Sigma, \mathbb{V}^*) \to \SL_2$ sends every corner path $\alpha(p)$ to an element of $\SL_2^1$. Since $\mathbb{V}\subset \mathbb{V}^*$, we have a full embedding $ \Pi_1(\Sigma, \mathbb{V}) \to \Pi_1(\Sigma^*, \mathbb{V}^*)$ and thus a restriction morphism $\Hom(\Pi_1(\Sigma^*, \mathbb{V}^*), \SL_2) \to \Hom(\Pi_1(\Sigma, \mathbb{V}), \SL_2)$ sending $\rho^*$ to  $\restriction{ \rho^*}{\Pi_1(\Sigma, \mathbb{V})}$. By definition, the map $j^*$ satisfies 
  $$ j^*( (\rho^*, h_{p_i}, h_{\partial_j})) = (\restriction{ \rho^*}{\Pi_1(\Sigma, \mathbb{V})}, h_{p_i}).$$
  So given a functor $\rho: \Pi_1(\Sigma, \mathbb{V})\to \SL_2$ such that $\rho(\alpha(p))\in \SL_2^0$ for all boundary puncture $p\in \mathcal{P}^{\partial}$ of $\mathbf{\Sigma}$, we need to prove that there exists a functor $\rho^*: \Pi_1(\Sigma^*, \mathbb{V}^*)\to \SL_2$ such that $\restriction{ \phi_*}{\Pi_1(\Sigma, \mathbb{V})}= \rho$ and such that $\rho^*(\alpha(p'))\in \SL_2^1$ for all boundary puncture $p'$ of $\mathbf{\Sigma}^*$. For a boundary component $\partial$ of $\Sigma$ containing a boundary puncture $p_{\partial}\in P^{\partial}$, the same boundary component considered in $\Sigma^*$ contains two boundary punctures $p_{\partial}'$ and $p_{\partial}''$ such that $\alpha(p_{\partial})= \alpha(p''_{\partial})\alpha(p'_{\partial})$. So lifting $\rho$ to a functor $\rho^*$ amounts to choose for each $p_{\partial}\in P^{\partial}$ two elements $\rho^*(\alpha(p'_{\partial})), \rho^*(\alpha(p''_{\partial})) \in \SL_2^1$ such that $\rho(\alpha(p_{\partial}))= \rho^*(\alpha(p'_{\partial}))\rho^*(\alpha(p''_{\partial}))$. The existence of such lift thus follows from the surjectivity of the map
  $$ \pi : \SL_2^1 \times \SL_2^1 \to \SL_2^0, \quad \pi(A,B):=AB.$$
  Indeed, denote by $B^+, B^- \subset \SL_2$ the subalgebras of upper and lower triangular matrices and write $w:= \begin{pmatrix} 0 & 1 \\ -1 & 0 \end{pmatrix}$ so that $\SL_2^0= B^-B^+$ and $\SL_2^1=B^- w B^+$. Then given $M\in \SL_2^0$ such that $M=M_- M_+$ with $M_{\pm} \in B^{\pm}$ set $A:= M_-w \in \SL_2^1$ and $B:= - wM_+\in \SL_2^1$ then $\pi(AB)=M$. Thus $\pi$ is surjective. This concludes the proof.

  \end{proof}

 \begin{proof}[Proof of Theorem \ref{theorem_Skein}]
 Write $D:= D_{\mathbf{\Sigma}^*}$.
Let $\widehat{\rho}=(\rho, h_{p_1}, \ldots, h_{p_k}) \in \underline{\widehat{X}}(\mathbf{\Sigma})$ such that $\mu(\rho)\in \SL_2^0 \times \ldots \times \SL_2^0$ and $\widehat{\rho}^* \in \widehat{X}(\mathbf{\Sigma}^*)$ such that $j^*(\widehat{\rho}^*)=\widehat{\rho}$. 
By Lemma \ref{lemma_Skein2}, $j$ induces a surjective map 
$$ j_{\widehat{\rho}}: \mathcal{S}_A(\mathbf{\Sigma})_{\widehat{\rho}} \to \overline{\mathcal{S}}_A(\mathbf{\Sigma}^*)_{\widehat{\rho}^*}.$$
Let $r: \overline{\mathcal{S}}_A(\mathbf{\Sigma}^*) \to \End(V)$ be an irreducible representation with classical shadow $\widehat{\rho}^*$ and $r':= r \circ j: \mathcal{S}_A(\mathbf{\Sigma})\to \End(V)$. Then $r'$ has classical shadow $\widehat{\rho}$ and is irreducible by surjectivity of $ j_{\widehat{\rho}}$ and the Sch\"ur lemma. First suppose that for all $i$, either  $\tr(\rho(\gamma_{p_i}))\neq \pm 2$ or $\tr(\rho(\gamma_{p_i}))= \pm  2$ and $h_{p_i}=\mp 2$. By Theorem \ref{main_theorem}, $\widehat{\rho}^*$ belongs to the Azumaya locus of $\overline{\mathcal{S}}_A(\mathbf{\Sigma}^*)$ so $\dim(V)=D$. Since the locus $\mathcal{O}\subset \widehat{\underline{X}}(\mathbf{\Sigma})$ of such $\widehat{\rho}$ is open dense, some of these irreducible representations $r'$ must have their classical shadows in the Azumaya locus of $\mathcal{S}_A(\mathbf{\Sigma})$. Therefore the PI-degree of  $\mathcal{S}_A(\mathbf{\Sigma})$ is $D$. This in turn implies that  $\mathcal{O}$ is included into the Azumaya locus of $\mathcal{S}_A(\mathbf{\Sigma})$. 
\par Next suppose that there exists $i$ such that $\tr(\rho(\gamma_{p_i}))= \pm  2$ and $h_{p_i}\neq \mp 2$. Then $\widehat{\rho}^*$ does not belong to the Azumaya locus of $\overline{\mathcal{S}}_A(\mathbf{\Sigma}^*)$ so $\dim(V)< D_{\mathbf{\Sigma}^*}$. This implies that $\widehat{\rho}$ does not belong to the Azumaya locus of  $\mathcal{S}_A(\mathbf{\Sigma})$. 
\vspace{2mm}
\par Eventually consider $\widehat{\rho} \in \underline{\widehat{X}}(\mathbf{\Sigma})$ such that $\mu(\rho)\in \SL_2^{1} \times \ldots \times \SL_2^1$. We can consider an irreducible representation $r: \overline{\mathcal{S}}_A(\mathbf{\Sigma}) \to \End(V)$ such that the composition $r': \mathcal{S}_A(\mathbf{\Sigma})\to \overline{\mathcal{S}}_A(\mathbf{\Sigma}) \xrightarrow{r} \End(V)$ has classical shadow $\widehat{\rho}$. Since $D_{\mathbf{\Sigma}}<D_{\mathbf{\Sigma}^*}$ then $\dim(V)< D_{\mathbf{\Sigma}^*}$ so $\widehat{\rho}$ does not belong to the Azumaya locus of $\mathcal{S}_A(\mathbf{\Sigma})$. This concludes the proof. 
 
 \end{proof}
 
 \subsection{Quantum moduli algebras and lattice gauge field theory}\label{sec_QMA}

  In the particular case where $\mathbf{\Sigma}=\mathbf{\Sigma}(g,n):=(\Sigma_{g,n+1}, \{a\})$ is a genus $g\geq 0$ surface with $n+1$ boundary components and exactly one of them has a single boundary edge $a$ whereas the others are inner punctures, the algebra $\mathcal{L}_{g,n}:= \mathcal{S}_A(\mathbf{\Sigma})$ is called the  \textbf{\QMA}. The algebra $\mathcal{L}_{g,n}$ appeared in literature in various contexts.
\begin{enumerate}
\item They were first defined independently   in \cite{AlekseevGrosseSchomerus_LatticeCS1,AlekseevGrosseSchomerus_LatticeCS2, AlekseevSchomerus_RepCS} and \cite{BuffenoirRoche, BuffenoirRoche2} where they appeared as deformation quantization of the  Fock-Rosly \cite{FockRosly} and Alekseev-Kosmann-Malkin-Meinrenken \cite{AlekseevMalkin_PoissonLie, AlekseevKosmannMeinrenken, AlekseevMalkin_PoissonCharVar} moduli spaces (see also \cite{KojuTriangularCharVar}). Under this approach, they were intensively studied in \cite{BaseilhacRoche_LGFT1, BaseilhacRoche_LGFT2, BaseilhacFaitgRoche_LGFT3}.
\item They were rediscovered independently by Habiro under the name \textit{quantum representation variety} in \cite{Habiro_QCharVar}.
\item As we just defined, they are particular cases of stated skein algebras.
\item They eventually appeared under the name \textit{internal skein algebras} in the work of Ben Zvi-Brochier-Jordan \cite{BenzviBrochierJordan_FactAlg1, BenzviBrochierJordan_FactAlg2} and further studied  in \cite{Cooke_FactorisationHomSkein, GunninghamJordanSafranov_FinitenessConjecture, GanevJordanSafranov_FrobeniusMorphism}.
\end{enumerate}
The equivalence between \QMAs and stated skein algebras is proved in \cite{BullockFrohmanKania_LGFT,Faitg_LGFT_SSkein, KojuPresentationSSkein}. The equivalence between internal skein algebras and \QMAs{ }  is proved in \cite{BenzviBrochierJordan_FactAlg1}. The equivalence between internal skein algebras and stated skein algebras is proved in \cite{Haioun_Sskein_FactAlg}. The equivalence between quantum representation varieties and stated skein algebras is proved in \cite{KojuMurakami_QCharVar}.

\vspace{2mm}
\par By definition $\mathbf{\Sigma}(g,n)^*=(\Sigma_{g,n+1}, \{a', a''\})$ is the surface $\Sigma_{g,n+1}$ with two boundary edges $\{a', a''\}$ in the same boundary component. Note that  $\mathbf{\Sigma}(g,n)^*$ is obtained from $g$ copies of $\mathbf{\Sigma}(1,0)^*$ and $n$ copies of $\mathbf{\Sigma}(0,1)^*$ by gluing some pairs of boundary edges. Therefore one has a splitting morphism 
$$\theta: \overline{\mathcal{S}}_A(\mathbf{\Sigma}(g,n)^*) \hookrightarrow \overline{\mathcal{S}}_A(\mathbf{\Sigma}(1,0)^*)^{\otimes g} \otimes \overline{\mathcal{S}}_A(\mathbf{\Sigma}(0, 1)^*)^{\otimes n}.$$
 Note that $\mathbf{\Sigma}(0, 1)^*=\mathbb{D}_1$ so $\overline{\mathcal{S}}_A(\mathbf{\Sigma}(0, 1)^*)\cong \Dq$. On the other hand, $\mathcal{HH}_q:=\mathcal{S}_A(\mathbf{\Sigma}(1, 0)^*)$ is the so-called \textit{Heisenberg double} algebra. To see this, let $\alpha, \beta$ be the arcs of Figure \ref{fig_D1} and consider the matrices
 $$N(\alpha):= \begin{pmatrix} \alpha_{++} & \alpha_{+-} \\ \alpha_{-+} & \alpha_{--} \end{pmatrix}, N(\beta):=  \begin{pmatrix} \beta_{++} & \beta_{+-} \\ \beta_{-+} & \beta_{--} \end{pmatrix}, \mathscr{R}:=\begin{pmatrix} A & 0 & 0 & 0 \\ 0 & 0 &A^{-1} & 0 \\ 0 & A^{-1} & A-A^{-3} & 0 \\ 0 & 0 & 0 & A \end{pmatrix} .$$
  By \cite[Theorem $1.1$]{KojuPresentationSSkein},  $\mathcal{HH}_q$ is generated by the elements $\alpha_{ij}, \beta_{kl}$ with relations $\det_q(N(\alpha))=\det_q(N(\beta))=1$ (where $\det_q \begin{pmatrix} a & b \\ c& d\end{pmatrix}=ad-q^{-1}bc$) and 
 $$ N(\alpha)\odot N(\alpha) = \mathscr{R}^{-1} (N(\alpha)\odot N(\alpha))\mathscr{R}, N(\beta)\odot N(\beta) = \mathscr{R}^{-1} (N(\beta)\odot N(\beta))\mathscr{R}, N(\alpha)\odot N(\beta) = \mathscr{R} (N(\alpha)\odot N(\beta))\mathscr{R}.$$
 Here $\odot$ denotes the Kronecker product and we recognize here Alekseev's relations defining the Heisenberg double. Therefore $\overline{\mathcal{HH}}_q:= \overline{\mathcal{S}}_A(\mathbf{\Sigma}(1, 0)^*)$ is the quotient of $\mathcal{HH}_q$ by the ideal generated by the two bad arcs.
 
     \begin{figure}[!h] 
\centerline{\includegraphics[width=3cm]{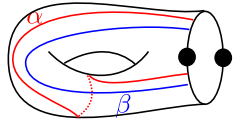} }
\caption{The marked surface $\mathbf{\Sigma}(1,0)^*=(\Sigma_{1,1}, \{a', a''\})$ defining the Heisenberg double $\mathcal{HH}_q$.} 
\label{fig_D1}
\end{figure} 

\par Consider the composition: 
$$ \Phi: \mathcal{L}_{g,n} \xrightarrow{j} \overline{\mathcal{S}}_A(\mathbf{\Sigma}(g,n)^*) \xrightarrow{\theta} (\overline{\mathcal{HH}}_q)^{\otimes g} \otimes (\Dq)^{\otimes n}.$$
An analogue of the map $\Phi$ appeared in Alekseev's work \cite{Alekseev_AlekseevMorphism} and was named \textit{Alekseev's morphism} in \cite{BaseilhacFaitgRoche_LGFT3}. Given $V_1, \ldots, V_n$ and $W_1, \ldots, W_g$ some semi-weight indecomposable representations of $U_q\mathfrak{gl}_2$ and $\overline{\mathcal{HH}}_q$ respectively, using the morphism $\Phi$, we endow $W_1\otimes \ldots \otimes W_g \otimes V_1 \otimes \ldots \otimes V_n$ with a structure of module over $\mathcal{L}_{g,n}$.
Theorem \ref{main_theorem} implies that every reduced $\mathcal{L}_{g,n}$ module is isomorphic to a direct sum of such  modules and proves that the representations $W_i$ are in $1:1$ correspondence with the character over the center of $\overline{\mathcal{HH}}_q$ and gives a complete classification for the modules $V_i$. Therefore we obtain an explicit construction of the reduced representations of the \QMAs. 
The map $\mu: \underline{X}(\mathbf{\Sigma})\to \SL_2$ sending $\rho$ to $\rho(\alpha_{\partial})$ was named the \textit{moment map} in  \cite{AlekseevKosmannMeinrenken}.
Theorem \ref{theorem_Skein} implies

\begin{corollary}\label{coro_QMA}
\begin{enumerate}
\item The center of $\mathcal{L}_{g,n}$ is generated by the image of the Frobenius and the peripheral curves $\gamma_{p_1}, \ldots, \gamma_{p_n}$.
\item The PI-degree of  $\mathcal{L}_{g,n}$ is $N^{3g+n}$.
\item The Azumaya locus of $\mathcal{L}_{g,n}$ is the locus of elements $\widehat{\rho}=(\rho, h_{p_i})$ such that $\mu(\rho)\in \SL_2^0$ and $\tr(\rho(\gamma_{p_i}))=\pm 2 \Rightarrow h_{p_i}=\mp 2$.
\end{enumerate}
\end{corollary}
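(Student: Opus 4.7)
The plan is to deduce everything as a direct specialization of Theorem \ref{theorem_Skein} to the marked surface $\mathbf{\Sigma}(g,n)=(\Sigma_{g,n+1},\{a\})$, which is connected, essential, has a single boundary edge (so $|\mathcal{A}|=1$), and whose $n+1$ boundary components each contain at most one boundary edge (exactly one of them contains $a$, the other $n$ being the inner punctures $p_1,\ldots,p_n$). Thus $\mathbf{\Sigma}(g,n)$ satisfies the hypotheses of Theorem \ref{theorem_Skein}.

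First, item $(1)$ of the corollary is exactly item $(1)$ of Theorem \ref{theorem_Skein}, since the inner punctures of $\mathbf{\Sigma}(g,n)$ are precisely $p_1,\ldots,p_n$. Next, for item $(2)$, item $(2)$ of Theorem \ref{theorem_Skein} gives $\underline{D}_{\mathbf{\Sigma}(g,n)}=N^{3g-3+(n+1)+2\cdot 1}=N^{3g+n}$, which is the claimed PI-degree.

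For item $(3)$, since $|\mathcal{A}|=1$ the boundary invariant $\mu(\rho)$ is a single element of $\SL_2$, and the Bruhat decomposition $\SL_2=\SL_2^0\sqcup \SL_2^1$ implies that exactly one of the two cases $\mu(\rho)\in\SL_2^0$ or $\mu(\rho)\in\SL_2^1$ holds for every $\widehat{\rho}\in\widehat{\underline{X}}(\mathbf{\Sigma}(g,n))$. Combining parts (i) and (ii) of item $(3)$ of Theorem \ref{theorem_Skein} then gives a complete characterization: $\widehat{\rho}$ is in the Azumaya locus if and only if $\mu(\rho)\in\SL_2^0$ and for every $i$ such that $\tr(\rho(\gamma_{p_i}))=\pm 2$ one has $h_{p_i}=\mp 2$. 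This is exactly the statement of item $(3)$ of the corollary.

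Since this corollary is a pure specialization, there is no substantial obstacle: all the work has been done in establishing Theorem \ref{theorem_Skein}, and the only subtlety is the observation that when $|\mathcal{A}|=1$ the Bruhat dichotomy turns the sufficient conditions (i) and (ii) into a necessary and sufficient description of the Azumaya locus.
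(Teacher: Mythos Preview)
Your proof is correct and follows the same approach as the paper, which simply records that ``Theorem \ref{theorem_Skein} implies'' the corollary. Your explicit remark that for $|\mathcal{A}|=1$ the Bruhat decomposition $\SL_2=\SL_2^0\sqcup\SL_2^1$ makes cases (i) and (ii) of Theorem \ref{theorem_Skein}(3) exhaustive is exactly the point needed to turn the partial description of the Azumaya locus into a complete characterization.
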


When $n=0$, Corollary \ref{coro_QMA} was proved in \cite[Theorem $1$]{GanevJordanSafranov_FrobeniusMorphism}. When $g=0$, the first point of Corollary \ref{coro_QMA} was proved in \cite[Theorem $1.1$]{BaseilhacRoche_LGFT1}, while the second point was proved in \cite[Theorem $1.3$]{BaseilhacRoche_LGFT2}. 
\par Let us end by a discussion on the classification of representations of $\mathcal{L}_{g,n}$. Let $r: \mathcal{L}_{g,n}\to \End(V)$ be an indecomposable semi-weight representation and consider the only bad arc $\alpha(p)_{-+}\in \mathcal{L}_{g,n}$.
\begin{enumerate}
\item If $r(\alpha(p)_{-+})$ is invertible, then $r$ is isomorphic to a reduced representation, i.e. it factorizes through $\overline{\mathcal{S}}_A(\mathbf{\Sigma}^*)$. These representations are completely classified by Theorem \ref{main_theorem} and the skein Alekseev morphism $\Phi$ permits an explicit construction of them as explained before. Note that every irreducible representation whose classical shadow belongs to the Azumaya locus belongs to this class.
\item If $r(\alpha(p)_{-+})=0$, $r$ factorizes through the reduced stated skein algebra $\overline{\mathcal{S}}_A(\mathbf{\Sigma})$. When $n=0$, by \cite{KojuMCGRepQT} the latter algebra is Azumaya so these representations are classified. When $n\geq 1$, the techniques of the present paper are insufficient to classify such representations though we conjecture that Theorem \ref{main_theorem} still holds in this case. 
\item The last class of representations, for which $r(\alpha(p)_{-+})^N=0$ but $r(\alpha(p)_{-+})\neq 0$, seems very difficult to classify. Such a classification might even be an unsolvable problem. 
\end{enumerate}
 
 \appendix 
  
  \section{Glossary}\label{appendix_glossary}
  
  We summarize the notations and definitions used in the paper.
  \vspace{2mm}
  \par \underline{\textbf{Marked surfaces}}
    \vspace{2mm}
\par $\bullet$ A \textbf{marked surface} is a pair $\mathbf{\Sigma}=(\Sigma, \mathcal{A})$ where $\Sigma$ is a compact oriented surface and $\mathcal{A}\subset \partial \Sigma$ a finite collection of pairwise disjoint arcs named \textbf{boundary edges}. 
A connected component of $\partial \Sigma \setminus \mathcal{A}$ is a \textbf{puncture}; it is an \textbf{inner puncture} if it is a circle, else it is a \textbf{boundary puncture}. We decompose as $\mathcal{P}=\mathring{\mathcal{P}} \sqcup \mathcal{P}^{\partial}$ the set of punctures. We denote by $\Gamma^{\partial} \subset \pi_0(\partial \Sigma)$ the subset of boundary components which are not inner punctures. 
\par $\bullet$
$\mathbf{\Sigma}$ is \textbf{essential} if each connected component of $\Sigma$ contains some boundary edge. $\mathbf{\Sigma}$ is a \textbf{2P-marked surface} if it has no inner puncture and each connected component of $\Sigma$ contains at least two boundary edges. 
\par $\bullet$ The \textbf{bigon} $\mathbb{B}$ is a disc with two boundary edges. The \textbf{punctured bigon} $\mathbb{D}_1$ is an annulus with two boundary edges in the same boundary component. $\mathbf{\Sigma}(g,n)$ is the surface $\Sigma_{g,n+1}$ with one boundary edge.  The \textbf{punctured monogon} $\mathbf{m}_1=(\Sigma_{0,2}, \{b\})$ is an annulus with a single boundary edge.

  \vspace{2mm}
  \par \underline{\textbf{Algebras}}
    \vspace{2mm}
  \par $\bullet$ An \textbf{almost Azumaya algebra} $\mathscr{A}$ is a complex algebra which is prime, affine, of finitely generated as a module over its center $Z$. Its \textbf{PI-degree} $D$ is defined by $D^2=\dim_{Q(Z)}\mathscr{A}\otimes_ZQ(Z)$. Its \textbf{Azumaya locus} is $$\mathcal{AL}(\mathscr{A}):= \{ x \in \Specm(Z): \quotient{\mathscr{A}}{x \mathscr{A}} \cong \Mat_D(\mathbb{C})\}.$$
  \par $\bullet$ $A^{1/2}\in \mathbb{C}$ is a root of unity of odd order $N\geq 3$. $\mathcal{S}_A(\mathbf{\Sigma})$ is the \textbf{stated skein algebra} of $\mathbf{\Sigma}$. $\overline{\mathcal{S}}_A(\mathbf{\Sigma})$ is the \textbf{reduced stated skein algebra} of $\mathbf{\Sigma}$. We set $\mathcal{L}_{g,n}:= \mathcal{S}_A(\mathbf{\Sigma}(g,n))$. 
  \\ We consider $Z^0_{\mathbf{\Sigma}}\subset Z^1_{\mathbf{\Sigma}} \subset Z_{\mathbf{\Sigma}}\subset \overline{\mathcal{S}_A}(\mathbf{\Sigma})$ where $Z_{\mathbf{\Sigma}}$ is the center of $\overline{\mathcal{S}_A}(\mathbf{\Sigma})$, $Z^0_{\mathbf{\Sigma}}$ is the image of the Frobenius and $Z^1_{\mathbf{\Sigma}}$ is the central subalgebra generated by $Z^0_{\mathbf{\Sigma}}$ and the boundary central elements $\alpha_{\partial}^{\pm 1}$ (see Definition \ref{def_central_elements}). 
  \par $\bullet$  We write
  $$ X(\mathbf{\Sigma}):= \Specm(Z^0_{\mathbf{\Sigma}}), \quad \widehat{X}(\mathbf{\Sigma}):= \Specm(Z_{\mathbf{\Sigma}}) \quad \widehat{\mathcal{X}}(\mathbf{\Sigma}):=\{ \mathfrak{m}\in \Spec(Z_{\mathbf{\Sigma}}): \mathfrak{m}\cap Z_{\mathbf{\Sigma}}^0 \in X(\mathbf{\Sigma})\}$$
  and denote by $\pi: \widehat{X}(\mathbf{\Sigma})\to X(\mathbf{\Sigma})$ and $\widehat{\pi}: \widehat{\mathcal{X}}(\mathbf{\Sigma})\to X(\mathbf{\Sigma})$  the natural projections sending $\mathfrak{m}\subset Z_{\mathbf{\Sigma}}$ to $\mathfrak{m}\cap Z^0_{\mathbf{\Sigma}} \subset Z^0_{\mathbf{\Sigma}}$.
  \\ Set $\mathcal{AL}(\mathbf{\Sigma}):= \mathcal{AL}(\overline{\mathcal{S}_A}(\mathbf{\Sigma}))$. The \textbf{fully Azumaya locus} is 
   $$ \mathcal{FAL}(\mathbf{\Sigma}):= \{ x \in X(\mathbf{\Sigma}) : \pi^{-1}(x)\subset \mathcal{AL}(\mathbf{\Sigma}) \}.$$
   \par $\bullet$ Similarly we set $\underline{Z}^0_{\mathbf{\Sigma}} \subset \underline{Z}_{\mathbf{\Sigma}} \subset \mathcal{S}_A(\mathbf{\Sigma})$ the image of the Frobenius and the center respectively and set 
   $$\underline{X}(\mathbf{\Sigma}):= \Specm(\underline{Z}^0_{\mathbf{\Sigma}}), \quad \underline{\widehat{X}}(\mathbf{\Sigma}):= \Specm(\underline{Z}_{\mathbf{\Sigma}}).$$

     \vspace{2mm}
  \par \underline{\textbf{Representations}}
    \vspace{2mm}
  \par $\bullet$ A representation $r: \overline{\mathcal{S}}_A(\mathbf{\Sigma})\to \End(V)$ is a \textbf{weight representation} if it is semi-simple as module over $Z_{\mathbf{\Sigma}}$ and is a \textbf{semi-weight representation} if it is semi-simple as a module over $Z^0_{\mathbf{\Sigma}}$. A representation $r: \mathcal{S}_A(\mathbf{\Sigma})\to \End(V)$ is \textbf{reduced} if it factorizes as $r: \mathcal{S}_A(\mathbf{\Sigma}) \xrightarrow{j} \overline{\mathcal{S}}_A(\mathbf{\Sigma}^*) \to \End(V)$.
  \par $\bullet$ Let $r: \overline{\mathcal{S}}_A(\mathbf{\Sigma})\to \End(V)$ be an indecomposable semi-weight representation.
  \begin{enumerate}
  \item 
   Its \textbf{classical shadow} is the maximal ideal $x_r\in X(\mathbf{\Sigma})$ defined by $x_r=\ker(r)\cap Z^0_{\mathbf{\Sigma}}$. 
   \item Its \textbf{full shadow} is the prime ideal $\mathcal{I}_r \in \widehat{\mathcal{X}}(\mathbf{\Sigma})$ defined by $\mathcal{I}_r:= \ker(r) \cap Z_{\mathbf{\Sigma}}$. 
   \item Its \textbf{maximal shadow} is the unique maximal ideal $\widehat{x}_r\in \widehat{X}(\mathbf{\Sigma})$ which contains $\mathcal{I}_r$.
   \end{enumerate}
   In particular ${\pi}(\widehat{x}_r)=\widehat{\pi}(\mathcal{I}_r)=x_r$ and $\mathcal{I}_r=\widehat{x}_r\in \widehat{X}(\mathbf{\Sigma})$ if and only if $r$ is weight representation.
    \vspace{2mm}
  \par \underline{\textbf{Moduli spaces}}
    \vspace{2mm}
  \par $\bullet$ When $\mathbf{\Sigma}$ is essential, the \textbf{representation variety} is 
  $$ \mathcal{R}_{\SL_2}(\mathbf{\Sigma}):= \Hom(\pi_1(\Sigma, \mathbb{V}), \SL_2).$$
 The \textbf{reduced representation variety} is
 $$ \overline{\mathcal{R}}_{\SL_2}(\mathbf{\Sigma}): \{ \rho \in \mathcal{R}_{\SL_2}(\mathbf{\Sigma}) : \rho(\alpha_{\partial}) \in \SL_2^1, \forall \partial \in \Gamma^{\partial} \}.$$
 Here we consider the simple Bruhat decomposition $\SL_2= \SL_2^0 \sqcup \SL_2^1$ where 
 $$ \SL_2^0 := \left\{ \begin{pmatrix} a & b \\ c & d \end{pmatrix} \in \SL_2 : a\neq 0\right\}.$$

\bibliographystyle{amsalpha}
\bibliography{biblio}

\end{document}